\documentclass[11pt]{amsart}
\usepackage{amssymb,amsmath}
\usepackage{bm}
\usepackage[]{graphicx}
\usepackage{color}
\usepackage{amssymb, epsfig}
%
%
%
%
%

%
\numberwithin{equation}{section}

\newcommand{\ep}{\epsilon}

\newcommand{\x}{\xi}

\newcommand{\eps}{\varepsilon}

\pagestyle{plain}
\setlength{\textheight}{21.0truecm}
\setlength{\textwidth}{16.5truecm}
\setlength{\oddsidemargin}{0.00truecm}
\setlength{\evensidemargin}{0.00truecm}
%
%

%
%
%


\numberwithin{figure}{section}
%
%

%
%


%
%

%
%



%
%
%
%
%


\newcommand{\red}{\textcolor{black}}

%
%

%
%

\newcommand{\meps}{\mu(\varepsilon)}  
\newcommand{\mepsq}{\mu^2(\varepsilon)}  
\newcommand{\deps}{\delta(\varepsilon)}  
\newcommand{\depsinv}{\delta^{-1}(\varepsilon)}  
\newcommand{\depsq}{\delta^2(\varepsilon)}  
\newcommand{\depsqinv}{\delta^{-2}(\varepsilon)}  
\newcommand{\neps}{{{\nu(\varepsilon)}}}

\usepackage{bm}

\usepackage[mathscr]{euscript}
\let\euscr\mathscr \let\mathscr\relax
\usepackage{mathrsfs}

\usepackage{tikz, pgfplots}
\pgfplotsset{compat=1.6}

\usepackage{empheq}

\newtheorem{theorem}{Theorem}[section]
\newtheorem{lemma}[theorem]{Lemma}

\newtheorem{proposition}[theorem]{Proposition}
\newtheorem{remark}[theorem]{Remark}



\usepackage{float}
\usepackage{caption}
\captionsetup{font=footnotesize}

\begin{document}

\title[Layer Dynamics for the Cahn-Hilliard / Allen-Cahn Equation]
{Layer Dynamics for the one dimensional $\bm{\eps}$-dependent
Cahn-Hilliard / Allen-Cahn Equation}

\author[D.~C. Antonopoulou]{D.C.~Antonopoulou$^{\dag\ddag}$}
\author[G.~Karali]{G.~Karali$^{*\ddag}$}
\author[K.~Tzirakis]{K.~Tzirakis$^\ddag$}

\thanks
{$^{\dag}$ Department of Mathematical and Physical Sciences,
University of Chester, UK}

\thanks
{$^{*}$ Department of Mathematics and Applied Mathematics,
University of Crete, Greece.}

\thanks
{$^{\ddag}$ Institute of Applied and Computational Mathematics,
FORTH, Heraklion, Greece.}
%
%
%
%

%
%
%

\subjclass{35K55, 35K57, 35K40, 58J90}
%
%
\keywords{Cahn-Hilliard / Allen-Cahn equation, layer dynamics,
approximate manifolds, metastable states, stability.}
\begin{abstract}
We study the dynamics of the one-dimensional $\eps$-dependent
Cahn-Hilliard / Allen-Cahn equation within a neighborhood of an
equilibrium of $N$ transition layers, that in general does not
conserve mass. Two different settings are considered which differ
in that, for the second, we impose a mass-conservation constraint
in place of one of the zero-mass flux boundary conditions at
$x=1$. Motivated by the study of Carr and Pego on the layered
metastable patterns of Allen-Cahn in \cite{CarrPego}, and by this
of Bates and Xun in \cite{BatesXunI} for the Cahn-Hilliard
equation, we implement an $N$-dimensional, and a mass-conservative
$ N-1$-dimensional manifold respectively; therein, a metastable
state with $N$ transition layers is approximated. We then
determine, for both cases, the essential dynamics of the layers
(ode systems with the equations of motion), expressed in terms of
local coordinates relative to the manifold used. In particular,
we estimate the spectrum of the linearized Cahn-Hilliard /
Allen-Cahn operator, and specify wide families of
$\eps$-dependent weights $\delta(\eps)$, $\mu(\eps)$, acting at
each part of the operator, for which the dynamics are stable and
rest exponentially small in $\eps$. Our analysis enlightens the
role of mass conservation in the classification of the general
mixed problem into two main categories where the solution has a
profile close to Allen-Cahn, or, when the mass is conserved, close
to the Cahn-Hilliard solution.
\end{abstract}

\maketitle
\pagestyle{myheadings}
\thispagestyle{plain}
%
%

\section{Introduction}
\subsection{The equation}
In this paper, we examine the dynamics of the Cahn-Hilliard /
Allen-Cahn equation
\begin{equation}\label{eq:deltaCH-AC}
 u_t = - \delta(\eps) \, \big(\varepsilon^2 u_{xx} - f(u)\big)_{xx} \; + \; \mu(\eps)(\varepsilon^2 u_{xx}  - f(u)) , \quad x \in
 (0,1),\quad t>0,
\end{equation}
in a neighborhood of a layered equilibrium parameterized by a
small positive constant $\eps$.

The nonlinearity $f(u)=F'(u)$ is the derivative of a double
equal-well potential $F$ taking a non-degenerate global minimum
value zero at $ u =\pm 1, $ in particular
\begin{flalign}
&  & & F(\pm1)  = f(\pm1)=0, &  
\label{eq:W cond1}
\\
& & &
f'(\pm1)  > 0, 
& 
 \label{eq:W cond2}
\\
& & & F(u) > 0 \quad \mbox{for} \quad u\in (-1,1). \label{eq:W
cond3} &
\end{flalign}

We define, for simplicity,
$$f(u):=u^3-u,$$ which is a typical example for a potential
$F(u):=\tfrac{1}{4}(u^2-1)^2$. However, many of the results are
valid for more general nonlinearities.

The initial condition
$$u_0(x;\eps)=:u(x,0),\;x\in(0,1),$$ is assumed layered with respect to
$\eps$ which stands as a measure of the layers width
corresponding to a time scale proportional to $e^{C\eps^{-1}}$
for $C>0$, and therefore, to very long times as $\eps\rightarrow
0$ where the solution is expected to change very slowly; see in
\cite{CarrPego} for the analogous considerations on the Allen-Cahn
equation.

We introduce the positive constant $\delta(\eps)>0$ and the
non-negative one $\mu(\eps)\geq 0$ in order to control the
coexistence of the $2$ operators in terms of $\eps$. Moreover, we
impose the presence of the Cahn-Hilliard part in the combined
model as $\delta(\eps)\neq 0$, while for $\delta(\eps):=1$ and
$\mu(\eps):=0$ the Cahn-Hilliard equation stands as a special
case.

An equation of the form \eqref{eq:deltaCH-AC} has been first
introduced in \cite{KK} as a mean field model of the microscopic
dynamics associated with adsorption and desorption mechanisms in
the context of surface processes; see also in \cite{KV,AKM} for
more details on the physical motivation. The combined
Cahn-Hilliard / Allen-Cahn model in study describes surface
diffusion including particle/particle interactions and adsorption
and desorption from the surface. It is noticeable that the
mobility is completely different from this of Allen-Cahn
equation, which implies that the diffusion speeds up the mean
curvature flow, (\cite{HM,Spohn}).

{\red Equation \eqref{eq:deltaCH-AC} is a gradient flow for the
above energy with respect to an $\eps$-weighted metric. In
particular, the standard Allen-Cahn equation is written as
$$u_t=(\Delta-\eps^{-2}I)u=:\mathcal{A}^\eps(u),$$
the Cahn-Hilliard equation, after rescaling, as
$$u_t=(-\eps\Delta)(\Delta-\eps^{-2}I)u=(-\eps\Delta)(\mathcal{A}^\eps(u)),$$
while the mixed model \eqref{eq:deltaCH-AC}, as
$$u_t=(\eps^2(-\delta(\eps)\Delta+\mu(\eps)I))(\Delta-\eps^{-2}I)u=(\eps^2(-\delta(\eps)\Delta+\mu(\eps)I))(\mathcal{A}^\eps(u)),$$
and the $\eps$-weighted metric is given by
$$<f,g>_\eps:=(f,(\eps^2(-\delta(\eps)\Delta+\mu(\eps)I))^{-1}g),$$
for $(\cdot,\cdot)$ the $L^2((0,1))$ inner product; see also the
discussion in \cite{KK} for the case $\delta(\eps):=1$,
$\mu(\eps)=c_0\eps^{-2}$.

The deterministic Allen-Cahn equation was proposed in \cite{a-c}
as a model for the dynamics of interfaces of crystal structures
in alloys.  The same equation also appears as a model for various
other problems, including population genetics and nerve
conduction. As far as the one-dimensional case is concerned, the
limiting behaviour was analyzed in \cite{CarrPego,chen}. After a
very short time, generation of  many very steep transition layers
is observed. These well developed transition layers then start to
move very slowly, and each time a pair of transition layers meet,
the two layers annihilate each other, and thus the number of
layers decrease gradually. Although those collision-annihilation
process takes place rather quickly, the motion of layers between
the collisions is extremely slow, and the profile of the layers
look nearly unchanged during those slow motion periods; in other
words, the solution exhibits a metastable pattern. The situation
is quite different in the multi-dimensional case, where such
metastable patterns hardly appear because of the curvature effect
on the motion of the interface as $\eps\rightarrow 0$; for
rigorous justification of singular limits, see for example in
\cite{BS}, \cite{C1,C2}, \cite{MS2}.

The deterministic Cahn-Hilliard equation, proposed by
\cite{Cahn}, describes the evolution of transitions (mass
transfer) during the phase separation of alloys.
In the case of only two layers, the exponentially slow dynamics
have been studied in \cite{ABF}, where a one-dimensional invariant
manifold of slowly moving states was constructed. More details of
the phenomenon and the motion towards the boundary can be found in
\cite{FH},  \cite{F}.

Considering the CH/AC equation \eqref{eq:deltaCH-AC}, in higher
dimensions, and for a specific choice of the coefficients
$\mu(\eps):=\eps^{-2},\;\;\delta(\eps):=\mathcal{O}(1)$, motion
by mean curvature was derived on the sharp interface limit
$\eps\rightarrow 0$, in \cite{KK}, as in the Allen-Cahn equation
limiting dynamics. A main result of our work is the analysis of
the spectrum of the linearized operator, where a crucial spectral
condition
$$\eps^{2}\mu(\eps)\geq \mathcal{O}(\delta(\eps)),$$ is determined.
For the existence and the regularity properties of
\eqref{eq:deltaCH-AC}, we refer to \cite{KN}, while the
stochastic version thereof was investigated in \cite{AKM}. As it
has been observed in \cite{AKM}, the operator at the right-hand
side of the mixed equation \eqref{eq:deltaCH-AC} is strongly
parabolic in the sense of Petrovsk\v{i}i and the bi-Laplacian
since existing ($\delta(\eps)>0$) dominates, resulting to
regularity properties identical to the Cahn-Hilliard equation (at
least in the stochastic setting). However, the sharp interface
limit of the deterministic equation may exhibit a different
profile closer to this of the Allen-Cahn, \cite{KK}. The above,
ignites a special interest on the scaling of the chosen parameters
$\delta(\eps),\;\mu(\eps)$, and the motivation of a further
investigation of their influence on the dynamics of the layers.

It is well known that the Cahn-Hilliard equation with the standard
Neumann boundary conditions for $u$ and its Laplacian is mass
conservative in the sense that the integral of the solution in
space is time independent. In contrast, the Allen-Cahn equation
with Neumann or Dirichlet boundary is not satisfying such a
property unless a non-local integral term is added, which is the
case for the mixed equation as well; this is not considered in
this work, however it consists a future plan in progress the
detailed investigation of the dynamics for such a version, i.e.,
\eqref{eq:deltaCH-AC} with the extra integral term.

Our main aim is to obtain the equations of motion and estimate
the dynamics of a fixed number of layers, when $\eps$ is
sufficiently small, for the combined model \eqref{eq:deltaCH-AC},
and in dimension one. For this, when the initial and boundary
value problem involves the Neumann conditions, and so mass
conservation is not holding true, the solution will be
approximated into the manifold constructed and effectively used
for the Allen-Cahn equation in the classical result of Carr and
Pego, \cite{CarrPego}. Then, by imposing mass conservation, not
through the pde but replacing one only of the b.c. with an
integral one, we will apply the mass conserving manifold of Bates
and Xun, \cite{BatesXunI,BatesXunII}, which has been proposed for
the integrated Cahn-Hillliard equation. There, the derived initial
and boundary value problem for the integrated equation is
identical to this of \cite{BatesXunI,BatesXunII}, when
$\mu(\eps):=0$. We note that the problem is of fourth order,
since $\delta(\eps)$ is not vanishing, while in dimension one the
boundary consists of only two well separated points where four
boundary conditions are applied on, and could be therefore of
different type. In both cases we will determine the ode systems of
the dynamics, and investigate the main order terms with respect to
the order in $\eps$ of $\delta(\eps)$ and $\mu(\eps)$, and
stability.

The general approach for deriving the equations of motion, as a
system of odes, consists of specifying the approximate solution
into a proper approximate manifold with a residual orthogonal to
a set of approximate tangent vectors to the manifold.
Differentiating in time the orthogonality condition then yields
the system describing the dynamics of the layers.

\subsection{Main results}


Let $u$ be the solution of \eqref{eq:deltaCH-AC}, with the
standard Neumann conditions, $u_{x}=u_{xxx}=0$ at $x=0,1$, (non
mass conserving case). Given a configuration
$$ h = (h_1, \ldots, h_{\scriptscriptstyle N}), $$ of exactly $ N
$ layer positions, we will construct a function $ u^h=u^h(x) $
approximating a metastable state of $u$ with $ N $ transition
layers. Here, we will use the parameterization of Carr and Pego,
\cite{CarrPego}, for the approximate manifold.

More precisely, the function $ u^h $ will almost satisfy the
steady-state problem of \eqref{eq:deltaCH-AC}, that is
$A_\varepsilon(u^h)$ is very small, where $ A_\varepsilon $ is the
operator given by the right-hand side of \eqref{eq:deltaCH-AC}
\begin{equation}\label{eq:def of Aepsilon2}
 A_\varepsilon(u) :=
 - {\deps}
\big(\varepsilon^2 u_{xx} - f(u)\big)_{xx} +
\mu(\eps)\big(\varepsilon^2 u_{xx}  -
f(u)\big)=:-\delta(\eps)A_{_{1,\varepsilon}}(u)+\mu(\eps)A_{_{2,\varepsilon}}(u),
\end{equation}
for
$$A_{_{1,\varepsilon}}(u):=\Big{(}\varepsilon^2 u_{xx} -
f(u)\Big)_{xx},$$ the negative of the Cahn-Hilliard operator, and
$$A_{_{2,\varepsilon}}(u):=\varepsilon^2 u_{xx} -
f(u),$$ the Allen-Cahn operator.

We shall then define the set of admissible layer positions by
$$\Omega_{\rho} = \bigg\{h = (h_1, \ldots, h_{\scriptscriptstyle N}): \;  \frac{\varepsilon}{2\rho} < h_1 < \cdots < h_N < 1-
\frac{\varepsilon}{2\rho},
 \quad \mbox{and} \quad \min\limits_{\scriptscriptstyle 2 \leq j \leq N}(h_{j}-h_{j-1}) > \frac{\varepsilon}{\rho}\bigg\}, $$
for some $ \rho $ small and independent of $ \varepsilon$, which
will be described in detail in the next section. Moreover, we
shall specify the $N$-dimensional manifold of approximate steady
states
$$ \mathcal M:=\{u^h: h \in \Omega_\rho\}.
$$ The residual $\upsilon$ of the approximation is defined as
orthogonal to $N$ approximate tangent vectors to $\mathcal{M}$ at
$u^h$.

The spectrum of the linearized CH/AC operator is determined at
Section \ref{specsec}, and as well the positive definition of the
induced bilinear form, when applied on the residual $\upsilon$ if
$\eps^2\mu(\eps)\geq C_0\delta(\eps)$ for some $C_0\geq C_{\min}>
0$ sufficiently large and specified through the supremum in
$(0,1)$ of $|\eps^{2}(f'(u^h))_{xx}|=\mathcal{O}(1)$, see Main
Theorem \ref{mainspec}. We also estimate the rest of the spectrum
in Theorem \ref{mainspec2}.

Section \ref{eqmonmc} presents the equations of motion through
the ode system \eqref{eq:2coord ODE 11} for the positions $h_i$,
$i=1,\cdots,N$, and then Main Theorem \ref{prop h velocity}
estimates the velocities $\dot{h}_i$ of the layers; at this
technical part, we followed the approach of Bates and Xun,
\cite{BatesXunI}. Finally at Section \ref{slowch}, after a rather
extensive calculus and using the spectral condition of the
linearized operator, we specify a wide class of $\mu(\eps)$,
$\delta(\eps)$, for which the dynamics are stable, and
exponentially small in $\eps$, cf. \eqref{eq222}, \eqref{eqasumpmd2}.

\vspace{0.6cm}

The second part of this manuscript is devoted, at Section
\ref{section:MSP}, to the mass conserving layer dynamics, and the
strategy applied is analogous to this in \cite{BatesXunI}.

Let $M$ be a fixed mass in $(-1,1)$. Restricting one degree of
freedom we impose a mass conservation property, and define the
second approximate $(N-1)$-dimensional manifold, which is a
submanifold of $\mathcal{M}$, by
$$ \mathcal M_1
:= \Big{\{}u^h \in \mathcal M: \int_0^1 u^h(x)\,
dx=M\Big{\}},
$$ and further define the manifold
$$ \tilde{\mathcal{M}}:=\Big{\{}\tilde{u}^h:\;u^h \in  \mathcal M_1,\;\;\tilde{u}^h(x)=\int_0^x u^h(y)\,
dy\Big{\}}.
$$

We impose the mass conservation condition
$\int_{0}^{1}u(x,t)dx=\int_{0}^{1}u_0(x)dx=$ fixed, in place of
the b.c. $u_{xxx}(1,t)=0$. Then, for the integrated equation, we
derive the CH/AC initial and boundary value problem given by
(IACH)-(IBC2). In the Appendix we discuss the well posedness of
the mass-conserving problem and derive \textit{a priori}
estimates by using the corresponding energy functional.

In Section \ref{eqmmc}, we specify the ode system for the
equations of motion of the $N-1$ layers and estimate their
dynamics, in the mass conservative case, see in Theorem \ref{prop
xi velocity}.

Finally, at Section \ref{slowmc}, we prove the Main Theorem
\ref{mtat}, establishing attractiveness of the manifold and
stability of the dynamics, again for a wide class of $\mu(\eps)$,
$\delta(\eps) $ (see \eqref{eq:a assumption B}), for which the dynamics are stable, and exponentially small in $\eps$.

\vspace{0.6cm}

We have also included, at the end, an Appendix where we collected
and proved various estimates used throughout the text.


%

\section{Non mass conserving layer dynamics}\label{nmcld}
We supplement \eqref{eq:deltaCH-AC} with the standard Neumann
b.c. on $u$ and $u_{xx}$, and consider the following initial and
boundary value problem
\begin{equation}\label{nmcp1}
\begin{split}
&u_t = - \delta(\eps) \, \big(\varepsilon^2 u_{xx} -
f(u)\big)_{xx} \; + \; \mu(\eps)(\varepsilon^2 u_{xx}  - f(u)) ,
\quad x\in
 (0,1),\quad t>0,\\
&u_{x}= u_{xxx}= 0 \quad \mbox{at}\quad x=0,1, \quad t>0,\\
&u(x,0)=u_0(x,\eps), \quad x\in(0,1).
\end{split}
\end{equation}
Let us point out that \eqref{nmcp1} does not conserve mass for any
$\mu(\eps)\neq 0$ since in general
$$\partial_t\int_0^1 u(x,t)dx=\int_0^1u_t(x,t)dx=\int_0^1\mu(\eps)(\varepsilon^2 u_{xx}(x,t)  - f(u(x,t))) dx=
-\mu(\eps)\int_0^1 f(u(x,t))dx,$$ is not the zero function, while
the solutions of the boundary problem
\begin{equation}\label{BVP: steady states}
\begin{split}
&\varepsilon^2 u_{xx} - f(u) = 0, \quad 0<x<1,
\\
&u_x = 0 \quad  \mbox{at} \quad x=0,1,
\end{split}
\end{equation}
are obviously steady states of \eqref{nmcp1}; \eqref{BVP: steady
states} follows by setting $ u_t = 0 $ at the equation of
\eqref{nmcp1} and integrating in space twice using the boundary
conditions.

We assume, therefore, in the context of Section \ref{nmcld}, that
$\mu(\eps)>0$ for all $\eps>0$.

\subsection{Free energy decreasing}
The relevant to the scaling of the standard Allen-Cahn operator
$$\eps^2\Delta u-f(u),$$ free energy functional is defined as
follows
\begin{equation}\label{energy}
E(u):=\int_0^1\Big{(}\frac{\eps^2|\nabla u|^2}{2}+F(u)\Big{)}dx.
\end{equation}

Multiplying both sides of the equation of the i.b.v.p
\eqref{eq:deltaCH-AC} with $\eps^2\Delta u-f(u)$, integrating in
space, and using the boundary conditions, we derive
\begin{equation}\label{d1}
\begin{split}
 (u_t,\eps^2\Delta u-f(u)) = &- \delta(\varepsilon)(\Delta\big(\varepsilon^2 \Delta u - f(u)\big),\eps^2\Delta u-f(u))
  + \mu(\varepsilon)\|\varepsilon^2 \Delta u - f(u)\|^2\\
  =&\delta(\varepsilon)\|\nabla\big(\varepsilon^2 \Delta u -
  f(u)\big)\|^2
  + \mu(\varepsilon)\|\varepsilon^2 \Delta u - f(u)\|^2,
\end{split}
\end{equation}
where here and for the rest of the manuscript, $(\cdot,\cdot)$
denotes the $L^2((0,1))$ inner product, and $\|\cdot\|$ the
induced $L^2((0,1))$ norm.

Differentiating in time, \eqref{energy} yields
\begin{equation}\label{energyder}
\begin{split}
\frac{\partial E(u)}{\partial t} =&\int_0^1\Big{(}\eps^2\nabla
u\nabla
u_t+F'(u)u_t\Big{)}dx\\
=&\int_0^1\Big{(}\eps^2\nabla u\nabla
u_t+f(u)u_t\Big{)}dx\\
=&\int_0^1\Big{(}-\eps^2\Delta
u u_t+f(u)u_t\Big{)}dx\\
=&-(u_t,\eps^2\Delta u-f(u)).
\end{split}
\end{equation}

So, by \eqref{d1} and \eqref{energyder}, we obtain the free
energy decreasing property for the combined model
\begin{equation}\label{d2}
\begin{split}
 \frac{\partial E(u)}{\partial t}= -\delta(\varepsilon)\|\nabla\big(\varepsilon^2 \Delta u -
  f(u)\big)\|^2
  - \mu(\varepsilon)\|\varepsilon^2 \Delta u - f(u)\|^2\leq 0,
\end{split}
\end{equation}
since $\delta(\varepsilon)> 0$ and $\mu(\varepsilon)\geq 0$.

Besides the three homogeneous equilibria $ u = \pm 1 $ and $u=u_0 $ for the zero $ u_0 $ of $ f $ in $(-1,1)$ ($u=0 $ for odd $ f $ as in our special case), problem \eqref{BVP: steady states} has non-constant solutions for all sufficiently small $ \varepsilon; $ see e.g. \cite{FH}. More precisely, if $ \varepsilon_{n+1} \leq \varepsilon < \varepsilon_n $ with $ \varepsilon_i:=(-f'(0))^{1/2}/2\pi i,\,i=1,2,\ldots, $ then problem \eqref{BVP: steady states} has exactly $n$ pairs of non-constant solutions $u^\pm_{\varepsilon_i} ,\, 1\leq i \leq n $ ($u^-_{\varepsilon_i}=-u^+_{\varepsilon_i}$ if $f$ is odd). For each $ i, $ the equilibria $u^-_{\varepsilon_i}, u^+_{\varepsilon_i} $ have exactly $ i $ zeros at $ x=1/2i, 3/2i, \ldots, 1-1/2i. $ The two solutions for $ i=1 $ are monotone, and the other solutions for $ i\geq 2 $ are oscillating taken as rescaled reflections and periodic extensions of monotone solutions, they correspond to periodic orbits around the origin on the phase plane and we speak of solutions with $i$ internal \textit{transition} layers.

Since the internal transition layers of stationary solutions must have periodic spacing, the solutions of \eqref{eq:deltaCH-AC} which reach patterns that are nearly piecewise constant, say with $N$ transition layers, but not periodic, they are close to a stationary state but they are
not solutions of the steady state problem and we do not expect them to remain at these patterns. We
will concern with these solutions yet not with their end-state but rather with their dynamics as long as
they remain at these ``metastable'' $N$-layered patterns.

\subsection{The approximate manifold}\label{section:constr manifold}
First we note that assumption \eqref{eq:W cond2} ensures the
existence of an $ a > 0 $ such that $ f'(u) > 0 \,$ for $\, |u
\pm 1| < a $, so let us fix such an $ a > 0. $

We will follow the strategy of the pioneering works of Carr and
Pego \cite{CarrPego}, and Bates and Xun \cite{BatesXunII} for the
construction of the approximate manifold solutions; we refer also
to the work of Antonopoulou, Bl\"omker, Karali
\cite{AntonBlomkerKarali} for the Cahn-Hilliard equation with
noise where this approach was applied effectively in the
stochastic setting as well.

For initial condition
$u_0(x;\eps)=u(x,0)$ close to the manifold $\mathcal{M}$, 
we will approximate the profile of a metastable state of the
solution $u$ of \eqref{nmcp1} with $ N $ transition layers by
piecing together the stationary solutions of \eqref{eq:deltaCH-AC}
satisfying the following boundary value Dirichlet problem for the
bistable equation,
\begin{equation}\label{eq: Bistabe BVP}
\begin{dcases}
\varepsilon^2 \phi_{xx} - f(\phi) = 0 , \qquad |x| <
\frac{\ell}{2} + \varepsilon,
\\
\phi\Big(\mp\frac{\ell}{2}\Big)  = 0,
\end{dcases}
\end{equation}
with $ \ell > 0 $ denoting the distance between two successive
layer positions.

\begin{remark}\label{ff}
We summarize briefly the properties of the solutions of
\eqref{eq: Bistabe BVP}, as established in \cite{CarrPego}, cf.
Prop. 2.1. therein:

There exists $ \rho_0 > 0 $ such that if $ \varepsilon/\ell <
\rho_0, $ then
\begin{itemize}
\item[(i)] a unique solution $ \phi_{\scriptscriptstyle\varepsilon}(x, \ell , +1) $ of \eqref{eq: Bistabe BVP} exists, with
$$ \phi_{\scriptscriptstyle\varepsilon}(x, \ell , +1) > 0 \quad  \mbox{for} \quad   |x| < \ell/2, \, \qquad  \mbox{and}
\qquad |\phi_{\scriptscriptstyle\varepsilon}(0, \ell , +1) - 1| <
a,$$
\item[(ii)] a unique solution $ \phi_{\scriptscriptstyle\varepsilon}(x, \ell , -1) $ of \eqref{eq: Bistabe
BVP} exists, with $$ \phi_{\scriptscriptstyle\varepsilon}(x, \ell
, -1) < 0 \quad  \mbox{for} \quad  |x| < \ell/2, \, \qquad
\mbox{and} \qquad |\phi_{\scriptscriptstyle\varepsilon}(0, \ell ,
-1) + 1| < a.$$
\end{itemize}

Moreover, the functions $ \phi_{\scriptscriptstyle\varepsilon} $
are smooth and depend on $ \varepsilon $ and $ \ell $ only
through the ratio $ \varepsilon/\ell.$
\end{remark}

\subsection{The approximate solution}

Let us consider a smooth cut-off function satisfying
\begin{equation}\label{eq: cutoff X}
\chi:\mathbb{R} \to [0,1] \qquad\mbox{with}\quad \chi(x)=0
\quad\mbox{for}\quad  x\leq -1, \qquad\mbox{and}\quad  \chi(x)=1
\quad\mbox{for}\quad  x\geq 1.
\end{equation}

Given a choice of admissible layer positions $ h  = (h_1, \ldots,
h_{\scriptscriptstyle N}) \in \Omega_{\rho} \,, $ let
\begin{equation}\label{eq: def ellj}
\begin{split}
&\ell_j = h_j - h_{j-1} \qquad \mbox{for} \quad j = 2, 3, \ldots, N, \qquad \mbox{and} \qquad \ell_1 = 2 h_1, \;\;\ell_{N+1} = 2 (1 - h_N),\\
& m_j =  \frac{h_{j-1}+h_j}{2} \qquad \mbox{for} \quad j = 2, 3, \ldots,
N, \qquad \mbox{and} \qquad
 m_1 = 0, \;\; m_{N+1} = 1,\\
& I_j =  [m_j, \, m_{j+1}] \qquad \mbox{for} \quad j = 1, 2, \ldots, N.
\end{split}
\end{equation}

We define the approximate solution $ u^h $\, for any \, $ x \in
I_j $, by
\begin{equation}\label{eq: def of uh}
\begin{split}
u^h(x) = &\bigg[1-\chi\Big(\frac{x-h_j}{\varepsilon}\Big)\bigg] \,
\phi_{\scriptscriptstyle\varepsilon}(x-m_j ,\, \ell_j , \, (-1)^j)
\\
&+ \chi\Big(\frac{x-h_j}{\varepsilon}\Big) \,
\phi_{\scriptscriptstyle\varepsilon}(x-m_{j+1} , \, \ell_{j+1}
,\, (-1)^{j+1}).
\end{split}
\end{equation}

In order to ease notation, we suppress the dependence of $ u^h $
on $ \varepsilon $ and omit hereafter the subscript in $
\phi_\varepsilon $  by simply writing $ \phi$, and define
\begin{equation}\label{eq: abbrev phij}
 \phi^j(x) := \phi_{\scriptscriptstyle\varepsilon}(x-m_j,\,h_j-h_{j-1},\, (-1)^{j}),
\end{equation}
for $ \phi_{\scriptscriptstyle\varepsilon}$ the steady states
presented analytically in Remark \ref{ff}.

Moreover, we define
\begin{equation}\label{eq: abbrev chij}
\chi^j(x) :=   \chi\big(\frac{x-h_j}{\varepsilon}\big).
\end{equation}

The profile of $u^h$ is presented at Figure \ref{fig: uh}.
\begin{figure}[H]
\begin{center}
\begin{tikzpicture}[scale=0.37]

\draw[mark size=1.5pt, black] (-21,0) node {\tiny$\scriptscriptstyle\mid$};
\draw[] (-21,0) node[below right=-0.2em] {\scriptsize$h_0$};

\draw[mark size=1.5pt, black] (-17.5,0) node {\tiny$\scriptscriptstyle\mid$};
\draw[] (-17.5,0) node[above right=-0.2em] {\scriptsize$m_1$};
\draw[] (-17.5,0) node[below left=-0.2em] {\scriptsize$0$};

\draw[dotted,thin] (-12, pi/2-0.1) -- (-17.5, pi/2-0.1) node[left, black] {\scriptsize{$1$}} ;
\draw[black]  (-17.5, pi/2-0.1) node[] {{$\bm{-}$}};

\draw[dotted,thin] (-12, -pi/2+0.12) -- (-17.5, -pi/2+0.12) node[below left, black] {\scriptsize{$-1$}};
\draw[black]  (-17.5,-pi/2+0.12) node[] {{$\bm{-}$}};

\draw[densely dashed, thin, domain=-22:-19.4,smooth,variable=\x] plot ({\x}, { - rad(atan(6*(\x + 21))) });

\draw[densely dashed, very thin] (-19.4, -pi/2+0.12) -- (-17.5, -pi/2+0.12) ;
\draw[black,thick] (-17.4, -pi/2+0.12) -- (-15.5, -pi/2+0.12) ;

\draw[thick,domain=-15.5:-12.3,smooth,variable=\x] plot ({\x}, { rad(atan(6*(\x + 14)))   }) ;
\draw[mark size=1.3pt, black] (-14,0) node {\tiny$\scriptscriptstyle\mid$};
\draw[] (-14,0) node[below right=-0.3em] {\scriptsize$h_1$};

\draw[black,thick] (-12.3, pi/2-0.1) -- (-10.5, pi/2-0.1) ;

\draw[thick,domain=-10.5:-7,smooth,variable=\x] plot ({\x}, { - rad(atan(6*(\x + 9))) }) ;
\draw[mark size=1.3pt, black] (-11.5,0) node {\tiny$\scriptscriptstyle\mid$};
\draw[] (-11.5,0) node[above=-0.2em] {\scriptsize$m_2$};
\draw[mark size=1.3pt, black] (-9,0) node {\tiny$\scriptscriptstyle\mid$};
\draw[] (-9,0) node[below left=-0.3em] {\scriptsize$h_2$};

\draw[thick] (0.5, pi/2-0.1) -- (4.5, pi/2-0.1) ;

\draw[thick,domain=-3:0.5,smooth,variable=\x] plot ({\x}, {  rad(atan(6*(\x +1)))  }) ;
\draw[mark size=1.5pt, black] (-1,0) node {\tiny$\scriptscriptstyle\mid$};
\draw[] (-1,0) node[below right=-0.3em] {\scriptsize$h_{j-1}$};
\draw[mark size=1.5pt, black] (2.5,0) node {\tiny$\scriptscriptstyle\mid$};
\draw[] (2.5,0) node[above=-0.3em] {\scriptsize$m_j$};

\draw[thick,domain=4.5:7.5,smooth,variable=\x] plot ({\x}, { - rad(atan(6*(\x - 6)))  }) ;
\draw[mark size=1.5pt, black] (6,0) node {\tiny$\scriptscriptstyle\mid$};
\draw[] (6,0) node[below left=-0.3em] {\scriptsize$h_j$};

\draw[thick] (7.5, -pi/2+0.12) -- (8.6, -pi/2+0.12) ;

\draw[thick, domain=8.6:12,smooth,variable=\x] plot ({\x}, {  rad(atan(6*(\x - 10))) });
\draw[mark size=1.5pt, black] (10,0) node {\tiny$\scriptscriptstyle\mid$};
\draw[] (10,0) node[below right=-0.3em] {\scriptsize$h_{j+1}$};
\draw[mark size=1.5pt, black] (8,0) node {\tiny$\scriptscriptstyle\mid$};
\draw[] (8,0) node[above=-0.3em] {\scriptsize$m_{j+1}$};

\draw[thick, domain=15:18.5,smooth,variable=\x] plot ({\x}, {  rad(atan(6*(\x - 17))) });
\draw[mark size=1.3pt, black] (17,0) node {\tiny$\scriptscriptstyle\mid$};
\draw[] (17,0) node[below left=-0.1em] {\scriptsize$h_{N}$};

\draw[thick] (18.5, pi/2-0.11) -- (19.5, pi/2-0.11) ;
\draw[densely dashed, thin] (19.5, pi/2-0.11) -- (20.5, pi/2-0.11) ;

\draw[densely dashed, very thin, domain=20.5:22.1,smooth,variable=\x] plot ({\x}, {  -rad(atan(6*(\x - 22))) });
\draw[densely dashed, very thin, domain=22.5:24,smooth,variable=\x] plot ({\x}, {  -rad(atan(6*(\x - 22))) });

\draw[mark size=1.3pt, black] (19.5,0) node {\tiny$\scriptscriptstyle\mid$};
\draw[] (19.5,0) node[above=-0.2em] {\scriptsize$m_{N+1}$};
\draw[] (19.5,0) node[below=-0.1em] {\scriptsize$1$};
\draw[mark size=1.3pt, black] (22,0) node {\tiny$\scriptscriptstyle\mid$};
\draw[] (22,0) node[below=-0.15em] {\scriptsize$h_{N+1}$};

\draw [draw=none, fill=gray!30!white, opacity=0.6] (-14.5, -pi/2) -- (-13.5, -pi/2) -- (-13.5, pi/2) -- (-14.5, pi/2)  -- cycle; 
\draw [draw=none, fill=gray!30!white, opacity=0.6] (-9.5, -pi/2) -- (-8.5, -pi/2) -- (-8.5, pi/2) -- (-9.5, pi/2)  -- cycle; 
\draw [draw=none, fill=gray!30!white, opacity=0.6] (-1.5, -pi/2) -- (-.5, -pi/2) -- (-.5, pi/2) -- (-1.5, pi/2)  -- cycle; 
\draw [draw=none, fill=gray!30!white, opacity=0.6] (5.5, -pi/2) -- (6.5, -pi/2) -- (6.5, pi/2) -- (5.5, pi/2)  -- cycle; 
\draw [draw=none, fill=gray!30!white, opacity=0.6] (9.5, -pi/2) -- (10.5, -pi/2) -- (10.5, pi/2) -- (9.5, pi/2)  -- cycle; 
\draw [draw=none, fill=gray!30!white, opacity=0.6] (16.5, -pi/2) -- (17.5, -pi/2) -- (17.5, pi/2) -- (16.5, pi/2)  -- cycle; 

\draw[decorate,decoration={brace,raise=1pt}, thin] (-0.5, pi/2) -- (5.5, pi/2) node[pos=0.5,above=2pt]   {\scriptsize$u^h=\phi^j$};
\draw[decorate,decoration={mirror,brace,raise=2pt}, thin] (6.5, -pi/2) -- (9.5, -pi/2) node[pos=0.5,below=3pt]   {\scriptsize$u^h=\phi^{j+1}$};

\draw[decorate,decoration={mirror, brace,raise=2pt} , very thin] (-14.5, -pi/2) -- (-13.5, -pi/2) node[pos=0.5,below=2pt]   {\scriptsize$2\varepsilon$};  

\draw[decorate,decoration={mirror, brace,raise=2pt} , very thin] (-9.5, -pi/2) -- (-8.5, -pi/2) node[pos=0.5,below=2pt]   {\scriptsize$2\varepsilon$}; 

\draw[decorate,decoration={mirror, brace,raise=2pt} , very thin] (-1.5, -pi/2) -- (-0.5, -pi/2) node[pos=0.5,below=2pt]   {\scriptsize$2\varepsilon$}; 


\draw[decorate,decoration={brace,raise=2pt} , very thin] (9.5, pi/2) -- (10.5, pi/2) node[pos=0.5,above=2pt]   {\scriptsize$2\varepsilon$};  

\draw[decorate,decoration={mirror, brace,raise=2pt} , very thin] (16.5, -pi/2) -- (17.5, -pi/2) node[pos=0.5,below=2pt]   {\scriptsize$2\varepsilon$};  

\draw[mark size=1.3pt, black] (-14,0) node {\tiny$\scriptscriptstyle\mid$};
\draw[] (-14,0) node[below right=-0.3em] {\scriptsize$h_1$};

\draw[mark size=1.3pt, black] (-9,0) node {\tiny$\scriptscriptstyle\mid$};
\draw[] (-9,0) node[below left=-0.3em] {\scriptsize$h_2$};

\draw[mark size=1.5pt, black] (-1,0) node {\tiny$\scriptscriptstyle\mid$};
\draw[] (-1,0) node[below right=-0.3em] {\scriptsize$h_{j-1}$};

\draw[mark size=1.5pt, black] (6,0) node {\tiny$\scriptscriptstyle\mid$};
\draw[] (6,0) node[below left=-0.3em] {\scriptsize$h_j$};

\draw[mark size=1.5pt, black] (10,0) node {\tiny$\scriptscriptstyle\mid$};
\draw[] (10,0) node[below right=-0.3em] {\scriptsize$h_{j+1}$};

\draw[mark size=1.3pt, black] (17,0) node {\tiny$\scriptscriptstyle\mid$};
\draw[] (17,0) node[below left=-0.1em] {\scriptsize$h_{N}$};

\draw[black,thick] (-22,0) -- (-7,0) ;
\draw[black, very thick, dotted] (-6.5, 0) -- (-3.5, 0) ;
\draw[black,thick] (-3,0) -- (12,0) ;
\draw[black, very thick, dotted] (12.5, 0) -- (14.5, 0) ;
\draw[mark size=1.5pt, thick, ->] (15,0) -- (24,0) node[above] {$\bm{x}$};
\draw[mark size=1.5pt, thick, ->] (-17.5,-3) -- (-17.5,4) node[right] {$\bm{u^h}$};
\end{tikzpicture}
\end{center}
\caption{Given a configuration $ h=(h_1,\cdots, h_N) $ of $ N $ layer positions, we construct $ u^h $ by piecing together
steady state solutions of \eqref{eq: Bistabe BVP}. 
We set
 $ u^h = (1 - \chi^j) \phi^j + \mathcal \chi^j \phi^{j+1}$  on $ [h_j-\varepsilon,  h_j+\varepsilon] $ (shaded areas). Note that $ h_0=-h_1, \, h_{N+1}=2-h_N, $ and $ m_1=0, m_{N+1}=1.$}
\label{fig: uh}
\end{figure}
\subsection{Properties of $u^h$}
Note that $ u^h $ is a smooth function of $ x $ and $ h. $ In
particular,  for $ x \in [m_j, \, m_{j+1}], $ we have
\begin{equation}\label{eq: derivative of uh}
u_x^h=
\begin{cases}
\; \phi_x^j , &\quad  m_j \leq x \leq h_j-\varepsilon ,
\\
\chi_x^j \, \big(\phi^{j+1}-\phi^j\big)  + \big(1-\chi^j\big) \,
\phi_x^j +  \chi^j \, \phi_x^{j+1}, &\quad |x-h_j| < \varepsilon ,
\\
\; \phi_x^{j+1} , &\quad   h_j +\varepsilon \leq x \leq m_{j+1},
\end{cases}
\end{equation}
and so
\begin{equation}\label{eq: uxxh}
u_{xx}^h =
\begin{cases}
\; \phi_{xx}^j , &\quad  m_j \leq x \leq h_j-\varepsilon ,
\\
\chi_{xx}^j \, \big(\phi^{j+1}-\phi^j\big)  + 2 \chi_x^j \,
\big(\phi_x^{j+1}-\phi_x^j\big) + (1-\chi^j)\phi^j_{xx}
 + \chi^j\phi^{j+1}_{xx}, &\quad   |x-h_j| < \varepsilon ,
\\
\; \phi_{xx}^{j+1} , &\quad   h_j +\varepsilon \leq x \leq m_{j+1},
\end{cases}
\end{equation}
where, from the definition of $ \phi^j  $ (see \eqref{eq: Bistabe BVP}),
\begin{equation}\label{eq: phij solves BS}
\varepsilon^2 \, \phi_{xx}^j \, = \,  f(\phi^j), \qquad
\mbox{on} \quad [h_{j-1} - \varepsilon, \, h_j+\varepsilon].
\end{equation}

It is then straightforward to see that $ u^h $ satisfy the bistable equation
\begin{equation}\label{eq: uh sat BS}
\mathscr L^b(u^h)=0,\qquad \mbox{for} \quad |x-h_j| \geq \varepsilon, \;\; j = 1, 2, \ldots, N.
\end{equation}
where $ \mathscr L^b $ is the bistable operator
\begin{equation}\label{eq:bistable operator}
\mathscr L^b(u) := A_{_{2,\varepsilon}}(u) = \varepsilon^2
u_{xx}  -  f(u).
\end{equation}

Notice also that by reflecting the solutions $ \phi(\cdot, \ell ,
\pm1) $ of \eqref{eq: Bistabe BVP} about the origin we can show
that they are even in $ x, $ and thus
\begin{equation}\label{eq: phix equals 0}
\phi_x(0, \ell , \pm1) = 0,
\end{equation}
which together with \eqref{eq: def of uh} and \eqref{eq: cutoff
X} yields
$$ u^h_x(m_j)=0, \; j = 1, \ldots, N+1.$$ So, we obtain
\begin{equation}\label{eq: zero Neumann BV uh}
u^h_x(0)=u^h_x(1)=0,
\end{equation}
and, therefore,
\begin{equation}\label{eq: zero BV Wprimeuhx}
\frac{\partial}{\partial x}f(u^h) = 0 \qquad\mbox{at \, $ x=0,1$}.
\end{equation}

Moreover, we note that
$$u_j^h \sim -u_x^h, \qquad\mbox{as} \quad r \to 0, \qquad \mbox{uniformly on} \quad I_j :=[m_j,\,
m_{j+1}],$$ see Remark \ref{remujh} for more details thereof.

\subsection{The coordinate system}
Following \cite{CarrPego}, we introduce a local coordinate system
$$ u\mapsto (h, \upsilon), $$ in a tubular neighborhood of the
approximate manifold $ \mathcal M, $ defined by the decomposition
\begin{equation}\label{eq:coord decomp2}
u(x, t) = u^{h(t)}(x) + \upsilon(x, t),
\end{equation}
with $ u^h \in \mathcal M $, and $ \upsilon $ satisfying the
orthogonality condition
 \begin{equation}\label{orthogonality condition2}
\langle \upsilon, \tau_j^h\rangle := \int_0^1 \upsilon \, \tau_j^h \, dx = 0,  \qquad j=1,\ldots, N,
\end{equation}
where $ \tau_j^h $ are approximate tangent vectors  to $ \mathcal
M\, $ at $ u^h $.

More precisely, let
\begin{equation}\label{eq:def gamma j}
\gamma^j(x):=\chi\big(\tfrac{x-m_j-\varepsilon}{\varepsilon}\big)
\Big[1-\chi\big(\tfrac{x-m_{j+1}+\varepsilon}{\varepsilon}\big)\Big],
\end{equation}
which yields that
\begin{equation*}\label{eq:def2 gamma j}
\gamma^j(x)
=
\begin{dcases}
0, & x \notin (m_j, \, m_{j+1}),
\\
1, & x \in [m_j+2\varepsilon, \, m_{j+1}-2\varepsilon].
\end{dcases}
\end{equation*}
The approximate tangent vectors are then defined through
$\gamma^j(x)$ by
\begin{equation}\label{eq:def tau j}
\tau_j^h(x) \, = \, \gamma^j(x) \,  u^h_x(x)
\end{equation}
which are smooth functions of $ x $ and  $ h $.

Considering differentiation, we introduce the notation
$$ \tau_{j,k}^h := \frac{\partial \tau_j^h}{\partial h_k} \qquad\mbox{and}\qquad  \tau_{j,x}^h := \frac{\partial \tau_j^h}{\partial x}, $$
and observe that $ \tau_{j,x}^h=0 $ at $ x=0,1, $ for $
j=1,\ldots, N. $

\subsection{Equations of motion}\label{eqmonmc} For a classical solution $ u=u(x,t) $ of \eqref{eq:deltaCH-AC},
we will establish the ODEs system for the dynamics of $ (h,
\upsilon), $ which is defined by \eqref{eq:coord
decomp2}-\eqref{orthogonality condition2}.

This first order ODE system with unknowns the positions
coordinates $h_k(t)$, $k=1,\cdots,N$ for each one of the $N$
layers (fronts) will be derived by differentiating in time the
orthogonality condition \eqref{orthogonality condition2}. We will
insert the Cahn-Hilliard / Allen-Cahn equation into the
differentiated condition and then we shall use linearization which
will be given by the linear combination through the weights
$\delta(\eps)$, $\mu(\eps)$ of the C-H and A-C linearized
operators respectively.

We differentiate \eqref{orthogonality condition2}, with respect to $ t, $  to get
$$ \big\langle  \partial_t \upsilon \; , \; \tau_j^h  \big\rangle
\,+\, \big\langle \upsilon \; , \;  \partial_t \tau_j^h
\big\rangle =  0 , \qquad j=1,\ldots, N. $$ Using
\eqref{eq:coord decomp2}, and the substituting $u_t$ by the
equation \eqref{eq:deltaCH-AC}, together with the definition
\eqref{eq:def of Aepsilon2} of the operator $A_\varepsilon$, we
obtain
\begin{equation*}
\begin{split}
\partial_t \upsilon=&\partial_t (u - u^h)\\
=&- {\deps}
\Big(\varepsilon^2 u_{xx} - f(u)\Big)_{xx} +
\mu(\eps)(\varepsilon^2 u_{xx}  - f(u))-\partial_t u^h)-\partial_t u^h\\
=& A_\varepsilon u - \partial_t u^h = A_\varepsilon u -
\sum_{k=1}^N (\partial_{\!_{\scriptscriptstyle
h_k}}\!\!u^h)\dot{h}_k,
\end{split}
\end{equation*}
to arrive at the system
\begin{equation}\label{eq:coord ODE 1}
\sum\limits_{k=1}^N a_{jk} \, \dot{h}_k
\; = \;
\big\langle A_{\varepsilon}(u^h +  \upsilon) \; , \; \tau_j^h\; \big\rangle ,\qquad j=1,2,\ldots,
 N,
\end{equation}
where
\begin{equation}\label{eq:1 the coefficient matrix}
a_{jk} := \big\langle u^h_k \; , \; \tau_j^h \big\rangle \; - \;
\big\langle \upsilon  \; , \; \tau^h_{j,k}\, \big\rangle
,\qquad j,k=1,2,\ldots, N.
\end{equation}
In the above, the subscripts $ k $ indicate differentiation
w.r.t. $ h_k, $ i.e.,
$$ u^h_k := \frac{\partial u^h}{\partial h_k}
\qquad \mbox{and} \qquad
\tau^h_{j,k} := \frac{\partial \tau^h_j}{\partial h_k}.$$

We expand
\begin{eqnarray}\label{eq:expand2 Ae}
A_\varepsilon(u^h + \upsilon) & = & A_\varepsilon(u^h) \,+\,
L_\varepsilon^h(\upsilon) \,-\, {\deps} \, \big(f^h \,
\upsilon^2\big)_{xx} \,+\,
 \mu(\eps) f^h \, \upsilon^2,
\end{eqnarray}
where $ L_\varepsilon^h(\upsilon) $ is the linearization of
$A_\eps$ at $u^h$, i.e.,
\begin{equation}\label{eq: linearized operator2}
\begin{split}
L_\varepsilon^h(\upsilon) :=& - \deps \Big( \varepsilon^2
\upsilon_{xx} -  f'(u^h)\upsilon \Big)_{xx}
+\mu(\eps)(\varepsilon^2 \upsilon_{xx}  -  f'(u^h)\upsilon)\\
=:&-\deps
L_{1,\varepsilon}^h(\upsilon)+\mu(\eps)L_{2,\varepsilon}^h(\upsilon),
\end{split}
\end{equation}
and
\begin{equation}\label{eq:definition2 of f2}
 f^h(x) := \int_0^1 (\tau - 1) \; f''(u^h + \tau \upsilon) \; d\tau.
\end{equation}

Using \eqref{eq:expand2 Ae}, then the system \eqref{eq:coord ODE
1} is written as
\begin{equation}\label{eq:2coord ODE 11}
\begin{split}
\sum\limits_{k=1}^N a_{jk} \, \dot{h}_k
 \; =& \;
 \big\langle A_\varepsilon(u^h)  \; , \; \tau_j^h\;  \big\rangle
\; + \;
  \big\langle L_\varepsilon^h(\upsilon)   \; , \; \tau_j^h\;  \big\rangle
\; \\
&- \; {\deps} \,
 \big\langle \big( f^h \, \upsilon^2\big)_{xx}  \; , \; \tau_j^h\;  \big\rangle
\; + \;
 \mu(\eps)\big\langle  f^h \, \upsilon^2\; , \; \tau_j^h\;
 \big\rangle\\
 =&\,
- {\deps} \, \big\langle A_{1,\varepsilon}(u^h)
 +
L_{1,\varepsilon}^h(\upsilon)
 +
\big( f^h \, \upsilon^2\big)_{xx} \; , \; \tau_j^h
\big\rangle \; \\
&+ \mu(\eps)\big\langle A_{2,\varepsilon}(u^h) +
L_{2,\varepsilon}^h(\upsilon)
 +
 f^h \, \upsilon^2
\; , \; \tau_j^h\big\rangle\\
=&\, - {\deps} \, \big\langle
A_{1,\varepsilon}(u^h),\tau_j^h\big\rangle+
\mu(\eps)\big\langle
A_{2,\varepsilon}(u^h),\tau_j^h\big\rangle\\
 &- {\deps} \,
\big\langle L_{1,\varepsilon}^h(\upsilon)
 +
\big( f^h \, \upsilon^2\big)_{xx} \; , \; \tau_j^h
\big\rangle \; \\
&+ \mu(\eps)\big\langle L_{2,\varepsilon}^h(\upsilon)
 +
 f^h \, \upsilon^2
\; , \; \tau_j^h\big\rangle,
 \end{split}
\end{equation}
for $  j=1,2,\ldots, N$.

\begin{remark}\label{remexp2}
We note that as in the Allen-Cahn case, $\upsilon$-independent
exponentially small terms in the dynamics \eqref{eq:2coord ODE
11} will be derived by the term
$$\mu(\eps)\big\langle A_{2,\varepsilon}(u^h), \tau_j^h\big\rangle,$$ due to the second order
operator there. Of course $\mu(\eps)$ will influence the result.



More specifically, as in \cite{CarrPego} Lemma 3.3., when $\rho$
in the definition of $\Omega_\rho$ is sufficiently small, we
observe that
\begin{equation}\label{fine}
\begin{split}
\mu(\eps)\big\langle A_{2,\varepsilon}(u^h) \; , \;
\tau_j^h\big\rangle=&-\mu(\eps)\int_{h_j-\eps}^{h_j+\eps}(\eps^2u_{xx}^h-f(u^h))u_x^hdx\\
=&\mu(\eps)[F(u^h)-\frac{1}{2}\eps^2(u_x^h)^2]_{h_j-\eps}^{h_j+\eps}=:\mu(\eps)(a^{j+1}-a^j),
\end{split}
\end{equation}
where the difference $a^{j+1}-a^j$ is exponentially small in
$\eps$.

In our case, we also have the $\upsilon$-independent term
$$\big\langle
A_{1,\varepsilon}(u^h),\tau_j^h\big\rangle,$$ stemming from
the Cahn-Hilliard part, which will be shown exponentially small
as well.
\end{remark}

Moreover, we apply \eqref{eq:coord decomp2} to \eqref{eq:deltaCH-AC} to get 
\begin{equation}\label{eq:coord ODE 2}
\upsilon_t = A_\varepsilon(u^h + \upsilon) - \sum\limits_{j=1}^N  u_j^{h}\,\dot{h}_j.
\end{equation}
As above, we expand in \eqref{eq:coord ODE 2} the term $ A_\varepsilon(u^h + \upsilon), $ according to \eqref{eq:expand2 Ae}, to get
\begin{equation}\label{eq:2coord ODE 22}
\upsilon_t \, = \, A_\varepsilon(u^h) \,+\,
L_\varepsilon^h(\upsilon) \,-\, {\deps} \, \big( f^h \,
\upsilon^2\big)_{xx} \,+\,\mu(\eps)
 f^h \, \upsilon^2
 \,-\,
 \sum\limits_{j=1}^N u_j^{h}\,\dot{h}_j
\end{equation}
or discriminating between the CH and AC parts,
\begin{equation}\label{eq:2coord ODE 2}
\begin{split}
\upsilon_t \, = \, &- {\deps} \, \bigg[ A_{1,\varepsilon}(u^h) \,
+ \, L_{1,\varepsilon}^h(\upsilon) \, + \, \big( f^h \,
\upsilon^2\big)_{xx} \bigg] \,\\
& + \, \mu(\eps)\bigg[ A_{2,\varepsilon}(u^h) \, + \,
L_{2,\varepsilon}^h(\upsilon) \, + \, f^h \, \upsilon^2\bigg] \,
- \, \sum\limits_{j=1}^N u_j^{h}\,\dot{h}_j.
\end{split}
\end{equation}

According to Proposition 2.3 in \cite{CarrPego}, 
there exist $ \rho_2 > 0, $ constants $ A_0, $ $ C $ and $ b(\rho)
= o(1) $ as $ \rho\to 0^+ $ such that if $ h \in \Omega_{\rho} $
with $ \rho < \rho_2, $ then:
\begin{itemize}
\item[(i)] for $ j=1,2,\cdots, N, $
\begin{eqnarray}
\big(A_0 - b(\rho)\big)^2 \varepsilon^{-1}
\, \leq \,
\big\langle u_j^h \; , \; \tau_j^h \big\rangle
\, \leq \,
\big(A_0 + b(\rho)\big)^2 \varepsilon^{-1}
\label{eq: CP 2.8}
\\
\|\tau_{jj}^h\|  \leq  C \varepsilon^{-3/2}  \qquad \mbox{and} \qquad
\|\tau_{jj}^h\|_{L^1} & \leq & C \varepsilon^{-1}
\label{eq: CP 2.9}
\end{eqnarray}

\item[(ii)] for $ j\neq k, $
\begin{eqnarray}\label{eq: CP 2.6}
\big|
\big\langle u^h_j \; , \; \tau_k^h \big\rangle
\big|
\, \leq \, b(\rho) \varepsilon^{-1},
\label{eq: CP 2.10}
\\
\|\tau_{kj}^h\|  \leq  b(\rho) \varepsilon^{-3/2}  \qquad \mbox{and} \qquad
\|\tau_{kj}^h\|_{L^1} & \leq & b(\rho) \varepsilon^{-1} .
\label{eq: CP 2.11}
\end{eqnarray}
\end{itemize}

\subsection{The spectrum of the linearized operator and transverse coercivity}\label{specsec}
A key point in the analysis of the equations of motion \eqref{eq:2coord ODE 11},\eqref{eq:2coord ODE 22} is the spectral analysis of the linear operator $ L^h_\eps $ on $ \upsilon = u- u^h, $ the part of the solution transverse to the manifold $ \mathcal M.$

We
consider the eigenvalue problem
\begin{equation}\label{The original EVP problem}
\begin{dcases}
L_{\varepsilon}^h(\phi):= - \deps \big(\varepsilon^2 \phi'' -
f'(u^h) \phi \big)'' + \mu(\eps)\big(\varepsilon^2 \phi''  -
f'(u^h)\phi\big) = \lambda(\varepsilon) \phi, \quad 0 < x < 1 ,
\\
\phi'(0) = \phi'(1) = 0,
\\
\phi'''(0) = \phi'''(1) = 0,
\end{dcases}
\tag{EVP}
\end{equation}
and the associated quadratic form
\begin{eqnarray}
\tilde{\mathcal A}_{\varepsilon}[\phi] & :=&  - \langle
L_\varepsilon^h(\phi) \, , \, \phi \rangle \nonumber
\\
& = & \int_0^1
\left[
{\deps}
\left(\varepsilon^2 \phi_{xx}^2
\, + \, f'(u^h)\, \phi_x^2 \,  - \, \tfrac{1}{2} \,
\big(f^{'}(u^h)\big)_{xx} \phi^2 \right)
\, + \mu(\eps)\left(
\varepsilon^2 \phi_x^2 \,+\, f'(u^h)\phi^2\right)
\right] \, dx,
\label{eq: define Atilde}
\end{eqnarray}
where we have applied integration by parts. We note that the operator $ L_{\varepsilon}^h:
\mathcal{D}(L_{\varepsilon}^h) \to L^2(0,1) $, with domain
$$ \mathcal{D}(L_{\varepsilon}^h) := \Big\{\phi\in H^4(0,1):\; \phi'(0) = \phi'(1) = 0 = \phi'''(0) = \phi'''(1)\Big\}, $$
is not symmetric in $ L^2(0,1). $ However \eqref{The original EVP problem} may be recast into a self-adjoint form, as the form $ \tilde{\mathcal A}_{\varepsilon} $ it will be seen in
the sequel 
to be lower semibounded and it is also closable since it is associated with the symmetric operator $
S_\varepsilon^h:\mathcal{D}(S_\varepsilon^h) \to L^2(0,1)$,
defined by
\begin{equation}\label{eq:def of S self adjoint}
S_\varepsilon^h(\phi) := - \deps \Big( \varepsilon^2 \phi''''
\; - \; \big(f'(u^h) \phi' \big)' - \frac{1}{2}
\big(f^{'}(u^h)\big)_{xx} \phi \Big) + \mu(\eps)\Big(
\varepsilon^2 \phi''  -  f'(u^h)\phi\Big),
\end{equation}
with domain $ \mathcal{D}(S_\varepsilon^h) \equiv
\mathcal{D}(L_{\varepsilon}^h), $ in the sense that
\begin{equation}
\langle L_{\varepsilon}^h(\phi) \, , \, \phi \rangle
=
-\tilde{\mathcal A}_{\varepsilon}[\phi]
= \langle S_\varepsilon^h(\phi) \, , \, \phi \rangle, \qquad
\mbox{for any} \quad \phi \in \mathcal{D}(L_{\varepsilon}^h).
\end{equation}
We can then consider the self-adjoint extension (Friedrichs extension) of $ S_\varepsilon^h $ associated with the closure of $ \tilde{\mathcal A}_{\varepsilon} $ which we still denote by $ S_\varepsilon^h. $ The spectrum of $ S_\varepsilon^h $ turns out to consist of a sequence of real eigenvalues $$\cdots
\leq \lambda_{N+1} \leq \lambda_N \leq \dots \leq \lambda_1,$$ satisfying, when
$\eps^{2}\mu(\eps)\geq\mathcal{O}(\delta(\eps)),$
$$\lambda_k\leq C\mu(\eps),$$
and in particular for $ k = N+1, N+2,\cdots$
$$\lambda_k\leq -C=-C(\eps)< 0,$$
for some $ C=C(\eps) $ which will be specified. To this aim, we will use the variational characterization of the eigenvalues for $ S_\varepsilon^h,$
\begin{equation}\label{The minimization problem}
-\lambda_{N+1} := \max\limits_{\substack{V \\\scriptscriptstyle \operatorname{dim}\,V=N}}  \min\limits_{\phi \in V^\bot} \frac{\tilde{\mathcal A}_{\varepsilon}[\phi] }{\|\phi\|^2},
\end{equation}
where the maximum is taken over the linear $N$-dimensional
subspaces $ V $ of
\begin{equation}\label{The spaces V}
\big\{ \phi \in H^2(0,1): \, \phi'(0) = \phi'(1) =  0\big\}.
\end{equation}

Let us decompose the Rayleigh quotient in \eqref{The minimization
problem} into the Cahn-Hilliard and the Allen-Cahn part, i.e.
\begin{equation}\label{decomposition of Reyleigh quotient}
R(\phi) := \frac{\langle -S_\varepsilon^h(\phi), \phi\rangle
}{\|\phi\|^2} =
 {\deps} R_1(\phi)
\, + \mu(\eps) R_2(\phi),
\end{equation}
for
\begin{equation}\label{eq: the summands of Reyleigh quotient}
\begin{split}
&R_1(\phi) := \frac{\displaystyle \int_0^1 (\varepsilon^2
\phi_{xx}^2 \, + \, f'(u^h)\, \phi_x^2 \,  - \, \tfrac{1}{2} \,
\big(f^{'}(u^h)\big)_{xx} \phi^2) \, dx  }{\|\phi\|^2} ,\\
&R_2(\phi) := \frac{ \overbrace{\int_0^1 ( \varepsilon^2 \phi_x^2
\,+\, f'(u^h)\phi^2) \, dx}^{\scriptscriptstyle \langle
L_{2,\varepsilon}^h \phi, \phi\rangle } }{\|\phi\|^2}.\quad
\end{split}
\end{equation}
In the sequel we will assume that $ \delta(\eps)> 0 $ and $ \mu(\eps) > 0 $  satisfy the condition
\begin{equation}\label{eq222}
\eps^2\mu(\eps)\geq C_0\delta(\eps)\qquad\;\forall\eps>0,
\end{equation}
for some $ C_0\geq C_{\min} > 0 $, where $C_{\min}$ depends on the
supremum of $|\eps^{2}(f'(u^h))_{xx}|=\mathcal{O}(1)$ in $(0,1) $ and can be determined implicitly in the context of the proof of following theorem.

We shall show that when $ \delta(\eps), \mu(\eps)$ satisfy \eqref{eq222}, then
$$ - \lambda_{N+1} \geq
C(\eps)=\mathcal{O}(\mu(\eps)-C_{\min}\delta(\eps)\eps^{-2})> 0.
$$
For our purpose, we state the result in the variational form. The main implication for us is the transverse coercivity (cf. Lemma \ref{lemma BleqA}), the coercivity of $ L_{\varepsilon}^h $ on $ \upsilon=u-u^h  $ which recall that is approximately orthogonal to the tangent space (see \eqref{orthogonality condition2}).
\begin{theorem}\label{mainspec}
Let $\delta(\eps)>0$ and $\mu(\eps)> 0$ satisfying
\eqref{eq222}. Then there exist $ \Lambda, \rho_2>0 $ such that for $ h \in \Omega_\rho $ with $ \rho\leq \rho_2 $ it holds that
$$\min\limits_{ \phi \in V^\bot} \frac{-\langle
S_\varepsilon^h(\phi) \, , \, \phi\rangle }{\|\phi\|^2} \geq
\eta(\eps) >
0,$$ for $ V = \operatorname{span}\{\tau_i^h, \; i = 1, 2,
\ldots, N\}, $ with $ \tau_i^h $ are the approximate tangent
vectors defined in \eqref{eq:def tau j}, and
\begin{equation}\label{def of etaeps}
\eta(\eps) := \Lambda (\mu(\eps)-C_{\min}\delta(\eps)\eps^{-2}).
\end{equation}
\end{theorem}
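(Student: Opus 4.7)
The plan is to exploit the orthogonal decomposition of the Rayleigh quotient $R(\phi)=\delta(\eps)R_1(\phi)+\mu(\eps)R_2(\phi)$ recorded in \eqref{decomposition of Reyleigh quotient} and estimate each summand separately on $V^\bot$. The Allen--Cahn summand $R_2$ is the good one: it inherits the spectral gap of the second-order bistable operator, for which the first $N$ eigenvalues are exponentially small in $\eps$ while the $(N+1)$-th eigenvalue is uniformly bounded away from zero, as established in \cite{CarrPego}. The Cahn--Hilliard summand $R_1$ is at worst of order $-\eps^{-2}$, which would be catastrophic on its own, but becomes harmless once weighted by $\delta(\eps)$ and compared with $\mu(\eps)R_2$ through the scaling hypothesis \eqref{eq222}.

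Concretely, since $V=\operatorname{span}\{\tau_1^h,\dots,\tau_N^h\}$ is the approximate slow subspace used in \cite{CarrPego}, their spectral estimate (Proposition 5.1 there, together with the control of the angle between $V$ and the exact low-eigenvalue eigenspace provided by their Proposition 2.3) yields a constant $\Lambda_0>0$ independent of $\eps$ such that for $\rho\leq\rho_2$ small,
\begin{equation*}
R_2(\phi)\geq \Lambda_0,\qquad \phi\in V^\bot.
\end{equation*}
For $R_1(\phi)$, I would derive a uniform lower bound valid for every admissible $\phi$. The dangerous middle integral $\int_0^1 f'(u^h)\phi_x^2\,dx$ has no definite sign, because $f'(u^h(x))=-1<0$ inside each transition layer; I would absorb it through the chain $\bigl|\int f'(u^h)\phi_x^2\,dx\bigr|\leq \|f'(u^h)\|_\infty\|\phi_x\|^2\leq \|f'(u^h)\|_\infty\|\phi\|\,\|\phi_{xx}\|\leq \eps^2\|\phi_{xx}\|^2 + \tfrac{1}{4\eps^2}\|f'(u^h)\|_\infty^2\|\phi\|^2$, the last step being Young's inequality and the interpolation $\|\phi_x\|^2\leq\|\phi\|\,\|\phi_{xx}\|$. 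The positive leading term $\eps^2\|\phi_{xx}\|^2$ of $R_1$ is then cancelled, leaving only a $\|\phi\|^2/\eps^2$ remainder. Combined with the elementary bound $\bigl|\int (f'(u^h))_{xx}\phi^2\,dx\bigr|\leq \eps^{-2}\|\eps^2(f'(u^h))_{xx}\|_\infty\|\phi\|^2$, this produces
\begin{equation*}
R_1(\phi)\geq -C_{\min}\eps^{-2},
\end{equation*}
with $C_{\min}$ determined by $\sup|\eps^2(f'(u^h))_{xx}|$ together with $\sup|f'(u^h)|$, both $\mathcal{O}(1)$ by the structure of $u^h$ recorded in Remark \ref{ff}.

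Combining the two estimates on $V^\bot$ gives
\begin{equation*}
R(\phi)\geq \Lambda_0\mu(\eps)-C_{\min}\delta(\eps)\eps^{-2} = \Lambda_0\Bigl(\mu(\eps) - \tfrac{C_{\min}}{\Lambda_0}\,\delta(\eps)\eps^{-2}\Bigr),
\end{equation*}
which is the claimed form with $\Lambda:=\Lambda_0$ after rescaling $C_{\min}$; the hypothesis \eqref{eq222} with $C_0\geq C_{\min}$ then guarantees $\eta(\eps)\geq 0$. The main technical obstacle is the clean extraction of $C_{\min}$ in Step 2: the term $\int f'(u^h)\phi_x^2\,dx$ is the genuine source of the $\eps^{-2}$ loss, so one must complete the square carefully in order not to introduce spurious $\eps^{-1}$ factors or leave unabsorbed $\|\phi_x\|^2$ contributions; verifying that the resulting $C_{\min}$ really depends only on the indicated $\mathcal{O}(1)$ quantities is the most delicate point. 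An alternative route would be to import the Bates--Xun spectral estimate for the linearized Cahn--Hilliard operator on its own natural slow subspace, but since $\mu R_2$ already supplies uniform positivity on $V^\bot$ from Carr--Pego, the direct crude bound on $R_1$ is the more economical strategy.
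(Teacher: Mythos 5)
Your proof is correct, but it handles the Cahn--Hilliard summand $R_1$ differently from the paper. The paper simply discards the positive term $\eps^2\|\phi_{xx}\|^2$ of $R_1$ as a one-sided bound, estimates $\int f'(u^h)\phi_x^2\,dx \geq -\|\phi_x\|^2$ using $f'\geq -1$, and then absorbs the resulting $-\delta(\eps)\|\phi_x\|^2$ contribution into the $\mu(\eps)\Lambda\eps^2\|\phi_x\|^2$ term coming from the full Carr--Pego lower bound $\int(\eps^2\phi_x^2+f'(u^h)\phi^2)\geq \Lambda\int(\eps^2\phi_x^2+\phi^2)$ on $V^\bot$; the hypothesis \eqref{eq222} with $C_0\geq 1/\Lambda$ is precisely what makes the combined coefficient $(-\delta(\eps)+\mu(\eps)\Lambda\eps^2)$ of $\|\phi_x\|^2/\|\phi\|^2$ nonnegative. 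You instead keep $\eps^2\|\phi_{xx}\|^2$ and absorb the indefinite middle term of $R_1$ into it via the interpolation $\|\phi_x\|^2\leq\|\phi\|\|\phi_{xx}\|$ (valid since $\phi_x$ vanishes at the endpoints) and Young's inequality, producing the unconditional bound $R_1(\phi)\geq -C_{\min}\eps^{-2}$; the hypothesis \eqref{eq222} is then invoked only at the very end to keep the combined sum nonnegative. Your route is more modular (each of $R_1$, $R_2$ is bounded independently, and $R_2$'s spectral gap is used only as $R_2\geq\Lambda_0$ without reference to the $\eps^2\phi_x^2$ enhancement), while the paper's route exploits a cancellation between the two summands. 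Both yield an $\mathcal{O}(1)$ constant $C_{\min}$ controlled by $\sup|\eps^2(f'(u^h))_{xx}|$ and $\|f'(u^h)\|_\infty$ (the numerical value differs, but the theorem only asserts existence). One small omission: to obtain the strict inequality $\eta(\eps)>0$ from $C_0\geq C_{\min}$ you need to build a small margin into the definition of $C_{\min}$, as the paper does with the extra $+\beta^2$; your "$\eta(\eps)\geq 0$" should be tightened accordingly.
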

\begin{proof}
Remind that $f(u)=u^3-u$, and so $f'(u)=3u^2-1\geq -1$.

Moreover, it holds that $$|(f^{'}(u^h))_{xx}|\leq C
\eps^{-2}\qquad\mbox{in} \quad [0,1],$$ since there
$$|u^h_{x}| \leq C
\varepsilon^{-1},\qquad|u^h_{xx}| \leq C \varepsilon^{-2},$$ see
in Appendix relations \eqref{eq: uhx leq eps}, and \eqref{eq: uhxx
leq eps}, while $u^h$ is bounded.

Using that $\delta(\eps)>0$, $\mu(\eps)> 0$, and the above, we
get for any $\phi \in V^\bot$
\begin{equation}\label{est1mainspec}
\begin{split}
R(\phi) :=&
\frac{\langle -S_\varepsilon^h(\phi), \phi\rangle
}{\|\phi\|^2}
\\
=&
{\deps} \frac{\displaystyle \int_0^1 (\varepsilon^2
\phi_{xx}^2 +f'(u^h)\phi_x^2- \tfrac{1}{2}
\big(f^{'}(u^h)\big)_{xx} \phi^2)dx  }{\|\phi\|^2}
\\
&+ \mu(\eps)\frac{\int_0^1  (\varepsilon^2 \phi_x^2 +
f'(u^h)\phi^2)dx}{\|\phi\|^2}
\\
\geq &-{\deps} \frac{\displaystyle \int_0^1\phi_x^2 dx
}{\|\phi\|^2}+ \mu(\eps)\frac{\int_0^1  (\varepsilon^2 \phi_x^2 +
f'(u^h)\phi^2)dx}{\|\phi\|^2}
\\
&-{\deps} \frac{\displaystyle \int_0^1 \tfrac{1}{2}
\big(f^{'}(u^h)\big)_{xx} \phi^2dx  }{\|\phi\|^2}
\\
\geq &-{\deps} \frac{\displaystyle \int_0^1\phi_x^2 dx
}{\|\phi\|^2}+ \mu(\eps)\Lambda\frac{\int_0^1  (\varepsilon^2
\phi_x^2 +
\phi^2)dx}{\|\phi\|^2}
\\
&-\frac{{\deps}}{2}\displaystyle{\sup_{(0,1)}}|(f^{'}(u^h))_{xx}|\\
\geq &\frac{\displaystyle \int_0^1 [-{\deps}+
\mu(\eps)\Lambda\eps^2]\phi_x^2 dx }{\|\phi\|^2}+
\mu(\eps)\Lambda-\displaystyle{\sup_{(0,1)}}|\eps^2(f^{'}(u^h))_{xx}|\frac{\deps}{2}\eps^{-2}
\\
= &
\Lambda\Big(\mu(\eps)\eps^2 - \frac{1}{\Lambda} {\deps}\Big)
\frac{\|\phi_x\|^2 }{\|\phi\|^2} + \Lambda\eps^{-2}\Big(\mu(\eps)\eps^2 - \frac{C_1}{2 \Lambda} {\deps}\Big)
\end{split}
\end{equation}
where we applied the Main Theorem 4.2 (i) of \cite{CarrPego}, at
pg. 538, and defined
$$C_1:=\displaystyle{\sup_{(0,1)}}|\eps^2(f^{'}(u^h))_{xx}|=\mathcal{O}(1).$$
More specifically, we used that for some $\Lambda>0 $ and $ \rho_2>0 $
$$ \int_0^1  (\varepsilon^2
\phi_x^2 + f'(u^h)\phi^2)dx\geq \Lambda\int_0^1  (\varepsilon^2
\phi_x^2 +\phi^2)dx$$
for $ h \in \Omega_\rho $ with $ \rho\leq \rho_2. $ Here, $\Lambda$ satisfies
$$\Lambda\leq \frac{\Lambda_*}{\Lambda_*+2},$$ (see \cite{CarrPego}, at pg. 541,
and use the specific $f$), with $\Lambda_*$ arbitrary in
$(0,\lambda_*)$ for some $\lambda_*>0$ such that $$\lambda_*\leq
\min f'(\pm 1)=2,$$ see \cite{CarrPego}, at pg. 536, for the
detailed definition.

Therefore, in \eqref{est1mainspec}, if we choose $\mu(\eps), \delta(\eps)$
such that
$$\eps^2\mu(\eps)\geq C_0\delta(\eps),$$for
$$C_0\geq C_{\min}:=\max\Big{\{}\frac{1}{\Lambda},\;\frac{C_1}{2\Lambda}\Big{\}}+\beta^2,$$
for $\beta=\mathcal{O}(1)\neq 0$ as small, we obtain
$$\frac{\langle -S_\varepsilon^h(\phi), \phi\rangle
}{\|\phi\|^2}\geq
C(\eps)=\Lambda(\mu(\eps)-C_{\min}\delta(\eps)\eps^{-2})> 0,$$
for any $\phi \in V^\bot$, and thus the result.
\end{proof}
The next theorem, gives an upper bound for the spectrum.
\begin{theorem}\label{mainspec2}
Let $\delta(\eps)>0$ and $\mu(\eps)>0$ satisfying \eqref{eq222},
$$\eps^2\mu(\eps)\geq
C_0\delta(\eps)\qquad\;\forall\eps>0,$$ for $C_0\geq C_{\min}> 0$
as defined in Theorem \ref{mainspec}. Then, for any $ k $ it holds that
$$ \lambda_k \leq C\mu(\eps),$$
for some $C>0$.
\end{theorem}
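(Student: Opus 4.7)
The plan is to prove the bound via the variational characterization of the largest eigenvalue and then use $\lambda_k \leq \lambda_1$ for every $k$. Since the (Friedrichs extension of the) operator $S_\varepsilon^h$ is self-adjoint with quadratic form $\langle S_\varepsilon^h\phi,\phi\rangle = -\tilde{\mathcal A}_\varepsilon[\phi]$, the spectrum is bounded above by
$$
\lambda_1 \;=\; \sup_{\phi \neq 0}\frac{\langle L_\varepsilon^h\phi,\phi\rangle}{\|\phi\|^2} \;=\; -\inf_{\phi \neq 0}\frac{\tilde{\mathcal A}_\varepsilon[\phi]}{\|\phi\|^2}.
$$
Thus the theorem reduces to establishing a lower bound of the form $\tilde{\mathcal A}_\varepsilon[\phi] \geq -C\mu(\varepsilon)\|\phi\|^2$ for \emph{every} admissible $\phi$, with no orthogonality constraint. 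This is the mirror of the computation carried out in the proof of Theorem~\ref{mainspec}, but without a spectral gap and without the constraint $\phi \in V^\perp$.

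Concretely, I would start from formula \eqref{eq: define Atilde} and apply the same crude pointwise estimates used in Theorem~\ref{mainspec}, namely $f'(u^h) = 3(u^h)^2 - 1 \geq -1$ and $|(f'(u^h))_{xx}| \leq C_1\varepsilon^{-2}$ (the latter following from $|u^h_x|\leq C\varepsilon^{-1}$, $|u^h_{xx}|\leq C\varepsilon^{-2}$ collected in the Appendix). Keeping the non-negative terms $\delta(\varepsilon)\varepsilon^2\int\phi_{xx}^2\,dx$ and $\mu(\varepsilon)\varepsilon^2\int\phi_x^2\,dx$, this gives
$$
\tilde{\mathcal A}_\varepsilon[\phi] \;\geq\; \delta(\varepsilon)\!\int_0^1\!\!\bigl[\varepsilon^2\phi_{xx}^2 - \phi_x^2\bigr]dx - \tfrac{C_1}{2}\delta(\varepsilon)\varepsilon^{-2}\|\phi\|^2 + \mu(\varepsilon)\varepsilon^2\!\int_0^1\!\!\phi_x^2\,dx - \mu(\varepsilon)\|\phi\|^2.
$$

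The spectral hypothesis \eqref{eq222}, $\varepsilon^2\mu(\varepsilon) \geq C_0\delta(\varepsilon)$ with $C_0 \geq C_{\min} > 1$, is then used twice. First, it makes the coefficient of $\int\phi_x^2\,dx$ non-negative: $\mu(\varepsilon)\varepsilon^2 - \delta(\varepsilon) \geq 0$, so that term is harmlessly dropped. Second, $\delta(\varepsilon)\varepsilon^{-2} \leq \mu(\varepsilon)/C_0$ lets us absorb the $-\tfrac{C_1}{2}\delta(\varepsilon)\varepsilon^{-2}\|\phi\|^2$ contribution into $-\tfrac{C_1}{2C_0}\mu(\varepsilon)\|\phi\|^2$. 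Combining these yields
$$
\tilde{\mathcal A}_\varepsilon[\phi] \;\geq\; -\Bigl(1 + \tfrac{C_1}{2C_0}\Bigr)\mu(\varepsilon)\,\|\phi\|^2,
$$
and hence $\lambda_1 \leq C\mu(\varepsilon)$ with $C := 1 + C_1/(2C_0)$. Since $\lambda_k \leq \lambda_1$ for all $k$ by the indexing convention of the text, the theorem follows.

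I do not expect any serious obstacle here: the bound $f'(u^h)\geq -1$ is crude but adequate, precisely because we have no orthogonality constraint to exploit (hence the gap-generating constant $\Lambda$ of \cite{CarrPego} plays no role and is replaced by the universal bound). The only point requiring a little care is the sign bookkeeping between $L_\varepsilon^h$, $S_\varepsilon^h$, and $\tilde{\mathcal A}_\varepsilon$, and the verification that the hypothesis $C_0 \geq C_{\min}$ from Theorem~\ref{mainspec} automatically gives $C_0 > 1$ (which it does, since $C_{\min} \geq 1/\Lambda > 1$).
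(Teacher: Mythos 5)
Your proof is correct and follows essentially the same route as the paper: both drop the non-negative $\deps\eps^2\|\phi_{xx}\|^2$ term, bound $f'(u^h)\geq -1$ and $|(f'(u^h))_{xx}|\leq C_1\eps^{-2}$, use \eqref{eq222} to discard the $\|\phi_x\|^2$ contribution, and then absorb $\deps\eps^{-2}$ into $\meps/C_0$. The only cosmetic difference is that you phrase the conclusion via $\lambda_1=-\inf R(\phi)$ rather than directly bounding $R(\phi)$ from below, which is the same argument.
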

\begin{proof}
Using that $\delta(\eps)>0$, $\mu(\eps)> 0$, and $f(u)=u^3-u$,
$f'(u)=3u^2-1\geq -1$, we get for any $\phi$, and since $C_0\geq
\frac{1}{\Lambda}>1$
\begin{equation}
\begin{split}
 R(\phi) := &\frac{\langle -S_\varepsilon^h(\phi), \phi\rangle
}{\|\phi\|^2}\\
 =&
 {\deps} \frac{\displaystyle \int_0^1 (\varepsilon^2
\phi_{xx}^2 +f'(u^h)\phi_x^2- \tfrac{1}{2}
\big(f^{'}(u^h)\big)_{xx} \phi^2)dx  }{\|\phi\|^2}\\
&+ \mu(\eps)\frac{\int_0^1  (\varepsilon^2 \phi_x^2 +
f'(u^h)\phi^2)dx}{\|\phi\|^2}\\
\geq &{\deps} \frac{\displaystyle \int_0^1 (-\phi_x^2-
\tfrac{1}{2}
\big(f^{'}(u^h)\big)_{xx} \phi^2)dx  }{\|\phi\|^2}\\
&+ \mu(\eps)\frac{\int_0^1  (\varepsilon^2 \phi_x^2
-\phi^2)dx}{\|\phi\|^2}\\
= &\frac{\displaystyle \int_0^1 (-{\deps} +
\mu(\eps)\eps^2)\phi_x^2dx}{\|\phi\|^2}- \frac{\displaystyle
\int_0^1[\mu(\eps)+{\deps} \tfrac{1}{2}\big(f^{'}(u^h)\big)_{xx}]
\phi^2dx}{\|\phi\|^2}\\
\geq&- \frac{\displaystyle \int_0^1[\mu(\eps)+{\deps}
\tfrac{1}{2}\big(f^{'}(u^h)\big)_{xx}] \phi^2dx}{\|\phi\|^2}\\
\geq&-\mu(\eps)-C_1{\deps} \tfrac{1}{2}\eps^{-2}.
\end{split}
\end{equation}

This yields that for all $k=1,\cdots,N$
$$\lambda_k\leq \mu(\eps)+C_1{\deps} \tfrac{1}{2}\eps^{-2}\leq
C\mu(\eps).$$
\end{proof}

For $ \upsilon \in C^2([0,1]) $ with $ \upsilon_x=0 $ at $ x=0,
1, $ we define the forms
\begin{equation}\label{eq: define
Atilde again}
\begin{split}
 \tilde{\mathcal A}_{\varepsilon}[\upsilon] :=
\int_0^1 \Big{[} &{\deps}
 \Big( \varepsilon^2 \upsilon_{xx}^2
+ f'(u^h)\, \upsilon_x^2 \,  - \, \tfrac{1}{2} \,
\big(f^{'}(u^h)\big)_{xx} \upsilon^2 \Big)\\
&+\mu(\eps)\Big( \varepsilon^2 \upsilon_x^2 \,+\,
f'(u^h)\upsilon^2\Big)\Big{]} \, dx,
\end{split}
\end{equation}
(see \eqref{eq:def of S self adjoint}, \eqref{eq: define Atilde})
and
\begin{eqnarray}
\tilde{\mathcal B}_{\varepsilon}[\upsilon] := \int_0^1
\Big{[}{\deps}
 \varepsilon^2 \upsilon_{xx}^2as well as
\, + \, \, \big( {\deps} \, + \mu(\eps)\varepsilon^2 \big)
\upsilon_x^2 \, + \, \big( {\deps} \, + \, \mu(\eps) \big)
\upsilon^2\Big{]} \, dx. \label{eq: define Btilde}
\end{eqnarray}

\begin{lemma}\label{lemma BleqA}
There is a $ \rho_0 > 0 $ such that if $ 0 < \rho < \rho_0 $ and $ h \in \Omega_\rho, $ then for any $ \upsilon \in C^2 $ with $ \upsilon_x=0 $ at $ x=0, 1 $ and $ \langle\upsilon, \tau_j^h\rangle = 0, \, j=1,\cdots, N,$
\begin{equation}\label{ineq: BleqA}
\tilde{\mathcal B}_\eps[\upsilon] \leq C \tilde{\mathcal A}_{\eps}[\upsilon]
\end{equation}
for some positive constant $ C, $ and
\begin{equation}\label{ineq: AleqL1eps}
\eta(\eps)  \tilde{\mathcal A}_{\eps}[\upsilon]
\; \leq \;
\|S_\eps^h(\upsilon)\|^2.
\end{equation}
\end{lemma}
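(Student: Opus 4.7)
The plan is to derive both inequalities from two ingredients: the pointwise bounds $f'(u^h)\ge -1$ (valid since $f(u)=u^3-u$) and $|(f'(u^h))_{xx}|\le C\varepsilon^{-2}$ (which follows from the estimates on $u^h_x,u^h_{xx}$ recorded in the Appendix), together with the spectral coercivity $\tilde{\mathcal A}_\varepsilon[\upsilon]\ge \eta(\varepsilon)\|\upsilon\|^2$ for $\upsilon\in V^\perp$ supplied by Theorem~\ref{mainspec}. The orthogonality hypothesis $\langle \upsilon,\tau_j^h\rangle=0$ is precisely what puts $\upsilon$ in $V^\perp$, so Theorem~\ref{mainspec} applies throughout.

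For \eqref{ineq: BleqA} the strategy is to first extract from $\tilde{\mathcal A}_\varepsilon[\upsilon]$ the non-negative part of $\tilde{\mathcal B}_\varepsilon[\upsilon]$, and then use the spectral bound to absorb the error. Pointwise I would write $\delta(\varepsilon)f'(u^h)+\mu(\varepsilon)\varepsilon^2\ge \mu(\varepsilon)\varepsilon^2-\delta(\varepsilon)\ge \tfrac12(\delta(\varepsilon)+\mu(\varepsilon)\varepsilon^2)$, which is valid as soon as $C_0\ge 3$ in the standing assumption $\varepsilon^2\mu(\varepsilon)\ge C_0\delta(\varepsilon)$. This dominates the gradient term of $\tilde{\mathcal B}_\varepsilon$, while the $\upsilon_{xx}^2$ coefficients in the two forms are identical. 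What remains is the negative part of the $\upsilon^2$ coefficient in $\tilde{\mathcal A}_\varepsilon$, bounded below by $-(\mu(\varepsilon)+\tfrac12 C\delta(\varepsilon)\varepsilon^{-2})\|\upsilon\|^2\ge -(1+C/(2C_0))\mu(\varepsilon)\|\upsilon\|^2$. Since Theorem~\ref{mainspec} gives $\|\upsilon\|^2\le \tilde{\mathcal A}_\varepsilon[\upsilon]/\eta(\varepsilon)$, and $\eta(\varepsilon)=\Lambda(\mu(\varepsilon)-C_{\min}\delta(\varepsilon)\varepsilon^{-2})\ge \tfrac{\Lambda}{2}\mu(\varepsilon)$ provided $C_0\ge 2C_{\min}$, this yields $\mu(\varepsilon)\|\upsilon\|^2\le (2/\Lambda)\tilde{\mathcal A}_\varepsilon[\upsilon]$. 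Moving the bad mass back to the left-hand side and using $\delta(\varepsilon)+\mu(\varepsilon)\le (1+1/C_0)\mu(\varepsilon)$ (which handles the $\upsilon^2$-term of $\tilde{\mathcal B}_\varepsilon$ directly) yields \eqref{ineq: BleqA}.

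For \eqref{ineq: AleqL1eps} the argument is short: starting from the identity $\tilde{\mathcal A}_\varepsilon[\upsilon]=\langle -S_\varepsilon^h(\upsilon),\upsilon\rangle$ valid for $\upsilon\in\mathcal D(S_\varepsilon^h)$, Cauchy--Schwarz gives $\tilde{\mathcal A}_\varepsilon[\upsilon]\le \|S_\varepsilon^h(\upsilon)\|\,\|\upsilon\|$. Combining with $\|\upsilon\|^2\le \tilde{\mathcal A}_\varepsilon[\upsilon]/\eta(\varepsilon)$ and squaring produces $\eta(\varepsilon)\tilde{\mathcal A}_\varepsilon[\upsilon]^2\le \|S_\varepsilon^h(\upsilon)\|^2\tilde{\mathcal A}_\varepsilon[\upsilon]$, which is \eqref{ineq: AleqL1eps} after dividing through (trivially, if $\tilde{\mathcal A}_\varepsilon[\upsilon]=0$ then $\upsilon\equiv 0$ by coercivity).

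The main obstacle will be the book-keeping in the first part: one must verify that the fraction one absorbs via Theorem~\ref{mainspec} is genuinely strictly less than $1$, which requires tracking how the constant $C$ from $|(f'(u^h))_{xx}|\le C\varepsilon^{-2}$, the Carr--Pego constant $\Lambda$, and the threshold $C_{\min}$ defined in Theorem~\ref{mainspec} interact. Choosing $\rho_0$ small (so the configuration $h$ is in $\Omega_\rho$ for small $\rho$) and $C_0$ large (strictly bigger than $\max\{3,2C_{\min}\}$, as already guaranteed by the definition of $C_{\min}$ in Theorem~\ref{mainspec}) is what makes the constants close.
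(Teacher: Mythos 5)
Your proof of \eqref{ineq: AleqL1eps} is exactly the paper's: the identity $\tilde{\mathcal A}_\eps[\upsilon]=-\langle S_\eps^h(\upsilon),\upsilon\rangle$, Cauchy--Schwarz, and the $L^2$-coercivity $\tilde{\mathcal A}_\eps[\upsilon]\ge \eta(\eps)\|\upsilon\|^2$ from Theorem~\ref{mainspec}.

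For \eqref{ineq: BleqA} your route is genuinely different from the paper's and is correct, though written a bit tersely in the ``moving the bad mass'' step. The paper does not invoke Theorem~\ref{mainspec} here at all; instead it re-applies the Carr--Pego estimate (Theorem~4.2 of \cite{CarrPego})
$\int_0^1 \eps^2\upsilon_x^2 + f'(u^h)\upsilon^2\,dx\ge \Lambda\int_0^1 \eps^2\upsilon_x^2+\upsilon^2\,dx$
directly to the $\mu(\eps)$ part of $\tilde{\mathcal A}_\eps$. This is an $H^1$-coercivity statement and simultaneously produces positive $\upsilon_x^2$ and $\upsilon^2$ contributions, so after bounding the $\delta(\eps)$ part crudely by $-c\delta(\eps)\int(\upsilon_x^2+\eps^{-2}\upsilon^2)$ the desired inequality drops out term by term under $\eps^2\mu(\eps)\ge C_0\delta(\eps)$ with $C_0$ large. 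You instead use only the $L^2$-coercivity output of Theorem~\ref{mainspec} together with the pointwise bounds $f'(u^h)\ge-1$ and $|(f'(u^h))_{xx}|\le C\eps^{-2}$. That forces you to generate the positive $\upsilon^2$ contribution separately, via $\tilde{\mathcal A}_\eps[\upsilon]\ge \tfrac{\Lambda}{2}\mu(\eps)\|\upsilon\|^2\ge c(\delta(\eps)+\mu(\eps))\|\upsilon\|^2$, and then take a convex combination of this bound with the ``pointwise'' bound $\tilde{\mathcal A}_\eps[\upsilon]\ge \int\delta(\eps)\eps^2\upsilon_{xx}^2 + \tfrac12(\delta(\eps)+\mu(\eps)\eps^2)\upsilon_x^2 - K\mu(\eps)\|\upsilon\|^2$ so that the negative mass is overtaken. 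I would spell out that convex combination explicitly rather than relying on the phrase ``moving the bad mass back''; as stated, feeding $\mu(\eps)\|\upsilon\|^2\le(2/\Lambda)\tilde{\mathcal A}_\eps[\upsilon]$ into the pointwise bound alone yields $(1+2K/\Lambda)\tilde{\mathcal A}_\eps[\upsilon]\ge\int\delta\eps^2\upsilon_{xx}^2+\tfrac12(\delta+\mu\eps^2)\upsilon_x^2$, which has no $\upsilon^2$-term, so the second ingredient is essential and must be combined with positive weight. With that made explicit the proof is sound. One practical difference: your argument needs $C_0\ge\max\{3,2C_{\min}\}$, slightly stronger than the paper's $C_0\ge C_{\min}$, so the resulting constant $C$ in \eqref{ineq: BleqA} is nominally larger; but since $C_0$ is free to be taken large in \eqref{eq222}, this is harmless.
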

\begin{proof}By Theorem 4.2 in \cite{CarrPego} there exists $ \Lambda>0 $ such that
\begin{equation}
\int_0^1  \eps^2 \upsilon_x^2 \,+\, f'(u^h)\upsilon^2
\, dx
\geq \Lambda
\int_0^1 \eps^2 \upsilon_x^2 \,+\, \upsilon^2.
\end{equation}
For such $ \Lambda, $ and $ c:=\max\{c_1, c_2\} $ for positive constants of the uniform bounds
$$ |\big(f'(u^h)\big)_{xx}| \leq c_1 \eps^{-2}
\qquad\mbox{and}\qquad
|f'(u^h)| \leq c_2
\qquad\mbox{and}\qquad [0,1],
$$as well as
we have
\begin{eqnarray*}
\tilde{\mathcal A}_{\eps}[\upsilon]
& := &
\int_0^1
 {\deps}
 \Big( \eps^2 \upsilon_{xx}^2
\, + \,
f'(u^h)\, \upsilon_x^2 \,  - \, \tfrac{1}{2} \, \big(f'(u^h)\big)_{xx} \upsilon^2
\Big)
\, + \,
\meps \Big(\eps^2 \upsilon_x^2 \,+\, f'(u^h)\upsilon^2\Big)
\, dx
\\[0.5em]as well as
& = &
\int_0^1
{\deps}
\eps^2 \upsilon_{xx}^2
\, dx
\, + \,
{\deps} \int_0^1
f'(u^h)\, \upsilon_x^2 \,  - \, \tfrac{1}{2} \, \big(f'(u^h)\big)_{xx} \upsilon^2
\, dx
\, + \,
\meps \int_0^1  \eps^2 \upsilon_x^2 \,+\, f'(u^h)\upsilon^2
\, dx
\\[0.5em]
& \geq &
\int_0^1
{\deps}
\eps^2 \upsilon_{xx}^2
\, dx
\, - \,
c \,{\deps}
\int_0^1  \upsilon_x^2 \,  + \, \eps^{-2} \upsilon^2
\, dx
\, + \,
\meps \, \Lambda
\int_0^1  \eps^2 \upsilon_x^2 \,+\, \upsilon^2
\, dx
\\[0.5em]
& = &
\int_0^1
{\deps}
 \eps^2 \upsilon_{xx}^2
\, + \,
\,
\big( \meps\, \Lambda \, \eps^2 \, - \, c \, {\deps} \big) \upsilon_x^2
\, + \,
\big(
\meps \, \Lambda \, - \, c \, {\deps} \eps^{-2}
\big) \upsilon^2
\; dx
\\[0.5em]
& \geq &
C \,
\int_0^1
{\deps}
 \eps^2 \upsilon_{xx}^2
\, + \,
\,
\big( {\deps} \, + \, \meps\, \eps^2 \big) \upsilon_x^2
\, + \,
\big( {\deps} \, + \, \meps \big) \upsilon^2
\, dx
\\[0.5em]
& =: &
C \, \tilde{\mathcal B}_{\eps}[\upsilon]
\end{eqnarray*}
for some positive constant $ C $ small enough; the last inequality follows from the assumption \eqref{eq222}, since
$$
\meps\, \Lambda \, \eps^2 \, - \, c \, {\deps}
\geq
C
\left(
{\deps} \, + \, \meps\, \eps^2
\right)
\Longleftrightarrow
\meps\, \eps^2 \, \geq \, \frac{c + C}{\Lambda  - C} \, {\deps}
$$
and
$$
\meps \, \Lambda \, - \, c \, {\deps} \eps^{-2}
\geq
C
\big( {\deps} \, + \, \meps \big)
\Longleftrightarrow
\meps\, \eps^2 \, \geq \, \frac{c + C \eps^2}{\Lambda  - C} \, {\deps}.
$$
Regarding inequality \eqref{ineq: AleqL1eps}, we use the estimate
\begin{equation}\label{ineq: minmax princ tA}
0 < C(\eps) \leq \frac{\tilde{\mathcal A}_\eps[\upsilon]}{\|\upsilon\|^2} 
\qquad
\forall \upsilon \;\;\mbox{with}\;\;\upsilon_x(0)=\upsilon_x(1)=0 \;\;\mbox{and}\;\; \langle \upsilon, \tau_j^h\rangle = 0, \quad j = 1, \cdots, N,
\end{equation}
for $ C(\eps) = \eta(\eps) :=\Lambda (\meps - C_{\rm min}\deps \eps^{-2}), $ to get immediately
\begin{equation}
\bm{\big|} \tilde{\mathcal A}_\eps[\upsilon] \bm{\big|}
\; = \;
\bm{|} \langle S_\eps^h(\upsilon) \, , \, \upsilon\rangle \bm{|}
\; \leq \;
\| S_{\eps}^h(\upsilon) \| \; \| \upsilon \|
\stackrel{\scriptscriptstyle \eqref{ineq: minmax princ tA}}{\leq}
\frac{1}{\sqrt{C}}
\cdot
\| S_{\eps}^h(\upsilon) \|
\cdot
\bm{\big|} \tilde{\mathcal A}_\eps[\upsilon] \bm{\big|}^{1/2}
\end{equation}
and hence \eqref{ineq: AleqL1eps} follows.
\end{proof}

\subsection{Flow near layered equilibria }
The main result of this section is Theorem \ref{prop h velocity} regarding the motion of the layers.
For $ \upsilon \in C^1[0,1] $ with $ \upsilon_x(0)=\upsilon_x(1)=0, $ we introduce the norm
\begin{equation}\label{eq: def of B upsilon}
\mathcal B_\varepsilon[\upsilon] := \int_0^1 \big[ \varepsilon^2 \upsilon_{x}^2 + \upsilon^2 \big] \, dx
\end{equation}
and we will study the orbit 
$ \, u(x, t) = u^{h(t)}(x) + \upsilon(x, t), \, $ of \eqref{eq:deltaCH-AC} as long as
\begin{equation}\label{eq: assumption1 B upsilon}
\mathcal B_\varepsilon[\upsilon] \leq C\eps= \mathcal
O(\varepsilon).
\end{equation}
\begin{lemma}\label{lemma 4.1 BX}For $ \upsilon \in C^1[0,1] $ we have
\begin{equation}\label{eq: 5 proof for Bups}
\| \upsilon \|^2_{L^\infty} \leq \frac{1+\varepsilon}{\varepsilon} \mathcal B_\varepsilon[\upsilon].
\end{equation}
\end{lemma}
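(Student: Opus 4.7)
The inequality is a one-dimensional Sobolev/Agmon-type embedding with the $\varepsilon$-weighting of $\mathcal{B}_\varepsilon$ tracked explicitly. The plan is to apply the fundamental theorem of calculus and close the estimate with a suitably weighted Young's inequality.

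First I would choose $x_0\in[0,1]$ at which $|\upsilon|$ attains its maximum (possible since $\upsilon\in C^1[0,1]$). For any $y\in[0,1]$ the identity
\[
\upsilon^2(x_0) \;=\; \upsilon^2(y) \;+\; 2\int_y^{x_0} \upsilon(t)\,\upsilon_x(t)\,dt
\]
holds. Integrating this equality with respect to $y$ over $[0,1]$ and bounding the double integral by extending the inner integration to $[0,1]$, I get
\[
\upsilon^2(x_0) \;\leq\; \int_0^1 \upsilon^2(y)\,dy \;+\; 2\int_0^1 |\upsilon(t)|\,|\upsilon_x(t)|\,dt \;\leq\; \|\upsilon\|^2 \;+\; 2\,\|\upsilon\|\,\|\upsilon_x\|,
\]
by Cauchy--Schwarz applied to the second term.

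Next I would balance the cross term with Young's inequality at the scale $\varepsilon$:
\[
2\,\|\upsilon\|\,\|\upsilon_x\| \;=\; 2\cdot \frac{\|\upsilon\|}{\sqrt{\varepsilon}}\cdot \sqrt{\varepsilon}\,\|\upsilon_x\| \;\leq\; \frac{1}{\varepsilon}\,\|\upsilon\|^2 \;+\; \varepsilon\,\|\upsilon_x\|^2 \;=\; \frac{1}{\varepsilon}\bigl(\|\upsilon\|^2 + \varepsilon^2\|\upsilon_x\|^2\bigr) \;=\; \frac{1}{\varepsilon}\mathcal{B}_\varepsilon[\upsilon].
\]
Combining this with the trivial bound $\|\upsilon\|^2\leq \mathcal{B}_\varepsilon[\upsilon]$, I conclude
\[
\|\upsilon\|_{L^\infty}^2 \;=\; \upsilon^2(x_0) \;\leq\; \mathcal{B}_\varepsilon[\upsilon] \;+\; \frac{1}{\varepsilon}\mathcal{B}_\varepsilon[\upsilon] \;=\; \frac{1+\varepsilon}{\varepsilon}\,\mathcal{B}_\varepsilon[\upsilon],
\]
which is the desired inequality.

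There is essentially no serious obstacle here; the only thing one must be careful about is choosing the Young's inequality split that produces the prefactor $\varepsilon^2$ on $\upsilon_x^2$ so that the right-hand side assembles exactly into $\mathcal{B}_\varepsilon[\upsilon]$, and avoiding any use of derivatives of $\upsilon$ at the endpoints (no boundary condition on $\upsilon$ is assumed in this lemma, so the averaging-over-$y$ trick is preferable to a pointwise application of the fundamental theorem with a fixed endpoint).
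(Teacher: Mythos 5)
Your proof is correct and gives exactly the constant $\tfrac{1+\varepsilon}{\varepsilon}$, via essentially the same one--dimensional Agmon-type estimate the paper uses. The paper picks a point $x_2$ with $\upsilon^2(x_2)\le\mathcal B_\varepsilon[\upsilon]$ (existence by the integral mean value theorem) and integrates the pointwise inequality $\varepsilon\,\tfrac{d}{dx}\upsilon^2\le \varepsilon^2\upsilon_x^2+\upsilon^2$ over $[x_2,x_1]$, whereas you average the FTC identity over $y$ and then apply Cauchy--Schwarz and Young on $L^2$ norms; these are just two bookkeepings of the same computation, and both correctly avoid invoking any boundary condition on $\upsilon$.
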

\begin{proof}
Let $ x_1\in [0,1] $ be such that
\begin{equation}\label{eq: 1  proof for B upsilon}
\upsilon^2(x_1) = \| \upsilon \|^2_{L^\infty}
\end{equation}
and let $ x_2 \neq x_1 $ be such that
\begin{equation}\label{eq: 2  proof for B upsilon}
\upsilon^2(x_2) \leq \mathcal B_\varepsilon[\upsilon].
\end{equation}
We can assume that $ x_2 \leq x_1 $ without loss of generality, for otherwise we would consider the reflection
of $ \upsilon $ about $ \tfrac{1}{2} $ which would then satisfy this assumption with $ \|\cdot\|_{L^\infty},
\, \mathcal B_\varepsilon[\cdot] $ remaining invariant.

Integrating the inequality
$$ \varepsilon \frac{d}{dx} \upsilon^2 = 2 \varepsilon \upsilon \upsilon_{x} \leq \varepsilon^2 \upsilon^2_{x} + \upsilon^2 $$
on $ [x_2, x_1], $ we obtain
\begin{equation}\label{eq: 4  proof for B upsilon}
\varepsilon \upsilon^2(x_1) - \varepsilon \upsilon^2(x_2) \leq \mathcal B_\varepsilon[\upsilon]
\end{equation}
hence \eqref{eq: 5 proof for Bups} results upon the substitution
of \eqref{eq: 1  proof for B upsilon}-\eqref{eq: 2  proof for B
upsilon} into \eqref{eq: 4  proof for B upsilon}.
\end{proof}

For the coefficients $ a_{jk} $ of $ \dot{h}_k $ in the LHS of
\eqref{eq:coord ODE 1}, defined in \eqref{eq:1 the coefficient
matrix}, we introduce the matrices $ S(h), \hat{S}(h,\upsilon), $
\begin{equation}\label{eq:def of matrices}
 S(h) = \big(S(h)\big)_{jk} := \big\langle u^h_k \; , \; \tau_j^h \big\rangle
\qquad\mbox{and}\qquad
 \hat{S}(h,\upsilon) = \big(\hat{S}(h,\upsilon)\big)_{jk} :=   \big\langle \upsilon  \; , \; \tau^h_{j,k}\, \big\rangle.
\end{equation}

According to Lemma 3.1 of \cite{CarrPego}, 
there exist $ \sigma_1, \rho_1>0 $ such that if $ \sigma <
\sigma_1 $ and $ \rho<\rho_1 $ and $ (h, \upsilon) \in  \mathscr
S_{\!\rho,\sigma} $, then the matrices $ S(h) $ and $ S(h)-
\hat{S}(h, \upsilon) $ are invertible with
\begin{equation}\label{eq:norm of inv matrices}
\|S^{-1}\| \leq 4 A_0^{-2} \varepsilon
\qquad\mbox{and}\qquad
\|(S- \hat{S})^{-1}\| \leq 8 A_0^{-2} \varepsilon.
\end{equation}
Here, $ \|\cdot\| $ denotes the matrix norm induced by the
vector norm $ \|h\|=\max_j|h_j|$ on $ \mathbb R^N, $ and $
A_0 $ is the constant appearing in \eqref{eq: CP 2.8}.

\begin{theorem}\label{prop h velocity}
There exists $ \rho_2 > 0$ and a constant $ C>0 $, such that
\begin{equation}\label{ineq: estimate hdot}
\begin{split}
| \dot{h}_i| \leq &C {\deps} \Big( \varepsilon^{-1} \alpha(r)
\,+\,
\varepsilon^{-5/2} \mathcal B^{1/2}_{\!\varepsilon}[\upsilon]
 \,+\,
 \varepsilon^{-3} \mathcal B_\varepsilon[\upsilon]
\Big)
\\
&+C \mu(\eps)\Big{(}\varepsilon \alpha(r) \,+\,
 \varepsilon^{1/2} \mathcal B_\varepsilon^{1/2}[\upsilon] \big[\alpha(r)+\beta(r)\big]
 \,+\,
\varepsilon^{-1} \mathcal B_\varepsilon[\upsilon]\Big{)},
\end{split}
\end{equation}
as long as $ h \in \Omega_{\rho} $, with $ \rho < \rho_2 $, and
the orbit $ \, u(x, t) = u^{h(t)}(x) + \upsilon(x, t) \, $ of
\eqref{eq:deltaCH-AC} remains sufficiently close to $ \mathcal M $
so that $\mathcal B_\varepsilon[\upsilon] = \mathcal
O(\varepsilon)$.
\end{theorem}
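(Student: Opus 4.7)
My plan is to treat \eqref{eq:2coord ODE 11} as a linear system for $\dot h=(\dot h_1,\ldots,\dot h_N)$, invert the coefficient matrix, and then bound each of the six inner products appearing on the right-hand side. The coefficient matrix $(a_{jk})$ equals $S(h)-\hat S(h,\upsilon)$ by \eqref{eq:def of matrices}, and under the hypotheses of the theorem, namely $h\in\Omega_\rho$ with $\rho$ small and $\mathcal B_\varepsilon[\upsilon]=\mathcal O(\varepsilon)$ (which through Lemma \ref{lemma 4.1 BX} controls $\|\upsilon\|_{L^\infty}$), the bound \eqref{eq:norm of inv matrices} gives $\|(S-\hat S)^{-1}\|\le 8A_0^{-2}\varepsilon$. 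It therefore suffices to show that the maximum over $j$ of the six inner products on the right of \eqref{eq:2coord ODE 11} is bounded by the quantity in \eqref{ineq: estimate hdot} divided by $\varepsilon$.

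For the two $\upsilon$-independent pieces, the Allen--Cahn term $\mu(\varepsilon)\langle A_{2,\varepsilon}(u^h),\tau_j^h\rangle$ is already reduced in \eqref{fine} to $\mu(\varepsilon)(a^{j+1}-a^j)$, of size $\mu(\varepsilon)\alpha(r)$, while for $\delta(\varepsilon)\langle A_{1,\varepsilon}(u^h),\tau_j^h\rangle$ I integrate by parts twice to move the two $x$-derivatives onto $\tau_j^h$, using $u^h_x=u^h_{xxx}=\tau_{j,x}^h=0$ at $x=0,1$ to dispose of boundary terms; since $\varepsilon^2 u^h_{xx}-f(u^h)$ vanishes outside the $\varepsilon$-neighborhoods of the $h_k$'s by \eqref{eq: uh sat BS} and is uniformly exponentially small on them, combined with $\|\tau^h_{j,xx}\|=\mathcal O(\varepsilon^{-5/2})$ one obtains a bound of order $\delta(\varepsilon)\varepsilon^{-2}\alpha(r)$. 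For the linear piece $\mu(\varepsilon)\langle L_{2,\varepsilon}^h\upsilon,\tau_j^h\rangle$ I exploit the fact that $u^h_x$ satisfies the linearized bistable equation away from the transition layers, so moving $L_{2,\varepsilon}^h$ onto $\tau_j^h=\gamma^j u^h_x$ yields only commutator terms supported where $\gamma^j_x,\gamma^j_{xx}$ are non-zero (near $m_j+\varepsilon$, $m_{j+1}-\varepsilon$, where $u_x^h$ is exponentially small) plus a contribution from the $2\varepsilon$-wide interpolation zones around the $h_k$; Cauchy--Schwarz with $\|\upsilon\|\le \mathcal B_\varepsilon^{1/2}[\upsilon]$ then delivers $\mu(\varepsilon)\varepsilon^{-1/2}(\alpha(r)+\beta(r))\mathcal B_\varepsilon^{1/2}[\upsilon]$. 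The Cahn--Hilliard linear piece $\delta(\varepsilon)\langle L_{1,\varepsilon}^h\upsilon,\tau_j^h\rangle$ is estimated by integrating by parts to transfer all derivatives onto $\tau_j^h$ (whose derivatives up to fourth order have $L^2$ norms $\mathcal O(\varepsilon^{-k-1/2})$), which together with $\|\upsilon\|\le \mathcal B_\varepsilon^{1/2}[\upsilon]$ gives $\delta(\varepsilon)\varepsilon^{-7/2}\mathcal B_\varepsilon^{1/2}[\upsilon]$. Finally, for the two quadratic pieces, $\|\upsilon\|_{L^\infty}\lesssim\varepsilon^{-1/2}\mathcal B_\varepsilon^{1/2}[\upsilon]$ from Lemma \ref{lemma 4.1 BX} together with the uniform boundedness of $f^h$ gives $\mu(\varepsilon)\varepsilon^{-1}\mathcal B_\varepsilon[\upsilon]$ for the Allen--Cahn contribution and, after two integrations by parts putting both derivatives onto $\tau_j^h$, $\delta(\varepsilon)\varepsilon^{-3}\mathcal B_\varepsilon[\upsilon]$ for the Cahn--Hilliard contribution. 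Multiplying all six bounds by the $\mathcal O(\varepsilon)$ factor from Step~1 produces \eqref{ineq: estimate hdot}.

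The main obstacle is the careful bookkeeping of negative powers of $\varepsilon$ in the Cahn--Hilliard contributions: each derivative on $\tau_j^h$ (whose characteristic width is $\mathcal O(\varepsilon)$) costs a factor $\varepsilon^{-1}$ in $L^2$, and after up to four integrations by parts, paired with $\upsilon$ via the $\mathcal B_\varepsilon$-norm which controls $\varepsilon\|\upsilon_x\|+\|\upsilon\|$ rather than $\|\upsilon_x\|$, the exponents must align precisely with \eqref{ineq: estimate hdot}. Equally subtle is the extraction of the exponentially small prefactor $\alpha(r)+\beta(r)$ in the Allen--Cahn linear term, which relies essentially on $L_{2,\varepsilon}^h u^h_x\equiv 0$ on each sub-interval $[m_j,h_j-\varepsilon]$ and $[h_j+\varepsilon,m_{j+1}]$, so that the non-zero contribution comes only from where either $\gamma^j$ is varying (far from the nearest layer, where $u_x^h$ decays exponentially) or from the narrow interpolation zones $|x-h_k|<\varepsilon$ (where differences between $\phi^j$ and $\phi^{j+1}$ are exponentially small). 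All boundary terms in the iterated integrations by parts must also be verified to vanish using the Neumann conditions on $u^h$, $\upsilon$ and the vanishing of $\tau_{j,x}^h$ at the endpoints.
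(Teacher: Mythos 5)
Your proposal follows the paper's proof essentially exactly: you invert the coefficient matrix $(a_{jk}) = S-\hat S$ using \eqref{eq:norm of inv matrices} and then bound the six inner products $T_1,T_2,T_3$ (from the Cahn--Hilliard part) and $I_1,I_2,I_3$ (from the Allen--Cahn part) by integration by parts onto $\tau_j^h=\gamma^ju_x^h$, exploiting that $\mathscr L^b(u^h)$ and its derivatives vanish away from the $\varepsilon$-neighborhoods of the layers and are exponentially small on them, and using Lemma~\ref{lemma 4.1 BX} for the quadratic terms — which is precisely the paper's decomposition \eqref{eq: 2 flow in channel}, \eqref{eq: 1 AC part} and the final assembly step. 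Your intermediate $\varepsilon$-powers differ slightly in a couple of places from the paper's (your $T_2$ and $T_3$ bounds come out to $\varepsilon^{-7/2}\mathcal B_\varepsilon^{1/2}$ and $\varepsilon^{-3}\mathcal B_\varepsilon$, versus the paper's $\varepsilon^{-5/2}\mathcal B_\varepsilon^{1/2}$ and $\varepsilon^{-4}\mathcal B_\varepsilon$), but after multiplying by the $\mathcal O(\varepsilon)$ factor from the inverse matrix both sets of bounds lie within the stated estimate \eqref{ineq: estimate hdot}, so the conclusion is unaffected.
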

\begin{proof}
The RHS of \eqref{eq:2coord ODE 11} with $$ \tau_j^h(x) =
-\gamma^j(x) u^h_x(x),\; j=1,2,\ldots,N, $$ is written as
\begin{equation}\label{eq: 1 flow in channel}
\begin{split}
&- {\deps} \, \big\langle A_{1,\varepsilon}(u^h)
 +
L_{1,\varepsilon}^h(\upsilon)
 +
\big( f^h \, \upsilon^2\big)_{xx} \; , \; \gamma^j u^h_x
\big\rangle \; \\
&+ \mu(\eps) \big\langle A_{2,\varepsilon}(u^h) +
L_{2,\varepsilon}^h(\upsilon)
 +
 f^h \, \upsilon^2
\; , \; \gamma^j u^h_x\big\rangle.
\end{split}
\end{equation}

We will consider the first summand in \eqref{eq: 1 flow in
channel} that comes from the CH part, and shall estimate the terms
\begin{equation}\label{eq: 2 flow in channel}
\underbrace{ \hspace{-0.4em} \big\langle
A_{1,\varepsilon}(u^h) \; , \; \gamma^j u^h_x \big\rangle
\hspace{-0.4em} }_{T_1} \qquad + \qquad \underbrace{
\hspace{-0.4em} \big\langle L_{1,\varepsilon}^h(\upsilon)
\;,\; \gamma^j u^h_x \big\rangle \hspace{-0.4em} }_{T_2}
\qquad + \qquad \underbrace{ \hspace{-0.4em} \big\langle
\big( f^h \, \upsilon^2\big)_{xx} \; , \; \gamma^j u^h_x
\big\rangle, \hspace{-0.4em} }_{T_3}
\end{equation}
for $ j=1,2,\ldots, N, \, $ where $ f^h $ is given in
\eqref{eq:definition2 of f2}; here, recall the notation
$$
A_{1,\varepsilon}(u) = \big(\varepsilon^2 u_{xx} -
f(u)\big)_{xx}, \qquad\mbox{and}\qquad
 L_{1,\varepsilon}^h(\upsilon) = \big(
\varepsilon^2 \upsilon_{xx} -  f'(u^h)\upsilon \big)_{xx}.
$$
\\
\vspace{0.5cm}
\\
\textit{Estimate of $ \mathit{T_1} $:}
\\
For all $ j, $ we have $$ \mathscr L^b(\phi^j)=0, $$ which in view
of the definition \eqref{eq: def of uh}, implies that $$
A_{1,\varepsilon}(u^h) = 0 \quad \mbox{for}\quad |x-h_j| \geq
\varepsilon.$$ Using that $$ \gamma^j(x) = 1 \quad \mbox{for} \quad
|x-h_j| \leq \varepsilon, $$ we get
\begin{equation}\label{eq: 3 flow in channel}
T_1 := \big\langle A_{1,\varepsilon}(u^h) \; , \; \gamma^j
u^h_x \big\rangle = \int_{h_j-\varepsilon}^{h_j+\varepsilon}
\Big( \underbrace{\varepsilon^2 u^h_{xx} - f(u^h)
}_{\scriptscriptstyle \mathscr L^b(u^h)}\Big)_{xx} \, u^h_x \, dx.
\end{equation}
In the above, we integrate by parts twice, and obtain
\begin{equation}\label{eq: 4 flow in channel}
T_1
=
\int_{h_j-\varepsilon}^{h_j+\varepsilon}
\mathscr L^b(u^h)  \, u^h_{xxx} \, dx.
\end{equation}
We apply \eqref{ineq: uh3x leq eps3} together with \eqref{ineq:
Lbuh leq alpha} in \eqref{eq: 4 flow in channel}, and derive
\begin{equation}\label{eq: estimate of T1}
|T_1| \, \leq \, C \, \varepsilon^{-2} \, \alpha(r).
\end{equation}
\\
\vspace{0.5cm}
\\
\textit{Estimate of $ \mathit{T_2} $:}
\\
Using $ \, \tau_j^h:=\gamma^j u^h_x\, $ we integrate by parts
four times the first term and twice the second one, in the
definition of $ \mathit{T_2} $, to get
\begin{subequations}\label{eq:100}
\begin{align}
T_2
&
:=
\big\langle
L_{1,\varepsilon}^h(\upsilon)
\;,\;
\tau_j^h
\big\rangle
\nonumber
\\
&
=
-\varepsilon^2
\big\langle
 \upsilon_{xxxx}
\;,\; \tau_j^h \big\rangle \;+\; \big\langle
(f'(u^h)\upsilon)_{xx} \;,\; \tau_j^h \big\rangle \nonumber
\\
&
=
-\varepsilon^2
\big\langle
\upsilon
\;,\;
 (\tau_j^h)_{xxxx}
\big\rangle \;+\; \big\langle \upsilon \;,\;
f'(u^h)(\tau_j^h)_{xx} \big\rangle \nonumber
\\
\label{eq:100a}
&
=
-\varepsilon^2
\Big[
\big\langle
\upsilon
\;,\;
 \gamma^j (u^h_x)_{xxxx}
\big\rangle
\;
\begin{aligned}[t]
&
+ \;
4 \big\langle
\upsilon
\;,\;
\gamma^j_x (u^h_x)_{xxx}
\big\rangle
\; + \;
 6 \big\langle
\upsilon
\;,\;
\gamma^j_{xx} (u^h_x)_{xx}
\big\rangle
\\
&
+ \;4 \big\langle
\upsilon
\;,\;
\gamma^j_{xxx} (u^h_x)_x
\big\rangle
\; + \;
\big\langle
\upsilon
\;,\;
\gamma^j_{xxxx} u^h_x
\big\rangle
\Big]
\end{aligned}
\\
& \phantom{=} \; + \; \big\langle \upsilon \;,\; \gamma^j
f'(u^h) u^h_{xxx} \big\rangle \;+\; 2 \big\langle
\upsilon \;,\; \gamma^j_{x} f'(u^h) u^h_{xx} \big\rangle \;
\; + \; \big\langle \upsilon \;,\; \gamma^j_{xx}
f'(u^h)u^h_x \big\rangle. \label{eq:100b}
\end{align}
\end{subequations}
Here, we have used that $ \tau^h_j, (\tau^h_j)_x,(\tau^h_j)_{xx}
$ vanish at $ x=0,1.$

We now proceed to pointwise estimates for the terms involving $
u^h $ in \eqref{eq:100}, within the interval $ [m_j, m_{j+1}] $
for a fixed yet arbitrary $ j=1,\ldots,N. $

First notice that
\begin{equation}\label{eq: 100.3}
\varepsilon^2 (u^h_x)_{xx}\, - \, f'(u^h)u^h_x \;=\;
\frac{d}{dx}\mathscr L^b(u^h),
\end{equation}
and by \eqref{eq: uh sat BS},
\begin{equation}\label{eq: 100.5}
\frac{d}{dx}\mathscr L^b(u^h) = 0\qquad\mbox{except in} \quad [h_j -
\varepsilon , \, h_j + \varepsilon],
\end{equation}
while, in view of \eqref{eq:def2 gamma j},
\begin{equation}\label{eq: 100.7}
\gamma^j_x=\gamma^j_{xx} = \gamma^j_{xxx} = 0
\qquad \mbox{in} \quad [m_j + 2 \varepsilon , \, m_{j+1} - 2
\varepsilon] \supset [h_j - \varepsilon , \, h_j + \varepsilon].
\end{equation}
Therefore the last term in \eqref{eq:100b} is canceled out by the last one in \eqref{eq:100a}, and \eqref{eq:100} becomes
\begin{align}
\label{eq:100apali}\tag{\ref{eq:100a}$'$}
T_2
& =
-\varepsilon^2
\Big[
\big\langle
\upsilon
\;,\;
 \gamma^j (u^h_x)_{xxxx}
\big\rangle
\;
\begin{aligned}[t]
&
+ \;
4 \big\langle
\upsilon
\;,\;
\gamma^j_x (u^h_x)_{xxx}
\big\rangle
\; + \;
5
\big\langle
\upsilon
\;,\;
\gamma^j_{xx} (u^h_x)_{xx}
\big\rangle
\nonumber
\\
&
+ \;
4
\big\langle
\upsilon
\;,\;
\gamma^j_{xxx} (u^h_x)_x
\big\rangle
\; + \;
\big\langle
\upsilon
\;,\;
\gamma^j_{xxxx} u^h_x
\big\rangle
\Big]
\end{aligned}
\\
& \phantom{=} \; \; + \; \big\langle \upsilon \;,\;
\gamma^j  f'(u^h) u^h_{xxx} \big\rangle \;+\; 2
\big\langle \upsilon \;,\; \gamma^j_{x} f'(u^h) u^h_{xx}
\big\rangle . \label{eq:100bpali}\tag{\ref{eq:100b}$'$}
\end{align}

We have
\begin{equation}\label{eq: gamma derivative order}
\Big| \frac{d^n \gamma^j}{dx^n} \Big|  \leq C \varepsilon^{-n}.
\end{equation}
Moreover, differentiating \eqref{eq: phij solves BS} twice, and
then using \eqref{eq: gamma derivative order}, \eqref{eq: 100.7},
\eqref{ineq: cons2 of lemma 7.5 CP}, we get  for $
|x-m_j|<2\varepsilon,$
\begin{equation}\label{eq:104.5 estimate T2}
\varepsilon^2 | \gamma^j_x (u^h_x)_{xxx} | \,=\, \varepsilon^2 \,
| \gamma^j_x| \, | f''(u^h) (u^h_x)^2 \,+\, f'(u^h) u^h_{xx} |
\leq C \varepsilon^{-1}\beta(r).
\end{equation}
By \eqref{eq: gamma derivative order}, \eqref{eq: def of uh},
\eqref{eq: phij solves BS} and for $ x \in
[m_j,m_j+2\varepsilon]\cup [m_{j+1} - 2\varepsilon, m_{j+1}], $
we derive
\begin{equation}\label{eq:106}
\varepsilon^{2} |\gamma^j_{xx} (u^h_x)_{xx}| \,= \,
\varepsilon^{2} \, |\gamma^j_{xx}|\, |f'(u^h)u^h_x| \leq C
\varepsilon^{-1}\beta(r).
\end{equation}
Using  \eqref{eq: gamma derivative order}, \eqref{eq: 100.7} and
the estimate \eqref{ineq: cons2 of lemma 7.5 CP} presented in the
Appendix, we get
\begin{equation}\label{eq:106.5}
\varepsilon^2 |\gamma^j_{xxx} (u^h_x)_x| \,= \,
|\gamma^j_{xxx}|\, |f(u^h)| \leq C \varepsilon^{-3}\beta(r),
\end{equation}
and
\begin{equation}\label{eq:106.8}
|\gamma^j_{x} \, f'(u^h) \, u^h_{xx}| \leq C
\varepsilon^{-3}\beta^2(r).
\end{equation}
By \eqref{eq: gamma derivative order}, \eqref{eq: derivative of
uh},  \eqref{eq: 100.7} and the estimate \eqref{ineq: lemma 7.5
CP} (see Appendix), we obtain
\begin{equation}\label{eq:107}
\varepsilon^{2} |\gamma^j_{xxxx} u^h_x| \leq
C \varepsilon^{-3}\beta(r)
.
\end{equation}

We will see that the dominant asymptotics of $ \mathit{T_2} $
comes from the term
\begin{equation}\label{eq:100.1 estimate T2}
\big\langle \upsilon \;,\;
 \gamma^j \big(-\varepsilon^2 (u^h_x)_{xxxx} + f'(u^h) u^h_{xxx}\big)
\big\rangle,
\end{equation}
which is the leading term of $ T_2. $ This term as we will
see has order $\mathcal{O}(\eps^{-5/2}\|\upsilon\|)$.

We may proceed as previously, to get pointwise estimates on $
[m_j, m_j+2\varepsilon]\cup [m_{j+1}-2\varepsilon, m_{j+1}] $
 wherein the contribution is in minor order in the asymptotics of $ T_2, $ while, as we shall see, the main order comes from the term
\begin{eqnarray}\label{eq:109 estimate T2}
T_{2,1} \; := \; -\varepsilon^2 (u^h_x)_{xxxx} + f'(u^h)
u^h_{xxx} \qquad \mbox{in} \quad [m_j+2\varepsilon, \,
m_{j+1}-2\varepsilon].
\end{eqnarray}
Differentiating the third derivative of $u^h$, given by \eqref{eq:
u xxx form}, twice, we obtain
\begin{flalign}\label{eq: u 5x form}
\frac{\partial^5 u^h }{\partial x^5} =
\begin{cases}
&\phi_{xxxxx}^j, \qquad\mbox{for}\qquad m_j \leq x \leq
h_j-\varepsilon ,
\\\\
&\chi_{xxxxx}^j \, \big(\phi^{j+1}-\phi^j\big) + 5 \chi_{xxxx}^j
\, \big(\phi^{j+1}_x-\phi^j_x\big) + 10 \chi_{xxx}^j \,
\big(\phi_{xx}^{j+1}-\phi_{xx}^j\big)
\\
&+ 10 \chi_{xx}^j \, \big(\phi_{xxx}^{j+1}-\phi_{xxx}^j\big) + 10
\chi_{x}^j \, \big(\phi_{xxxx}^{j+1}-\phi_{xxxx}^j\big)
\\
&+ (1-\chi^j)\phi^j_{xxxxx}
 + \chi^j\phi^{j+1}_{xxxxx},\qquad\mbox{for}\qquad |x-h_j| < \varepsilon ,
\\\\
&\phi_{xxxxx}^{j+1},\qquad \mbox{for}\qquad h_j +\varepsilon \leq
x \leq m_{j+1}.
\end{cases}
\end{flalign}
We use first \eqref{eq: u 5x form}, \eqref{eq: lemma 8.2 CP},
\eqref{ineq: estimate diff phij}, \eqref{ineq: estim gen diff
phij}, \eqref{eq: deriv cutoff}, \eqref{eq: u xxx form},  and get
\begin{equation}\label{eq:114 estimate T2}
T_{2,1} \, = \, \mathcal O\big(\varepsilon^{-3}\alpha(r)\big) \;
- \; \varepsilon^2 \left[ (1-\chi^j)\phi^j_{xxxxx} +
\chi^j\phi^{j+1}_{xxxxx} \right] \;+\; f'(u^h) \Big[
(1-\chi^j)\phi^j_{xxx}
 + \chi^j\phi^{j+1}_{xxx}
\Big],
\end{equation}
and then, after differentiating \eqref{eq: phij solves BS} three
times, we obtain
\begin{subequations}\label{eq:114.2 estimate T2}
\begin{eqnarray}
T_{2,1}
&=&
\mathcal O\big(\varepsilon^{-3}\alpha(r)\big)
\nonumber
\\
& & \; - \; (1-\chi^j) \,  f'(\phi^j) \phi^j_{xxx} \; - \; \chi^j
\, f'(\phi^{j+1}) \phi^{j+1}_{xxx} \;+\; f'(u^h) \Big[
(1-\chi^j)\phi^j_{xxx}
 + \chi^j\phi^{j+1}_{xxx}
\Big]
\label{eq:115 estimate T2a}
\\
& & \; - \; (1-\chi^j) \, \big(f'(\phi^j)\big)_{xx} \phi^j_x
\;-\; \chi^j \, \big(f'(\phi^{j+1})\big)_{xx} \phi^{j+1}_x
\nonumber
\\
& & \; - \; 2 (1-\chi^j) \big(f'(\phi^j)\big)_x \phi^j_{xx} \; -
\; 2 \chi^j \big(f'(\phi^{j+1})\big)_x \phi^{j+1}_{xx} \nonumber
\\
& \stackrel{\scriptscriptstyle\eqref{eq: phij solves BS}}{=\joinrel=}&
\mathcal O\big(\varepsilon^{-3}\alpha(r)\big)
\; - \; \eqref{eq:115 estimate T2a}
\; - \; (1-\chi^j)\Big[ f^{^{(3)}}\!(\phi^j) (\phi_x^j)^3 \, + \,
\varepsilon^{-2} f''(\phi^j)  f(\phi^j) \phi^j_x \Big] \nonumber
\\
& & \; - \; \chi^j \Big[f^{^{(3)}}\!(\phi^{j+1})  (\phi^{j+1}_x)^3
\, + \, \varepsilon^{-2} f''(\phi^{j+1}) f(\phi^{j+1})
\phi^{j+1}_x \Big] \nonumber
\\
& & \; - \; 2\varepsilon^{-2} \Big[ (1-\chi^j) f''(\phi^j)
f(\phi^j)  \phi^j_x \; + \; \chi^j f''(\phi^{j+1}) f(\phi^{j+1})
\phi^{j+1}_x \Big]. \nonumber
\end{eqnarray}
\end{subequations}
So, this yields
\begin{subequations}\label{eq:114.5 estimate T2}
\begin{eqnarray}
T_{2,1}
& = &
\mathcal O\big(\varepsilon^{-3}\alpha(r)\big)
 -
\eqref{eq:115 estimate T2a} \; - \; \Big[ (1-\chi^j)
f^{^{(3)}}\!(\phi^j) (\phi_x^j)^3 \; + \; \chi^j
f^{^{(3)}}\!(\phi^{j+1})  (\phi^{j+1}_x)^3 \Big] \nonumber
\\
& &
\phantom{
\mathcal O\big(\varepsilon^{-3}\alpha(r)\big)
}
 -
3 \varepsilon^{-2} \Big[ (1-\chi^j) f''(\phi^j)f(\phi^j) \phi^j_x
\; + \; \chi^j f''(\phi^{j+1}) f(\phi^{j+1}) \phi^{j+1}_x \Big]
\nonumber 
\\
& = &
\mathcal O\big(\varepsilon^{-3}\alpha(r)\big)
 -
\eqref{eq:115 estimate T2a} -  f^{^{(3)}}\!(\phi^j) (\phi_x^j)^3
 -
3 \varepsilon^{-2} f''(\phi^j) f(\phi^j)  \phi^j_x \nonumber
\\
&  & \; + \; \chi^j \Big[ f^{^{(3)}}\!(\phi^j) (\phi_x^j)^3 \; -
\; f^{^{(3)}}\!(\phi^{j+1})  (\phi^{j+1}_x)^3 \Big]
\label{eq:114.5b estimate T2}
\\
& & \; + \; 3 \, \varepsilon^{-2}  \, \chi^j \, \Big[
 f''(\phi^j) f(\phi^j)  \phi^j_x
\; - \;  f''(\phi^{j+1}) f(\phi^{j+1})  \phi^{j+1}_x \Big].
\label{eq:114.5c estimate T2}
\end{eqnarray}
\end{subequations}

To estimate the term \eqref{eq:115 estimate T2a}, we use
\eqref{eq: Lagrange interp}-\eqref{eq: Lagrange interp2} with $
\chi:=\chi^j $ and
\begin{equation}\label{eq:115 estimate T2}
F(s) \, := \, f'\big(\theta_1(s)\big) \, \theta_2(s) ,\qquad s
\in [0,1],
\end{equation}
where
\begin{equation}\label{eq:116 estimate T2}
\theta_1(s) \, := \,
(1-s) \phi^j + s \phi^{j+1},
\qquad
\theta_2(s) \, := \,
(1-s)\phi^j_{xxx}
 + s \phi^{j+1}_{xxx},
\qquad s \in [0,1],
\end{equation}
so \eqref{eq:115 estimate T2a} equals to $ R(\chi^j), $ (cf.
\eqref{eq: Lagrange interp}-\eqref{eq: Lagrange interp2} for the
detailed definition of the remainder $R$), and hence, it is given
by
\begin{flalign}
\eqref{eq:115 estimate T2a} & = (1-\chi^j) \Big[
(\phi^{j+1}-\phi^j)^2 \int_0^{\chi^j} s \, \theta_2(s)
f^{(3)}(\theta_1(s))\, \, ds \nonumber
\\
& \phantom{ = (1-\chi^j) \Big[ } \, + \, 2 \,
(\phi_{xxx}^{j+1}-\phi_{xxx}^j) \, (\phi^{j+1}-\phi^j)\,
\int_0^{\chi^j} \!\! s \, f''(\theta_1(s)) \, ds \Big] \nonumber
\\
&  \phantom{=}\; + \chi^j \Big[ (\phi^{j+1}-\phi^j)^2
\int_{\chi^j}^1 (1-s) \, \theta_2(s) f^{(3)}(\theta_1(s))\, \, ds
\nonumber
\\
& \phantom{ = \; + \chi^j \Big[ } \, + \, 2 \,
(\phi_{xxx}^{j+1}-\phi_{xxx}^j) \, (\phi^{j+1}-\phi^j)\,
\int_{\chi^j}^1 \!\! (1-s) \, f''(\theta_1(s)) \, ds.
\Big]\label{eq:118 estimate T2}
\end{flalign}
The above, in view of \eqref{eq: a lemma 8.2 CP}, \eqref{eq:
11.3a flow in channel} is bounded as follows
\begin{equation}\label{eq:119 estimate T3} \big|\eqref{eq:115
estimate T2a}\big| \;\leq \; C \varepsilon^{-3} \alpha(r),
\qquad\mbox{for}\quad |x-h_j| < \varepsilon.
\end{equation}

We also exploit \eqref{eq: phi x leq ceps}, \eqref{eq: lemma 8.2
CP}, to obtain
\begin{flalign}
\eqref{eq:114.5b estimate T2}
& = \big| f^{^{(3)}}\!(\phi^j) (\phi_x^j)^3 \; \pm \;
f^{^{(3)}}\!(\phi^j) (\phi_x^{j+1})^3 \; - \;
f^{^{(3)}}\!(\phi^{j+1})  (\phi^{j+1}_x)^3 \big| \nonumber
\\
& \leq \big| f^{^{(3)}}\!(\phi^j) \big| \big| (\phi_x^j)^3 \, - \,
(\phi_x^{j+1})^3 \big| \, + \, \big|\phi^{j+1}_x\big|^3 \big|
f^{^{(3)}}\!(\phi^{j+1}) \, - \, f^{^{(3)}}\!(\phi^j) \big|
\nonumber
\\
& \leq C \varepsilon^{-2} \big| \phi_x^j \, - \, \phi_x^{j+1}
\big| \, + \, C \varepsilon^{-3} \big| f^{^{(3)}}\!(\phi^{j+1}) \,
- \, f^{^{(3)}}\!(\phi^j) \big| \nonumber
\\
& \leq
C \varepsilon^{-2}
\big|
\phi_x^j
\, - \, \phi_x^{j+1}
\big|
\, + \,
C \varepsilon^{-3}
\big|
\phi^{j+1}
\, - \,
\phi^j
\big|
\nonumber
\\
& \leq
C \varepsilon^{-3} \alpha(r), \qquad\mbox{for}\quad |x-h_j| < \varepsilon.
\label{eq:119.1 estimate T3}
\end{flalign}

A similar argument yields
\begin{equation}\label{eq:119.3 estimate T3}
\eqref{eq:114.5c estimate T2}
\leq
C \varepsilon^{-3} \alpha(r), \qquad\mbox{for}\quad |x-h_j| < \varepsilon.
\end{equation}

Combining \eqref{eq: phi nx leq ceps}, \eqref{eq: 5 proof for
Bups}, \eqref{eq:100apali}, \eqref{eq:104.5 estimate
T2}-\eqref{eq:107}, \eqref{eq:114.5 estimate T2}, \eqref{eq:119
estimate T3}, \eqref{eq:119.1 estimate T3}, and \eqref{eq:119.3
estimate T3}, we get the final estimate for $T_2$, given by
\begin{equation}\label{eq: estimate of T2}
|T_2| \, \leq \, C \, \varepsilon^{-5/2} \, \mathcal
B_\varepsilon^{1/2}[\upsilon].
\end{equation}
\\
\vspace{0.5cm}
\\
\textit{Estimate of $ \mathit{T_3} $:}
\\
Integrating by parts twice, we obtain
\begin{equation}\label{eq:120 estimate T3}
T_3 =  \big\langle f^h  \upsilon^2 \; , \; \big( \gamma^j
u^h_x \big)_{xx} \big\rangle,
\end{equation}
where $ f^h $ is defined in \eqref{eq:definition2 of f2}.

By \eqref{eq: assumption1 B upsilon}, \eqref{eq: 5 proof for Bups}
and the fact that $$ u^h =\mathcal O(1), $$ we get
\begin{equation}\label{eq:121 estimate T3}
u^h + \tau \upsilon = \mathcal O(1),
\end{equation}
and thus, the integrand $ |f''(u^h + \tau \upsilon)| $ in the
definition of $ f^h $ is uniformly bounded; so, using again
\eqref{eq: 5 proof for Bups}, we derive
\begin{equation}\label{eq:122 estimate T3}
| f^h | \upsilon^2 \leq C \varepsilon^{-1} \mathcal B_\varepsilon[\upsilon].
\end{equation}

Then \eqref{eq:120 estimate T3}, by employing \eqref{eq:
derivative of uh}, \eqref{eq: uxxh}, \eqref{eq: phij solves BS},
\eqref{eq: u xxx form}, \eqref{eq: phi nx leq ceps}, \eqref{eq:
100.7}, \eqref{ineq: cons1 of lemma 7.5 CP},
 \eqref{ineq: cons2 of lemma 7.5 CP}, yields
\begin{equation}\label{eq: estimate of T3}
|T_3| \, \leq \, C \, \varepsilon^{-4} \, \mathcal B_\varepsilon[\upsilon].
\end{equation}
\vspace{0.5cm}

Gathering \eqref{eq: estimate of T1}, \eqref{eq: estimate of T2},
and \eqref{eq: estimate of T3}, we arrive at the final estimate
\begin{equation}\label{ineq:estimate  T123}
|T_1+T_2+T_3| \leq C \Big( \varepsilon^{-2} \alpha(r) \,+\,
\varepsilon^{-5/2} \mathcal B^{1/2}_{\varepsilon}[\upsilon]
\,+\, \varepsilon^{-4} \mathcal B_\varepsilon[\upsilon] \Big).
\end{equation}

\vspace{0.6cm}

Let us proceed now with the second summand in \eqref{eq: 1 flow in
channel} that comes from the AC part.

In particular, we have to estimate the terms
\begin{equation}\label{eq: 1 AC part}
\underbrace{ \hspace{-0.4em}
\big\langle
A_{2,\varepsilon}(u^h)
\; , \;
\gamma^j u^h_x
\big\rangle
\hspace{-0.4em} }_{I_1}
\qquad + \qquad
\underbrace{ \hspace{-0.4em}
\big\langle
L_{2,\varepsilon}^h(\upsilon)
\;,\;
\gamma^j u^h_x
\big\rangle
\hspace{-0.4em} }_{I_2}
\qquad + \qquad
\underbrace{ \hspace{-0.4em}
\big\langle
f^h \, \upsilon^2
\; , \;
\gamma^j u^h_x
\big\rangle
\hspace{-0.4em} }_{I_3}
\end{equation}
for $ j=1,2,\ldots, N, \, $ where $ f^h $ is given in
\eqref{eq:definition2 of f2}; here, recall the notation
$$
A_{2,\varepsilon}(u) =\varepsilon^2 u_{xx} - f(u) = \mathscr
L^b(u) \qquad\mbox{and}\qquad
 L_{2,\varepsilon}^h(\upsilon)
= \varepsilon^2 \upsilon_{xx} -  f'(u^h)\upsilon.
$$
\\
\vspace{0.3cm}
\textit{Estimate of} $ \mathit{I_1:}$
\\
For all $ j, $ we have $$ \mathscr L^b(\phi^j)=0, $$ which by the
definition \eqref{eq: def of uh}, implies that $$
A_{2,\varepsilon}(u^h) = 0\qquad\mbox{for}\qquad |x-h_j| \geq
\varepsilon. $$ Using the fact that $$ \gamma^j(x) =
1\qquad\mbox{for}\qquad |x-h_j| \leq \varepsilon, $$ we get
\begin{equation}\label{eq: 3 AC part}
I_1 := \big\langle A_{2,\varepsilon}(u^h) \; , \; \gamma^j
u^h_x \big\rangle = \int_{h_j-\varepsilon}^{h_j+\varepsilon}
\underbrace{\bigl[\varepsilon^2 u^h_{xx} - f(u^h)
\bigr]}_{\scriptscriptstyle \mathscr L^b(u^h)} \, u^h_x \, dx.
\end{equation}
We apply \eqref{eq: uhx leq eps}, \eqref{ineq: Lbuh leq alpha}
into \eqref{eq: 3 AC part}, and obtain
\begin{equation}\label{eq: estimate of I1}
|I_1| \, \leq \, C \, \alpha(r).
\end{equation}
See also the analytical calculation at Remark \ref{remexp2}.
\\
\vspace{0.3cm}
\\
\textit{Estimate of} $ \mathit{I_2:}$
\\
With $ \, \tau_j^h:=\gamma^j u^h_x\, $ we integrate by parts
twice the first term, to get
\begin{eqnarray}
I_2
& := &
\big\langle
L_{2,\varepsilon}^h(\upsilon)
\;,\;
\tau_j^h
\big\rangle
\nonumber
\\
& = &
\varepsilon^2
\big\langle
 \upsilon_{xx}
\;,\; \tau_j^h \big\rangle \;-\; \big\langle
f'(u^h)\upsilon \;,\; \tau_j^h \big\rangle \nonumber
\\
& = &
\varepsilon^2
\big\langle
 \upsilon
\;,\; (\tau_j^h)_{xx} \big\rangle \;-\; \big\langle
f'(u^h)\upsilon \;,\; \tau_j^h \big\rangle \nonumber
\\
& = &
\varepsilon^2
\big\langle
 \upsilon
\;,\;
\gamma_{xx}^j u^h_x
\big\rangle
\,+\,
2
\varepsilon^2
\big\langle
 \upsilon
\;,\; \gamma_x^j (u^h_x)_x \big\rangle \,+\, \big\langle
\upsilon \;,\; \gamma^j \big[\varepsilon^2  \big(u^h_x)_{xx}
\,-\, f'(u^h) u^h_x\big] \big\rangle. \label{eq: 5 AC part}
\end{eqnarray}
In the above, we used that $ \tau^h_j, (\tau^h_j)_x $ vanish at $
x=0,1.$

By \eqref{eq: derivative of uh}, \eqref{eq: uxxh}, \eqref{ineq:b lemma 7.5 CP}, \eqref{ineq: cons2 of lemma 7.5 CP},
\eqref{eq: 100.7}, \eqref{eq: gamma derivative order} we have
\begin{equation}\label{eq: 6 AC part}
\big| \varepsilon^2 \gamma_{xx}^j u^h_x \big|
\leq C \varepsilon^{-1} \beta(r)
\qquad \mbox{and} \qquad
\big|\varepsilon^2
\gamma_x^j (u^h_x)_x  \big|
\leq C \varepsilon^{-1} \beta(r).
\end{equation}

Considering third term in \eqref{eq: 5 AC part}, by \eqref{eq:
100.3} and \eqref{eq: 100.5} we have
\begin{equation}\label{eq: 7 AC part}
\varepsilon^2 (u^h_x)_{xx}\, - \, f'(u^h)u^h_x \;=\;
0\qquad\mbox{except  in}\quad [h_j - \varepsilon , \, h_j +
\varepsilon].
\end{equation}
By \eqref{eq: derivative of uh}, \eqref{eq: u xxx form} we have,
for $  |x-h_j| < \varepsilon$
\begin{flalign*}
I_{2,1}&:= \gamma^j \big[\varepsilon^2  \big(u^h_x)_{xx} \,-\,
f'(u^h) u^h_x\big]
\\
&= \varepsilon^2 \Big[ \chi_{xxx}^j \, \big(\phi^{j+1}-\phi^j\big)
+ 3 \chi_{xx}^j \, \big(\phi^{j+1}_x-\phi^j_x\big) + 3 \chi_x^j
\, \big(\phi_{xx}^{j+1}-\phi_{xx}^j\big) + (1-\chi^j)\phi^j_{xxx}
+ \chi^j\phi^{j+1}_{xxx} \Big]
\\
& \qquad - f'\big(\big(1 - \chi^j\big) \, \phi^j +
\chi^j \, \phi^{j+1}\big) \big[\chi_x^j \,
\big(\phi^{j+1}-\phi^j\big)  + \big(1-\chi^j\big) \, \phi_x^j +
\chi^j \, \phi_x^{j+1} \big].
\end{flalign*}
Using \eqref{eq: phij solves BS}, it follows that
\begin{multline}\label{eq: 9 AC part}
I_{2,1} = \varepsilon^2 \Big[ \chi_{xxx}^j \,
\big(\phi^{j+1}-\phi^j\big) + 3 \chi_{xx}^j \,
\big(\phi^{j+1}_x-\phi^j_x\big) + 3 \chi_x^j \,
\big(\phi_{xx}^{j+1}-\phi_{xx}^j\big) \Big]
\\ + f'\big(u^h\big)
\chi_x^j \, \big(\phi^{j+1}-\phi^j\big) + I_{2,2 },
\end{multline}
where
\begin{equation}\label{eq: 10 AC part}
I_{2,2 } = (1-\chi^j) f'(\phi^j) \phi^j_{x} + \chi^j
f'(\phi^{j+1}) \phi^{j+1}_{x} \,-\, f'\big((1 - \chi^j) \, \phi^j
+ \chi^j \, \phi^{j+1}\big) \big[ \big(1-\chi^j\big) \, \phi_x^j
+ \chi^j \, \phi_x^{j+1} \big].
\end{equation}

To estimate the term $ I_{2,2 }, $ we employ \eqref{eq: Lagrange
interp}-\eqref{eq: Lagrange interp2}, for $ \chi:=\chi^j $, and
\begin{equation}\label{eq: 11 AC part}
F(s) \, := \, f'\big(\theta_1(s)\big) \, \theta_3(s) ,\qquad s
\in [0,1],
\end{equation}
where
\begin{equation}\label{eq: 12 AC part}
\theta_1(s) \, := \,
(1-s) \phi^j + s \phi^{j+1},
\qquad
\theta_3(s) \, := \,
(1-s)\phi^j_{x}
 + s \phi^{j+1}_{x},
\qquad s \in [0,1],
\end{equation}
to obtain
\begin{equation}\label{eq: 13 AC part}
\begin{split}
I_{2,2 }  = &(1-\chi^j) \Big[ (\phi^{j+1}-\phi^j)^2
\int_0^{\chi^j} s \, \theta_2(s) f^{(3)}(\theta_1(s))\, \,
ds\\
&+ \; 2 \, (\phi_{x}^{j+1}-\phi_{x}^j) \, (\phi^{j+1}-\phi^j)\,
\int_0^{\chi^j} \!\! s \, f''(\theta_1(s)) \, ds \Big]
\\
&  \phantom{=}\; + \chi^j \Big[ (\phi^{j+1}-\phi^j)^2
\int_{\chi^j}^1 (1-s) \, \theta_2(s) f^{(3)}(\theta_1(s))\, \, ds
\\
& \phantom{ = \; + \chi^j \Big[ } \, + \, 2 \,
(\phi_{x}^{j+1}-\phi_{x}^j) \, (\phi^{j+1}-\phi^j)\,
\int_{\chi^j}^1 \!\! (1-s) \, f''(\theta_1(s)) \, ds \Big].
\end{split}
\end{equation}

Combining \eqref{eq: lemma 8.2 CP}, \eqref{ineq: estimate diff
phij}, \eqref{eq: deriv cutoff}, \eqref{eq: 5 proof for Bups},
\eqref{eq: 5 AC part}-\eqref{eq: 9 AC part}, \eqref{eq: 13 AC
part}, and taking into account that each of the integrals in
\eqref{eq: 5 AC part} is taken over an interval of length $
\mathcal O(\varepsilon), $ we conclude that
\begin{equation}\label{eq: estimate of I2}
|I_2|
\, \leq \, C \, \varepsilon^{-1/2} \, \mathcal B_\varepsilon^{1/2}[\upsilon] \, \big[\alpha(r) \, + \, \beta(r)\big].
\end{equation}
\\
\vspace{0.3cm}
\textit{Estimate of} $ \mathit{I_3:}$
\\
In view of \eqref{eq: derivative of uh}, \eqref{eq: phi nx leq ceps}, \eqref{eq:122 estimate T3}, we obtain
\begin{equation}\label{eq: estimate of I3}
|I_3|
\, \leq \, C \, \varepsilon^{-2} \, \mathcal B_\varepsilon[\upsilon].
\end{equation}

\vspace{0.5cm}

Gathering together \eqref{eq: estimate of I1}, \eqref{eq:
estimate of I2}, \eqref{eq: estimate of I3}, we get
\begin{equation}\label{eq: 2 AC part}
|I_1+I_2+I_3|
\leq C
\Big(
\alpha(r)
\,+\,
 \varepsilon^{-1/2} \mathcal B_\varepsilon^{1/2}[\upsilon] \, \big[\alpha(r)+\beta(r)\big]
 \,+\,
\varepsilon^{-2} \mathcal B_\varepsilon[\upsilon]
\Big).
\end{equation}

\vspace{0.5cm}

Combining \eqref{eq:2coord ODE 11}, \eqref{eq:norm of inv
matrices}, \eqref{ineq:estimate  T123}, and \eqref{eq: 2 AC part},
the final estimate \eqref{ineq: estimate hdot} follows.
\end{proof}

\begin{remark} In view of the estimate of Main Theorem \ref{prop h velocity} for the dynamics,
considering the main order of the Allen-Cahn part (which follows
from \eqref{eq: 2 AC part})
in comparison with this of Carr-Pego \cite{CarrPego}, 
let us emphasize the difference in our approach. We estimated separately the contributions of $ (S - \hat{S})^{-1} $ and the right side of the equations of motion, while the analysis in \cite{CarrPego} is carried out only after having applied the inverse $ (S - \hat{S})^{-1} $ of the coefficient matrix on the right side of the system; see in particular the first equations of (3.1) and (3.4) therein. Nevertheless, our result is analogous even if only the Allen-Cahn part is considered.

The estimate \eqref{ineq: estimate hdot} shows that the main
order in the dynamics will be given by the contribution of
$B_{\varepsilon}[\upsilon]$ at the terms where the exponentially
small quantities $\alpha(r)$, $\beta(r)$ do not act.

Using the spectrum of the linearized Cahn-Hilliard / Allen-Cahn
operator, and a quite wide class of weights $\delta(\eps)>0$,
$\mu(\eps)\geq 0$, we shall show that for initial data close
enough to the manifold $ \mathcal M$ (through a form
$\tilde{\mathcal A}_\varepsilon[\upsilon]$) the layer dynamics
will be stable, and will remain exponentially small in
$\varepsilon$ if the initial data are exponentially small. (The
form $\tilde{\mathcal A}_\varepsilon[\upsilon]$ will involve $u^h$
and up to second derivatives of $\upsilon$, and as we shall prove,
satisfies $\tilde{\mathcal A}_\varepsilon[\upsilon]\geq c\mathcal
B_{\varepsilon}[\upsilon]$.) This stability profile is in
agreement to Sections 1.2-1.3 and (1.2)-(1.3) of \cite{CarrPego}.
\end{remark}

\subsection{The slow channel}\label{slowch}
We define the slow channel for \eqref{eq:deltaCH-AC} to be
\begin{equation}\label{eq: def ourslow ch0}
\Gamma_\rho
:=
\Big\{
u(x): \; u = u^h + \upsilon, \; \; \tilde{\mathcal A}_\eps[\upsilon] \leq c \, \gamma(\eps) \, \alpha^2(r)
\Big\}
\end{equation}
with
\begin{equation}\label{eq:def gammaeps0}
\gamma(\eps)
:=
\big({\depsq} \eps^{-3} \, + \, {\mepsq} \eps
\; + \;
\eps^{-4}
\big)/\eta(\eps).
\end{equation}
We will study the orbit 
$ \, u(x, t) = u^{h(t)}(x) + \upsilon(x, t) \, $ of \eqref{eq:integrated IACH} as long as
\begin{equation}\label{eq:2 assump tB}
\Big(
\frac{{\depsq} \, \eps^{-7}}{ \neps^2}
\,+ \,
\frac{{\mepsq}  \, \eps^{-3}}{ \neps^2}
\; + \;
\frac{\deps \, \eps^{-5}}{\neps}
\; + \;
\frac{\deps \, \eps^{-2} }{\deps + \meps \eps^2}
\Big)
\;\tilde{\mathcal B}_\eps[\upsilon] \ll \eta(\eps)
\end{equation}
(conditions \eqref{eqasumpmd2} and \eqref{eq:2 assump tB} arise in \eqref{estA55}-\eqref{eq: assumption tildeB} further below); for instance, for the condition \eqref{eq:2 assump tB} it suffices to have
$$
\tilde{\mathcal B}_\eps[\upsilon] \ll \eps^7 \eta(\eps).
$$
For later use, notice that clearly $ \, \tilde{\mathcal B}_{\varepsilon} \geq ({\deps} + {\meps} ) \mathcal B_{\varepsilon} \, $ and the estimate  \eqref{eq: 5 proof for Bups} directly yields \begin{equation}\label{ineq: bound tildeBups} \| \upsilon \|^2_{L^\infty} \leq \frac{1}{{\deps} + {\meps}} \, \frac{1 + \varepsilon}{\varepsilon}  \, \tilde{\mathcal B}_\varepsilon[\upsilon]. \end{equation}
The following Lemma will be also useful.
\begin{lemma}\label{lemma my similar to 4.1 BX}For $ \upsilon \in C^2[0,1] $ with $ \upsilon_x(0)  = \upsilon_x(1) = 0 $ we have
\begin{equation}\label{eq: 1 mylemma 4.1 BX}
\| \upsilon_x \|_{L^\infty}
\leq
\deps^{-1/2} \, \varepsilon^{-1} \, \tilde{\mathcal B}_\varepsilon^{1/2}[\upsilon].
\end{equation}
\end{lemma}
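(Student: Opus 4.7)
The plan is to exploit the boundary condition $\upsilon_x(0)=0$ directly, rather than going through an $L^\infty$ interpolation argument (as in the preceding Lemma \ref{lemma 4.1 BX}). Since $\upsilon_x$ vanishes at the left endpoint, we may write, for any $x\in[0,1]$,
\[
\upsilon_x(x) \;=\; \int_0^x \upsilon_{xx}(y)\,dy,
\]
and so by the Cauchy--Schwarz inequality
\[
|\upsilon_x(x)|^2 \;\leq\; x\int_0^x \upsilon_{xx}^2(y)\,dy \;\leq\; \|\upsilon_{xx}\|^2.
\]
Taking the supremum over $x\in[0,1]$ gives $\|\upsilon_x\|_{L^\infty}^2 \leq \|\upsilon_{xx}\|^2$. (Of course one could equivalently use the boundary condition at $x=1$, so the estimate is symmetric.)

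Next, we read off from the definition \eqref{eq: define Btilde} of $\tilde{\mathcal B}_\varepsilon[\upsilon]$ that
\[
\deps\,\varepsilon^2\,\|\upsilon_{xx}\|^2 \;\leq\; \int_0^1\!\bigl[\deps\,\varepsilon^2\upsilon_{xx}^2 + (\deps+\meps\varepsilon^2)\upsilon_x^2 + (\deps+\meps)\upsilon^2\bigr]dx \;=\;\tilde{\mathcal B}_\varepsilon[\upsilon],
\]
since $\deps>0$ and the other two integrands are non-negative. Rearranging yields $\|\upsilon_{xx}\|^2 \leq \deps^{-1}\varepsilon^{-2}\tilde{\mathcal B}_\varepsilon[\upsilon]$, and combining this with the pointwise bound above gives exactly \eqref{eq: 1 mylemma 4.1 BX}.

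No significant obstacle is expected: the argument is just a one-line fundamental-theorem-of-calculus estimate followed by extracting the highest-order term of $\tilde{\mathcal B}_\varepsilon$. The only subtle point is to recognise that the boundary condition $\upsilon_x(0)=0$ allows a cleaner bound than the Sobolev-type interpolation used in Lemma \ref{lemma 4.1 BX} (which bounded $\|\upsilon\|_{L^\infty}$ without any boundary assumption on $\upsilon$, and consequently lost a factor of $\varepsilon^{1/2}$ compared to the present sharper gradient estimate).
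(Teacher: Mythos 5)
Your proof is correct and follows essentially the same route as the paper's: both write $\upsilon_x(x)=\int_0^x\upsilon_{xx}\,dy$ using $\upsilon_x(0)=0$, bound this by $\|\upsilon_{xx}\|$ via Cauchy--Schwarz on an interval of length at most one, and then read off $\deps\,\varepsilon^2\|\upsilon_{xx}\|^2\leq\tilde{\mathcal B}_\varepsilon[\upsilon]$ from the definition \eqref{eq: define Btilde}. The only difference is that you spell out the Cauchy--Schwarz step explicitly, where the paper compresses it into a single inequality chain.
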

\begin{proof} We have
\begin{equation}\label{eq: 3 mylemma 4.1 BX}
|\upsilon_x(x)| = \left| \int_0^x \upsilon_{xx} \, dy \right|  \leq  \| \upsilon_{xx} \|
\leq \deps^{-1/2} \, \varepsilon^{-1} \, \tilde{\mathcal B}_\varepsilon^{1/2}[\upsilon].
\end{equation}
\end{proof}
Next, besides the condition \eqref{eq222} we assume that the coefficients $ \delta(\eps),  \mu(\eps) $ satisfy the condition
\begin{equation}\label{eqasumpmd2}
{\depsq} \, \eps^{-6} \leq c \eta(\eps) \neps
\end{equation}
for some $ c>0 $ small enough, with $\neps:= \deps+\meps $ and the $ \eta(\eps) $ given in \eqref{def of etaeps}.

The result about the attractiveness and the slow evolution of states within the channel \eqref{eq: def ourslow ch0} is stated in the following theorem.
\begin{theorem}
Let $ \, u(x, t) = u^{h(t)}(x) + \upsilon(x, t) \, $ be an orbit of \eqref{eq:deltaCH-AC} starting outside but near the slow channel \eqref{eq: def ourslow ch0} in the sense that $ \upsilon(\cdot, 0) $ satisfies condition \eqref{eq:2 assump tB}. Then $ \tilde{\mathcal B}_\eps[\upsilon] $ will decrease exponentially until $ u $ enters the channel and will remain in the channel following the approximate manifold $ \mathcal M $ with speed $ \mathcal O(e^{-c/r}), $ thus staying in the channel for an exponentially long time. It can leave $ \Gamma_\rho $ only through the ends of the channel i.e at a time that $ (h_j-h_{j-1}) $ is reduced to $ \frac{\eps}{\rho} $ for some $ j.$
\end{theorem}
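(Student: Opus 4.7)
The plan is to work with the quadratic form $\tilde{\mathcal A}_\eps[\upsilon]$ as a Lyapunov-type functional and derive a differential inequality of the form
\begin{equation*}
\tfrac{d}{dt}\tilde{\mathcal A}_\eps[\upsilon] \;\leq\; -\,c\,\eta(\eps)\,\tilde{\mathcal A}_\eps[\upsilon] \;+\; C\,\alpha^2(r)\,\gamma(\eps)\,\eta(\eps),
\end{equation*}
so that Gronwall's lemma forces $\tilde{\mathcal A}_\eps[\upsilon]$ to decay exponentially toward the channel threshold $c\gamma(\eps)\alpha^2(r)$ and then remain there. First I would differentiate $\tilde{\mathcal A}_\eps[\upsilon]=\langle S_\eps^h(\upsilon),\upsilon\rangle$ in time, picking up a main term $2\langle S_\eps^h(\upsilon),\upsilon_t\rangle$ and a parametric term $\langle (\partial_t S_\eps^h)\upsilon,\upsilon\rangle$ coming from the $h$-dependence of the symmetric operator. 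For the first term I substitute $\upsilon_t$ from \eqref{eq:2coord ODE 2}, which decomposes as $A_\eps(u^h)+L_\eps^h(\upsilon)-\deps(f^h\upsilon^2)_{xx}+\meps f^h\upsilon^2-\sum_j u_j^h\dot h_j$. The part $\langle S_\eps^h(\upsilon),L_\eps^h(\upsilon)\rangle$ yields (after symmetrization) the crucial dissipative term $-\|S_\eps^h(\upsilon)\|^2$, which by Lemma \ref{lemma BleqA} dominates $-\eta(\eps)\tilde{\mathcal A}_\eps[\upsilon]$.

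Next I would bound the remaining terms using Cauchy–Schwarz together with $2ab\leq \kappa a^2+b^2/\kappa$, so that each contribution splits into a piece absorbable by a small fraction of $\|S_\eps^h(\upsilon)\|^2$ and a residual piece whose size I estimate explicitly. For $\langle S_\eps^h(\upsilon),A_\eps(u^h)\rangle$ the residual is controlled by $\|A_\eps(u^h)\|^2$, which is exponentially small in $\eps$ of order $\alpha^2(r)$ times the appropriate polynomial weights in $\deps,\meps,\eps$, producing exactly the factor $\gamma(\eps)\eta(\eps)\alpha^2(r)$ appearing in $\Gamma_\rho$. For the parametric term and for $\langle S_\eps^h(\upsilon),\sum_j u_j^h\dot h_j\rangle$ I would invoke the velocity bound from Theorem \ref{prop h velocity}: $|\dot h_j|$ is a linear combination of an exponentially small piece (the $\alpha(r)$ terms) and a piece proportional to powers of $\mathcal B_\eps^{1/2}[\upsilon]\leq C\tilde{\mathcal B}_\eps^{1/2}[\upsilon]$, each of which when squared is small relative to $\eta(\eps)$ precisely under the hypothesis \eqref{eqasumpmd2} and the standing assumption $\eps^2\meps\geq C_0\deps$.

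The quadratic-in-$\upsilon$ nonlinearities $(f^h\upsilon^2)_{xx}$ and $f^h\upsilon^2$ are treated by first pushing the $\partial_{xx}$ onto $S_\eps^h(\upsilon)$ (or equivalently onto test functions in the weak form of $\tilde{\mathcal A}$) and using the uniform bound $|f^h|\leq C$ implied by \eqref{eq:121 estimate T3}, together with $\|\upsilon\|_{L^\infty}^2\leq C\tilde{\mathcal B}_\eps[\upsilon]/(\eps(\deps+\meps))$ from \eqref{ineq: bound tildeBups} and the gradient bound in Lemma \ref{lemma my similar to 4.1 BX}. The resulting cubic-in-$\upsilon$ estimates are all of the form $C\,\omega(\eps)\,\tilde{\mathcal B}_\eps[\upsilon]\cdot\|S_\eps^h(\upsilon)\|$ with prefactors $\omega(\eps)$ that, upon applying $2ab\leq\kappa a^2+b^2/\kappa$ with $\kappa$ a small fraction of $\eta(\eps)$, fit inside the parenthesis of condition \eqref{eq:2 assump tB}. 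Thus precisely that condition is what guarantees that these nonlinear contributions do not spoil the differential inequality.

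The main technical obstacle is this last step: identifying the exact polynomial weights in $\deps$, $\meps$, $\eps$ multiplying $\tilde{\mathcal B}_\eps[\upsilon]$ in each of the Cahn–Hilliard error terms (the fourth-order derivatives on $f^h\upsilon^2$ are genuinely delicate and produce the $\deps^2\eps^{-7}$ contribution in \eqref{eq:2 assump tB}), and keeping track of how they assemble under \eqref{eqasumpmd2} so that everything is controlled by $\eta(\eps)$. Once the differential inequality is established, the conclusion follows in a standard way: while $\tilde{\mathcal A}_\eps[\upsilon]$ exceeds the channel threshold the dissipative term dominates, giving exponential decay at rate $\eta(\eps)$; once inside, the balance is maintained with $\tilde{\mathcal A}_\eps[\upsilon]=\mathcal O(\gamma(\eps)\alpha^2(r))$, which through the velocity bound \eqref{ineq: estimate hdot} forces $|\dot h_j|=\mathcal O(e^{-c/r})$, so the configuration $h(t)\in\Omega_\rho$ drifts only exponentially slowly. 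The orbit can therefore leave $\Gamma_\rho$ only when the minimal layer separation shrinks to $\eps/\rho$, i.e., through the $\Omega_\rho$ boundary, as asserted.
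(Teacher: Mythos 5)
Your plan matches the paper's proof essentially step by step: differentiate the quadratic form in time, pick up a parametric term from the $h$-dependence, substitute $\upsilon_t$ from \eqref{eq:2coord ODE 2}, isolate the dissipative term $-\|S_\eps^h(\upsilon)\|^2$, estimate the other contributions via Cauchy--Schwarz and Young's inequality using the velocity bound and the polynomial weights tracked by \eqref{eqasumpmd2}, \eqref{eq:2 assump tB}, and close with Lemma \ref{lemma BleqA} and Gronwall. The only cosmetic deviations are that you write $\tilde{\mathcal A}_\eps[\upsilon]$ as $\langle S_\eps^h(\upsilon),\upsilon\rangle$ rather than $-\langle S_\eps^h(\upsilon),\upsilon\rangle$, and you differentiate $S_\eps^h$ directly where the paper first differentiates $L_\eps^h$ and then rewrites $\langle S_\eps^h(\upsilon),L_\eps^h(\upsilon)\rangle$ as $\|S_\eps^h(\upsilon)\|^2$ plus the $\deps$-corrections \eqref{ineq: Teps plast term}--\eqref{ineq: Teps last term}; your phrase ``after symmetrization'' glosses over that last step but does not change the argument.
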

\begin{proof}
Applying \eqref{ineq: BleqA} combined with the estimate $ \, \mathcal B_{\eps} \leq \neps^{-1} \tilde{\mathcal B}_{\eps} \, $ into \eqref{ineq: estimate hdot} we immediately get
\begin{multline}\label{ineq:hdotAantiB}
| \dot{h}_i|
\leq C
{\deps}
\big(
\eps^{-5/2}  \neps^{-1/2} \tilde{\mathcal A}^{1/2}_{\eps}[\upsilon]
 \,+\,
 \eps^{-3}  \neps^{-1} \tilde{\mathcal A}_{\eps}[\upsilon]
\big)
\,+\,
C \big(\deps + \meps \eps^2\big) \eps^{-1} \alpha(r)
\\
\;+\;
C
{\meps}
\bigl(
\big(\alpha(r)+\beta(r)\big) \, \eps^{1/2} \neps^{-1/2} \tilde{\mathcal A}^{1/2}_{\eps}[\upsilon]
\,+\,
\eps^{-1} \neps^{-1} \tilde{\mathcal A}_{\eps}[\upsilon]
\bigr).
\end{multline}
We have to estimate the growth of $ \tilde{\mathcal A}_{\eps}[\upsilon(\cdot,t)], $ so that to prove the attractiveness of the slow channel, and then combined with \eqref{ineq:hdotAantiB}, we will get an upper bound of the layers' speed within the channel;
see \eqref{ineqq:hdotAantiB22}. To this end, we set
\begin{eqnarray}
I_\eps[\upsilon]
& := &
\frac{1}{2} \frac{d}{dt} \tilde{\mathcal A}_\eps[\upsilon]
\nonumber
\\
& = &
\frac{1}{2} \frac{d}{dt} \bm{\big\langle} - L_{\eps}^h(\upsilon), \, \upsilon \bm{\big\rangle}
\nonumber
\\
& = &
\bm{\big\langle} -  \frac{1}{2} \frac{\partial}{\partial t}  L_{\eps}^h(\upsilon), \; \upsilon \bm{\big\rangle}
\;-\;
\frac{1}{2} \bm{\big\langle} L_{\eps}^h(\upsilon), \; \upsilon_t \bm{\big\rangle}\label{eq:b abbrev I}
\end{eqnarray}
where, we recall by \eqref{eq: linearized operator2},
\begin{flalign}\label{eq:the linearized operator2pali}
& &L_\eps^h(\upsilon)
:=
- \deps \;
\underbrace{
\bigl(
\eps^2 \upsilon_{xx} -  f'(u^h)\upsilon
\bigr)_{xx}
}_{\scriptscriptstyle L_{1,\eps}^h(\upsilon)  = \mbox{\tiny the linearized CH part}}
 + \;
{\meps} \underbrace{
\bigl(
\eps^2 \upsilon_{xx}  -  f'(u^h)\upsilon
\bigr)
}_{\scriptscriptstyle L_{2,\eps}^h(\upsilon)  = \mbox{\tiny the linearized AC part}}.
& &
\tag{\ref{eq: linearized operator2}}
\end{flalign}
In order to write $ I_{\eps}[\upsilon] $ in a more convenient form, we first notice the pointwise estimate
\begin{eqnarray}
\frac{\partial}{\partial t}  L_{\eps}^h(\upsilon)
&=&
\frac{\partial}{\partial t}
\left[
- \deps \,
\big(
\eps^2 \upsilon_{xx} -  f'(u^h)\upsilon
\big)_{xx}
\; + \;
{\meps} \,
\big(\eps^2 \upsilon_{xx}  -  f'(u^h) \upsilon \big)
\right]
\nonumber
\\
&=&
L_{\eps}^h(\upsilon_t)
\;+\; \deps \big(\big(f'(u^h)\big)_t \upsilon \big)_{xx}
\;-\; {\meps} \big(f'(u^h)\big)_t \upsilon
\label{eq: pointwise estim 1 slow chan}
\end{eqnarray}
and note also that integration by parts yields
\begin{equation}\label{eq: int by parts slow channel 1}
\bm{\big\langle} L_{\eps}^h(\upsilon_t), \; \upsilon \bm{\big\rangle}
\; = \;
\bm{\big\langle}  \upsilon_t , \;  L_{\eps}^h(\upsilon) \bm{\big\rangle}
\;-\;
\deps \bm{\big\langle}  \upsilon_t , \; \big(f'(u^h)\upsilon
\big)_{xx} \bm{\big\rangle}
\;+\;
\deps \bm{\big\langle}  \upsilon_t , \;  f'(u^h)\upsilon_{xx} \bm{\big\rangle}
\end{equation}
where the boundary terms vanish due to the zero Neumann conditions on $ \upsilon_x, \upsilon_{xxx} $ and \eqref{eq: zero BV Wprimeuhx}.

Therefore, by \eqref{eq:b abbrev I}, \eqref{eq: pointwise estim 1 slow chan}, \eqref{eq: int by parts slow channel 1} we get
\begin{eqnarray}\label{eq:1 estimate I}
I_\eps[\upsilon]
& = &
- \bm{\big\langle} L_{\eps}^h(\upsilon), \, \upsilon_t \bm{\big\rangle}
\; + \;
\frac{\deps}{2} \bm{\Big\langle} \big(f'(u^h)\upsilon
\big)_{xx} , \; \upsilon_t  \bm{\Big\rangle}
\;-\;
\frac{\deps}{2} \bm{\Big\langle}   f'(u^h)\upsilon_{xx} , \; \upsilon_t   \bm{\Big\rangle}
\nonumber
\\
&  &
\phantom{ - \bm{\big\langle} L_{\eps}^h(\upsilon), \, \upsilon_t \bm{\big\rangle}  }
\;-\;
\frac{\deps}{2}
\bm{\Big\langle} \Big(\big(f'(u^h)\big)_t \upsilon \Big)_{xx} , \; \upsilon
\bm{\Big\rangle}
\;+\;
\frac{\meps}{2}
\bm{\left\langle}
\big(f'(u^h)\big)_t  \upsilon
, \; \upsilon
\bm{\right\rangle}
\nonumber
\\
&=&
T_\eps[\upsilon]
\;-\;
\frac{\deps}{2}  \bm{\Big\langle} \Big(\big(f'(u^h)\big)_t \upsilon \Big)_{xx} , \; \upsilon \bm{\Big\rangle}
\;+\;
\frac{\meps}{2}  \bm{\Big\langle}
\big(f'(u^h)\big)_t  \upsilon
, \; \upsilon
\bm{\Big\rangle}
\end{eqnarray}
for
\begin{equation}\label{eq: def 1st term I}
T_\eps[\upsilon]
\; := \;
- \bm{\big\langle} S_{\eps}^h(\upsilon), \, \upsilon_t \bm{\big\rangle}
\end{equation}
with $  S_{\eps}^h $ the symmetric operator (corresponding to $L_{\eps}^h$) given in \eqref{eq:def of S self adjoint}.

Regarding the last term in \eqref{eq:1 estimate I}, we use Lemma \ref{lemma 4.1 BX}, and notice that the support of each $ u_j^h $ is contained in an interval of length $2 \eps \, $ where $ \, |u_j^h| \leq c \eps^{-1}, \,$ therefore
\begin{equation}\label{ineq:ujhL1}
\|u_j^h\|_{_{L^1}} = \mathcal O(1),
\end{equation}
so we obtain
\begin{eqnarray}
{\meps} \, \bm{\Big\langle}
\big(f'(u^h)\big)_t  \upsilon
, \; \upsilon
\bm{\Big\rangle}
& \leq &
{\meps} \, \|\upsilon\|^2_{_{L^\infty}} \, \|f''(u^h)\|_{_{L^\infty}} \,  \sum_{j=1}^N \|u_j^h\|_{_{L^1}}  |\dot{h}_j|
\nonumber
\\
& \leq &
C\, \eps^{-1} \, {\meps} \, \mathcal B_\eps[\upsilon] \;  \sum_{j=1}^N  \, |\dot{h}_j|
\nonumber
\\
& \leq &
C\, \eps^{-1} \,
\big(
{\mepsq} \mathcal B^2_\eps[\upsilon] \; + \; \max_{j} |\dot{h}_j|^2
\big)
\nonumber
\\
& \leq &
C\, \eps^{-1} \,
\Big(
\frac{\mepsq}{\neps^2} \, \tilde{\mathcal B}_{\eps}^2[\upsilon]
\; + \; \max_{j} |\dot{h}_j|^2
\Big).
\label{eq:2b estimate I}
\end{eqnarray}
As for the middle term in \eqref{eq:1 estimate I}, after integrating by parts and using again Lemma \ref{lemma 4.1 BX}, it can be similarly seen that
\begin{eqnarray*}
\bm{\Big\langle} \Big(\big(f'(u^h)\big)_t \upsilon \Big)_{xx} , \; \upsilon \bm{\Big\rangle}
& = &
-
\bm{\Big\langle} \big(f'(u^h)\big)_t \upsilon_{x} , \; \upsilon_x \bm{\Big\rangle}
\; - \;
\bm{\Big\langle} \big(f''(u^h)u^h_x\big)_t \upsilon, \; \upsilon_x \bm{\Big\rangle}
\nonumber
\\
& \leq &
\|\upsilon_{x}\|^2 \, \|f''(u^h)\|_{_{L^\infty}} \,  \sum_{j=1}^N \|u_j^h\|_{_{L^\infty}}  \, |\dot{h}_j|
\nonumber
\\
& & \;+\;
\|\upsilon\|_{_{L^\infty}} \, \|\upsilon_x\|_{_{L^1}} \, \|f'''(u^h)\|_{_{L^\infty}} \,    \|u^h_x\|_{_{L^\infty}}   \,  \sum_{j=1}^N \|u_{j}^h\|_{_{L^\infty}}  \, |\dot{h}_j|
\nonumber
\\
& & \;+\;
\|\upsilon\|_{_{L^\infty}} \, \|\upsilon_x\|_{_{L^1}} \, \|f''(u^h)\|_{_{L^\infty}} \,  \sum_{j=1}^N \|\partial_{h_j}(u_x^h)\|_{_{L^\infty}}   \, |\dot{h}_j|
\nonumber
\\
& \leq &
C  \,
\big(\eps^{-3} + \eps^{-7/2}\big) \, \mathcal B_\eps[\upsilon] \,  \sum_{j=1}^N  \, |\dot{h}_j|
\nonumber
\\
& \leq &
C  \,
\eps^{-7/2} \, \mathcal B_\eps[\upsilon] \;  \sum_{j=1}^N  \, |\dot{h}_j| .
\end{eqnarray*}
In the last inequality we applied \eqref{eq: Lemma 7.10 CP}, \eqref{eq: phi nx leq ceps}, \eqref{eq: lemma 8.2 CP}, \eqref{ineq: estimate diff phij} into \eqref{eq:9 estimate T2} to get
$$ \|\partial_{h_j}(u_x^h)\|_{_{L^\infty}}  = \mathcal O(\eps^{-2}), $$
and additionally \eqref{eq: 1 Lemma 7.9 CP}-\eqref{eq: 2 Lemma 7.9 CP} into \eqref{eq: derivative uh wrt h} to get
$$ \|u_j^h\|_{_{L^\infty}}  = \mathcal O(\eps^{-1}). $$
We also used
\eqref{eq: 5 proof for Bups} 
and as well the estimate
$$
\|\upsilon_x\|_{_{L^1}} <  \eps^{-1} \mathcal B^{1/2}_\eps[\upsilon].
 $$
Therefore,
\begin{eqnarray}\label{eq:3 estimate I}
\deps\,\bm{\Big\langle} \Big(\big(f'(u^h)\big)_t \upsilon \Big)_{xx} , \; \upsilon \bm{\Big\rangle}
& \leq &
C  \, \deps\, \,
\eps^{- 3 - \frac{1}{2}} \, \mathcal B_\eps[\upsilon] \;  \sum_{j=1}^N  \, |\dot{h}_j|
\nonumber\\
& \leq &
C \,
\big(\depsq\, \, \eps^{-6} \,
\mathcal B_\eps^2[\upsilon] \; + \; \eps^{-1} \, \max_{j} |\dot{h}_j|^2
\big)
\nonumber
\\
& \leq &
C
\,
\Big(
\frac{ \depsq\, \, \eps^{-6}}{\neps^2} \;
\tilde{\mathcal B}^2_{\eps}[\upsilon]
\; + \; \eps^{-1} \, \max_{j} |\dot{h}_j|^2
\Big).
\end{eqnarray}
We next want to estimate the first term in \eqref{eq:1 estimate I}. In view of the equation of motion \eqref{eq:2coord ODE 22}
$$\upsilon_t \, = \, A_\varepsilon(u^h) \,+\,
L_\varepsilon^h(\upsilon) \,-\, {\deps} \, \big( f^h \,
\upsilon^2\big)_{xx} \, + \,\mu(\eps) f^h \, \upsilon^2
\,-\, \sum\limits_{j=1}^N u_j^{h}\,\dot{h}_j$$
with (see \eqref{eq:definition2 of f2})
$$
f^h(x) := \int_0^1 (\tau - 1) \; f''(u^h + \tau \upsilon) \; d\tau
$$
we may write $ T_\eps[\upsilon] $ as follows,
\allowdisplaybreaks
\begin{eqnarray*}\label{eq: calc of Teps}
T_\eps[\upsilon]
& = &
- \bm{\big\langle} S_{\eps}^h(\upsilon), \, \upsilon_t \bm{\big\rangle}
\nonumber
\\
& = &
- \, \bm{\Big\langle} S_{\eps}^h(\upsilon) \, , \,
L_{\eps}^h(\upsilon)
\bm{\Big\rangle}
\, - \,
\bm{\Big\langle} S_{\eps}^h(\upsilon) \, , \,
A_{\eps}(u^h)
\bm{\Big\rangle}
\, + \, \deps \,
\bm{\Big\langle} S_{\eps}^h(\upsilon) \, , \,
\big( f^h \, \upsilon^2\big)_{xx}
\bm{\Big\rangle}
\nonumber
\\
&  &
\, - \,
{\meps} \, \bm{\Big\langle} S_{\eps}^h(\upsilon) \, , \,
f^h \, \upsilon^2
\bm{\Big\rangle}
\, + \,
\sum\limits_{j=1}^N
\bm{\Big\langle} S_{\eps}^h(\upsilon) \, , \,
u_j^{h}
\bm{\Big\rangle}
\,\dot{h}_j
\nonumber
\\[0.5em]
& = &
 - \bm{\Big\langle} S_{\eps}^h(\upsilon) \, , \,
S_{\eps}^h(\upsilon)
\bm{\Big\rangle}
\, - \,
T_{2,\eps}[\upsilon]
\,+ \, \deps
T_{3,\eps}[\upsilon]
\, - \,
{\meps} \, T_{4,\eps}[\upsilon]
\,+ \,
T_{5,\eps}[\upsilon]
\nonumber
\\[0.5em]
& &
 -
\deps
\Big[
\bm{\Big\langle}  S_{\eps}^h(\upsilon) \, , \,  \big(f'(u^h)\big)_{x} \upsilon_x \bm{\Big\rangle}
\; + \;
\frac{1}{2} \bm{\Big\langle} S_{\eps}^h(\upsilon)  \, , \, \big(f'(u^h)\big)_{xx} \upsilon  \bm{\Big\rangle}
\Big]
\end{eqnarray*}
where in in the last equality, we substituted the following relation into the first term of the left side,
\begin{equation*}
L_{\eps}^h(\upsilon)
\; = \;
S_{\eps}^h(\upsilon)
\; + \;
\deps
\Big[
\big(f'(u^h)\big)_{x} \upsilon_x
\; + \;
\frac{1}{2} \big(f'(u^h)\big)_{xx} \upsilon
\Big].
\end{equation*}
So, we obtain
\begin{multline}
T_\eps[\upsilon]
=
- \|S_{\eps}^h(\upsilon)\|^2
\, - \,
T_{2,\eps}[\upsilon]
\,+ \, \deps
T_{3,\eps}[\upsilon]
\, - \,
{\meps} \, T_{4,\eps}[\upsilon]
\,+ \,
T_{5,\eps}[\upsilon]
\\
\,-\,
\deps
\Big[
\bm{\Big\langle}  S_{\eps}^h(\upsilon) \, , \,  \big(f'(u^h)\big)_{x} \upsilon_x \bm{\Big\rangle}
\; + \;
\frac{1}{2} \bm{\Big\langle} S_{\eps}^h(\upsilon)  \, , \, \big(f'(u^h)\big)_{xx} \upsilon  \bm{\Big\rangle}
\Big]
\label{eq:d calc of Teps}
\end{multline}
for
\begin{eqnarray*}
T_{2,\varepsilon}[\upsilon]&:=&
\Big\langle S_{\varepsilon}^h(\upsilon) \, , \,
A_{\varepsilon}(u^h) \Big\rangle,
\\
T_{3,\varepsilon}[\upsilon]&:=&
\Big\langle S_{\varepsilon}^h(\upsilon) \, , \, \big(f^h \,
\upsilon^2\big)_{xx} \Big\rangle,
\\
T_{4,\varepsilon}[\upsilon]&:=&
\Big\langle S_{\varepsilon}^h(\upsilon) \, , \, f^h \,
\upsilon^2 \Big\rangle,
\\
T_{5,\varepsilon}[\upsilon]&:=&
\sum\limits_{j=1}^N \Big\langle S_{\varepsilon}^h(\upsilon)
\, , \, u_j^{h} \Big\rangle \,\dot{h}_j.
\end{eqnarray*}
We have
\begin{flalign}\label{ineq: Teps2}
\big|
T_{2,\eps}[\upsilon]
\big|
& =
\left|
\bm{\Big\langle} S_{\eps}^h(\upsilon) \, , \,
A_{\eps}(u^h)
\bm{\Big\rangle}
\right|
\; \leq \;
\| S_{\eps}^h(\upsilon) \| \;
\| A_{\eps}(u^h) \|
\; \leq \;
\frac{1}{4} \| S_{\eps}^h(\upsilon) \|^2
\; + \;
\| A_{\eps}(u^h) \|^2
\nonumber
\\
& \leq
\frac{1}{4} \| S_{\eps}^h(\upsilon) \|^2
\; + \; C \eps^{-4} \alpha^2(r) ,
\tag{\ref{eq:d calc of Teps}a}
\end{flalign}
where we combined \eqref{eq: uh sat BS}, \eqref{eq: uhxx leq eps},
\eqref{ineq: Lbuh leq alpha} to estimate the term $ \| A_{\eps}(u^h) \|.  $ Also we have
\begin{flalign*}
\big(f^h \, \upsilon^2\big)_{xx}
& =
\upsilon^2
\int_0^1 (\tau-1) \; f^{(4)}(u^h + \tau \upsilon)
(u^h_x + \tau \upsilon_x)^2 \; d\tau
\, + \,
\upsilon^2
\int_0^1 (\tau-1) \; f''(u^h + \tau \upsilon)
(u^h_{xx} + \tau \upsilon_{xx}) \; d\tau
\\
&
+ \;
4  \,  \upsilon \, \upsilon_x
\;
\int_0^1 (\tau-1) \; f^{(3)}(u^h + \tau \upsilon)
(u^h_x + \tau \upsilon_x) \; d\tau
\, + \,
 2  (\upsilon_{xx}\upsilon + \upsilon_x^2) \int_0^1 (\tau-1) \; f''(u^h + \tau \upsilon)  \; d\tau
\end{flalign*}
therefore
\begin{flalign*}
\deps \big|T_{3,\eps}[\upsilon]\big|
& =
\deps \left|
\bm{\Big\langle} S_{\eps}^h(\upsilon) \, , \,
\big(f^h \, \upsilon^2\big)_{xx}
\bm{\Big\rangle}
\right|
\\[0.5em]
& \leq
\deps
\;
\| S_{\eps}^h(\upsilon) \|
\;
\| \big(f^h \, \upsilon^2\big)_{xx}  \|
\\[0.5em]
& \leq
\epsilon \| S_{\eps}^h(\upsilon) \|^2
\; + \;
\frac{\depsq}{4\epsilon} \| \big(f^h \, \upsilon^2\big)_{xx}  \|^2
\\[0.5em]
& \leq
\epsilon \| S_{\eps}^h(\upsilon) \|^2
\; + \;
\frac{C \, \depsq}{\epsilon} \,
\Big[
\|\upsilon\|_{L^\infty}^4
\big(
\frac{1}{\eps^4} + \|\upsilon_x\|^4
+ \|\upsilon_{xx}\|^2
\big)
\, + \,
\|\upsilon\|_{L^\infty}^2  \,
\|\upsilon_x\|_{L^\infty}^2 \,
\big(
\frac{1}{\eps^2} + \|\upsilon_{x}\|^2
\big)
\\[0.2em]
&
\hspace{10em}
\, + \,
\|\upsilon\|_{L^\infty}^2  \,
\|\upsilon_{xx}\|^2 \,
\,+\,
\|\upsilon_x\|_{L^\infty}^2 \, \|\upsilon_{x}\|^2
\Big]
\\[0.5em]
& \leq
\epsilon \| S_{\eps}^h(\upsilon) \|^2
+
\frac{C \, \depsq}{\epsilon} \,
\Big(
\|\upsilon\|_{L^\infty}^4 \frac{1}{\eps^4}
+
\|\upsilon\|_{L^\infty}^2  \,
\|\upsilon_{xx}\|^2
+
\|\upsilon\|_{L^\infty}^2  \,
\|\upsilon_x\|_{L^\infty}^2 \frac{1}{\eps^2}
+
\|\upsilon_x\|_{L^\infty}^2 \, \|\upsilon_{x}\|^2
\Big)
\end{flalign*}
for some $ \epsilon $ small enough which is to be determined later on. In the last inequality we used that  $ \|\upsilon\|_{L^\infty}^2  = \mathcal O(1) \, $ as follows by \eqref{eq: assumption1 B upsilon}, \eqref{eq: 5 proof for Bups}.

Then, applying into the above inequality, the estimates \eqref{ineq: bound tildeBups}, \eqref{eq: 1 mylemma 4.1 BX} and the following two estimates that follow directly from the definition \eqref{eq: define Btilde},
\begin{eqnarray}
\|\upsilon_{xx}\|^2
& \leq &
\deps^{\color{red!70!black}-1} \, \eps^{-2} \, \tilde{\mathcal B}_\eps[\upsilon]
\label{ineq:upsxx}
\\[0.5em]
\|\upsilon_x\|^2
& \leq &
(\deps + \meps \eps^2)^{-1} \, \tilde{\mathcal B}_\eps[\upsilon],
\label{ineq:upsx}
\end{eqnarray}
we get
\begin{equation}\label{eq: T3eps}
{\deps}\,
\big|T_{3,\eps}[\upsilon]\big|
 \leq
\epsilon \| S_{\eps}^h(\upsilon) \|^2
\; + \;
\frac{C}{\epsilon}
\Big[
\frac{\depsq \;  \eps^{-6}}{\neps^2}
\; + \;
\frac{\deps \, \eps^{-5}}{\neps}
\; + \;
\frac{\deps \; \eps^{-2} }{\deps + \meps \eps^2}
\Big] \;
\tilde{\mathcal B}^2_\eps[\upsilon].
\tag{\ref{eq:d calc of Teps}b}
\end{equation}
Also we clearly have
\begin{flalign}
{\meps}
\big|T_{4,\eps}[\upsilon]\big|
& = \;
{\meps}
\left|
\bm{\Big\langle}
S_{\eps}^h(\upsilon) \, , \,
f^h \, \upsilon^2
\bm{\Big\rangle}
\right|
\; \leq \;
{\meps}
\| S_{\eps}^h(\upsilon) \| \;
\| f^h \, \upsilon^2  \|
\nonumber
\\[0.5em]
& \leq \;
\epsilon \| S_{\eps}^h(\upsilon) \|^2
\; + \;
\frac{{\mepsq}}{4\epsilon} \| f^h \, \upsilon^2  \|^2
\nonumber
\\[0.5em]
& \leq \;
\epsilon \| S_{\eps}^h(\upsilon) \|^2
\; + \;
\frac{C \, {\mepsq}}{\epsilon} \|\upsilon\|_{L^\infty}^2  \,
\|\upsilon\|^2
\nonumber
\\[0.5em]
& \leq \;
\epsilon \| S_{\eps}^h(\upsilon) \|^2
\; + \;
\frac{C}{ \epsilon}
\; \frac{\mepsq}{\neps^2} \;
\eps^{-1} \; \tilde{\mathcal B}^2_\eps[\upsilon].
\tag{\ref{eq:d calc of Teps}c}
\end{flalign}
In the last inequality we applied the estimate \eqref{ineq: bound tildeBups} for the term $ \|\upsilon\|_{L^\infty} $ and the estimate
\begin{equation}\label{ineq:ups}
\|\upsilon\|^2 \leq \neps^{-1} \tilde{\mathcal B}_\eps[\upsilon]
\end{equation}
which follows immediately from the definition \eqref{eq: define Btilde}.

Using \eqref{ineq:ujhL1}, we get
\begin{flalign}
\big|T_{5,\eps}[\upsilon]\big|
& = \;
\left|
\sum\limits_{j=1}^N
\bm{\Big\langle} S_{\eps}^h(\upsilon) \, , \,
u_j^{h}
\bm{\Big\rangle}
\,\dot{h}_j
\right|
\; \leq \;
\| S_{\eps}^h(\upsilon) \|
\sum\limits_{j=1}^N
\| u_j^{h} \|
\cdot
| \dot{h}_j |
\; \leq \;
C \, \| S_{\eps}^h(\upsilon) \|
\eps^{-1/2}
\Big(
\sum\limits_{j=1}^N
| \dot{h}_j |
\Big)
\nonumber
\\
& \leq \;
\epsilon \,
\| S_{\eps}^h(\upsilon) \|^2
\; + \;
\frac{C}{\epsilon}
\,
\eps^{-1}
\,
\Big(\sum\limits_{j=1}^N | \dot{h}_j |\Big)^2
\nonumber
\\
& \leq \;
\epsilon \,
\| S_{\eps}^h(\upsilon) \|^2
\; + \;
\frac{C}{\epsilon}
\,
\eps^{-1} \max\limits_j | \dot{h}_j |^2.
\tag{\ref{eq:d calc of Teps}d}
\end{flalign}
As for the last two terms in \eqref{eq:d calc of Teps}, we use \eqref{eq: uhx leq eps},\eqref{eq: uhxx leq eps} and then \eqref{ineq:upsx} and \eqref{ineq:ups} respectively, to get:
\begin{flalign}
{\deps}\,
\bm{\Big\langle}  S_{\eps}^h(\upsilon) \, , \,  \big(f'(u^h)\big)_{x} \upsilon_x \bm{\Big\rangle}
& \leq \;
\epsilon \,
\| S_{\eps}^h(\upsilon) \|^2
\; + \;
\frac{C \, {\depsq}}{\epsilon}
\,
\eps^{-2} \,\| \upsilon_x \|^2
\nonumber
\\[0.5em]
& \leq \;
\epsilon \, \| S_{\eps}^h(\upsilon) \|^2
\; +
\;
\frac{C }{\epsilon}
\;
\frac{{\depsq \;
\eps^{-2}}}{\deps + \meps \eps^2}
\; \tilde{\mathcal B}_\eps[\upsilon]
\tag{\ref{eq:d calc of Teps}e}
\label{ineq: Teps plast term}
\\[0.5em]
{\deps} \,
\bm{\Big\langle} S_{\eps}^h(\upsilon)  \, , \, \big(f'(u^h)\big)_{xx} \upsilon  \bm{\Big\rangle}
& \leq \;
\epsilon \,
\| S_{\eps}^h(\upsilon) \|^2
\; + \;
\frac{C \, {\depsq}}{\epsilon}
\,
\eps^{-4} \,\| \upsilon \|^2
\nonumber
\\[0.5em]
& \leq \;
\epsilon \, \| S_{\eps}^h(\upsilon) \|^2
\; + \;
\frac{C }{\epsilon}
\;
\frac{{\depsq \;
\eps^{-4}}}{\neps } \, \tilde{\mathcal B}_\eps[\upsilon]
\tag{\ref{eq:d calc of Teps}f}
\label{ineq: Teps last term}.
\end{flalign}
We then apply \eqref{ineq: Teps2}-\eqref{ineq: Teps last term} into \eqref{eq:d calc of Teps} and next apply the resulted estimate together with \eqref{eq:2b estimate I}, \eqref{eq:3 estimate I} into \eqref{eq:1 estimate I} and use the assumption \eqref{eq222}  to conclude that
\begin{multline}\label{ineq: Aeps est1}
\frac{1}{2} \frac{d}{dt} \tilde{\mathcal A}_\eps[\upsilon]
\; + \;
\big(1 - \frac{1}{4} - 5 \epsilon\big)
\;
\| S_{\eps}^h(\upsilon) \|^2
\; \leq \;
C \;
\Big[ \eps^{-1} \max_{j} |\dot{h}_j|^2
\; + \;
\eps^{-4} \alpha^2(r)
\Big]
\\[0.5em]
\; + \;
C \big[ \frac{{\depsq}}{\neps}
\;
\eps^{-4}
\;+\;
\Big(
\frac{\mepsq \; \eps^{-1}}{\neps^2}
\; + \;
\frac{\depsq \, \eps^{-6}}{\neps^2}
\; + \;
\frac{\deps \, \eps^{-5}}{\neps}
\; + \;
\frac{\deps \, \eps^{-2} }{\deps + \meps \eps^2}
\Big)
\;
\tilde{\mathcal B}_\eps[\upsilon]
\Big]
\, \tilde{\mathcal B}_\eps[\upsilon].
\end{multline}
We apply the estimate \eqref{ineq: estimate hdot} for the term $ \, \max_{j} |\dot{h}_j|^2 $ of the RHS of \eqref{ineq: Aeps est1}, and then substitute the estimate $ \; \mathcal B_\eps[\upsilon] \leq \neps^{-1}\tilde{\mathcal B}_\eps[\upsilon]\; $ and fix any $ \epsilon <\tfrac{3}{20} $ to get
\begin{multline}\label{estA5}
\frac{1}{2} \frac{d}{dt} \tilde{\mathcal A}_\eps[\upsilon]
\; + \;
C \;
\| S_{\eps}^h(\upsilon) \|^2
 \leq
C \;
\big({\depsq} \eps^{-3} \, + \, {\mepsq} \eps
\; + \;
\eps^{-4}\big) \alpha^2(r)
\\
 + \;
C
\Big[
\frac{{\depsq} \, \eps^{-6} }{\neps}
\; + \;
\Big(
\frac{{\depsq} \, \eps^{-7}}{ \neps^2}
\,+ \,
\frac{{\mepsq}  \, \eps^{-3}}{ \neps^2}
\; + \;
\frac{\deps \, \eps^{-5}}{\neps}
\; + \;
\frac{\deps \, \eps^{-2} }{\deps + \meps \eps^2}
\Big)
\;\tilde{\mathcal B}_\eps[\upsilon]
\Big]
\; \tilde{\mathcal B}_\eps[\upsilon].
\end{multline}
Applying the estimates \eqref{ineq: BleqA}-\eqref{ineq: AleqL1eps} into \eqref{estA5} we obtain
\begin{multline}\label{estA55}
\frac{1}{2} \frac{d}{dt} \tilde{\mathcal A}_\eps[\upsilon]
\; + \;
C \;
\eta(\eps) \,\tilde{\mathcal A}_\eps[\upsilon]
\; \leq \;
C \;
\big({\depsq} \eps^{-3} \, + \, {\mepsq} \eps
\; + \;
\eps^{-4}\big) \alpha^2(r)
\\
 + \;
C
\Big[
\frac{{\depsq} \, \eps^{-6} }{\neps}
\; + \;
\Big(
\frac{{\depsq} \, \eps^{-7}}{ \neps^2}
\,+ \,
\frac{{\mepsq}  \, \eps^{-3}}{ \neps^2}
\; + \;
\frac{\deps \, \eps^{-5}}{\neps}
\; + \;
\frac{\deps \, \eps^{-2} }{\deps + \meps \eps^2}
\Big)
\;\tilde{\mathcal B}_\eps[\upsilon]
\Big]
\,
\tilde{\mathcal A}_\eps[\upsilon]
\end{multline}
and in view of the assumptions
\eqref{eq222},\eqref{eqasumpmd2} on $ \deps,\meps, $ as well as the condition \eqref{eq:2 assump tB} for $ \tilde{\mathcal B}_\eps[\upsilon(\cdot,t)], $
we have
\begin{equation}\label{eq: assumption tildeB}
\frac{{\depsq} \, \eps^{-6} }{\neps}
\; + \;
\Big(
\frac{{\depsq} \, \eps^{-7}}{ \neps^2}
\,+ \,
\frac{{\mepsq}  \, \eps^{-3}}{ \neps^2}
\; + \;
\frac{\deps \, \eps^{-5}}{\neps}
\; + \;
\frac{\deps \, \eps^{-2} }{\deps + \meps \eps^2}
\Big)
\;\tilde{\mathcal B}_\eps[\upsilon]
= o\big(\eta(\eps)\big)
\end{equation}
so \eqref{estA55} yields
\begin{equation}\label{estA555}
\frac{d}{dt} \tilde{\mathcal A}_\eps[\upsilon]
\; + \;
c \; \eta(\eps)\;
\tilde{\mathcal A}_\eps[\upsilon]
\; \leq \;
C \; \big({\depsq} \eps^{-3} \, + \, {\mepsq} \eps
\; + \;
\eps^{-4}\big) \; \alpha^2(r).
\end{equation}
Integrating \eqref{estA555} we get
\begin{eqnarray}
\tilde{\mathcal A}_\eps[\upsilon(t)]
& \leq &
\tilde{\mathcal A}_\eps[\upsilon(0)] \, e^{ - c \eta t}
\; + \;
C\,
\gamma(\eps) \, \alpha^2(r)
\big(
1 - e^{ - c \eta t}
\big)
\label{eq:a est6 mathcalA}
\label{est tildeA6}
\\[0.5em]
& \leq &
\max
\big\{
\tilde{\mathcal A}_\eps[\upsilon(0)] \; , \;
C\,
\gamma(\eps) \, \alpha^2(r)
\big\}
\nonumber
\end{eqnarray}
with the $ \gamma(\eps) $ given in \eqref{eq:def gammaeps0}. In view of  \eqref{est tildeA6} and the definition \eqref{eq: def ourslow ch0}  of the slow channel $ \Gamma_\rho $ we deduce that the solution $ u $ evolves exponentially towards $ \Gamma_\rho.$

Applying \eqref{est tildeA6} into \eqref{ineq:hdotAantiB} we get
\begin{multline}\label{ineqq:hdotAantiB22}
| \dot{h}_i|
\leq
C (\deps + \meps \eps^2) \eps^{-1} \alpha(r)
\;+\;
C
\big(\eps^{-2}
\deps
\,+\, \meps
\big)
\eps^{-1} \neps^{-1} \,
\big(
\tilde{\mathcal A}_\eps[\upsilon(0)]
\,+\,
\gamma(\eps) \, \alpha^2(r)
\big)
\\
\;+\;
C\,
\big[\deps \eps^{-3}  \; + \; \big(\alpha(r)+\beta(r)\big) \, \eps^{1/2} \meps\big]
\;
\neps^{-1/2}
\;
\big(
\tilde{\mathcal A}^{1/2}_\eps[\upsilon(0)]
\,+\,
\gamma^{1/2}(\eps) \, \alpha(r)
\big)
\end{multline}
and since in the slow channel \eqref{eq: def ourslow ch0} we have
$\, \mathcal A_\eps[\tilde{\upsilon}(0)] \leq
c \, \gamma(\eps) \,  \alpha^2(r),\,  $
\eqref{ineqq:hdotAantiB22} becomes
\begin{multline}\label{ineq:hdotAantiB222}
| \dot{h}_i|
\leq
C (\deps + \meps \eps^2) \eps^{-1} \alpha(r)
\;+\;
C
\big(
\deps
\,+\, \meps \eps^2
\big)
\eps^{-3} \neps^{-1}
\,
\gamma(\eps) \, \alpha^2(r)
\\
\;+\;
C\,
\big[\deps \eps^{-3}  \; + \; \big(\alpha(r)+\beta(r)\big) \, \eps^{1/2} \meps\big]
\;
\neps^{-1/2}
\,
\gamma^{1/2}(\eps) \, \alpha(r)
\end{multline}
which implies that
\begin{equation*}
| \dot{h}_i|
\leq
C \;
\max\big\{
\, \big(
{\deps}
\,+ \,
\meps \eps^2
\big)
\, \eps^{-1}
\, , \;
\big(
\deps
\,+\, \meps \eps^2
\big)
\eps^{-3} \neps^{-1}
\,
\gamma(\eps) \, \alpha(r)
\, , \;
\deps \,\neps^{-1/2} \,\eps^{-3} \,\gamma^{1/2}(\eps)
\big\} \; \alpha(r)
\end{equation*}
where $ \alpha(r) $ is exponentially small (see the definition \eqref{eq: alphabetapm}, \eqref{eq: alphabeta} and the estimate
\eqref{ineq:alpha estimate}), so provided that
$$
\big(
{\deps}
\,+ \,
\meps \eps^2
\big)
\, \eps^{-1}
\ll
\alpha^{-1} \; ,
\quad
\big(
\deps
\,+\, \meps \eps^2
\big)
\eps^{-3} \neps^{-1}
\,
\gamma(\eps)
\ll
\alpha^{-2},
\quad
\deps \,\neps^{-1/2} \,\eps^{-3} \,\gamma^{1/2}(\eps)
\ll
\alpha^{-1},
$$
by \eqref{ineq:hdotAantiB222} we have
$$ |\dot{h}_i| = \mathcal O(e^{-c/r}) $$
and the solution $ \tilde{u} $ stay in the channel for an exponentially long time.
\end{proof}

\begin{remark}
Let for example
$$\delta(\eps)=\mathcal{O}(1) \quad \mbox{small
enough},$$ and
$$\mu(\eps)=\eps^{-3},$$ then $$n(\eps)=\mathcal{O}(\eps^{-3}),$$
or for example
$$\delta(\eps)=\mathcal{O}(\eps^3) \quad \mbox{small
enough},$$ and
$$\mu(\eps)=\mathcal{O}(1),$$ then $$n(\eps)=\mathcal{O}(1).$$

In both cases, the conditions \eqref{eq222} and \eqref{eqasumpmd2} are
satisfied.
\end{remark}
\section{Mass conserving layer dynamics}\label{section:MSP}
We fix a mass $ M \in (-1,1),$ and consider the mixed problem
\begin{equation}\label{eq:deltaCH-AC2}
u_t = - \deps \, \big(\varepsilon^2 u_{xx} - f(u)\big)_{xx} \; +
\;   \meps \, \big(\varepsilon^2 u_{xx}  - f(u)\big) , \tag{ACH}
\end{equation}
for $ 0 < x < 1, \; t > 0 ,\, $ subject to the boundary conditions
\begin{flalign}
&  &
u_{x}(0, \, t) =  u_{x}(1 , \, t) & \; = 0,
& \tag{BC1} \label{eq:BC1p}
\\
& &
u_{xxx}(0, \, t) & \; = 0,
& \tag{BC2} \label{eq:BC2p}
\end{flalign}
together with the constraint of mass conservation
\begin{equation}\label{eq:MC}
\int_0^1 u(x, t) \, dx = \int_0^1 u(x, 0) \, dx =: M, \qquad t>0,
\tag{MC}
\end{equation}
in place of the Neumann b.c. for $u_{xx}$ at $x=1$; here, we
replaced the fourth b.c. $u_{xxx}(1,t)=0$, used in the previous
Section, by \eqref{eq:MC}.

As we shall see, \eqref{eq:MC}, when the integrated version is
stated, yields the integrated Cahn-Hilliard/ Allen-Cahn equation
with the same b.c. as these of the integrated Cahn-Hilliard
proposed and analyzed by Bates and Xun in \cite{BatesXunI}.

We remind that $f(u):=u^3-u$.

Following \cite{BatesXunI,CarrPego}, we define the first
approximate manifold by
\begin{equation}
\mathcal M = \{u^h: \, h \in \Omega_\rho\},
\end{equation}
with $ u^h $ given in \eqref{eq: def of uh}.

Let $$ \mathrm{M}(h):=\int_0^1u^h(x)\,dx, $$ for $ h \in
\Omega_\rho, $ then, by Lemma 2.1 of \cite{BatesXunI}, $
\mathrm{M}(h) $ is a smooth function of $ h $, and
\begin{equation}\label{l21bx}
\frac{\partial \mathrm{M}}{\partial h_j} = 2 (-1)^{j+1} +
\mathcal O(\varepsilon^{-1} \beta(r)).
\end{equation}

We define the second approximate manifold $ \mathcal M_1 $,
as the constant mass sub-manifold of $ \mathcal M, $
\begin{equation}
\mathcal M_1 = \big\{u^h \in \mathcal M: \,
\int_0^1u^h(x)\,dx=M \big\},
\end{equation}
which will be the proper approximate manifold for the
mass-conserving problem \eqref{eq:deltaCH-AC2}; see also in
\cite{BatesXunI}.

It is clear that $\mathcal M_1$ is smooth, while by
\eqref{l21bx}
 and the Implicit Function Theorem, we see
that $ h_{\scriptscriptstyle N} $ is a smooth function of $ h_1,
\cdots, h_{\scriptscriptstyle N-1} $ if $ u^h \in\mathcal
M_1. $ Thus, $ \mathcal M_1 $ can be parameterized by $
(h_1, h_2, \cdots, h_{\scriptscriptstyle N-1}). $

We set
\begin{equation}
\xi := (\xi_1, \xi_2, \cdots, \xi_{\scriptscriptstyle N-1})\equiv
(h_1, h_2, \cdots, h_{\scriptscriptstyle N-1}),
\end{equation}
and for $ u^h \in \mathcal M_1, $ we will denote $ u^h $ by
$ u^\xi  $, and define
\begin{equation}
u_j^\xi := \frac{\partial u^\xi}{\partial \xi_j} =
 u_j^h + \frac{\partial u^h}{\partial h_{\scriptscriptstyle N}}  \frac{\partial h_{\scriptscriptstyle N}}{\partial h_j},
\qquad\qquad j=1,2,\cdots, N-1,
\end{equation}
where $ u_j^h $ still stands as a notation for $ \partial
u^h/\partial h_j.$

\subsection{The coordinate system}
Motivated by \cite{ABF} and \cite{BatesXunI}, instead of working
with the original problem
\eqref{eq:deltaCH-AC}-\eqref{eq:BC1p}-\eqref{eq:BC2p}-\eqref{eq:MC}
we will work with the integrated problem.

More precisely, we integrate \eqref{eq:deltaCH-AC}, use the
conditions at $ x=0 $ given in \eqref{eq:BC1p}-\eqref{eq:BC2p}
and set
\begin{equation}\label{eq:tilde u}
\tilde{u}(x, t) :=\int_0^x u(y,t) \, dy ,
\end{equation}
to get the integrated CH/AC equation,
\begin{equation}\label{eq:integrated IACH}
\tilde{u}_t \;=\; - \deps \big(\varepsilon^2 \tilde{u}_{xxx} -
W'(\tilde{u}_x)\big)_{x} \; + \; \meps \, \Big(\varepsilon^2
\tilde{u}_{xx}  - \int_0^x W'\big(\tilde{u}_x(y,t)\big) \, dy
\Big), \tag{IACH}
\end{equation}
with the boundary conditions, following directly from
\eqref{eq:tilde u},  \eqref{eq:MC}, \eqref{eq:BC1p} respectively,
\begin{flalign}
& & \tilde{u}(0 , \, t) & \;\, = 0,
 \tag{IBC0} \label{eq:IBC0}
& &
\\
& & \tilde{u}(1 , \, t) & \;\, = M, &   \tag{IMC} \label{eq:IMC}
\\
& & \tilde{u}_{xx}(0, \, t) =  \tilde{u}_{xx}(1 , \, t) & \; = 0.
& \tag{IBC1} \label{eq:IBC1}
\end{flalign}
We may apply standard arguments for establishing the well-posedness of this problem, resulting from the one of the original problem \eqref{eq:deltaCH-AC2}; we outline the basic points in \S\ref{sectionWP} of Appendix.

Here, and for the rest of this section, we have adopted the
notation $W'(u):=f(u)$ and $W(u)\equiv F(u)$ introduced in
\cite{BatesXunI}, since for the case $\delta(\eps):=1$,
$\mu(\eps):=0$, the equation \eqref{eq:integrated IACH} with the
above b.c. coincides exactly with the problem analyzed therein
(see pg. 431).

Equivalently, \eqref{eq:integrated IACH} is written as
$$
\tilde{u}_t = - \deps \, \big(\varepsilon^2 \tilde{u}_{xx} -
\euscr W(\tilde{u}_x) \big)_{xx} \; + \; \meps \, \big(
\varepsilon^2 \tilde{u}_{xx}  -  \euscr W(\tilde{u}_x)\big),$$
for $ \, 0 < x < 1, \; t > 0 , \; $ where $ \euscr W $ is
obviously given by
\begin{equation}\label{eq:definition of mathcalW}
\euscr W(u)(x,t) :=  \int_0^x W'\big(u(y,t)\big) \, dy,
\end{equation}
that is $ (\euscr W(u))_x = W'(u) $ \, with \, $ \euscr
W(u)(0,t)=0. $

Let us also denote by $ \mathrm{A}_\varepsilon $ the spatial
differential operator at the right side of \eqref{eq:integrated
IACH}, that is
\begin{equation}\label{eq:def of Aeps}
 \mathrm{A}_\varepsilon(\tilde{u}) :=
 \deps
\mathrm{A}_{1,\varepsilon}(\tilde{u}) \, + \, \meps
\mathrm{A}_{2,\varepsilon}(\tilde{u}),
\end{equation}
where $ \mathrm{A}_{1,\varepsilon}(\tilde{u}),
\mathrm{A}_{2,\varepsilon}(\tilde{u}) $ stand for the integrated
CH operator and the integrated AC operator respectively,
\begin{equation}\label{eq:def of A12epsilon}
\mathrm{A}_{1,\varepsilon}(\tilde{u}) := -\big(\varepsilon^2
\tilde{u}_{xx}  -   \euscr W(\tilde{u}_x) \big)_{xx} \, = \,
-\big(\mathrm{A}_{_{2,\varepsilon}}(\tilde{u})\big)_{xx}
\qquad\mbox{and}\qquad \mathrm{A}_{_{2,\varepsilon}}(\tilde{u}) :=
\varepsilon^2 \tilde{u}_{xx}  -  \euscr W(\tilde{u}_x).
\end{equation}

To study the dynamics of \eqref{eq:integrated IACH}
in a neighborhood of $ \mathcal M, $ we introduce a coordinate
system relative to $ \mathcal M, $
$$ \tilde{u} \mapsto (\xi, \tilde{\upsilon}), $$
as in \cite{BatesXunI}, in the sense that for a solution $
\tilde{u} $ close to $ \mathcal M $ there exist unique components
$ \tilde{u}^{\xi}, \tilde{\upsilon} $ such that
\begin{equation}\label{eq:coord decomp for int}
\tilde{u}(x, t) = \tilde{u}^{\xi(t)}(x) + \tilde{\upsilon}(x, t).
\end{equation}
More specifically, the approximate solution $\tilde{u}^{\xi}$ is
in $\mathcal M $, and
\begin{equation}\label{eq:ups BCs}
 \tilde{\upsilon}=\tilde{\upsilon}_{xx}=0 \qquad\mbox{at}\qquad x=0,1,
\end{equation}
with
 \begin{equation}\label{orthogonality condition for int}
\langle\tilde{\upsilon}, E_j\rangle := \int_0^1 \tilde{\upsilon}
\, E_j \, dx = 0,  \qquad j=1,\ldots, N-1,
\end{equation}
where $ E_j $ are approximate tangent vectors to $ \mathcal M \,
$, defined as in \cite{BatesXunI}, by
\begin{equation}\label{eq:def Ej}
E_j(x) = \bar{\mathrm{w}}_j(x) - Q_j(x), \qquad\qquad
j=1,2,\cdots,N-1,
\end{equation}
with
\begin{equation}\label{eq:def barwj}
\bar{\mathrm{w}}_j(x) := \tilde{u}^h_j(x) + \tilde{u}^h_{j+1}(x),
\end{equation}
and
\begin{equation}\label{eq:def Qj}
Q_j(x) :=
 (-\tfrac{1}{6}x^3+\tfrac{1}{2} x^2 - \tfrac{1}{3} x)\bar{\mathrm{w}}_{jxx}(0)
+ \tfrac{1}{6}(x^3-x)\bar{\mathrm{w}}_{jxxx}(1) + x
\bar{\mathrm{w}}_j(1) ,
\end{equation}
so that
\begin{equation}\label{eq:Eps BCs}
 E_j = (E_{j})_{xx} = 0 \qquad\mbox{at}\qquad x=0,1.
\end{equation}

\subsection{Equations of motion}\label{eqmmc}
We proceed next to obtain the odes system describing the motion of
$ (\xi, \tilde{\upsilon}). $ To this end, we consider the
linearized $ \mathrm{A}_\varepsilon $ at $ \tilde{u}^h, $
\begin{equation}\label{eq:the linearized operator}
L_\varepsilon^h(\tilde{\upsilon}) :=
 \deps L_{1,\varepsilon}^h(\tilde{\upsilon})
\;+\; {\meps} L_{2,\varepsilon}^h(\tilde{\upsilon}),
\end{equation}
with the linearized CH part and the linearized AC part
\begin{equation}\label{eqs:def of lin ACCH oper}
L_{1,\varepsilon}^h(\tilde{\upsilon}) := - \big( \varepsilon^2
\tilde{\upsilon}_{xx} -   L_{\scriptscriptstyle
W}^h(\tilde{\upsilon}_x) \big)_{xx} \; = \; -  \big(
L_{2,\varepsilon}^h(\tilde{\upsilon}) \big)_{xx}
\qquad\mbox{and}\qquad L_{2,\varepsilon}^h(\tilde{\upsilon}) :=
\varepsilon^2 \tilde{\upsilon}_{xx}  -  L_{\scriptscriptstyle
W}^h(\tilde{\upsilon}_x),
\end{equation}
and  $ L_{\scriptscriptstyle W}^h $ let the linearized $ \euscr W
$ at $ u^h, $
\begin{equation}\label{eq:definition of mathcalW Gateaux}
L_{\scriptscriptstyle W}^h(\tilde{\upsilon}_x)(x,t) \; := \;
\int_0^xW''\big(u^{h(t)}(y)\big) \; \tilde{\upsilon}_x(y,t) \; dy
\end{equation}
that is $ \euscr (L_{\scriptscriptstyle
W}^h(\tilde{\upsilon}_x))_x = W''(u^h) \tilde{\upsilon}_x $, with
$ L_{\scriptscriptstyle W}^h(\tilde{\upsilon}_x)(0,t)=0.$

We differentiate \eqref{orthogonality condition for int}, with
respect to $ t, $  to get
\begin{equation}\label{eq: diff orthog}
\big\langle  \partial_t \tilde{\upsilon} \; , \; E_j
\big\rangle \,+\, \big\langle \tilde{\upsilon} \; , \;
\partial_t E_j \big\rangle =  0 , \qquad j=1,\ldots, N-1,
\end{equation}
with
$$ \partial_t \tilde{\upsilon} = \partial_t (\tilde{u} - \tilde{u}^\xi)
\stackrel{\scriptscriptstyle\eqref{eq:integrated IACH}}{=\joinrel=\joinrel=}
\mathrm{A}_\varepsilon(\tilde{u}) - \partial_t \tilde{u}^\xi = \mathrm{A}_\varepsilon(\tilde{u}) -
\sum_k \tilde{u}^\xi_k  \, \dot{\xi}_k , $$
and $ \,  \partial_t E_j = \sum_k E_{j,k}  \, \dot{\xi}_k, \,$
hence \eqref{eq: diff orthog} becomes
\begin{equation}\label{eq:coord ode 1}
\sum\limits_{k=1}^{N-1} a_{jk} \, \dot{\xi}_k
 \; = \;
\big\langle A_{\varepsilon}(\tilde{u}^\xi +  \tilde{\upsilon}) \; , \; E_j\; \big\rangle ,\qquad j=1,2,\ldots, N-1,
\end{equation}
where
\begin{equation}\label{eq: coeff matrix}
a_{jk} := \big\langle u^\xi_k \; , \; E_j \big\rangle \; - \;
\big\langle \tilde{\upsilon}  \; , \; E_{j,k}\, \big\rangle
,\qquad j,k=1,2,\ldots, N-1,
\end{equation}
and the subscripts $ k $ indicate the differentiation with respect
to $ \xi_k,$
$$ u^\xi_k := \frac{\partial u^\xi}{\partial \xi_k}
\qquad \mbox{and} \qquad E_{j,k} := \frac{\partial E_j}{\partial
\xi_k}.$$

We write \eqref{eq:coord ode 1} in more useful form by expanding
the term
\begin{eqnarray}\label{eq:expand Ae}
\mathrm{A}_\varepsilon(\tilde{u}^\xi + \tilde{\upsilon})
 =
\mathrm{A}_\varepsilon(\tilde{u}^\xi) \,+\,
L_\varepsilon^h(\tilde{\upsilon}) \,+\, \deps \,  \big(f^\xi \,
\tilde{\upsilon}_x^2\big)_{x} \,+ \, \meps \int_0^x f^\xi \,
\tilde{\upsilon}_x^2\,dy,
\end{eqnarray}
where $ L_\varepsilon^h(\tilde{\upsilon}) $ is given in
\eqref{eq:the linearized operator}, and
\begin{equation}\label{eq:definition of f2}
f^\xi(x) := \int_0^1 (1-\tau) \; W'''(\tilde{u}^\xi_x + \tau \tilde{\upsilon}_x) \; d\tau ,
\end{equation}
to get
$$
\sum\limits_{k=1}^{N-1} a_{jk} \, \dot{\xi}_k
 \; = \;
 \big\langle \mathrm{A}_\varepsilon(\tilde{u}^\xi)  \; , \; E_j\; \big\rangle
\; + \;
 \big\langle L_\varepsilon^h(\tilde{\upsilon})   \; , \; E_j\; \big\rangle
\; + \; \deps \,  \big\langle \big( f^\xi \,
\tilde{\upsilon}_x^2\big)_{x}  \; , \; E_j\; \big\rangle \; +\;
\meps \, \big\langle  \int_0^x f^\xi \, \tilde{\upsilon}_x^2\,dy
\; , \; E_j\; \big\rangle.
$$

Discriminating between the (integrated) CH and AC parts (see
\eqref{eq:def of Aeps}, \eqref{eq:the linearized operator}), we
have
\begin{equation}\label{eq:2coord ode 1}
\begin{split}
\sum\limits_{k=1}^{N-1} a_{jk} \, \dot{\xi}_k \, =& \, \deps \,
\big\langle \mathrm{A}_{1,\varepsilon}(\tilde{u}^\xi)
 +
L_{1,\varepsilon}^h(\tilde{\upsilon})
 +
\big( f^\xi \, \tilde{\upsilon}_x^2\big)_{x} \; , \; E_j
\big\rangle\\
& + \, \meps \, \big\langle
\mathrm{A}_{2,\varepsilon}(\tilde{u}^\xi) +
L_{2,\varepsilon}^h(\tilde{\upsilon})
 +
\int_0^x f^\xi \, \tilde{\upsilon}_x^2\,dy \; , \;E_j\big\rangle.
\end{split}
\end{equation}

Moreover, we apply \eqref{eq:coord decomp for int} to \eqref{eq:integrated IACH}, to get 
\begin{equation}\label{eq:coord ode 2}
\tilde{\upsilon}_t = \mathrm{A}_\varepsilon(\tilde{u}^\xi +
\tilde{\upsilon})
 -
 \sum\limits_{j=1}^{N-1} \tilde{u}_j^\xi \, \dot{\xi}_j.
\end{equation}
As above, we expand in \eqref{eq:coord ode 2} the term $
\mathrm{A}_\varepsilon(\tilde{u}^\xi + \tilde{\upsilon}), $
according to \eqref{eq:expand Ae}, to get
$$
\tilde{\upsilon}_t \, = \, \mathrm{A}_\varepsilon(\tilde{u}^\xi)
\,+\, L_\varepsilon^h(\tilde{\upsilon}) \,+\, \deps \,  \big(
f^\xi \, \tilde{\upsilon}_x^2\big)_{x} \, + \, \meps \, \int_0^x
f^\xi \, \tilde{\upsilon}_x^2\,dy \, - \,
 \sum\limits_{j=1}^{N-1} \tilde{u}_j^\xi \, \dot{\xi}_j,
$$
while separating the (integrated) CH and AC induced parts, we
arrive at
\begin{equation}\label{eq:2coord ode 2}
\tilde{\upsilon}_t \, = \, \deps \, \bigg[
\mathrm{A}_{1,\varepsilon}(\tilde{u}^\xi) \, + \,
L_{1,\varepsilon}^h(\tilde{\upsilon}) \, + \, \big( f^\xi \,
\tilde{\upsilon}_x^2\big)_{x} \bigg] \, + \, \meps \, \bigg[
\mathrm{A}_{2,\varepsilon}(\tilde{u}^\xi) \, + \,
L_{2,\varepsilon}^h(\tilde{\upsilon}) \, + \, \int_0^x f^\xi \,
\tilde{\upsilon}_x^2\,dy \bigg] \, - \, \sum\limits_{j=1}^{N-1}
\tilde{u}_j^\xi\,\dot{\xi}_j .
\end{equation}

Equations \eqref{eq:2coord ode 1}, \eqref{eq:2coord ode 2} will be
mainly used in the sequel, and for the rest of the section.

\subsection{Flow near layered equilibria}For $ \tilde{\upsilon} \in C^2[0,1] $ with $ \tilde{\upsilon}(0)=\tilde{\upsilon}(1)=0, $
 we introduce the form
\begin{equation}\label{eq: def of B form}
B_\varepsilon[\tilde{\upsilon}] := \int_0^1 \big[ \varepsilon^2
\tilde{\upsilon}_{xx}^2 + \tilde{\upsilon}_x^2 \big] \, dx.
\end{equation}
We will study the orbit
$ \tilde{u}(x, t) = \tilde{u}^{\xi(t)}(x) +
\tilde{\upsilon}(x, t) $ of \eqref{eq:integrated IACH} as long
as (cf. \cite[$(80)'$]{BatesXunI} at pg. 448, for an analogous
argument)
\begin{equation}\label{eq: assumption B}
\mepsq \, \depsqinv \, \varepsilon^{-3} \,+\, \mepsq \, \depsinv
\,\varepsilon^{-5} \; + \; \big( \deps \,\varepsilon^{-7} \,+\,
\varepsilon^{-6} \,+\, \mepsq \, \depsqinv \, \varepsilon^{-2}
\big) \, B_\varepsilon[\tilde{\upsilon}] = o(1),
\end{equation}
(condition \eqref{eq: assumption B} arises in \eqref{eq: est4
mathcalA}-\eqref{eq: est5 mathcalA} further below), or
sufficiently for 
\begin{equation}\label{eq:a assumption B}
 \mepsq \, \depsqinv  \, \big( 1 \,+\, \deps \,\varepsilon^{-5}
\big) \,=\,o(\varepsilon^3) ,
\end{equation}
and as long as
\begin{equation}\label{eq:b assumption
B} \big( \deps \,+\, \varepsilon \big) \,
B_\varepsilon[\tilde{\upsilon}] = o(\varepsilon^7).
\end{equation}

By Lemma 4.1 in \cite{BatesXunI}, if $ \tilde{\upsilon} \in
C^2[0,1] $ with $ \tilde{\upsilon}(0)=\tilde{\upsilon}(1)=0, $
then the following estimates hold true
\begin{eqnarray}
\| \tilde{\upsilon} \|^2_{L^\infty} & \leq &
B_\varepsilon[\tilde{\upsilon}] \label{ineq: tupsinfty leq B},
\\[0.5em]
\| \tilde{\upsilon}_x \|^2_{L^\infty} & \leq &
\frac{1+\varepsilon}{\varepsilon} \;
B_\varepsilon[\tilde{\upsilon}]. \label{ineq: tupsxinfty leq B}
\end{eqnarray}

We prove now the next Main Theorem estimating the dynamics of the
layers, in the current mass conservative case.
\begin{theorem}\label{prop xi velocity}There exist $ \rho_2 > 0, $ and constant
$ C>0 $, such that, as long as $ h \in \Omega_{\rho} $ with $ \rho
< \rho_2 $ and the orbit $ \, \tilde{u}(x, t) =
\tilde{u}^{h(t)}(x) + \tilde{\upsilon}(x, t) \, $ of
\eqref{eq:integrated IACH} remains close to $ \mathcal M $ so
that \eqref{eq: assumption B} holds, the next bound is valid
\begin{equation}\label{ineq: estimate xidot}
\begin{split}
| \dot{\xi}_i| \leq &C {\deps} \Big( \varepsilon^{-2} \, \alpha(r)
+
 \varepsilon^{-5} \, \beta(r) \, B_\varepsilon^{1/2}[\tilde{\upsilon}]
+ \varepsilon^{-2} \, B_\varepsilon[\tilde{\upsilon}] \Big)
\\
& +  C\meps \Big( \alpha(r) \; + \; \varepsilon^{-1}
B^{1/2}_\varepsilon[\tilde{\upsilon}]+{
\eps^{-2}B_\varepsilon[\tilde{\upsilon}]} \Big).
\end{split}
\end{equation}
\end{theorem}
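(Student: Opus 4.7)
The plan is to mirror the strategy of Theorem \ref{prop h velocity}, but adapted to the integrated formulation \eqref{eq:integrated IACH} with the reduced manifold $\mathcal M_1$ and the corrected tangent vectors $E_j = \bar{\mathrm w}_j - Q_j$ of \eqref{eq:def Ej}. The starting point is the ODE system \eqref{eq:2coord ode 1}; following \cite{BatesXunI}, I would first show that the coefficient matrix $(a_{jk})$ is invertible with $\|(a_{jk})^{-1}\| = \mathcal O(1)$ for $h \in \Omega_\rho$ with $\rho$ sufficiently small. This rests on the observation that the diagonal entries $\langle u_j^\xi, E_j\rangle$ are of unit order because $\bar{\mathrm w}_j = \tilde u_j^h + \tilde u_{j+1}^h$ captures the $\pm 2$ jump of $u^h$ across the $j$th layer, while the off-diagonal entries are $\mathcal O(\beta(r))$. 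Consequently $|\dot\xi_i| \leq C\max_j |\langle \mathrm A_\varepsilon(\tilde u^\xi+\tilde\upsilon), E_j\rangle|$, and it suffices to bound the six terms in \eqref{eq:2coord ode 1}, exploiting crucially the boundary conditions $E_j=(E_j)_{xx}=0$ and $\tilde\upsilon=\tilde\upsilon_{xx}=0$ at $x=0,1$ to make repeated integrations by parts free of boundary contributions.

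For the Cahn-Hilliard triple weighted by $\deps$, the residual term $\langle \mathrm A_{1,\varepsilon}(\tilde u^\xi), E_j\rangle$ would be treated by writing $\mathrm A_{1,\varepsilon} = -(\mathrm A_{2,\varepsilon})_{xx}$ and integrating by parts twice onto $E_j$; since $\mathrm A_{2,\varepsilon}(\tilde u^\xi) = \mathscr L^b(u^h)$ is supported on the strips $\{|x-h_k|<\varepsilon\}$ and of size $\alpha(r)$ there by \eqref{ineq: Lbuh leq alpha}, this produces the $\varepsilon^{-2}\alpha(r)$ contribution. The linearized term $\langle L_{1,\varepsilon}^h(\tilde\upsilon), E_j\rangle$ is expanded through repeated integration by parts, substituting the bistable identity $\varepsilon^2(u_x^h)_{xx} - f'(u^h)u_x^h = (\mathscr L^b(u^h))_x$ which vanishes off the transition strips, and then bounded via pointwise estimates on $E_j$ and its derivatives together with $\|\tilde\upsilon_x\|_{L^\infty}^2 \le C\varepsilon^{-1}B_\varepsilon[\tilde\upsilon]$ from \eqref{ineq: tupsxinfty leq B}; this yields the $\varepsilon^{-5}\beta(r)B_\varepsilon^{1/2}[\tilde\upsilon]$ term. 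The quadratic term $\langle(f^\xi\tilde\upsilon_x^2)_x, E_j\rangle$ is integrated by parts once onto $E_{j,x}$, and since $f^\xi = \mathcal O(1)$ by \eqref{ineq: tupsinfty leq B} it yields $\varepsilon^{-2}B_\varepsilon[\tilde\upsilon]$.

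The Allen-Cahn triple weighted by $\meps$ is estimated analogously, producing in turn the $\alpha(r)$, $\varepsilon^{-1}B_\varepsilon^{1/2}[\tilde\upsilon]$, and $\varepsilon^{-2}B_\varepsilon[\tilde\upsilon]$ contributions. Here the nonlocal integrated term $\int_0^x f^\xi\tilde\upsilon_x^2\,dy$ is controlled via $\|f^\xi\|_{L^\infty}\|\tilde\upsilon_x\|_{L^\infty}^2$ paired with the $L^1$ bound on $E_j$. Gathering the six estimates and multiplying by $\|(a_{jk})^{-1}\|$ delivers \eqref{ineq: estimate xidot}. The main technical obstacle is obtaining sharp pointwise bounds on $(E_j)_x, (E_j)_{xx}, (E_j)_{xxx}, (E_j)_{xxxx}$ inside the transition layers, where $\bar{\mathrm w}_{j,x}$ is of order $\varepsilon^{-1}$ and higher derivatives grow accordingly; this requires an integrated analogue of the Carr-Pego estimates \eqref{eq: CP 2.8}--\eqref{eq: CP 2.11}, using the explicit form of the polynomial correction $Q_j$ to subtract the boundary values of $\bar{\mathrm w}_j$ and $\bar{\mathrm w}_{j,xx}$ without introducing spurious growth. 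A secondary subtlety is the nonlocal operator $L_W^h(\tilde\upsilon_x)$, which I would handle through the pointwise identity $(L_W^h(\tilde\upsilon_x))_x = W''(u^h)\tilde\upsilon_x$ together with $L_W^h(\tilde\upsilon_x)(0)=0$, giving $|L_W^h(\tilde\upsilon_x)(x)| \le C\|\tilde\upsilon_x\|_{L^1}$, so that it can be handled uniformly in the subsequent Cauchy--Schwarz steps.
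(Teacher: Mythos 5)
Your high-level strategy matches the paper's: expand $\langle\mathrm A_\varepsilon(\tilde u^\xi+\tilde\upsilon),E_j\rangle$ into the Cahn--Hilliard triple and the Allen--Cahn triple, bound each, and close with the invertibility of $(a_{jk})$. (For the CH triple the paper simply quotes \cite[(78)--(80)]{BatesXunI}, which already give $\varepsilon^{-1}\alpha+\varepsilon^{-4}\beta B_\varepsilon^{1/2}+\varepsilon^{-1}B_\varepsilon$, while you propose to rederive it; that's legitimate.) Your remarks about the boundary conditions on $E_j$, $\tilde\upsilon$, the role of the polynomial correction $Q_j$, and the pointwise control of $L_W^h(\tilde\upsilon_x)$ via $\|\tilde\upsilon_x\|_{L^1}$ are all correct and are used in the paper.

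However, two concrete steps in your account do not hold up, and they are linked. First, you assert $\|(a_{jk})^{-1}\|=\mathcal O(1)$; the paper, citing Bates--Xun p.~448, states that it is $\varepsilon(a_{jk})^{-1}$ that is uniformly bounded, i.e.\ $\|(a_{jk})^{-1}\|=\mathcal O(\varepsilon^{-1})$. The extra $\varepsilon^{-1}$ from the matrix inverse is exactly what upgrades Bates--Xun's CH-triple bound $\varepsilon^{-1}\alpha+\varepsilon^{-4}\beta B_\varepsilon^{1/2}+\varepsilon^{-1}B_\varepsilon$ and the paper's AC-triple bound $\varepsilon\alpha+B_\varepsilon^{1/2}+\varepsilon^{-1}B_\varepsilon$ to the stated $\varepsilon^{-2}\alpha$, $\varepsilon^{-5}\beta B_\varepsilon^{1/2}$, $\varepsilon^{-2}B_\varepsilon$ and $\alpha$, $\varepsilon^{-1}B_\varepsilon^{1/2}$, $\varepsilon^{-2}B_\varepsilon$ in \eqref{ineq: estimate xidot}. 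Second, you claim $\mathrm A_{2,\varepsilon}(\tilde u^\xi)=\mathscr L^b(u^h)$ ``supported on the strips.'' That is false: by \eqref{eq:3 estimate T1}, $\mathrm A_{2,\varepsilon}(\tilde u^\xi)=\int_0^x\mathscr L^b(u^h)\,dy$, which is an $\mathcal O(\varepsilon\alpha)$ function that is \emph{not} localized to the transition strips; it is $\mathscr L^b(u^h)$ itself (its $x$-derivative) that is localized. Because you have miscalibrated both the matrix-inverse size and the residual size (each by a factor $\varepsilon^{-1}$, in opposite directions), your final answer happens to agree with \eqref{ineq: estimate xidot}, but the intermediate claims are not internally consistent: with your own $\mathcal O(1)$ inverse you would need the raw inner product to deliver $\varepsilon^{-2}\alpha$, and the argument you sketch (pairing an $\alpha$-sized localized residual with $(E_j)_{xx}=\mathcal O(\varepsilon^{-2})$ over a set of measure $\mathcal O(\varepsilon)$) gives $\varepsilon^{-1}\alpha$, not $\varepsilon^{-2}\alpha$. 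To repair the proof you should keep the correct formula $\mathrm A_{2,\varepsilon}(\tilde u^\xi)=\int_0^x\mathscr L^b(u^h)\,dy$ throughout, prove the individual bounds $|T_1|\le C\varepsilon\alpha$, $|T_2|\le CB_\varepsilon^{1/2}$, $|T_3|\le C\varepsilon^{-1}B_\varepsilon$ for the AC part (and accept or rederive Bates--Xun's CH bounds), and then invoke the $\mathcal O(\varepsilon^{-1})$ bound on $(a_{jk})^{-1}$.
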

\begin{proof}
The first summand in the RHS of \eqref{eq:2coord ode 1} is
estimated in Bates-Xun \cite[(78)-(80)]{BatesXunI}. In
particular, it holds that
\begin{equation}\label{eq:coord ode 2 1st RHS term estimate}
\big\langle \mathrm{A}_{1,\varepsilon}(\tilde{u}^\xi)
+
L_{1,\varepsilon}^h(\tilde{\upsilon})
 +
\big( f^\xi \, \tilde{\upsilon}_x^2\big)_{x} \; , \; E_j
\big\rangle \leq C \, \bigl( \varepsilon^{-1} \, \alpha(r) +
\varepsilon^{-4} \, \beta(r) \, B_\varepsilon^{1/2}[\tilde{\upsilon}]
+ \varepsilon^{-1} \, B_\varepsilon[\tilde{\upsilon}] \bigr).
\end{equation}

Let us estimate the AC originated part
\begin{equation}\label{eq:coord ode 2 2nd RHS term estimate}
\underbrace{ \hspace{-0.4em} \big\langle
\mathrm{A}_{2,\varepsilon}(\tilde{u}^\xi) \; , \; E_j \big\rangle
\hspace{-0.4em} }_{\scriptscriptstyle T_1} \quad + \quad
\underbrace{ \hspace{-0.4em} \big\langle
L_{2,\varepsilon}^h(\tilde{\upsilon}) \; , \; E_j \, \big\rangle
\hspace{-0.4em} }_{\scriptscriptstyle T_2} \quad + \quad
\underbrace{ \hspace{-0.4em} \big\langle \int_0^x f^\xi \,
\tilde{\upsilon}_x^2\,dy \; , \; E_j \, \big\rangle
\hspace{-0.4em} }_{\scriptscriptstyle T_3} \;\;.
\end{equation}

We begin with the term $ T_1. $ First notice that $ E_j = \mathcal O(1) $ (cf.
\cite[(55)]{BatesXunI}), and therefore
\begin{equation}\label{eq:2 estimate T1}
\big| \big\langle
\mathrm{A}_{2,\varepsilon}(\tilde{u}^\xi) \; , \; E_j \big\rangle
\big|  \leq C \, \int_0^1
\mathrm{A}_{2,\varepsilon}(\tilde{u}^\xi) \,dy,
\end{equation}
where we recall that
\begin{eqnarray}\label{eq:3 estimate T1}
\mathrm{A}_{2,\varepsilon}(\tilde{u}^\xi) &:=& \varepsilon^2
\tilde{u}^\xi_{xx}  -  \euscr W(\tilde{u}_x^\xi) = \varepsilon^2
u^\xi_{x} -  \euscr W(u^\xi) \phantom{\int_0^x} \nonumber
\\
& = & \int_0^x \mathscr L^b(u^h) \, dy,
\end{eqnarray}
and that $ \euscr W $ is defined as
$$\euscr W(u^h)(x)
 :=
 \int_0^x W'\big(u^h(y)\big) \, dy,$$
while $ \mathscr L^b $ denotes the bistable operator given in
\eqref{eq:bistable operator}.

Combining \eqref{eq: uh sat BS}, \eqref{ineq: Lbuh leq alpha},
\eqref{eq:2 estimate T1}, \eqref{eq:3 estimate T1},
 we get
\begin{equation}\label{eq:6 estimate T1}
 \bm{|} T_1\bm{|} \, \leq \, C \, \varepsilon \, \alpha(r).
\end{equation}

Considering the term $ T_2 $ we have, (for $
L_{2,\varepsilon}(\tilde{\upsilon}), $ see \eqref{eq:the
linearized operator})
\begin{subequations}\label{eq:1 estimate T2}
\begin{eqnarray}
T_2 & := & \big\langle L_{2,\varepsilon}(\tilde{\upsilon}) \; ,
\; E_j \big\rangle \phantom{\int_0^x} \nonumber
\\
& = & \big\langle \varepsilon^2 \tilde{\upsilon}_{xx}  -
L_{\scriptscriptstyle W}^h(\tilde{\upsilon}_x) \; , \; E_j
\big\rangle \phantom{\int_0^x} \nonumber
\\
& = & \varepsilon^2 \big\langle  \tilde{\upsilon}  \; , \;
(E_j)_{xx} \big\rangle \; - \; \big\langle  L_{\scriptscriptstyle
W}^h(\tilde{\upsilon}_x) \; , \; E_j \big\rangle
\phantom{\int_0^x} \label{eq:1a estimate T2}
\\
& = & \varepsilon^2 \big\langle  \tilde{\upsilon}  \; , \;
\frac{\partial }{\partial h_j}\tilde{u}^h_{xx} \big\rangle \; + \;
\varepsilon^2 \big\langle  \tilde{\upsilon}  \; , \;
\frac{\partial }{\partial h_{j+1}}\tilde{u}^h_{xx} \big\rangle \;
- \; \varepsilon^2 \big\langle  \tilde{\upsilon}  \; , \;
(Q_j)_{xx} \big\rangle \; - \; \big\langle  L_{\scriptscriptstyle
W}^h(\tilde{\upsilon}_x) \; , \; E_j \big\rangle \label{eq:1b
estimate T2}
\\
&=& \underbrace{\varepsilon^2 \big\langle  \tilde{\upsilon}  \; ,
\; \frac{\partial }{\partial h_j}u^h_x \big\rangle }_{T_{2,1}} \;
+ \; \underbrace{\varepsilon^2 \big\langle  \tilde{\upsilon}  \;
, \; \frac{\partial }{\partial h_{j+1}}u^h_x \big\rangle
}_{T_{2,2}} \; - \; \underbrace{\varepsilon^2 \big\langle
\tilde{\upsilon}  \; , \; (Q_j)_{xx} \big\rangle }_{T_{2,3}} \; -
\; \underbrace{ \big\langle  L_{\scriptscriptstyle
W}^h(\tilde{\upsilon}_x) \; , \; E_j \big\rangle }_{T_{2,4}}.
\label{eq:1c estimate T2}
\end{eqnarray}
\end{subequations}
In \eqref{eq:1a estimate T2} we used the Dirichlet boundary
conditions for $ E_j, \tilde{\upsilon} $ given in \eqref{eq:ups
BCs} and \eqref{eq:Eps BCs} respectively. In \eqref{eq:1b
estimate T2} we took into account that $ u^h $ is a smooth
function, so, we interchanged $ \partial_{h_j} $ with $
\partial_{xx} $ after applying the definition of $ E_j $ by
\eqref{eq:def Ej}-\eqref{eq:def Qj}, and then, in \eqref{eq:1c
estimate T2} we substituted
\begin{equation}\label{eq:2 estimate T2}
\tilde{u}^h_x=u^h.
\end{equation}

Let us proceed with the term $ T_{2,1}. $ In order to apply $
\partial_{h_j} $ into $ u_x^h $ given in \eqref{eq: derivative of
uh}, we notice first that
$$
\chi^j = \chi\big(\frac{x-h_j}{\varepsilon}\big) , \qquad
m_j=\frac{h_{j-1} + h_j}{2}.
$$
Moreover, considering
\begin{eqnarray*}
\phi^j(x) &:=& \phi\big(x - m_j, \, h_j - h_{j-1}, \, (-1)^j\big)
\\
&=& \phi\big(x - \tfrac{h_{j-1} + h_j}{2}, \, h_j - h_{j-1}, \,
(-1)^j\big) , \qquad\mbox{for} \quad x \in [h_{j-1}, \, h_j] ,
\end{eqnarray*}
we use \eqref{eq:Lemma 7.8 CP} to get
\begin{eqnarray*}\label{eq:5 estimate T2}
 \frac{\partial }{\partial h_j} \phi^j
& = & \phi^j_x \, \frac{\partial }{\partial
h_j}\left(\tfrac{h_{j-1} + h_j}{2}\right) \, + \, \phi^j_\ell
\,\frac{\partial }{\partial h_j} \big(h_j - h_{j-1}\big)
\\
& = & - \, \frac{1}{2} \phi^j_x \, - \, \frac{1}{2}
\operatorname{sgn}(x-m_j)\, \phi_x^j \,+\,  \mathrm{w}^j
\\
& = & - \phi^j_x \,+\,  \mathrm{w}^j \qquad\mbox{in} \quad
I_j:=[m_j,\, m_{j+1}] ,
\end{eqnarray*}
and similarly
\begin{equation}\label{eq:6 estimate T2}
 \frac{\partial }{\partial h_j} \phi^{j+1}
 \,=\,
 -  \phi^{j+1}_x
\,-\,  \mathrm{w}^{j+1} \qquad\mbox{in} \; I_j ,
\end{equation}
with
$$
\mathrm{w}^j(x, h_{j-1}, h_j) := \mathrm{w}\big(x-m_j,
h_j-h_{j-1}, (-1)^j\big).
$$
Therefore, we obtain
\begin{equation}\label{eq:7 estimate T2}
 \frac{\partial }{\partial h_j} \big(\phi^{j+1} -  \phi^j\big)
\, = \,
 \phi^j_x
\, - \, \phi^{j+1}_x \, - \,  \mathrm{w}^j \,- \,
\mathrm{w}^{j+1},
\end{equation}
and
\begin{subequations}\label{eq:8 estimate T2}
\begin{eqnarray}
 \frac{\partial }{\partial h_j} \big(\phi_x^{j+1} -  \phi_x^j\big)
& =\joinrel= & \frac{\partial}{\partial x} \frac{\partial
}{\partial h_j} \big(\phi^{j+1} -  \phi^j\big) \label{eq:8a
estimate T2}
\\
& \stackrel{\scriptscriptstyle \eqref{eq:7 estimate
T2}}{=\joinrel=} & \frac{\partial}{\partial x} \big( \phi^j_x \,
- \, \phi^{j+1}_x \, - \,  \mathrm{w}^j \,- \,  \mathrm{w}^{j+1}
\big) \label{eq:8b estimate T2}
\\
&=\joinrel= & \phi^j_{xx} \, - \, \phi^{j+1}_{xx} \, - \,
\mathrm{w}^j_x \,- \,  \mathrm{w}^{j+1}_x. \label{eq:8c estimate
T2}
\end{eqnarray}
\end{subequations}

We now apply apply $ \partial_{h_j} $ to $ u^h_x $ given in
\eqref{eq: derivative of uh}, then we use \eqref{eq:5 estimate
T2}-\eqref{eq:8c estimate T2}, and noticing that
 $$ \chi^j_x = - \chi_{h_j}^j, $$ we get
\begin{equation}\label{eq:9 estimate T2}
 \frac{\partial }{\partial h_j} u^h_x
\, = \,
\begin{cases}
 - \phi^j_{xx}
\,+\,  \mathrm{w}^j_x,   \hspace{7.5cm}  \mbox{for} \quad  m_j
\leq x \leq h_j-\varepsilon , \phantom{\Bigg|_0^1}
\\
-\chi_{xx}^j \, \big(\phi^{j+1}-\phi^j\big) \,+\, \chi^j_x\big(
 \phi^j_x
\, - \, \phi^{j+1}_x \, - \,  \mathrm{w}^j \,- \,
\mathrm{w}^{j+1} \big) - \chi^j_x \,
\big(\phi_x^{j+1}-\phi_x^j\big)
\\
\,+\, \chi^j\big(\phi^j_{xx} \, - \, \phi^{j+1}_{xx} \, - \,
\mathrm{w}^j_x \,- \,  \mathrm{w}^{j+1}_x\big) \,-\, \phi^j_{xx}
\,+\, \mathrm{w}^j_x , \qquad\qquad  \mbox{for} \quad |x-h_j| <
\varepsilon , \phantom{\Bigg|_0^1}
\\
-  \phi^{j+1}_{xx} \,-\,  \mathrm{w}^{j+1}_x , \hspace{7cm}
\mbox{for} \quad h_j +\varepsilon \leq x \leq m_{j+1}.
\end{cases}
\end{equation}
By \eqref{eq: Lemma 7.10 CP}, \eqref{eq: phi xx leq ceps},
\eqref{eq: lemma 8.2 CP}, \eqref{ineq: estimate diff phij},
\eqref{eq:9 estimate T2}, we derive
\begin{equation}\label{eq: estimate T2.1}
|T_{2,1}| < \varepsilon^2  \|\tilde{\upsilon}\|_{L^1}\,
\|\partial_{h_j}u^h_x\|_{L^\infty} \leq C \,
B^{1/2}_\varepsilon[\tilde{\upsilon}].
\end{equation}
We may see that a similar estimate holds true for the term $
|T_{2,2}|. $

The term $ |T_{2,3}| $ turns out to be dominated by the
other terms (cf. \cite[(54)]{BatesXunI}), and
\begin{equation}\label{eq: estimate T2.4}
|T_{2,4}| = |\big\langle  L_{\scriptscriptstyle
W}^h(\tilde{\upsilon}_x) \; , \; E_j \big\rangle | \leq
\|L_{\scriptscriptstyle W}^h(\tilde{\upsilon}_x)\|_{L^1} \;
\| E_j\|_{L^\infty} \leq C \|\tilde{\upsilon}_x\|_{L^1}
\leq C \, B^{1/2}_\varepsilon[\tilde{\upsilon}].
\end{equation}

Moreover, we have
\begin{equation}\label{dddd7}
|T_3|\leq C\eps^{-1}B_\varepsilon[\tilde{\upsilon}].
\end{equation}

 Combining \eqref{eq:2coord ode 1}, \eqref{eq:coord ode 2
1st RHS term estimate}, \eqref{eq:6 estimate T1}, \eqref{eq:
estimate T2.1}, \eqref{eq: estimate T2.4}, and
\eqref{dddd7}, and taking into account the fact that the matrix $
\varepsilon (a_{ij})^{-1} $ is uniformly bounded as $ \varepsilon
\to 0, $ as shown in \cite[p. 448]{BatesXunI}, we derive
\eqref{ineq: estimate xidot}.
\end{proof}

\subsection{The slow channel}\label{slowmc}

For $ \tilde{\upsilon} \in C^2([0,1]) $ with $ \tilde{\upsilon} =
\tilde{\upsilon}_{xx} = 0 $ at $ x=0, 1, $ we define the form
\begin{subequations}\label{eq: def of mathcalA tups}
\begin{eqnarray}
\mathcal A_{\varepsilon}[\tilde{\upsilon}] & := & - \big\langle
L_{1,\varepsilon}^h(\tilde{\upsilon}) \,\bm{,}\,
\tilde{\upsilon}\big\rangle \label{eq:a def of mathcalA tups}
\\[0.5em]
&  = & \int_0^1 \big[ \varepsilon^2 \tilde{\upsilon}_{xx}^2 +
W''(u^h) \tilde{\upsilon}^2_x \big] \, dx \label{eq:b def of
mathcalA tups}
\end{eqnarray}
\end{subequations}
where we performed integration by parts and recall that $
L_{1,\varepsilon}^h $ stands for the linearized Cahn-Hilliard
operator $ \mathrm{A}_{1,\varepsilon} $ at $ u^h $ (see
\eqref{eq:def of A12epsilon}, \eqref{eqs:def of lin ACCH oper}),
and is given by
$$L_{1,\varepsilon}^h(\tilde{\upsilon})
= - \varepsilon^2 \tilde{\upsilon}_{xxxx} \, + \, \big(W''(u^h)
\, \tilde{\upsilon}_x \big)_x,
$$
associated with the BVP for the integrated Cahn-Hilliard equation
\begin{equation}\label{cases: CH BVP}
\begin{dcases}
\tilde{u}_t = - \varepsilon^2 \tilde{u}_{xxxx}  +
\big(W'(\tilde{u}_x)\big)_x, & \qquad 0 < x < 1,
\\
\tilde{u}(0, \, t) = 0, \qquad  \tilde{u}(1 , \, t)  \; = M, &
\\
\tilde{u}_{xx}(0, \, t) =  \tilde{u}_{xx}(1 , \, t)  \; = 0. &
\end{dcases}\tag{\mbox{ICH}}
\end{equation}

The definition of $ \mathcal A_{\varepsilon} $ is motivated by
Lemma 4.2 of \cite{BatesXunI}. There, this Lemma, combined with
the estimates on the growth of $ | \xi_i|  $ in terms of $
B_{\varepsilon} $ (see \cite[(84)]{BatesXunI}), together with the
estimates on the growth of $ \mathcal A_{\varepsilon} $ (see
\cite[(96)-(98)]{BatesXunI}) obtained by the equations of motion,
led to the characterization of the ``slow channel'':
\begin{equation}\label{eq: def slow ch}
\Gamma := \Big\{ \tilde{u}(x): \; \tilde{u} = \tilde{u}^\xi +
\tilde{\upsilon}, \; \; \mathcal A_\varepsilon[\tilde{\upsilon}]
\leq c \varepsilon^{-5} \alpha^2(r) \Big\},
\end{equation}
for the solutions of the integrated Cahn-Hilliard near
$N$-layered equillibria. This, stands as a special case of our
problem, for $\delta:=1$, $\mu(\eps):=0$.

In particular, according to \cite[Lemma 4.2]{BatesXunI}, there
is a $ \rho_0 > 0 $ such that if $ 0 < \rho < \rho_0 $ and $ h
\in \Omega_\rho, $ then for any $ \tilde{\upsilon} \in C^2 $ with
$ \tilde{\upsilon}=0 $ at $ x=0, 1 $ and $
\langle\tilde{\upsilon}, E_j\rangle = 0, \, j=1,\cdots, N-1, $
there exists a constant $ C $ independent of $ \eps $ and $
\tilde{\upsilon} $ such that
\begin{equation}\label{ineq: tA geq B}
\varepsilon^2 B_\varepsilon[\tilde{\upsilon}] \leq C \mathcal
A_\varepsilon[\tilde{\upsilon}].
\end{equation}

Let us point out that the forms $ \mathcal
A_\varepsilon[\tilde{\upsilon}], B_\varepsilon[\tilde{\upsilon}]
$ as defined here in \S\ref{section:MSP} are the forms associated
with the 4\textsuperscript{th} order Cahn-Hilliard operator and
they are defined by Bates-Xun \cite[(76)]{BatesXunI}.

Considering our problem, we define the slow channel for
\eqref{eq:integrated IACH} by (cf. \eqref{eq: def slow ch})
\begin{equation}\label{eq: def ourslow ch}
\Gamma_\rho := \Big\{ \tilde{u}(x): \; \tilde{u} = \tilde{u}^\xi
+ \tilde{\upsilon}, \; \; \mathcal
A_\varepsilon[\tilde{\upsilon}] \leq c \, \gamma(\varepsilon) \,
\alpha^2(r) \Big\},
\end{equation}
with
\begin{equation}\label{eq:def gammaeps}
\gamma(\varepsilon) := \mepsq \depsinv \varepsilon^{-1} \,+\,
\deps \varepsilon^{-5} \,+\, \, \varepsilon^{-2} \; + \; \mepsq
\, {\depsqinv} \, \varepsilon.
\end{equation}
It is clear that $ \gamma(\varepsilon) \gg 1, $ and in view of
\eqref{eq:a assumption B},
\begin{equation}\label{eq: order geps}
\gamma(\varepsilon) \;=\; \mathcal O\big(\deps \varepsilon^{-5}
\,+\, \, \varepsilon^{-2}\big).
\end{equation}

The next Main Theorem establishes attractiveness, and the slow
evolution of states within the channel \eqref{eq: def ourslow
ch}; cf. \cite[Theorem B]{BatesXunI} for an analogous result in
the Cahn-Hilliard case.
\begin{theorem}\label{mtat}
Let $ \, \tilde{u}(x, t) = \tilde{u}^{\xi(t)}(x) +
\tilde{\upsilon}(x, t) \, $ be an orbit of \eqref{eq:integrated
IACH} starting outside but near the slow channel $ \Gamma_\rho $
in the sense that $ \tilde{\upsilon}(\cdot, 0) $ satisfies
condition \eqref{eq: assumption B}. Then $
B_\varepsilon[\tilde{\upsilon}] $ will decrease exponentially
until $ \tilde{u} $ enters the channel and will remain in the
channel
following the approximate manifold $ \mathcal M $ with speed $
\mathcal O(e^{-c/r}), $ thus staying in the channel for an
exponentially long time. It can leave $ \Gamma_\rho $ only
through the ends of the channel i.e at a time that $
(h_j-h_{j-1}) $ is reduced to $ \frac{\varepsilon}{\rho} $ for
some $ j.$
\end{theorem}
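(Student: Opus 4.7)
\noindent\textbf{Proof plan for Theorem \ref{mtat}.} The plan is to run the same strategy used for the non mass-conserving case in Section \ref{slowch}, but now adapted to the integrated problem \eqref{eq:integrated IACH} and to the form $\mathcal{A}_\varepsilon[\tilde{\upsilon}]$ that is tied to the integrated Cahn-Hilliard operator. The key monotonicity identity will be obtained by testing the equation of motion \eqref{eq:2coord ode 2} for $\tilde{\upsilon}$ against $-L_{1,\varepsilon}^h(\tilde{\upsilon})$, in the spirit of Bates-Xun, and then extracting a differential inequality of the type
\begin{equation*}
\frac{d}{dt}\mathcal{A}_\varepsilon[\tilde{\upsilon}]\;+\;c\,\eta(\varepsilon)\,\mathcal{A}_\varepsilon[\tilde{\upsilon}]\;\leq\;C\,\gamma(\varepsilon)\,\alpha^{2}(r),
\end{equation*}
with $\gamma(\varepsilon)$ as in \eqref{eq:def gammaeps} and some $\eta(\varepsilon)>0$ furnished by the spectral / coercivity information coming from \eqref{ineq: tA geq B} (in essence, $\mathcal{A}_\varepsilon$ controls $\varepsilon^{2}B_\varepsilon$ on the orthogonal complement of $\mathrm{span}\{E_j\}$). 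Integrating this inequality in time yields
\begin{equation*}
\mathcal{A}_\varepsilon[\tilde{\upsilon}(t)]\;\leq\;\mathcal{A}_\varepsilon[\tilde{\upsilon}(0)]\,e^{-c\eta(\varepsilon) t}\;+\;C\,\gamma(\varepsilon)\alpha^{2}(r)\bigl(1-e^{-c\eta(\varepsilon) t}\bigr),
\end{equation*}
which is exactly the attractiveness statement relative to $\Gamma_\rho$.

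The detailed calculation of $\tfrac{1}{2}\tfrac{d}{dt}\mathcal{A}_\varepsilon[\tilde{\upsilon}]$ goes as follows. First, differentiating \eqref{eq:a def of mathcalA tups} and using \eqref{eq:2coord ode 2}, one decomposes
\begin{equation*}
\tfrac{1}{2}\tfrac{d}{dt}\mathcal{A}_\varepsilon[\tilde{\upsilon}]=-\bigl\langle L_{1,\varepsilon}^h(\tilde{\upsilon}),\tilde{\upsilon}_t\bigr\rangle+\tfrac{1}{2}\bigl\langle\bigl(W''(u^h)\bigr)_t\tilde{\upsilon}_x,\tilde{\upsilon}_x\bigr\rangle,
\end{equation*}
where $\tilde{\upsilon}_t$ is replaced by the full right hand side of \eqref{eq:2coord ode 2}. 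The leading negative term will be $-\deps\|L_{1,\varepsilon}^h(\tilde{\upsilon})\|^{2}-\meps\langle L_{1,\varepsilon}^h(\tilde{\upsilon}),L_{2,\varepsilon}^h(\tilde{\upsilon})\rangle$, which after an integration by parts becomes $-\deps\|L_{1,\varepsilon}^h(\tilde{\upsilon})\|^{2}-\meps\langle(L_{2,\varepsilon}^h(\tilde{\upsilon}))_{xx},L_{2,\varepsilon}^h(\tilde{\upsilon})\rangle$, producing a coercive contribution compatible with both the CH and the AC part. Each of the remaining terms -- the inhomogeneity $\mathrm{A}_{i,\varepsilon}(\tilde{u}^\xi)$, the quadratic remainders $\deps(f^\xi\tilde{\upsilon}_x^{2})_x$ and $\meps\int_0^x f^\xi\tilde{\upsilon}_x^{2}\,dy$, the orbit correction $\sum \tilde{u}_j^\xi\dot{\xi}_j$, and the time derivative of $W''(u^h)$ -- is absorbed via Cauchy-Schwarz and Young's inequality with a small parameter, following exactly the template of \eqref{eq:d calc of Teps}--\eqref{ineq: Aeps est1}. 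The velocity $|\dot{\xi}_i|^2$ arising from the orbit correction is controlled by the already proved estimate \eqref{ineq: estimate xidot}, which returns us a term proportional to $B_\varepsilon[\tilde{\upsilon}]$ that is dominated by $\mathcal{A}_\varepsilon[\tilde{\upsilon}]$ via \eqref{ineq: tA geq B}; the smallness assumptions \eqref{eq:a assumption B}--\eqref{eq:b assumption B} are precisely what is needed to close the inequality.

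Once the exponential decay of $\mathcal{A}_\varepsilon[\tilde{\upsilon}]$ has been established, the final step is to feed the bound $\mathcal{A}_\varepsilon[\tilde{\upsilon}(t)]\leq\max\{\mathcal{A}_\varepsilon[\tilde{\upsilon}(0)],C\gamma(\varepsilon)\alpha^{2}(r)\}$ back into the velocity estimate \eqref{ineq: estimate xidot} via \eqref{ineq: tA geq B}, getting $B_\varepsilon[\tilde{\upsilon}]\leq C\varepsilon^{-2}\gamma(\varepsilon)\alpha^{2}(r)$ inside the channel. The assumptions on $\deps,\meps$ together with the exponential smallness of $\alpha(r),\beta(r)$ then give $|\dot{\xi}_i|=\mathcal{O}(e^{-c/r})$, and consequently the orbit can only exit $\Gamma_\rho$ through the boundary of $\Omega_\rho$, i.e.\ when two consecutive layers come to distance $\varepsilon/\rho$. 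The persistence in the channel for exponentially long time will then follow by a standard continuation argument: as long as $\xi(t)\in\Omega_\rho$, the differential inequality keeps $\mathcal{A}_\varepsilon$ trapped in $\Gamma_\rho$, while $|\dot{\xi}_i|=\mathcal{O}(e^{-c/r})$ forbids $\xi(t)$ from leaving $\Omega_\rho$ in subexponential time.

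\emph{Main obstacle.} The delicate part is treating the Allen-Cahn weighted term $\meps L_{2,\varepsilon}^h(\tilde{\upsilon})$ inside $\tilde{\upsilon}_t$ when testing against $-L_{1,\varepsilon}^h(\tilde{\upsilon})$: the product is second order in $\meps$ and formally competes with the pure CH coercive square, so obtaining a sign requires a careful integration by parts exploiting the boundary conditions \eqref{eq:ups BCs} and balancing $\meps$ versus $\deps$ through the spectral condition \eqref{eq222}. The second delicate point is the nonlocal quadratic remainder $\meps\int_{0}^{x}f^\xi\tilde{\upsilon}_x^{2}\,dy$, which is not a pure power of $\tilde{\upsilon}$ in $x$; here I would use $\|L_{\scriptscriptstyle W}^h\|_{L^\infty}\leq C\|\tilde{\upsilon}_x\|_{L^1}$ and \eqref{ineq: tupsxinfty leq B} to reduce matters to $B_\varepsilon[\tilde{\upsilon}]^{3/2}$, which is negligible under \eqref{eq:b assumption B}. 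Everything else mirrors, term by term, the analogous estimates already carried out in Section \ref{slowch} and in \cite[\S4]{BatesXunI}.
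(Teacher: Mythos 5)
Your high-level plan coincides with the paper's: test the evolution equation for $\tilde{\upsilon}$ against $-L_{1,\varepsilon}^h(\tilde{\upsilon})$, develop $\tfrac{1}{2}\tfrac{d}{dt}\mathcal{A}_\varepsilon[\tilde{\upsilon}]$ using \eqref{eq:2coord ode 2}, absorb all perturbative terms via Cauchy--Schwarz and Young together with \eqref{ineq: tA geq B}, integrate the resulting Gronwall inequality, and close by feeding the bound back into the $\dot{\xi}$ estimate. That much is fine. There are, however, two genuine gaps in the part you flag as the ``main obstacle.''

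First, the integration by parts you propose for $-\meps\langle L_{1,\varepsilon}^h(\tilde{\upsilon}),\,L_{2,\varepsilon}^h(\tilde{\upsilon})\rangle$ does not produce the sign-definite quantity $-\meps\|(L_{2,\varepsilon}^h(\tilde{\upsilon}))_x\|^2$ for free. Indeed, while $L_{2,\varepsilon}^h(\tilde{\upsilon})(0)=\eps^2\tilde{\upsilon}_{xx}(0)-L_{\scriptscriptstyle W}^h(\tilde{\upsilon}_x)(0)=0$, at the right endpoint one has $L_{2,\varepsilon}^h(\tilde{\upsilon})(1)=-\int_0^1 W''(u^h)\tilde{\upsilon}_x\,dy$, which is in general nonzero because the nonlocal linearization $L_{\scriptscriptstyle W}^h(\tilde{\upsilon}_x)$ vanishes by construction only at $x=0$, not at $x=1$. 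So the boundary term $\bigl[(L_{2,\varepsilon}^h(\tilde{\upsilon}))_x\,L_{2,\varepsilon}^h(\tilde{\upsilon})\bigr]_0^1$ survives and would still need to be estimated; you have not said how. The paper sidesteps this entirely: it never integrates by parts here, but applies Cauchy--Schwarz and Young with the small weight $\deps/(12\meps)$ (see \eqref{eq: I2epstups}), so that the cross term is absorbed into $-\deps\|L_{1,\varepsilon}^h(\tilde{\upsilon})\|^2$ up to an error $\propto\meps^2\depsinv\|L_{2,\varepsilon}^h(\tilde{\upsilon})\|^2\lesssim\meps^2\depsinv\eps^{-1}B_\varepsilon[\tilde{\upsilon}]$ which is then shown to be negligible under \eqref{eq: assumption B}.

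Second, you invoke the spectral condition \eqref{eq222} (i.e.\ $\eps^2\meps\gtrsim\deps$) to ``balance $\meps$ versus $\deps$,'' but that condition belongs to Section~2 (the non mass-conserving problem) and is not assumed in Section~\ref{section:MSP}. In fact, in the mass-conserving setting the standing assumption \eqref{eq:a assumption B} forces essentially the opposite scaling: $\mepsq\depsqinv(1+\deps\eps^{-5})=o(\eps^3)$ implies $\meps$ is much smaller than $\deps$. Consistently with this, the decay rate in \eqref{eq: est5 mathcalA}--\eqref{ineq: tildeAfinal} is $C_\delta=\tfrac{\Lambda\deps}{3}$, produced entirely by the Cahn--Hilliard coercive square $\deps\|L_{1,\varepsilon}^h(\tilde{\upsilon})\|^2\geq\Lambda\deps\,\mathcal{A}_\varepsilon[\tilde{\upsilon}]$ via \eqref{ineq: minmax princ mathcalA}--\eqref{eq: A leq L1}; the Allen--Cahn contribution is a perturbation controlled by the smallness assumptions, not a source of coercivity. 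Your vague ``$\eta(\varepsilon)$ furnished by the spectral information'' should be replaced by precisely this $C_\delta$, and the appeal to \eqref{eq222} dropped. (A minor additional slip: your estimate for the nonlocal quadratic remainder predicts an error $B_\varepsilon^{3/2}$, whereas the paper's \eqref{eq:2 I3epstups}--\eqref{eq:3 I3epstups} give $B_\varepsilon^2$, but this does not affect the conclusion.)
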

\begin{proof}
Applying \eqref{ineq: tA geq B} into \eqref{ineq: estimate xidot}
we immediately get

\begin{equation}\label{ineq: estimate xidot meAantiB}
\begin{split}
| \dot{\xi}_i| \leq &C\delta(\eps) \Big( \varepsilon^{-2}
\alpha(r) \, + \, \varepsilon^{-6} \beta(r) \mathcal
A^{1/2}_{\varepsilon}[\tilde{\upsilon}] \, + \, \varepsilon^{-4}
\mathcal A_\varepsilon[\tilde{\upsilon}] \Big)\\
&+ C \meps \Big( \alpha(r) \, + \, \varepsilon^{-2} \mathcal
A^{1/2}_\varepsilon[\tilde{\upsilon}] + \eps^{-4}\mathcal
A_\varepsilon[\tilde{\upsilon}]\Big).
\end{split}
\end{equation}

In view of \eqref{ineq: estimate xidot meAantiB}, our aim is to
establish estimates on the growth of $ \mathcal
A_\varepsilon[\tilde{\upsilon}(\cdot,t)]. $

Let us set
\begin{eqnarray}
\mathcal I_\varepsilon[\tilde{\upsilon}] & := & \frac{1}{2}
\frac{d}{dt} \mathcal A_\varepsilon[\tilde{\upsilon}] \nonumber
\\[0.5em]
& = & \frac{1}{2} \frac{d}{dt} \big\langle - L_{1,
\varepsilon}^h(\tilde{\upsilon}), \, \tilde{\upsilon}
\big\rangle
\nonumber
\\[0.5em]
& = & \big\langle -  \frac{1}{2} \frac{\partial}{\partial
t}  L_{1, \varepsilon}^h(\tilde{\upsilon}), \; \tilde{\upsilon}
\big\rangle \;-\; \frac{1}{2} \big\langle L_{1,
\varepsilon}^h(\tilde{\upsilon}), \; \tilde{\upsilon}_t
\big\rangle.\label{eq:9b abbrev I}
\end{eqnarray}
In order to write $ \mathcal I_{\varepsilon}[\tilde{\upsilon}] $
in a more convenient form, we first observe that
\begin{equation}\label{eq:9 pointwise estim 1 slow chan}
\frac{\partial}{\partial t}  L_{1,
\varepsilon}^h(\tilde{\upsilon}) =
L_{1,\varepsilon}^h(\tilde{\upsilon}_t) \; + \;
\Big(\big(W''\big(u^\xi\big)\big)_t \; \tilde{\upsilon}_x\Big)_x.
\end{equation}
Moreover, using integrations by parts (i.e. symmetry of the
integrated linearized CH operator $ L_{1,\varepsilon}^h$), we
obtain
\begin{equation}\label{eq:9 int by parts slow channel 1}
\big\langle L_{1, \varepsilon}^h(\tilde{\upsilon}_t), \;
\tilde{\upsilon} \big\rangle \; = \; \big\langle
\tilde{\upsilon}_t , \;  L_{1,\varepsilon}^h(\tilde{\upsilon})
\big\rangle,
\end{equation}
where the boundary terms vanish due to the zero boundary values
of $ \tilde{\upsilon}, \tilde{\upsilon}_{xx}.$

Therefore, by \eqref{eq:9b abbrev I}, \eqref{eq:9 pointwise estim
1 slow chan}, \eqref{eq:9 int by parts slow channel 1} we get
\begin{equation}\label{eq:1 estimate mathcalI}
\frac{1}{2} \frac{d}{dt} \mathcal A_\varepsilon[\tilde{\upsilon}]
\; = \; - \big\langle L_{1,
\varepsilon}^h(\tilde{\upsilon}), \, \tilde{\upsilon}_t
\big\rangle \; - \; \frac{1}{2} \Big\langle
\Big(\big(W''\big(u^\xi\big)\big)_t \; \tilde{\upsilon}_x\Big)_x
\; \bm{,} \; \tilde{\upsilon} \Big\rangle .
\end{equation}
Regarding the second term in \eqref{eq:1 estimate mathcalI}, we
integrate by parts and use Lemma \ref{lemma 4.1 BX}, to derive as
in \cite[(93)]{BatesXunI}
\begin{eqnarray}
\left| \Big\langle \big(\big(W''\big(u^\xi\big)\big)_t \;
\tilde{\upsilon}_x\big)_x , \; \tilde{\upsilon} \Big\rangle
\right| & = & \left| \Big\langle
\big(W''\big(u^\xi\big)\big)_t \; \tilde{\upsilon}_x , \;
\tilde{\upsilon}_x \Big\rangle \right| \nonumber
\\
& \leq & \|\tilde{\upsilon}_{x}\|^2_{L^\infty} \,
\|W'''(u^\xi)\|_{_{L^\infty}} \,  \sum_{j=1}^{N-1}
\|u_j^\xi\|_{_{L^1}}  \, |\dot{\xi}_j| \nonumber
\\
& \leq & C \, \varepsilon^{-1} \,
B_\varepsilon[\tilde{\upsilon}] \;  \sum_{j=1}^{N-1}  \,
|\dot{\xi}_j| \nonumber
\\
& \leq & C \, \varepsilon^{-1} \big(
B^2_\varepsilon[\tilde{\upsilon}] \,+\, \max_j |\dot{\xi}_j|^2\big) \label{eq: est nonlin}
\end{eqnarray}
where we used Propositions \ref{lemma7.7 CP}, \ref{lemma7.8 CP}, \ref{prop7.9 CP}, for the boundedness
of the $L^1$-norm of $ u^\xi_j $.

We next have to estimate the first term in \eqref{eq:1 estimate
mathcalI}.

Recall the equation of motion \eqref{eq:2coord ode 2}
\begin{equation*}
\tilde{\upsilon}_t \, = \, \deps \, \bigg[
\mathrm{A}_{1,\varepsilon}(\tilde{u}^\xi) \, + \,
L_{1,\varepsilon}^h(\tilde{\upsilon}) \, + \, \big( f^\xi \,
\tilde{\upsilon}_x^2\big)_x \bigg] \, + \, \meps \, \bigg[
\mathrm{A}_{2,\varepsilon}(\tilde{u}^\xi) \, + \,
L_{2,\varepsilon}^h(\tilde{\upsilon}) \, + \, \int_0^x f^\xi \,
\tilde{\upsilon}_x^2\,dy \bigg] \, - \, \sum\limits_{j=1}^{N-1}
\tilde{u}_j^\xi\,\dot{\xi}_j,
\end{equation*}
with
\begin{equation*}
 f^\xi(x) := \int_0^1 (1-\tau) \; W'''(\tilde{u}^\xi_x + \tau \tilde{\upsilon}_x) \; d\tau ,
\end{equation*}
given in \eqref{eq:definition of f2}.

We may write the first term in \eqref{eq:1 estimate mathcalI} as
follows, \allowdisplaybreaks
\begin{flalign}
-\big\langle L_{1,\varepsilon}^h(\tilde{\upsilon}) \, ,\,
\tilde{\upsilon}_t\big\rangle & = -\deps
\|L_{1,\varepsilon}^h(\tilde{\upsilon})\|^2 \; - \;
\overbrace{ \Big\langle L_{1,\varepsilon}^h(\tilde{\upsilon}) \,
\bm{,} \, \deps \big[ \mathrm{A}_{1,\varepsilon}(\tilde{u}^\xi)
\, + \, \big( f^\xi \, \tilde{\upsilon}_x^2\big)_x \big] \, - \,
\sum\limits_{j=1}^{N-1} \tilde{u}_j^\xi\,\dot{\xi}_j \Big\rangle
}^{\hfill\scriptscriptstyle =:
I_{0,\varepsilon}[\tilde{\upsilon}], \; \mbox{\tiny can be
estimated in terms of $
B_\varepsilon[\tilde{\upsilon}],\|L_{1,\varepsilon}^h(\tilde{\upsilon})\|
$ by \cite[(88)-(90)]{BatesXunI}}} \nonumber
\\
- \, \meps \, & \underbrace{ \Big\langle
L_{1,\varepsilon}^h(\tilde{\upsilon}) \, \bm{,} \,
\mathrm{A}_{2,\varepsilon}(\tilde{u}^\xi) \Big\rangle
}_{I_{1,\varepsilon}[\tilde{\upsilon}]} \, - \, \meps \,
\underbrace{ \Big\langle L_{1,\varepsilon}^h(\tilde{\upsilon}) \,
\bm{,} \, L_{2,\varepsilon}^h(\tilde{\upsilon}) \Big\rangle
}_{I_{2,\varepsilon}[\tilde{\upsilon}]} \, - \, \meps \,
\underbrace{ \Big\langle L_{1,\varepsilon}^h(\tilde{\upsilon}) \,
\bm{,} \, \int_0^x f^\xi \, \tilde{\upsilon}_x^2\,dy \Big\rangle
}_{I_{3,\varepsilon}[\tilde{\upsilon}]}
\nonumber
\\[0.5em]
& = - \deps \, \|L_{1,\varepsilon}^h(\tilde{\upsilon})\|^2 \,
- \, I_{0,\varepsilon}[\tilde{\upsilon}] \,- \, \meps \,
I_{1,\varepsilon}[\tilde{\upsilon}] \, - \, \meps \,
I_{2,\varepsilon}[\tilde{\upsilon}] \, - \, \meps \,
I_{3,\varepsilon}[\tilde{\upsilon}].
\label{eq: est Iepstups}
\end{flalign}

Arguing as in \cite[(88)-(90)]{BatesXunI} and applying
\cite[(101)]{BatesXunI}, the term $
I_{0,\varepsilon}[\tilde{\upsilon}] $ is estimated by
\begin{equation}\label{eq: I0epstups}
\Big| I_{0,\varepsilon}[\tilde{\upsilon}] \Big| \leq
\frac{{\deps}}{4} \|L_{1,\varepsilon}^h(\tilde{\upsilon})\|^2
\; + \; C \, \Big( \varepsilon^{-1} \max_j |\dot{\xi}_j|^2 \, + \,
{\deps} \big( \varepsilon^{-2}\alpha^2(r)\, + \,
\varepsilon^{-4}B^2_\varepsilon[\tilde{\upsilon}] \big) \Big).
\end{equation}

Let us estimate the terms $ I_{1,\varepsilon}[\tilde{\upsilon}],
I_{2,\varepsilon}[\tilde{\upsilon}],
I_{3,\varepsilon}[\tilde{\upsilon}] $ which are the ones coming
from the AC part. Regarding the term $
I_{1,\varepsilon}[\tilde{\upsilon}] $ in \eqref{eq: est
Iepstups}, we have the estimate
\begin{eqnarray}\label{eq: I1epstups}
I_{1,\varepsilon}[\tilde{\upsilon}] &=& \Big\langle
L_{1,\varepsilon}^h(\tilde{\upsilon}) \, \bm{,} \,
\mathrm{A}_{2,\varepsilon}(\tilde{u}^\xi) \Big\rangle \nonumber
\\[0.5em]
& \leq & \|L_{1,\varepsilon}^h(\tilde{\upsilon})\| \,
\|\mathrm{A}_{2,\varepsilon}(\tilde{u}^\xi) \| \nonumber
\\[0.5em]
& \leq & \frac{\deps}{12 \, \meps}
\|L_{1,\varepsilon}^h(\tilde{\upsilon})\|^2 \, + \, 3 \,
{\depsinv} \, \meps \,
\|\mathrm{A}_{2,\varepsilon}(\tilde{u}^\xi) \|^2 \nonumber
\\[0.5em]
& \leq & \frac{\deps}{12 \, \meps}
\|L_{1,\varepsilon}^h(\tilde{\upsilon})\|^2 \, + \, C \,
{\depsinv} \, \meps \, \varepsilon \, \alpha^2(r).
\end{eqnarray}
In the last inequality we used \eqref{eq: uh sat BS},
\eqref{ineq: Lbuh leq alpha}, and \eqref{eq:3 estimate T1}.

As for the term $ I_{2,\varepsilon}[\tilde{\upsilon}] $ in
\eqref{eq: est Iepstups}, we first easily get
\begin{eqnarray}\label{eq: I2epstups}
I_{2,\varepsilon}[\tilde{\upsilon}] &=& \Big\langle
L_{1,\varepsilon}^h(\tilde{\upsilon}) \, \bm{,} \,
L_{2,\varepsilon}^h(\tilde{\upsilon}) \Big\rangle \nonumber
\\[0.5em]
& \leq & \|L_{1,\varepsilon}^h(\tilde{\upsilon})\| \,
\|L_{2,\varepsilon}^h(\tilde{\upsilon}) \| \nonumber
\\[0.5em]
& \leq & \frac{\deps}{12 \, \meps}
\|L_{1,\varepsilon}^h(\tilde{\upsilon})\|^2 \, + \, 3 \,
{\depsinv} \, \meps \, \|L_{2,\varepsilon}^h(\tilde{\upsilon})
\|^2,
\end{eqnarray}
and then, as $ u^h =\mathcal O(1), $ we have
 $ |W''(u^h)| = \mathcal O(1) $.

 By the definitions \eqref{eqs:def of lin ACCH oper}, \eqref{eq:definition of mathcalW Gateaux}
 of the linearized AC operator $ L_{2,\varepsilon}^h $ together with \eqref{ineq: tupsxinfty leq B}, we get
\begin{eqnarray}
[L_{2,\varepsilon}^h(\tilde{\upsilon}) ]^2 & := & \Big[
\varepsilon^2 \tilde{\upsilon}_{xx}  -
\int_0^xW''\big(u^{h(t)}(y)\big) \; \tilde{\upsilon}_x(y,t) \; dy
\Big]^2 \nonumber
\\[0.5em]
& \leq & 2 \varepsilon^4 \tilde{\upsilon}^2_{xx} \, + \, C
\|\tilde{\upsilon}_x\|_{L^\infty}^2 \nonumber
\\[0.5em]
& \leq & 2 \varepsilon^4 \tilde{\upsilon}^2_{xx} \, + \, C \,
\varepsilon^{-1} \, B_\varepsilon[\tilde{\upsilon}] {\red\leq C \,
\varepsilon^{-1} \, B_\varepsilon[\tilde{\upsilon}]}.
\end{eqnarray}
So, \eqref{eq: I2epstups} yields
\begin{equation}\label{eq: I2epstups2}
I_{2,\varepsilon}[\tilde{\upsilon}] \leq \frac{\deps}{12 \, \meps}
\|L_{1,\varepsilon}^h(\tilde{\upsilon})\|^2 \, + \, C \,
{\depsinv} \, \meps \, \varepsilon^{-1} \,
B_\varepsilon[\tilde{\upsilon}].
\end{equation}

For the term $ I_{3,\varepsilon}[\tilde{\upsilon}] $ in \eqref{eq:
est Iepstups}, we have
\begin{eqnarray}\label{eq: I3epstups}
I_{3,\varepsilon}[\tilde{\upsilon}] &=& \Big\langle
L_{1,\varepsilon}^h(\tilde{\upsilon}) \, \bm{,} \, \int_0^x f^\xi
\, \tilde{\upsilon}_x^2\,dy \Big\rangle \nonumber
\\[0.5em]
& \leq & \|L_{1,\varepsilon}^h(\tilde{\upsilon})\| \;
\big|\!\big| \int_0^x f^\xi \, \tilde{\upsilon}_x^2\,dy
\big|\!\big| \nonumber
\\[0.5em]
& \leq & \frac{\deps}{12\, \meps}
\|L_{1,\varepsilon}^h(\tilde{\upsilon})\|^2 \; + \; 3 \,
\depsinv \, \meps \, \big|\!\big| \int_0^x f^\xi \,
\tilde{\upsilon}_x^2\,dy \big|\!\big|^2,
\end{eqnarray}
where we recall \eqref{eq:definition of f2}
\begin{equation*}
 f^\xi(x) := \int_0^1 (1-\tau) \; W'''(\tilde{u}^\xi_x + \tau \tilde{\upsilon}_x) \; d\tau.
\end{equation*}
By \eqref{eq: assumption B}, \eqref{ineq: tupsxinfty leq B} and
the fact that $ u^h = \mathcal O(1), $ we have (cf. \eqref{eq:121
estimate T3})
\begin{equation*}
\tilde{u}^\xi_x + \tau \tilde{\upsilon}_x = u^h + \tau \upsilon =
\mathcal O(1),
\end{equation*}
and thus the integrand $ |W'''(\tilde{u}^\xi_x + \tau
\tilde{\upsilon}_x)| $ in the definition \eqref{eq:definition of
f2} of $ f^\xi $ is uniformly bounded.

So,
\begin{equation}\label{eq:2 I3epstups}
\big|\!\big| \int_0^x f^\xi \, \tilde{\upsilon}_x^2\,dy
\big|\!\big| \leq C \, B_\varepsilon[\tilde{\upsilon}],
\end{equation}
and therefore, \eqref{eq: I3epstups} yields
\begin{equation}\label{eq:3 I3epstups}
I_{3,\varepsilon}[\tilde{\upsilon}] \leq \frac{\deps}{12\, \meps}
\|L_{1,\varepsilon}^h(\tilde{\upsilon})\|^2 \; + \; C \,
{\depsinv} \, \meps \, B^2_\varepsilon[\tilde{\upsilon}].
\end{equation}

\vspace{0.8cm}

Gathering  \eqref{eq:1 estimate mathcalI}-
\eqref{eq: I1epstups}, \eqref{eq: I2epstups2}, \eqref{eq:3
I3epstups},
we get
\begin{multline}\label{eq: est1 mathcalA}
\frac{d}{dt} \mathcal A_\varepsilon[\tilde{\upsilon}] \; + \;
\frac{\deps}{2} \|L_{1,\varepsilon}^h(\tilde{\upsilon})\|^2
\; \leq \; C \, \Big[ \varepsilon^{-1} \max_j |\dot{\xi}_j|^2 \;
+ \; \Big( {\deps} \, \varepsilon^{-2} \; + \; \mepsq \,
{\depsinv} \, \varepsilon \Big) \alpha^2(r)
\\[0.2em]
+ \; \Big( \big( \varepsilon^{-1} +\, {\deps} \, \varepsilon^{-4}
\; + \; \mepsq \, {\depsinv} \big) \,
B_\varepsilon[\tilde{\upsilon}] \; + \; \mepsq \, {\depsinv} \,
\varepsilon^{-1} \Big) B_\varepsilon[\tilde{\upsilon}] \Big].
\end{multline}
In the above, we apply the estimate \eqref{ineq: estimate xidot}
for $ \, \max_j|\dot{\xi}_j| \, $ into the first term in the RHS
of \eqref{eq: est1 mathcalA} to get
\begin{equation}\label{eq: est2 mathcalA}
\begin{split}
\frac{d}{dt} \mathcal A_\varepsilon[\tilde{\upsilon}] \; &+ \;
\frac{\deps}{2} \|L_{1,\varepsilon}^h(\tilde{\upsilon})\|^2
\; \leq \; C \, \Big[ \Big( \mepsq \varepsilon^{-1} \,+\, \depsq
\varepsilon^{-5} \,+\, {\deps} \, \varepsilon^{-2} \; + \; \mepsq
\, {\depsinv} \, \varepsilon \Big) \, \alpha^2(r)
\\&
+ \; \Big( \depsq \, \varepsilon^{-11} \, \beta^2(r) \; + \;
\mepsq \, {\depsinv} \, \varepsilon^{-1} \; + \; \mepsq \,
\varepsilon^{-3} \; \\
&+ \; \big( {\depsq} \, \varepsilon^{-5} \; {\red{+
\;\mu(\eps)^2\varepsilon^{-5}+\eps^{-1}}}+ \; {\deps} \,
\varepsilon^{-4} \; + \; \mepsq \, {\depsinv}\big) \,
B_\varepsilon[\tilde{\upsilon}] \Big)
B_\varepsilon[\tilde{\upsilon}] \Big].
\end{split}
\end{equation}

Let us now note that by \cite[Lemma 3.2]{BatesXunI} we have the
spectral estimate
\begin{equation}\label{ineq: minmax princ mathcalA}
0 < \Lambda \leq \lambda_{\scriptscriptstyle N} \leq
\frac{\mathcal
A_\varepsilon[\tilde{\upsilon}]}{\|\tilde{\upsilon}\|^2},
\end{equation}
where $ \Lambda $ is a constant independent of $ \varepsilon  $
and $ \xi, $ and $ \lambda_{\scriptscriptstyle N} $ denotes the
$N$\textsuperscript{th} eigenvalue of $ L_{1,\varepsilon}^h, $
\begin{equation}\label{cases: L1 EigProb}
\begin{dcases}
L_{1,\varepsilon}^h(\phi) := - \varepsilon^2 \phi''''  +
\big(W''(u^\xi)\phi' \big)' \;=\; \lambda(\varepsilon, \xi) \,
\phi, & \qquad 0 < x < 1,
\\
\phi(0) = \phi(1) = 0, &
\\
\phi''(0) = \phi''(1) = 0. &
\end{dcases}\tag{\mbox{EVP}}
\end{equation}
From \eqref{ineq: minmax princ mathcalA} we obtain
$$
\bm{\big|} \mathcal A_{\varepsilon}[\tilde{\upsilon}]  \bm{\big|}
:= \; \bm{|} - \big\langle L_{1,\varepsilon}^h(\tilde{\upsilon})
\,\bm{,}\, \tilde{\upsilon}\big\rangle \bm{|} \; \leq \; \|
L_{1,\varepsilon}^h(\tilde{\upsilon})  \| \; \|
\tilde{\upsilon} \| \stackrel{\scriptscriptstyle \eqref{ineq:
minmax princ mathcalA}}{\leq} \frac{1}{\sqrt{\Lambda}} \cdot \|
L_{1,\varepsilon}^h(\tilde{\upsilon})  \| \cdot \bm{\big|}
\mathcal A_\varepsilon[\tilde{\upsilon}] \bm{\big|}^{1/2},
$$
therefore
\begin{equation}\label{eq: A leq L1}
\mathcal A_\varepsilon[\tilde{\upsilon}] \; \leq \;
\frac{1}{\Lambda} \; \| L_{1,\varepsilon}^h(\tilde{\upsilon})
\|^2.
\end{equation}
Combining \eqref{ineq: tA geq B} with \eqref{eq: A leq L1}, we get
\begin{equation}\label{eq: est3 mathcalA}
 B_\varepsilon[\tilde{\upsilon}]
\; \leq \; \, \frac{1}{\Lambda} \, \varepsilon^{-2} \,
\|L_{1,\varepsilon}^h(\tilde{\upsilon})\|^2.
\end{equation}

Applying \eqref{eq: est3 mathcalA} into \eqref{eq: est2 mathcalA}
we obtain
\begin{multline}\label{eq: est4 mathcalA}
\frac{d}{dt} \mathcal A_\varepsilon[\tilde{\upsilon}] \; + \;
\frac{\deps}{2} \|L_{1,\varepsilon}^h(\tilde{\upsilon})\|^2
\; \leq \; C \, \Big[ \Big( \mepsq \varepsilon^{-1} \,+\, \depsq
\varepsilon^{-5} \,+\, {\deps} \, \varepsilon^{-2} \; + \; \mepsq
\, {\depsinv} \, \varepsilon \Big) \, \alpha^2(r)
\\[0.5em]
\; + \; \Big( \depsq \, \varepsilon^{-13} \, \beta^2(r) \; + \;
\mepsq \,  {\depsinv} \, \varepsilon^{-3} \; + \; \mepsq \,
\varepsilon^{-5}
\\[0.5em]
\; + \; \big( {\depsq} \, \varepsilon^{-7} \; {\red{+
\;\mu(\eps)^2\varepsilon^{-7}+\eps^{-3}}}+ \; {\deps} \,
\varepsilon^{-6} \; + \; \mepsq \, \depsinv \, \varepsilon^{-2}
\big) \, B_\varepsilon[\tilde{\upsilon}] \Big) \,
\|L_{1,\varepsilon}^h(\tilde{\upsilon})\|^2 \Big]
\end{multline}
and taking into account  \eqref{ineq:beta estimate}, \eqref{eq:
assumption B} and \eqref{eq: A leq L1}, we arrive at (cf.
\cite[(96)]{BatesXunI} for an analogous argument)
\begin{equation}\label{eq: est5 mathcalA}
\frac{d}{dt} \mathcal A_\varepsilon[\tilde{\upsilon}(t)] \; + \;
\frac{\Lambda\,\deps}{3} \mathcal
A_\varepsilon[\tilde{\upsilon}(t)] \; \leq \; C \, \Big( \mepsq
\varepsilon^{-1} \,+\, \depsq \varepsilon^{-5} \,+\, {\deps} \,
\varepsilon^{-2} \; + \; \mepsq \, {\depsinv} \, \varepsilon
\Big) \, \alpha^2(r)
\end{equation}
with the light abuse of notation $ \mathcal
A_\varepsilon[\tilde{\upsilon}(t)] $ in place of $ \mathcal
A_\varepsilon[\tilde{\upsilon}(\cdot, t)]. $

Integrating \eqref{eq: est5 mathcalA} we get
\begin{eqnarray}
\mathcal A_\varepsilon[\tilde{\upsilon}(t)] & \leq & \mathcal
A_\varepsilon[\tilde{\upsilon}(0)] \, e^{ - C_\delta t} \; + \;
C\, \gamma(\varepsilon) \, \alpha^2(r) \big( 1 - e^{ - C_\delta t}
\big) \label{ineq: tildeAfinal}
\\[0.5em]
& \leq & \max \big\{ \mathcal A_\varepsilon[\tilde{\upsilon}(0)]
\; , \; C\, \gamma(\varepsilon) \, \alpha^2(r) \big\},
\nonumber
\end{eqnarray}
where $ C_\delta:=\tfrac{\Lambda\,\deps}{3},\, C $ is a positive
constant independent of $ \varepsilon, \tilde{\upsilon}, \,$ and
the coefficient $ \gamma(\varepsilon) $ is given in \eqref{eq:def
gammaeps}.
We see by \eqref{ineq: tildeAfinal} that the solution $
\tilde{u} $ evolves exponentially towards the slow channel
\eqref{eq: def ourslow ch}.

In view of \eqref{ineq: tildeAfinal}, the estimate \eqref{ineq:
estimate xidot meAantiB}
yields
\begin{multline}\label{eq: est7 mathcalA}
| \dot{\xi}_i| \leq C \Big[ {\deps} \, \varepsilon^{-6} \beta(r)
\Big( \mathcal A^{1/2}_\varepsilon[\tilde{\upsilon}(0)] \,+\,
\gamma^{1/2}(\varepsilon) \, \alpha(r) \Big) \; + \; \big( {\deps}
\,+ \, \meps \varepsilon^2 \big) \varepsilon^{-2}
\alpha(r){\red+\mu(\eps) \, \varepsilon^{-4}\mathcal
A_\varepsilon[\tilde{\upsilon}(0)]}
\\[0.5em]
 + \,
\deps \, \varepsilon^{-4} \Big( \mathcal
A_\varepsilon[\tilde{\upsilon}(0)] \,+\, \gamma(\varepsilon) \,
\alpha^2(r) \Big) \,+\, \meps \, \varepsilon^{-2} \Big( \mathcal
A^{1/2}_\varepsilon[\tilde{\upsilon}(0)] \,+\,
\gamma^{1/2}(\varepsilon) \, \alpha(r) \Big) \Big], \qquad
\end{multline}
and in the slow channel \eqref{eq: def ourslow ch} we have
\begin{equation}
\mathcal A_\varepsilon[\tilde{\upsilon}(0)] \leq c \,
\gamma(\varepsilon) \,  \alpha^2(r),
\end{equation}
so \eqref{eq: est7 mathcalA} on its turn gives
\begin{equation}\label{eq: est8 mathcalA}
| \dot{\xi}_i| \leq C \; \max\big\{ \, \big( {\deps} \,+ \, \meps
\varepsilon^2 \big) \, \varepsilon^{-2} \, , \; \deps
\varepsilon^{-4} \gamma(\varepsilon) \alpha(r) \, , \; \meps
\varepsilon^{-2} \gamma^{1/2}(\varepsilon) \big\} \; \alpha(r),
\end{equation}
where $ \alpha(r) $ is exponentially small in $\eps$ (see the
detailed definition in Appendix and the estimate \eqref{ineq:alpha
estimate}).

So, provided that \eqref{eq: assumption B} is satisfied, and if
\begin{equation}\label{dddd1}
\big( {\deps} \,+ \, \meps \varepsilon^2 \big) \,
\varepsilon^{-2} \ll \alpha^{-1} \; , \qquad \deps
\varepsilon^{-4} \gamma(\varepsilon) \ll \alpha^{-2}
\qquad\mbox{and}\qquad \meps \,\varepsilon^{-2}
\,\gamma^{1/2}(\varepsilon) \ll \alpha^{-1},
\end{equation}
by \eqref{eq: est8 mathcalA} we have
$$ |\dot{\xi}_i| = \mathcal O(e^{-c/r}),$$ and the solution $ \tilde{u} $ stay in the channel
for an exponentially long time.

{ Note that any $\delta(\eps)$, $\mu(\eps)$ of polynomial or
negative polynomial order in $\eps$  satisfy \eqref{dddd1}.}
\end{proof}

\section{Appendix}
In \S\ref{sec1Ap2}-\S\ref{sec1Ap2} we prove various estimates for the non
mass-conserving manifold approximation used throughout this
paper, and collect together existing results thereof from the
work of Carr and Pego, \cite{CarrPego}. Some of the estimates
have been also proven in \cite{AntonBlomkerKarali} and then used
in their integrated version for the mass-conserving case. Then in \S\ref{sectionWP} we derive certain apriori energy estimates for  establishing the well-posedness of the mass-conserving problem considered in \S\ref{section:MSP}.

\subsection{Estimates for the stationary Dirichlet problem (\ref{eq: Bistabe BVP})}\label{sec1Ap}
As it is clear from the definition \eqref{eq: def of uh}, many of
our subsequent estimates involving $ u^h, $ rest upon certain
properties of the stationary states $ \phi $ of
\eqref{eq:deltaCH-AC}, namely the solutions of the Dirichlet
problem \eqref{eq: Bistabe BVP}. In this section we record these
properties and for their proof we refer to \cite{CarrPego}.

Since $ \phi_{\scriptscriptstyle\varepsilon}(0, \ell, \pm1) $
depends on $ \varepsilon $ and $ \ell $ only through the ratio
$\; \mathtt{r}  = \varepsilon/\ell, $ we may define
\begin{equation}\label{eq: def alpha beta}
\alpha_{\pm}( \mathtt{r}) :=
F\big(\phi_{\scriptscriptstyle\varepsilon}(0,\ell,
\pm1)\big),\qquad \beta_{\pm}(\mathtt{r}) := 1 \mp
\phi_{\scriptscriptstyle\varepsilon}(0,\ell, \pm1).
\end{equation}

In what follows, $ C $ will denote a positive constant not
necessarily the same at each occurrence and we stress that $ C $
is independent of $ \varepsilon, x, h_j$'s, $ j$'s.

\begin{proposition}[{\cite[Proposition 3.4]{CarrPego}}]\label{Proposition 3.4 CarrPego}There exists $ r_0>0 $ such that if $ 0 < r < r_0, $ then
\begin{eqnarray}
\alpha_{\pm}(r) & = & \frac{1}{2} K_{\pm}^2 A_{\pm}^2
\operatorname{exp}\Big(\frac{-A_{\pm}}{r}\Big) \left[ 1+\mathcal
O\left(r^{-1}\operatorname{exp}\bigl(\frac{-A_{\pm}}{2r}\bigr)\right)
\right] , \label{ineq:alpha estimate}
\\
\beta_{\pm}(r) & = & K_{\pm}
\operatorname{exp}\Big(\frac{-A_{\pm}}{2r}\Big) \left[ 1+\mathcal
O\left(r^{-1}\operatorname{exp}\bigl(\frac{-A_{\pm}}{2r}\bigr)\right)
\right] ,\label{ineq:beta estimate}
\end{eqnarray}
where
\begin{eqnarray}
A_{\pm} & := & f'(\pm 1) > 0,
\\
K_{\pm} & := & 2 \, \operatorname{exp} \left[ \int_0^1\Big(\frac{
A }{ \sqrt{2 F(\pm t)} } - \frac{1}{1-t}\Big) \, dt \right],
\end{eqnarray}
with $ A:=\min\{A_+, A_-\}, $ and the asymptotic formulas
\eqref{ineq:alpha estimate}, \eqref{ineq:beta estimate} also hold
when they are differentiated a finite number of times, e.g.
\begin{equation}\label{eq: 3 Prop 3.4 CP}
\alpha_+'(r)= - A_+ \, r^{-2} \, \alpha_+(r) \, \left[ 1+\mathcal
O\left(r^{-1}\operatorname{exp}\bigl(\frac{-A_+}{2r}\bigr)\right)
\right].
\end{equation}
\end{proposition}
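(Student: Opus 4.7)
The plan is to reduce the second-order bistable BVP to a one-dimensional energy identity, extract an implicit equation for the midpoint value $\phi_\varepsilon(0,\ell,\pm1)$, and perform a matched asymptotic inversion as $r=\varepsilon/\ell\to 0^+$. I focus on the $+1$ branch; the $-1$ case is identical after a sign change. By the even-reflection symmetry inherited from the Dirichlet data, $\phi_x(0,\ell,+1)=0$, so multiplying $\varepsilon^2\phi_{xx}=f(\phi)=F'(\phi)$ by $\phi_x$ and integrating yields the conservation law $\tfrac{1}{2}\varepsilon^2\phi_x^2=F(\phi)-F(\phi_0)$, with $\phi_0:=\phi(0,\ell,+1)\in(1-a,1)$. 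Monotonicity of $\phi$ on $[0,\ell/2]$ then gives by separation of variables the implicit relation
\[
\frac{1}{2r}=G(\phi_0):=\int_0^{\phi_0}\!\frac{d\phi}{\sqrt{2\bigl(F(\phi)-F(\phi_0)\bigr)}},
\]
which smoothly and monotonically determines $\phi_0$—hence $\beta_+(r)=1-\phi_0$ and $\alpha_+(r)=F(\phi_0)$—as a function of $r\in(0,r_0)$.

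The core of the proof is the asymptotic expansion of $G(\phi_0)$ as $\phi_0\uparrow 1$. Using the Taylor expansion $F(\phi)=\tfrac{A_+}{2}(1-\phi)^2+O((1-\phi)^3)$ (which follows from $F(1)=f(1)=0$ and $F''(1)=f'(1)=A_+$) and the change of variables $t=1-\phi$, $\beta=1-\phi_0$, I would split $G(1-\beta)$ into the model integral $\int_\beta^1\!\bigl[A_+(t^2-\beta^2)\bigr]^{-1/2}\,dt$, which is evaluated exactly via $t=\beta\cosh\sigma$ to produce the leading $\log(2/\beta)$ term plus an $O(\beta^2)$ correction, and a remainder integral whose singular part has been subtracted. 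The remainder extends continuously, and in fact smoothly, down to $\beta=0$; its limit at $\beta=0$ is precisely the integral appearing in the definition of $K_+$ in the statement, where the auxiliary subtraction of $(1-t)^{-1}$ inside the integrand serves to regularize the logarithmic tail of the limiting integrand near $t=1$. Combining these two pieces yields an equation of the form $1/(2r)=\log(K_+/\beta_+)/\sqrt{A_+}+O(\beta_+^2)$ (modulo the $A_+$ vs. $\sqrt{A_+}$ scaling choice), whose inversion produces the asserted formula for $\beta_+(r)$. The formula for $\alpha_+(r)$ then follows immediately by plugging this into $\alpha_+(r)=F(1-\beta_+)=\tfrac{A_+}{2}\beta_+^2\bigl[1+O(\beta_+)\bigr]$, which accounts for the extra $\tfrac{1}{2}A_+^2$ prefactor.

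For the statement about differentiated asymptotics, I would invoke the implicit function theorem: $G$ is real-analytic on $(-1+a,1)$ with $G'(\phi_0)>0$, so $\phi_0=\phi_0(r)$ is smooth on $(0,r_0)$, and the matched expansion can be differentiated termwise to produce analogous formulas for $\alpha_\pm',\beta_\pm'$ and higher derivatives. The principal technical obstacle I anticipate is uniformity: establishing that the subtracted-remainder integral is genuinely $C^k$ in $\beta$ down to $\beta=0$ requires controlling $\beta$-derivatives of an integrand with a movable endpoint singularity at $t=\beta$. The standard route is a single integration by parts, or equivalently the substitution $t=\beta s$ in a neighborhood of the endpoint followed by $t=\beta+\sigma^2$ elsewhere, to trade the weak $(t-\beta)^{-1/2}$ singularity for an absolutely convergent integrand whose $\beta$-dependence is manifestly smooth; the differentiated asymptotic formulas then follow by applying the same inversion argument to the differentiated implicit equation.
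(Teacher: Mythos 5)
The paper states this proposition purely by citation to Carr and Pego (Proposition 3.4 there) and contains no proof of its own, so there is no in-paper argument against which to compare yours; the relevant comparison is with the cited source, and your sketch reconstructs that argument faithfully. The first integral $\tfrac{1}{2}\varepsilon^2\phi_x^2=F(\phi)-F(\phi_0)$ obtained from evenness of $\phi$ gives the implicit relation $\tfrac{1}{2r}=G(\phi_0)$; the logarithmically singular model piece $\int_\beta^1\bigl[A_+(t^2-\beta^2)\bigr]^{-1/2}\,dt$ is computed exactly via the $\cosh$ substitution; the regularized remainder converges smoothly to the integral defining $K_+$ as $\beta\to 0$; and inversion together with $\alpha_+=F(1-\beta_+)=\tfrac{1}{2}F''(1)\,\beta_+^2\bigl[1+O(\beta_+)\bigr]$ recovers both asymptotic formulas, with differentiability of the expansions following from smoothness of $\phi_0(r)$ via the implicit function theorem as you indicate. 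Your parenthetical flag about ``$A_+$ versus $\sqrt{A_+}$'' is well taken and is a genuine defect of the transcription here rather than of your argument: linearizing $\varepsilon^2 v''=f'(\pm 1)\,v$ near $\phi=\pm 1$ shows the exponential rate is $\sqrt{f'(\pm 1)}/\varepsilon$, so the exponent in $\beta_\pm(r)$ must be $-\sqrt{f'(\pm 1)}\big/(2r)$, and consistency of the $\tfrac{1}{1-t}$ regularization inside $K_\pm$ likewise forces $A_\pm=\sqrt{f'(\pm 1)}$; the paper's restatement, which sets $A_\pm:=f'(\pm 1)$ without the square root and inserts a fixed $A=\min\{A_+,A_-\}$ into both $K_+$ and $K_-$, carries normalization slips relative to Carr--Pego's original formulation. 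Your derivation produces the correct scaling.
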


\begin{proposition}[{\cite[Lemma 7.4]{CarrPego}}]\label{lemma7.4 CP}Let $ 0 < r < r_0. $ Then there exist constants
$ C_1 \in (0,1) $ and $ C_2 > 0 $ such that, for $ |x\pm \frac{\ell}{2}|\leq \varepsilon,$
\begin{subequations}\label{ineq: lemma7.4 CP}
\begin{eqnarray}
|\phi(x, \ell, \pm 1)| & \leq & C_1 , \label{ineq:a lemma7.4 CP}
\\
F(\phi(x, \ell, \pm1)) & \geq & C_2. \label{ineq:b lemma7.4 CP}
\end{eqnarray}
\end{subequations}
\end{proposition}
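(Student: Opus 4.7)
The plan is to exploit the first integral of the bistable ODE $\varepsilon^{2}\phi''=f(\phi)$. Multiplying by $\phi'$ and integrating, one obtains the conservation law $\tfrac{\varepsilon^{2}}{2}(\phi')^{2}=F(\phi)+E$ for some constant $E$, and since the solution $\phi(\cdot,\ell,+1)$ is even in $x$ with $\phi'(0,\ell,+1)=0$ (see \eqref{eq: phix equals 0}), we get $E=-F(\phi(0,\ell,+1))=-\alpha_{+}(r)$. On the half-interval $[-\ell/2,0]$ the function $\phi$ is positive, monotonically increasing from $0$ to $\phi(0,\ell,+1)=1-\beta_{+}(r)$, so it satisfies
\begin{equation*}
\varepsilon\,\phi_{x}=\sqrt{2\bigl(F(\phi)-\alpha_{+}(r)\bigr)}\qquad\text{on }[-\ell/2,0].
\end{equation*}

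Separating variables and integrating from the boundary point $-\ell/2$ (where $\phi=0$) to $x=-\ell/2+\varepsilon$, I would obtain the implicit relation
\begin{equation*}
\int_{0}^{\phi(-\ell/2+\varepsilon,\ell,+1)}\frac{d\sigma}{\sqrt{2\bigl(F(\sigma)-\alpha_{+}(r)\bigr)}}\;=\;1.
\end{equation*}
Using Proposition \ref{Proposition 3.4 CarrPego} I would treat $\alpha_{+}(r)$ as an exponentially small perturbation, so up to such a perturbation the value $\phi(-\ell/2+\varepsilon,\ell,+1)$ is determined as the unique $C_{1}\in(0,1)$ with $\int_{0}^{C_{1}}\frac{d\sigma}{\sqrt{2F(\sigma)}}=1$. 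The key point is that the improper integral $\int_{0}^{1}\frac{d\sigma}{\sqrt{2F(\sigma)}}$ diverges, because assumption \eqref{eq:W cond2} gives $F(\sigma)\sim\tfrac{f'(1)}{2}(1-\sigma)^{2}$ as $\sigma\uparrow 1$, so the integrand behaves like $\tfrac{1}{\sqrt{f'(1)}\,(1-\sigma)}$ and integrates to a logarithm. Hence the upper limit $C_{1}$ is bounded strictly below $1$ by a universal constant. Since $\phi$ is monotone on $[-\ell/2,-\ell/2+\varepsilon]$, the bound \eqref{ineq:a lemma7.4 CP} follows on this strip, and by symmetry also on $[\ell/2-\varepsilon,\ell/2]$. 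The analogous argument with $\phi(\cdot,\ell,-1)$ (now $\phi<0$ and decreasing on the relevant half-interval) handles the minus case.

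For \eqref{ineq:b lemma7.4 CP}, once \eqref{ineq:a lemma7.4 CP} is in hand, I would simply set
\begin{equation*}
C_{2}:=\min_{|u|\leq C_{1}}F(u)\;>\;0,
\end{equation*}
which is positive by assumption \eqref{eq:W cond3} combined with the continuity of $F$ on the compact interval $[-C_{1},C_{1}]\subset(-1,1)$.

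The main obstacle I anticipate is the careful propagation of the exponentially small error $\alpha_{+}(r)$ through the implicit equation: one needs to verify that the integrand's singularity at $\sigma=\phi(0,\ell,+1)=1-\beta_{+}(r)$ does not degrade the estimate, since $F(\sigma)-\alpha_{+}(r)$ vanishes there. This is handled by observing that $\phi(-\ell/2+\varepsilon,\ell,+1)$ stays well inside the interval where $F(\sigma)-\alpha_{+}(r)$ is comparable to $F(\sigma)$ (quantitatively, because $C_{1}$ is bounded away from both $0$ and $1-\beta_{+}(r)$ uniformly in small $r$), so a standard Lebesgue dominated convergence / uniform continuity argument — essentially the same smooth dependence on $\varepsilon/\ell$ noted in Remark \ref{ff} — transfers the estimate from the unperturbed integral to the perturbed one for all sufficiently small $r$.
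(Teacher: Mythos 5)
The paper provides no internal proof of this Proposition — it is stated as a direct citation of Carr--Pego, Lemma 7.4 — so your phase-plane argument via the first integral $\tfrac{\eps^2}{2}(\phi')^2 = F(\phi)-\alpha_\pm(r)$ is a legitimate reconstruction of the standard route, and the core idea (divergence of $\int_0^1 d\sigma/\sqrt{2F(\sigma)}$ by the nondegeneracy \eqref{eq:W cond2}, which pins the level reached after a traversal of length $\eps$ strictly inside $(0,1)$) is exactly right. Two remarks on economy of means: first, you do not need the exponential asymptotics of Proposition~\ref{Proposition 3.4 CarrPego} at all — and invoking them is in fact potentially circular, since those asymptotics are themselves derived from phase-plane estimates of precisely this type. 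All you need is $\alpha_+(r)=F(\phi(0,\ell,+1))>0$, which holds because $\phi$ is nonconstant, whence $1/\sqrt{2(F(\sigma)-\alpha_+(r))}>1/\sqrt{2F(\sigma)}$ and the implicit relation immediately forces $\phi(-\ell/2+\eps,\ell,+1)<C_1$ by monotonicity of $b\mapsto \int_0^b d\sigma/\sqrt{2F(\sigma)}$. No perturbation analysis or dominated convergence is required.

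There is, however, a genuine gap: you only establish the bound on the inner half-strips $[-\ell/2,\,-\ell/2+\eps]$ and (by evenness of $\phi$ in $x$) $[\ell/2-\eps,\,\ell/2]$, whereas the Proposition requires $|x\pm\ell/2|\leq\eps$, i.e.\ the full strips including the outer halves $[-\ell/2-\eps,\,-\ell/2]$ and $[\ell/2,\,\ell/2+\eps]$ that lie inside the domain $|x|<\ell/2+\eps$ of \eqref{eq: Bistabe BVP}. On those outer halves $\phi(\cdot,\ell,+1)$ is \emph{negative}, and a separate (though analogous) estimate is needed. For the specific odd $f(u)=u^3-u$, the solution is odd about the zero $-\ell/2$, so $\phi(-\ell/2-y)=-\phi(-\ell/2+y)$ and the bound transfers verbatim; for the general nonlinearity satisfying \eqref{eq:W cond1}–\eqref{eq:W cond3} you must repeat the separation-of-variables step on $[-\ell/2-\eps,\,-\ell/2]$, using that $\int_{-1}^{0}d\sigma/\sqrt{2F(\sigma)}$ also diverges (by the nondegeneracy of $F$ at $-1$), to produce a $C_1'\in(0,1)$ with $|\phi(-\ell/2-\eps)|\leq C_1'$, and then set the final constant to $\max\{C_1,C_1'\}$. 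As written the proof simply does not address half of the required set.
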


\begin{proposition}[{\cite[Lemma 7.5]{CarrPego}}]\label{lemma7.5 CP}For $ |x|\leq 2\varepsilon, $ we have
\begin{subequations}\label{ineq: lemma7.5 CP}
\begin{eqnarray}
|\phi(x, \ell, \pm 1) - (-1)^j| & \leq & C \beta(r) ,
\label{ineq:a lemma7.5 CP}
\\
|\phi_x(x, \ell, \pm1)| & \leq & C \varepsilon^{-1} \beta(r).
\label{ineq:b lemma7.5 CP}
\end{eqnarray}
\end{subequations}
\end{proposition}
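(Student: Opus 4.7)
The plan is to reduce the second-order equation $\varepsilon^2\phi_{xx}=f(\phi)$ to a first-order separable ODE via the natural energy integral, and then analyze this reduced problem as a small perturbation of the linearization at the stable equilibrium. Since $\phi(\cdot,\ell,\pm 1)$ is even in $x$ (the even extension of a monotone solution on $[0,\ell/2]$, as mentioned in Remark \ref{ff}), one has $\phi_x(0,\ell,\pm 1)=0$, so multiplying the ODE by $\phi_x$ and integrating from $0$ to $x$ yields the conservation law
\begin{equation*}
\tfrac{1}{2}\varepsilon^2\phi_x^2(x,\ell,\pm 1)\;=\;F(\phi(x,\ell,\pm 1))\,-\,F(\phi(0,\ell,\pm 1)).
\end{equation*}

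I would focus on the $+1$ case (the $-1$ case is identical, or follows from the odd symmetry of $f$) and set $\psi:=1-\phi$, so that by \eqref{eq: def alpha beta} one has $\psi(0)=\beta_+(r)$, $\psi_x(0)=0$, and $\psi\geq\beta_+(r)$ on $[0,\ell/2]$ by the monotonicity of $\phi$. Taylor expansion of $F$ around $1$, using $F(1)=f(1)=0$ and $f'(1)=A_+$, gives $F(1-\psi)=\tfrac{A_+}{2}\psi^2+O(\psi^3)$, and the conservation law becomes
\begin{equation*}
\varepsilon^2\psi_x^2\;=\;A_+\bigl(\psi^2-\beta_+^2(r)\bigr)\,+\,O\bigl(\psi^3+\beta_+^3(r)\bigr).
\end{equation*}
Separating variables on $[0,\ell/2]$ and integrating from $x=0$ produces, to leading order, the explicit formula $\psi(x)=\beta_+(r)\cosh(\sqrt{A_+}\,x/\varepsilon)(1+o(1))$. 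On the window $|x|\leq 2\varepsilon$ the hyperbolic cosine is bounded by the absolute constant $\cosh(2\sqrt{A_+})$, which delivers $|\phi(x,\ell,+1)-1|\leq C\beta_+(r)$ and proves \eqref{ineq:a lemma7.5 CP}. For the derivative bound \eqref{ineq:b lemma7.5 CP} I would simply feed this estimate back into the conservation law: $F(\phi)-F(\phi(0))=\tfrac{A_+}{2}(\psi^2-\beta_+^2)+O(\psi^3)=O(\beta_+^2(r))$ on the window, whence $|\phi_x|\leq C\varepsilon^{-1}\beta(r)$.

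The main technical obstacle is making the $O(\psi^3)$ remainder rigorous rather than formal over the full window $|x|\leq 2\varepsilon$, since the separation-of-variables step must be closed against the nonlinear correction. I would handle this by a bootstrap argument: because $\beta(r)$ is exponentially small by Proposition \ref{Proposition 3.4 CarrPego}, one first postulates $\psi(x)\leq C_0\beta_+(r)$ on the window and verifies that the cubic correction is then relatively of size $O(\beta_+(r))\ll 1$, so the separated ODE $\varepsilon\psi_x=\sqrt{A_+}\sqrt{\psi^2-\beta_+^2}\,(1+O(\beta_+))$ reproduces the claimed estimate with an improved constant, closing the loop. The $-1$ case is handled by the same argument applied to $\tilde{\psi}=1+\phi$ (or, when $f$ is odd, directly from $\phi(x,\ell,-1)=-\phi(x,\ell,+1)$).
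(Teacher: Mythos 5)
The paper itself does not reproduce a proof of this proposition (it cites \cite{CarrPego} Lemma 7.5 and simply records the statement), so there is nothing in the manuscript to compare against word for word. Your reconstruction is, however, a correct proof and matches the standard approach used by Carr and Pego: exploit the first integral $\tfrac{1}{2}\varepsilon^2\phi_x^2 = F(\phi)-F(\phi(0))$ (valid because $\phi_x(0)=0$, cf.\ \eqref{eq: phix equals 0}), linearize $F$ near the stable equilibrium $\pm 1$ to recover the hyperbolic-cosine profile of the deviation $\psi$, and observe that on the window $|x|\leq 2\varepsilon$ the argument of the $\cosh$ is $\mathcal O(1)$, so the deviation stays comparable to its value $\beta_\pm(r)$ at $x=0$; the derivative bound then falls straight out of the first integral. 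This is the right mechanism, and the bootstrap you sketch is the right way to control the nonlinear remainder rigorously.

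One presentational imprecision worth flagging: after Taylor expanding, the remainder in $\varepsilon^2\psi_x^2 = A_+(\psi^2-\beta_+^2) + \ldots$ cannot be an arbitrary additive $\mathcal O(\psi^3+\beta_+^3)$ term, because the right-hand side must vanish at $\psi=\beta_+$ (since $\psi_x(0)=0$); the remainder in fact factors as $(\psi^2-\beta_+^2)\cdot\mathcal O(\psi)$, which is exactly what makes your relative-error statement $\varepsilon\psi_x=\sqrt{A_+}\,\sqrt{\psi^2-\beta_+^2}\,(1+\mathcal O(\beta_+))$ correct inside the bootstrap. You arrive at the right conclusion, but stating the remainder in the factored form from the outset would remove the apparent tension between the two displays. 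With that cosmetic adjustment, the argument closes cleanly, and the $-1$ case (via $\tilde\psi=1+\phi$, or by oddness of $f$) is indeed identical.
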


\begin{proposition}[{\cite[Lemma 7.7]{CarrPego}}]\label{lemma7.7 CP}We have
\begin{equation}\label{eq: 1 Lemma 7.7 CP}
\int_{-\ell/2}^{\ell/2} |\phi_x| \, dx \leq 2, \qquad
\int_{-\ell/2}^{\ell/2} |\phi_{xx}| \, dx \leq C\varepsilon^{-1},
\qquad \int_{-\ell/2}^{\ell/2} |\phi_{xx}|^2 \, dx \leq
C\varepsilon^{-3}.
\end{equation}
\end{proposition}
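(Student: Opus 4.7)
The plan is to reduce all three estimates to explicit phase-plane calculations for the autonomous ODE $\varepsilon^2\phi_{xx} = f(\phi)$. The central tool will be the first integral obtained by multiplying the equation by $\phi_x$ and using $\phi_x(0) = 0$ (by the evenness of $\phi$, cf.\ \eqref{eq: phix equals 0}):
\begin{equation*}
\tfrac{1}{2}\varepsilon^2\phi_x^2 \;=\; F(\phi) - F(\phi(0)) \;\geq\; 0.
\end{equation*}
Evaluating at $\phi = 0$ forces $F(\phi(0)) \leq F(0)$; since $\phi(0) = 1$ would correspond to the heteroclinic profile (realized only in infinite time), we have the strict bound $|\phi(0)| < 1$.

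For the first estimate, I will use evenness and strict monotonicity of $\phi$ on each half of $[-\ell/2,\ell/2]$ to write $\int_{-\ell/2}^{\ell/2}|\phi_x|\,dx = 2|\phi(0)| < 2$. For the second, I will observe that on $[0,\ell/2]$ the solution ranges over $[0,|\phi(0)|] \subset (-1,1)$, an interval on which $f(\phi) = \phi(\phi^2-1)$ has constant sign; consequently $\phi_{xx} = \varepsilon^{-2}f(\phi)$ has one sign throughout, so
\begin{equation*}
\int_0^{\ell/2}|\phi_{xx}|\,dx \;=\; |\phi_x(\ell/2) - \phi_x(0)| \;=\; |\phi_x(\ell/2)| \;\leq\; \varepsilon^{-1}\sqrt{2F(0)},
\end{equation*}
by the first integral evaluated at $\phi = 0$, and doubling gives the claimed bound.

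For the third estimate the plan is to change variables from $x$ to $s = \phi$ via $dx = \varepsilon\,ds/\sqrt{2(F(s) - F(\phi(0)))}$ from the conservation law, which yields
\begin{equation*}
\int_0^{\ell/2}\phi_{xx}^2\,dx \;=\; \varepsilon^{-3}\int_0^{|\phi(0)|}\frac{f(s)^2}{\sqrt{2(F(s) - F(\phi(0)))}}\,ds,
\end{equation*}
so the task reduces to bounding the remaining $s$-integral by a constant uniformly in $\varepsilon/\ell$. The main technical point, and the only real obstacle, is the inverse-square-root singularity at $s = \phi(0)$, which becomes delicate in the relevant asymptotic regime $\phi(0) \to 1^-$ (where $f(\phi(0)) \to 0$ and naive estimates degenerate). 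I plan to handle this by the further substitution $u = F(s) - F(\phi(0))$, $du = f(s)\,ds$, which recasts the integral as $\int_0^{F(0)-F(\phi(0))}|f(s(u))|/\sqrt{2u}\,du$; since $f$ is bounded on $[0,1]$ and $u^{-1/2}$ is integrable at $0$, this is $\mathcal O(1)$ uniformly in $\varepsilon/\ell$, yielding $\int \phi_{xx}^2\,dx = \mathcal O(\varepsilon^{-3})$. Note that the pointwise bound $|\phi_{xx}| \leq C\varepsilon^{-2}$ alone would only give $\mathcal O(\varepsilon^{-4}\ell)$, so the phase-plane substitution is essential to recover the sharp $\varepsilon^{-3}$ scaling.
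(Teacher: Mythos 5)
Your proof is correct. The paper does not reproduce the argument for this lemma (it simply cites Carr and Pego, \cite{CarrPego}), but your phase-plane route is the standard one behind that reference: the first integral $\tfrac12\varepsilon^2\phi_x^2 = F(\phi) - F(\phi(0))$ gives the first two bounds directly (monotonicity on each half-period for the first, one-signedness of $\phi_{xx}=\varepsilon^{-2}f(\phi)$ plus the boundary flux $|\phi_x(\ell/2)|\le\varepsilon^{-1}\sqrt{2F(0)}$ for the second), and the substitution $s=\phi$ followed by $u=F(s)-F(\phi(0))$ correctly tames the $u^{-1/2}$ endpoint singularity to yield the sharp $\mathcal O(\varepsilon^{-3})$ rate for the third. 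This is in line with how Carr--Pego organize these estimates — note that the paper's Proposition~\ref{Proposition 3.4 CarrPego} already parametrizes the bistable states by exactly the energy constant $\alpha_\pm(r)=F(\phi(0))$ appearing in your conservation law.
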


Beside the above estimates for $ \phi $ and its derivatives with
respect to $ x,$ we will also need estimates on the derivatives $
\phi_\ell(x, \ell, \pm1):=\tfrac{\partial}{\partial\ell}\phi(x,
\ell, \pm1).$

\begin{proposition}[{\cite[Lemma 7.8]{CarrPego}}]\label{lemma7.8 CP}For $ x \in [-\ell, \ell], $
\begin{equation}\label{eq:Lemma 7.8 CP}
\phi_\ell\big(x, \ell, \pm1\big) \,= \, - \frac{1}{2}
\operatorname{sgn}(x)\,\phi_x\big(x, \ell, \pm1\big) \,+\,
\mathrm{w}\big(x, \ell, \pm1\big),
\end{equation}
where, for $ x \neq 0,$
\begin{equation}\label{eq: 2 Lemma 7.8 CP}
\mathrm{w}\big(x, \ell, \pm1\big) \;=\; \varepsilon^{-1} \,
\ell^{-2} \, \alpha'_\pm(r) \,\phi_x(|x|, \ell, \pm1)\,
\int_{\ell/2}^{|x|} \phi_x\big(s, \ell, \pm1\big)^{-2} \, ds,
\end{equation}
and
\begin{equation}\label{eq: 3 Lemma 7.8 CP}
 \mathrm{w}\big(0, \ell, \pm1\big) \;=\;
 \frac{-\varepsilon^{-1} \, \ell^{-2} \, \alpha'_\pm(r)}{\phi_{xx}(0, \ell, \pm1)}.
\end{equation}
\end{proposition}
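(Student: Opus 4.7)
My plan is to derive a linear Sturm--Liouville problem for $\phi_\ell$, decompose off an ansatz dictated by the boundary conditions, and solve the resulting inhomogeneous problem by reduction of order, pinning down the single free constant via the first integral of the bistable ODE.

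First I would differentiate the Dirichlet problem \eqref{eq: Bistabe BVP} in $\ell$ to get
\[
\varepsilon^2\,\phi_{\ell xx} - f'(\phi)\,\phi_\ell = 0, \qquad \phi_\ell\bigl(\pm\tfrac{\ell}{2}\bigr) = \mp\tfrac{1}{2}\,\phi_x\bigl(\pm\tfrac{\ell}{2}\bigr),
\]
the boundary values coming from the chain rule applied to $\phi(\pm\ell/2,\ell,\pm 1)=0$. Note that $\phi_x$ itself satisfies the same homogeneous equation (by differentiating the original ODE in $x$), and that $\phi$ being even in $x$ (Remark \ref{ff}) forces $\phi_x$ odd with $\phi_x(0)=0$; in particular $\operatorname{sgn}(x)\,\phi_x(x)$ is continuous across $x=0$ and reproduces the required boundary trace. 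This motivates the ansatz $\phi_\ell = -\tfrac{1}{2}\,\operatorname{sgn}(x)\,\phi_x + w$, so that $w$ vanishes at $x=\pm\ell/2$.

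Next I would compute the distributional identity $(\operatorname{sgn}(x)\phi_x)_{xx} = 2\phi_{xx}(0)\delta(x) + \operatorname{sgn}(x)\phi_{xxx}$ (using $\phi_x(0)=0$), and combine it with $\varepsilon^2\phi_{xxx}=f'(\phi)\phi_x$ to conclude
\[
\varepsilon^2 w_{xx} - f'(\phi)\,w = \varepsilon^2\,\phi_{xx}(0)\,\delta(x), \qquad w(\pm\ell/2)=0.
\]
Since $w$ inherits evenness from $\phi_\ell$, I would reduce to solving this on $(0,\ell/2)$ with $w(\ell/2)=0$ and the jump condition $w_x(0^+)=\tfrac{1}{2}\phi_{xx}(0)$. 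One homogeneous solution is $\phi_x$; reduction of order then yields the independent solution vanishing at $\ell/2$,
\[
v_2(x) \, := \, \phi_x(x)\int_{\ell/2}^{x}\phi_x(s)^{-2}\,ds,
\]
so $w = K\,v_2$ on $(0,\ell/2)$ for some constant $K$, and even reflection produces the $|x|$ in the statement.

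To pin down $K$ I would invoke the first integral $\tfrac{\varepsilon^2}{2}\phi_x^2 = F(\phi) - \alpha_\pm(r)$, obtained by multiplying the ODE by $\phi_x$ and integrating, and recalling $\alpha_\pm(r)=F(\phi(0,\ell,\pm 1))$. Differentiating in $\ell$ (with $r=\varepsilon/\ell$) and substituting $f(\phi)=\varepsilon^2\phi_{xx}$ reveals that the Wronskian
\[
W(\phi_x,\phi_\ell) \, = \, \phi_x\phi_{\ell x}-\phi_{xx}\phi_\ell \, = \, \varepsilon^{-1}\ell^{-2}\alpha'_\pm(r),
\]
constant as expected. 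Evaluating at $x=0$ (where $\phi_x(0)=0$) gives $\phi_\ell(0)=-\alpha'_\pm(r)/(\varepsilon\ell^2\phi_{xx}(0))$, which is precisely \eqref{eq: 3 Lemma 7.8 CP}. On the other hand, expanding $\phi_x(s)\sim\phi_{xx}(0)s$ near $s=0$ and cancelling the singular parts in $\phi_x(x)\int_{\ell/2}^x\phi_x^{-2}\,ds$ yields the finite limit $v_2(0^+)=-1/\phi_{xx}(0)$; matching $K\,v_2(0^+)=w(0)=\phi_\ell(0)$ fixes $K=\varepsilon^{-1}\ell^{-2}\alpha'_\pm(r)$, completing the proof. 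The hard part will be this last matching: since $\phi_x^{-2}$ is not integrable at $0$, one must carry the Taylor expansion of $\phi_x$ past the leading term (using $\phi_{xxx}(0)=0$, which follows from evenness) to see that the divergent contributions in $v_2$ actually cancel and extract the precise finite value needed to match the conservation-law computation.
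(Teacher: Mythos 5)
Your proposal is correct and reconstructs the natural argument behind Carr--Pego's Lemma~7.8: differentiate the bistable Dirichlet problem in $\ell$ to get the linearized equation with boundary data $\phi_\ell(\pm\tfrac{\ell}{2})=\mp\tfrac12\phi_x(\pm\tfrac{\ell}{2})$, subtract the particular even function $-\tfrac12\operatorname{sgn}(x)\phi_x$ (continuous because $\phi_x(0)=0$) to convert this into a homogeneous-Dirichlet problem for $w$ with a point source $\varepsilon^2\phi_{xx}(0)\delta(x)$, solve on $(0,\ell/2)$ by reduction of order off $\phi_x$, and pin down the constant by evaluating the conserved Wronskian $\phi_x\phi_{\ell x}-\phi_{xx}\phi_\ell=\varepsilon^{-1}\ell^{-2}\alpha'_\pm(r)$ (which you correctly obtain by $\ell$-differentiating the first integral $\tfrac{\varepsilon^2}{2}\phi_x^2=F(\phi)-\alpha_\pm(r)$) at $x=0$, where $\phi_x$ vanishes. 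The paper itself only cites \cite[Lemma 7.8]{CarrPego} without reproducing the proof, so there is no in-paper argument to compare against.

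One small correction on your closing remark: the matching at $x=0$ is not actually delicate. Because $\phi$ is even, $\phi_x$ is odd, so $\phi_x(s)=\phi_{xx}(0)\,s+O(s^3)$ with no quadratic term; the $s^{-2}$ singularity of $\phi_x^{-2}$ therefore integrates to $-\phi_{xx}(0)^{-2}x^{-1}+O(1)$ (no logarithmic piece), and multiplying by $\phi_x(x)\sim\phi_{xx}(0)x$ cancels the singular part at leading order, giving $v_2(0^+)=-1/\phi_{xx}(0)$ immediately. Higher-order Taylor terms would only be needed to compute $v_{2,x}(0^+)$ (i.e., to verify the jump condition directly), which your approach avoids by matching values instead. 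So the step you worry about is routine once the parity of $\phi$ is invoked, which you already do.
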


\begin{proposition}[{\cite[Lemma 7.9]{CarrPego}}]\label{prop7.9 CP}Let $ \mathrm{w} $ be defined
in Proposition \ref{lemma7.8 CP}. There exists $ r_0>0 $ such that if $ 0 < r < r_0, $ then
\begin{eqnarray}
 |\mathrm{w}\big(x, \ell, \pm1\big)| & \leq & C \varepsilon^{-1} \beta_\pm(r), \qquad \mbox{for}\quad
 x \in \big[-\frac{\ell}{2} - \varepsilon, \, \frac{\ell}{2} + \varepsilon\big],
\label{eq: 1 Lemma 7.9 CP}
\\
\ |\mathrm{w}\big(x, \ell, \pm1\big)| & \leq &  C
\varepsilon^{-1} \alpha_\pm(r), \qquad \mbox{for} \quad \big|x \pm
\frac{\ell}{2}\big|  < \varepsilon. \label{eq: 2 Lemma 7.9 CP}
\end{eqnarray}
\end{proposition}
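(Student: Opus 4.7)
The plan is to start from the closed-form expression \eqref{eq: 2 Lemma 7.8 CP}, namely
$$\mathrm{w}(x,\ell,\pm 1)=\varepsilon^{-1}\ell^{-2}\alpha'_\pm(r)\,\phi_x(|x|,\ell,\pm 1)\int_{\ell/2}^{|x|}\phi_x(s,\ell,\pm 1)^{-2}\,ds,$$
and to use the sharp asymptotics \eqref{ineq:alpha estimate}--\eqref{ineq:beta estimate}, \eqref{eq: 3 Prop 3.4 CP}. Since $r=\varepsilon/\ell$, one has $\varepsilon^{-1}\ell^{-2}=r^{2}\varepsilon^{-3}$, and \eqref{eq: 3 Prop 3.4 CP} yields $\alpha'_\pm(r)=-A_\pm r^{-2}\alpha_\pm(r)(1+o(1))$, so the prefactor collapses to $-A_\pm\varepsilon^{-3}\alpha_\pm(r)(1+o(1))$. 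The task thus reduces to bounding
$$J_\pm(x):=\phi_x(|x|,\ell,\pm 1)\int_{\ell/2}^{|x|}\phi_x(s,\ell,\pm 1)^{-2}\,ds,$$
and proving the estimates $|J_\pm(x)|\leq C\varepsilon^{2}\beta_\pm(r)^{-1}$ globally and $|J_\pm(x)|\leq C\varepsilon^{2}$ in the end layers $|x\pm\ell/2|<\varepsilon$, which combined with $\alpha_\pm\sim\tfrac12 A_\pm^{2}\beta_\pm^{2}$ will produce the two stated bounds.

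The workhorse is the first integral of the bistable ODE $\varepsilon^{2}\phi_{xx}=f(\phi)$: multiplying by $\phi_x$, integrating from the origin, and using $\phi_x(0,\ell,\pm 1)=0$ (see \eqref{eq: phix equals 0}), one obtains
$$\varepsilon^{2}\phi_x^{2}=2\bigl(F(\phi)-\alpha_\pm(r)\bigr)\quad\text{on }[-\ell/2,\ell/2].$$
This pointwise identity converts both $\phi_x(|x|)$ and $\phi_x^{-2}$ into explicit functions of $F(\phi)-\alpha_\pm$, and the monotonicity of $\phi$ on $[0,\ell/2]$ (consequence of Remark \ref{ff} together with concavity from $f(\phi)<0$ for $0<\phi<1$) permits the change of variables $s\mapsto\phi(s)$, recasting the integral as one over the values of $\phi$ where its behavior is completely transparent.

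I would then split according to the location of $x$. In the end layer $|x\pm\ell/2|<\varepsilon$, Proposition \ref{lemma7.4 CP} provides the uniform lower bound $F(\phi)\geq C_{2}>0$, so $|\phi_x|$ is of size $\varepsilon^{-1}$ and $\phi_x^{-2}$ of size $\varepsilon^{2}$; since the interval of integration has length at most $\varepsilon$, we get $|J_\pm|\leq C\varepsilon^{2}$, producing the sharper estimate $|\mathrm{w}|\leq C\varepsilon^{-1}\alpha_\pm(r)$ in \eqref{eq: 2 Lemma 7.9 CP}. For $|x|\leq\ell/2-\varepsilon$, $\phi$ lies near $\pm 1$ and $F(\phi)-\alpha_\pm$ is only of size $\beta_\pm^{2}$, so the integrand becomes singular near $s=0$. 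Using the Taylor expansion $F(\phi)-F(\pm 1)=\tfrac12 A_\pm(\phi\mp 1)^{2}+O(|\phi\mp 1|^{3})$ together with $\phi(s)-\phi(0)\approx\tfrac12\phi_{xx}(0)s^{2}$ and the ODE relation $\phi_{xx}(0)=\varepsilon^{-2}f(\phi(0))\sim\mp\varepsilon^{-2}A_\pm\beta_\pm$, one finds
$$\int_{|x|}^{\ell/2}\phi_x^{-2}\,ds\sim\frac{\varepsilon^{4}}{A_\pm^{2}\beta_\pm^{2}\,|x|},\qquad |\phi_x(|x|)|\sim\frac{A_\pm\beta_\pm\,|x|}{\varepsilon^{2}},$$
so the apparent $|x|$-singularities cancel and $|J_\pm(x)|\leq C\varepsilon^{2}\beta_\pm^{-1}$, whence $|\mathrm{w}(x,\ell,\pm 1)|\leq C\varepsilon^{-1}\beta_\pm(r)$ as required.

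The main obstacle will be rigorously justifying this cancellation uniformly on the full interval: although $\phi_x$ and $\phi_x^{-2}$ individually blow up at $s=0$, their combination stays bounded, and establishing this requires patching the quadratic approximation in a neighborhood of $s=0$ with the exponential-decay regime away from $s=0$, and verifying that the higher-order errors from Proposition \ref{Proposition 3.4 CarrPego} do not spoil the leading cancellation. A secondary subtlety is the limit $x\to 0$, which must reproduce the closed-form value \eqref{eq: 3 Lemma 7.8 CP} with the correct constant; this can be handled by a L'Hopital-type argument based on $\varepsilon^{2}\phi_{xx}(0)=f(\phi(0))$ and serves as a useful consistency check that no factor has been lost along the change-of-variables manipulations.
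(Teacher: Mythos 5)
The paper does not contain a proof of Proposition~\ref{prop7.9 CP}; it is quoted as Lemma~7.9 of \cite{CarrPego}, and \S\ref{sec1Ap} explicitly refers the reader to that source. So there is no internal proof to compare against, and I assess your sketch on its own. The reduction is sound: using $r=\varepsilon/\ell$ and \eqref{eq: 3 Prop 3.4 CP}, the prefactor in \eqref{eq: 2 Lemma 7.8 CP} collapses to $-A_\pm\varepsilon^{-3}\alpha_\pm(r)(1+o(1))$, and the first integral $\varepsilon^2\phi_x^2 = 2\bigl(F(\phi)-\alpha_\pm(r)\bigr)$ (valid since $\phi_x(0)=0$ and $\alpha_\pm=F(\phi(0))$) is the right tool. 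Your treatment of the endpoint estimate \eqref{eq: 2 Lemma 7.9 CP} is complete: for $|x\pm\ell/2|<\varepsilon$, Proposition~\ref{lemma7.4 CP} gives $F(\phi)\geq C_2>0$, whence $\phi_x^{-2}=O(\varepsilon^2)$ and $|\phi_x|=O(\varepsilon^{-1})$, and since the range of integration has length $O(\varepsilon)$, $|J_\pm|\leq C\varepsilon^2$ and hence $|\mathrm{w}|\leq C\varepsilon^{-1}\alpha_\pm$.

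The interior bound $|J_\pm|\leq C\varepsilon^2\beta_\pm^{-1}$, which yields \eqref{eq: 1 Lemma 7.9 CP}, is however left as a plan, as you yourself concede: the near-origin quadratic asymptotics and the $1/|x|$ growth of the integral are stated with $\sim$ and their uniform matching with the outer regime is only promised. That is the genuine gap. The good news is that the change of variables $\phi=\phi(s)$ you mention closes it cleanly and avoids the matching altogether. From $\varepsilon\phi_x=-\sqrt{2(F(\phi)-\alpha_\pm)}$ on $(0,\ell/2)$ one gets $ds = -\varepsilon\,d\phi/\sqrt{2(F(\phi)-\alpha_\pm)}$, so
\begin{equation*}
|J_\pm(x)| = \varepsilon^{2}\sqrt{2\bigl(F(\phi(|x|))-\alpha_\pm\bigr)}\int_0^{\phi(|x|)}\frac{d\phi}{\bigl[2(F(\phi)-\alpha_\pm)\bigr]^{3/2}}\,.
\end{equation*}
Split the $\phi$-integral at $1-\delta$ for a small fixed $\delta$. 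On $[1-\delta,\phi(|x|)]$, Taylor expansion at $\pm1$ gives $F(\phi)-\alpha_\pm \asymp (1\mp\phi)^2-\beta_\pm^2$; the antiderivative of $(u^2-\beta_\pm^2)^{-3/2}$ is $-u/(\beta_\pm^2\sqrt{u^2-\beta_\pm^2})$, and after multiplying by the square-root prefactor the apparent blow-up as $1\mp\phi(|x|)\searrow\beta_\pm$ cancels exactly, leaving a uniform $O(1/\beta_\pm)$ bound; on $[0,1-\delta]$ the integrand is $O(1)$. The only further input is the strict monotonicity of $\phi$ on $(0,\ell/2)$, which follows from $\varepsilon^2\phi_x^2=2(F(\phi)-\alpha_\pm)>0$ for $0<|x|<\ell/2$, so no separate concavity argument is required. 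One minor caution: you infer $\alpha_\pm\sim\tfrac12 A_\pm^2\beta_\pm^2$ from \eqref{ineq:alpha estimate}--\eqref{ineq:beta estimate} as transcribed in the paper, but Taylor expansion of $F$ at $\pm1$ gives $\alpha_\pm\approx\tfrac12 A_\pm\beta_\pm^2$, so the literal constants in Proposition~\ref{Proposition 3.4 CarrPego} should be checked against Carr--Pego (one expects $\sqrt{A_\pm}$ in the exponents); this does not affect orders of magnitude and therefore does not harm your argument, but it would spoil a sharp-constant version.
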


\begin{lemma}[{\cite[Lemma 7.10]{CarrPego}}]\label{lemma 7.10 CP} For $ x \in [-\tfrac{\ell}{2} - \varepsilon,
\, \tfrac{\ell}{2} + \varepsilon], \, x\neq 0,$
\begin{equation}\label{eq: Lemma 7.10 CP}
\bm{\big|} \mathrm{w}_x\big(x, \ell, \pm 1\big) \bm{\big|} \; = \;
\left| \phi_{\ell x}\big(x, \ell, \pm 1\big) + \frac{1}{2}
\operatorname{sgn}(x)\,  \phi_{xx}\big(x, \ell, \pm 1\big) \right|
\; \leq \; C \,\varepsilon^{-2}\, r^{-1}\, \beta_{\pm}(r).
\end{equation}
\end{lemma}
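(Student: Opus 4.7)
The plan is to start from the representation formula \eqref{eq:Lemma 7.8 CP} established in Proposition \ref{lemma7.8 CP}, namely
$$
\mathrm{w}(x,\ell,\pm 1) = \phi_\ell(x,\ell,\pm1) + \tfrac{1}{2}\operatorname{sgn}(x)\,\phi_x(x,\ell,\pm 1),
$$
and differentiate with respect to $x$. Since the statement excludes $x=0$, $\operatorname{sgn}(x)$ is locally constant and the differentiation is classical, immediately yielding the first equality in the claim. The problem therefore reduces to bounding $|\mathrm{w}_x|$ using the explicit representation \eqref{eq: 2 Lemma 7.8 CP}.

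Next I would substitute \eqref{eq: 2 Lemma 7.8 CP} and differentiate in $x$ (assume $x>0$ by symmetry). The product/quotient rules give
$$
\mathrm{w}_x(x,\ell,\pm 1) \;=\; \varepsilon^{-1}\ell^{-2}\alpha_\pm'(r)\left[\phi_{xx}(x,\ell,\pm 1)\int_{\ell/2}^{x}\!\!\phi_x(s,\ell,\pm 1)^{-2}\,ds \;+\; \phi_x(x,\ell,\pm 1)^{-1}\right],
$$
so the job is to bound the bracketed quantity uniformly on $(0,\tfrac{\ell}{2}+\varepsilon]$, and then pay for the prefactor $\varepsilon^{-1}\ell^{-2}\alpha_\pm'(r)$. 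Using \eqref{eq: 3 Prop 3.4 CP} one has $|\alpha_\pm'(r)|\lesssim r^{-2}\alpha_\pm(r)\lesssim r^{-2}\beta_\pm^2(r)$ (by comparing \eqref{ineq:alpha estimate}–\eqref{ineq:beta estimate}), and $\ell^{-2}=r^2\varepsilon^{-2}$, so the prefactor is of order $\varepsilon^{-3}\beta_\pm^2(r)$. To get the claimed $\varepsilon^{-2}r^{-1}\beta_\pm(r)$ it will therefore suffice to show that the bracket is $\mathcal O\!\bigl(\varepsilon\, r^{-1}\, \beta_\pm^{-1}(r)\bigr)$.

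The main obstacle is the singularity of $\phi_x^{-2}$ at $x=0$, where $\phi_x(0)=0$ by \eqref{eq: phix equals 0}. I would handle this via the first integral of $\varepsilon^2\phi_{xx}=f(\phi)$, namely
$$
\tfrac{\varepsilon^2}{2}\phi_x^2 \;=\; F(\phi) - F(\phi(0,\ell,\pm 1)),
$$
which gives a sharp lower bound $\phi_x^2 \gtrsim \varepsilon^{-2}\bigl(F(\phi)-\alpha_\pm(r)\bigr)$ away from the turning point. Splitting the integral into a piece on $[\ell/2-\varepsilon,\ell/2+\varepsilon]\cup[\varepsilon,\ell/2-\varepsilon]$, where Proposition \ref{lemma7.5 CP} and Proposition \ref{lemma7.4 CP} give uniform lower bounds on $|\phi_x|$, and a piece on $[0,\varepsilon]$ where one uses the energy identity together with $|\phi(x)-\phi(0)|\sim \varepsilon^{-2}x^2$ (Taylor at the turning point) to show the integrand is integrable with bound $\mathcal O(\varepsilon\,\beta_\pm^{-2}(r))$, gives the required control. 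The endpoint term $\phi_x(x)^{-1}$ is handled the same way: it is at worst $\mathcal O(\varepsilon\,\beta_\pm^{-1}(r))$ away from $x=0$, and near $x=0$ the energy identity yields $|\phi_x|\gtrsim \varepsilon^{-1}|x|\,\beta_\pm(r)^{1/2}\cdot (\text{correction})$, which combined with the $\phi_{xx}$ factor (bounded pointwise via the ODE $\varepsilon^2\phi_{xx}=f(\phi)$, i.e.\ $|\phi_{xx}|\lesssim\varepsilon^{-2}$) keeps the contribution within the same order.

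Finally, collecting the prefactor $\varepsilon^{-3}\beta_\pm^2(r)$ together with the bracket bound $\mathcal O(\varepsilon r^{-1}\beta_\pm^{-1}(r))$ yields
$$
|\mathrm{w}_x(x,\ell,\pm 1)| \;\leq\; C\,\varepsilon^{-2}\,r^{-1}\,\beta_\pm(r),
$$
uniformly for $x\in[-\tfrac{\ell}{2}-\varepsilon,\tfrac{\ell}{2}+\varepsilon]\setminus\{0\}$, which is the desired estimate. The delicate step is genuinely the $[0,\varepsilon]$ slice, where all the factors degenerate simultaneously and one must track the cancellation between the vanishing of $\phi_x$ and the growth of the endpoint term; everything else is a routine application of \eqref{ineq: lemma7.4 CP}–\eqref{ineq: lemma7.5 CP} and the asymptotics \eqref{ineq:alpha estimate}–\eqref{ineq:beta estimate}.
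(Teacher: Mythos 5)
The paper does not prove this lemma --- it is quoted verbatim from \cite[Lemma 7.10]{CarrPego} --- so your proposal can only be judged on its own. Your opening moves are sound: for $x\neq 0$, differentiating \eqref{eq:Lemma 7.8 CP} gives the first equality; differentiating \eqref{eq: 2 Lemma 7.8 CP} yields the formula $\mathrm{w}_x=\varepsilon^{-1}\ell^{-2}\alpha_\pm'(r)\bigl[\phi_{xx}\int_{\ell/2}^{x}\phi_x^{-2}\,ds+\phi_x^{-1}\bigr]$; and the prefactor $\varepsilon^{-1}\ell^{-2}|\alpha_\pm'(r)|\sim\varepsilon^{-3}\alpha_\pm(r)\sim\varepsilon^{-3}\beta_\pm^2(r)$ is computed correctly from \eqref{eq: 3 Prop 3.4 CP} and \eqref{ineq:alpha estimate}--\eqref{ineq:beta estimate}, so the problem does reduce to showing the bracket is $\mathcal O(\varepsilon r^{-1}\beta_\pm^{-1}(r))$. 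Away from the turning point your sketch via Propositions \ref{lemma7.4 CP} and \ref{lemma7.5 CP} is fine.

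The treatment near $x=0$, however, contains concrete errors that make the argument collapse precisely in the slice you flag as delicate. First, the energy identity $\tfrac{\varepsilon^2}{2}\phi_x^2=F(\phi)-\alpha_\pm(r)$ together with $F'(\phi(0))=f(\phi(0))\approx\mp A_\pm\beta_\pm(r)$ and $\phi(x)-\phi(0)\approx\tfrac{1}{2}\phi_{xx}(0)x^2$ gives $|\phi_x(x)|\approx A_\pm\varepsilon^{-2}\beta_\pm(r)|x|$ near $x=0$, which is far \emph{smaller} than your claimed lower bound $\gtrsim\varepsilon^{-1}|x|\beta_\pm^{1/2}(r)$ (note $\varepsilon^{-1}\beta_\pm^{1/2}\ll1$); that lower bound is simply false. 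Second, $\int_x^{\varepsilon}\phi_x(s)^{-2}\,ds\sim\phi_{xx}(0)^{-2}x^{-1}\to\infty$ as $x\to 0^+$, so the integral over the $[0,\varepsilon]$ slice is \emph{not} uniformly $\mathcal O(\varepsilon\beta_\pm^{-2}(r))$, and the endpoint term $\phi_x(x)^{-1}\sim\phi_{xx}(0)^{-1}x^{-1}$ blows up simultaneously. These two singularities cancel exactly --- $\mathrm{w}$ is even and smooth, so $\mathrm{w}_x(0)=0$, consistent with \eqref{eq: 3 Lemma 7.8 CP} --- and that cancellation is unavoidable: bounding the two summands of the bracket separately, as you do, cannot produce a uniform estimate as $x\to0$. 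A correct argument must pair them, e.g.\ write the bracket as $\phi_{xx}(x)\int_{\ell/2}^x\bigl[\phi_x(s)^{-2}-\phi_{xx}(0)^{-2}s^{-2}\bigr]ds$ plus the regular combination $\phi_x(x)^{-1}+\phi_{xx}(x)\phi_{xx}(0)^{-2}x^{-1}$ (up to a bounded piece from the lower limit), and estimate each of these genuinely bounded quantities --- or else work with a representation of $\mathrm{w}_x$ that is manifestly regular at the turning point.
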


One may show that $ \mathrm{w} $ is $ C^2 $ on $ [0, \ell] $ and
satisfies (see \cite[(7.19)]{CarrPego})
\begin{equation}
\varepsilon^2 \mathrm{w}_{xx} = f'(\phi) \mathrm{w},
\end{equation}
which together with \eqref{eq: 1 Lemma 7.9 CP} yields
\begin{equation}\label{eq:2 prop 2.10 BX}
\bm{\big|} \mathrm{w}_{xx}\big(x, \ell, \pm 1\big) \bm{\big|}
 \leq  C \,\varepsilon^{-3}\, \beta_{\pm}(r).
\end{equation}

\subsection{Estimates on the states \itshape u\textsuperscript{h}}\label{sec1Ap2}
For $ j =1 ,2, \ldots, N+1, $ and the $ \ell_j $ that are given
in \eqref{eq: def ellj}, we set
\begin{equation}
r_j  :=  \frac{\varepsilon}{\ell_j},
\end{equation}
and
\begin{equation}\label{eq: alphabetapm}
\alpha^j  :=
\begin{cases}
\alpha_+(r_j), & \mbox{for $ j $ even},
\\
\alpha_-(r_j), & \mbox{for $ j $ odd},
\end{cases}
\qquad\mbox{and}\qquad \beta^j  :=
\begin{cases}
\beta_+(r_j), & \mbox{for $ j $ even},
\\
\beta_-(r_j), & \mbox{for $ j $ odd}.
\end{cases}
\end{equation}

We also set
\begin{equation}
r  :=  \max\limits_{\scriptscriptstyle 1\leq j \leq N+1} r_j =
\frac{\varepsilon}{\min_j \ell_j},
\end{equation}
and
\begin{equation}\label{eq: alphabeta}
\alpha(r)  :=  \max\limits_{\scriptscriptstyle 1\leq j \leq N+1}
\alpha^j, \qquad\mbox{and}\qquad \beta(r)  :=
\max\limits_{\scriptscriptstyle 1\leq j \leq N+1} \beta^j.
\end{equation}

From the first estimate in \eqref{eq: 1 Lemma 7.7 CP} and the
zero boundary values in \eqref{eq: Bistabe BVP} we deduce that $
|\phi| \leq 2 $ on $ [-  \tfrac{\ell}{2},  \tfrac{\ell}{2}]. $
Therefore, for each $ j=1,\ldots,N+1, $
\begin{equation}\label{eq: phi leq 2}
|\phi^j| \leq 2 \qquad\mbox{on}\quad
 \bigl[m_j -  \tfrac{\ell_j}{2}, \, m_j + \tfrac{\ell_j}{2}\bigr],
\end{equation}
and as a consequence of the definition \eqref{eq: def of uh}, $
u^h $ is uniformly bounded on $ [0, 1], $ thus $ f(u^h) $ and $
f'(u^h) $ are uniformly bounded too.

Similarly, from the second estimate in \eqref{eq: 1 Lemma 7.7 CP}
we get that
\begin{equation}\label{eq: phi x leq ceps}
|\phi^j_x| \leq C\varepsilon^{-1} \qquad \mbox{on} \quad \bigl[m_j -  \tfrac{\ell_j}{2},\, m_j + \tfrac{\ell_j}{2}\bigr].
\end{equation}

By \eqref{eq: phij solves BS} and \eqref{eq: phi leq 2} we get
\begin{equation}\label{eq: phi xx leq ceps}
|\phi^j_{xx}| \leq C\varepsilon^{-2} \qquad\mbox{on}\quad
 \bigl[m_j -  \tfrac{\ell_j}{2},\, m_j + \tfrac{\ell_j}{2}\bigr],
\end{equation}
and a differentiation of \eqref{eq: phij solves BS} together with
\eqref{eq: phi leq 2}, \eqref{eq: phi x leq ceps} yields
\begin{equation}\label{eq: phi xxx leq ceps}
|\phi^j_{xxx}| \leq C\varepsilon^{-3} , \qquad \mbox{on} \quad \bigl[m_j -  \tfrac{\ell_j}{2},\, m_j + \tfrac{\ell_j}{2}\bigr],
\end{equation}
and in general, we may see that, see also in
\cite{AntonBlomkerKarali}
\begin{equation}\label{eq: phi nx leq ceps}
\Big| \partial_x^n \phi^j \Big| \leq C \varepsilon^{-n}, \qquad
\quad \mbox{on}\quad \bigl[m_j -  \tfrac{\ell_j}{2}, \; m_j +
\tfrac{\ell_j}{2}\bigr].
\end{equation}

By Proposition \ref{lemma7.5 CP}, we have
\begin{subequations}\label{ineq: lemma 7.5 CP}
\begin{eqnarray}
|\phi^j(x) - (-1)^j| & \leq & C \beta(r) , \label{ineq:a lemma
7.5 CP}
\\
|\phi^j_x(x)| & \leq & C \varepsilon^{-1} \beta(r),
\qquad\qquad\mbox{for}\quad |x-m_j| < 2 \varepsilon.
\label{ineq:b lemma 7.5 CP}
\end{eqnarray}
\end{subequations}
As a consequence of \eqref{ineq:a lemma 7.5 CP},  we have
\begin{equation}\label{ineq: cons1 of lemma 7.5 CP}
|f(\phi^j)| \,=\, |f(\phi^j) - f((-1)^j)| \leq C \beta(r), \qquad\qquad \mbox{for} \quad |x-m_j|<2\varepsilon,
\end{equation}
which on its turn together with \eqref{eq: phij solves BS} implies
\begin{equation}\label{ineq: cons2 of lemma 7.5 CP}
| \phi_{xx}^j | \leq C \varepsilon^{-2} \beta(r), \qquad\qquad
\mbox{for} \quad |x-m_j|<2\varepsilon.
\end{equation}

\begin{proposition}[{\cite[Lemma 8.2]{CarrPego}}]\label{lemma:8.2 CP}Let $ r_0 > 0 $ be sufficiently small. There exist
constants $ C_1, C_2 $ such that if we assume that $ \varepsilon/\ell_j < r_0 $ and $ \varepsilon/\ell_{j+1} < r_0 $
for $ j \in \{1, 2, \ldots, N\}, $ then
\begin{subequations}\label{eq: lemma 8.2 CP}
\begin{eqnarray}
\bm{\big|} \phi^j(x) \, - \, \phi^{j+1}(x) \bm{\big|} & \leq &
C_1\, \bm{\big|} a^j \, - \, a^{j+1} \bm{\big|}, \label{eq: a
lemma 8.2 CP}
\\
\bm{\Big|} \phi^j_x(x) \, - \, \phi^{j+1}_x(x) \bm{\Big|} & \leq
& C_2 \,\varepsilon^{-1}\, \bm{\Big|} a^j \, - \, a^{j+1}
\bm{\Big|}, \qquad\qquad \mbox{for} \quad |x-h_j|<\varepsilon.
\label{eq: b lemma 8.2 CP}
\end{eqnarray}
\end{subequations}

\end{proposition}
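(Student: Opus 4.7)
The proof rests on the first integral of the bistable ODE \eqref{eq: phij solves BS}. Multiplying $\varepsilon^2 \phi^j_{xx} = f(\phi^j)$ by $\phi^j_x$ and integrating yields the conservation law
\begin{equation*}
\tfrac{1}{2}\varepsilon^2 (\phi^j_x)^2 - F(\phi^j) = -a^j,
\end{equation*}
where the constant $a^j$, as in Remark \ref{remexp2}, coincides with $F(\phi^j(m_j)) = \alpha^j$ by virtue of the symmetry $\phi^j_x(m_j) = 0$ from \eqref{eq: phix equals 0}; the analogous identity holds for $\phi^{j+1}$ with constant $a^{j+1}$. Evaluating both identities at $x = h_j$, where $\phi^j(h_j) = \phi^{j+1}(h_j) = 0$ by the Dirichlet condition in \eqref{eq: Bistabe BVP}, and subtracting, I obtain
\begin{equation*}
\tfrac{1}{2}\varepsilon^2\bigl[(\phi^j_x(h_j))^2 - (\phi^{j+1}_x(h_j))^2\bigr] = a^{j+1} - a^j.
\end{equation*}

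Next, I plan to factor the left-hand side as $(\phi^j_x(h_j) - \phi^{j+1}_x(h_j))(\phi^j_x(h_j) + \phi^{j+1}_x(h_j))$. Since $a^j = \alpha^j = \mathcal{O}(\exp(-A/r))$ by \eqref{ineq:alpha estimate} while $F(0) = \tfrac{1}{4}$, the quantity $|\varepsilon \phi^j_x(h_j)|^2 = 2(F(0) - a^j)$ is bounded below for sufficiently small $r$. Moreover $\phi^j_x(h_j)$ and $\phi^{j+1}_x(h_j)$ share the same sign (the transition layer on the left approaches zero from one side and continues monotonically into the opposite well on the right, forcing both derivatives to have the common sign $-(-1)^j$), so the sum is nondegenerate of magnitude $\sim 2c\varepsilon^{-1}$. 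This yields the pointwise bound
\begin{equation*}
\bigl|\phi^j_x(h_j) - \phi^{j+1}_x(h_j)\bigr| \leq C\varepsilon^{-1}|a^{j+1} - a^j|,
\end{equation*}
together with the trivial $|\phi^j(h_j) - \phi^{j+1}(h_j)| = 0$.

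To propagate these pointwise bounds across the whole interval $|x - h_j| < \varepsilon$, I set $g := \phi^j - \phi^{j+1}$. By the mean value theorem applied to $f$, the difference satisfies the linear ODE
\begin{equation*}
\varepsilon^2 g_{xx} = f'(\widetilde{\phi})\,g, \qquad |x - h_j| < \varepsilon,
\end{equation*}
for some intermediate value $\widetilde{\phi}$ with $|f'(\widetilde{\phi})|$ uniformly bounded. Combining the initial data $g(h_j) = 0$ and the bound on $g_x(h_j)$ with Taylor's formula with integral remainder and a Gronwall argument, I obtain $|g(x)| \leq C|a^{j+1}-a^j|$ and $|g_x(x)| \leq C\varepsilon^{-1}|a^{j+1}-a^j|$; the Gronwall exponent stays $\mathcal{O}(1)$ because the coefficient $\varepsilon^{-2}|f'(\widetilde{\phi})|$ acts on an interval of length $2\varepsilon$, yielding the two asserted inequalities \eqref{eq: a lemma 8.2 CP}--\eqref{eq: b lemma 8.2 CP}.

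The main obstacle is the nondegeneracy of the factor $\phi^j_x(h_j) + \phi^{j+1}_x(h_j)$ used in the second paragraph: without it, the energy identity would yield only a weaker bound on $|\phi^j_x(h_j) - \phi^{j+1}_x(h_j)|$ involving a division by a possibly small quantity. Everything else is routine ODE comparison, but this step depends essentially on the exponential smallness of $\alpha^j$ through \eqref{ineq:alpha estimate}, which guarantees that $F(0) - a^j$ is bounded below by a positive constant for $r$ sufficiently small.
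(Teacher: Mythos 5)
The paper does not supply its own proof of this statement; it is quoted verbatim as Lemma~8.2 of Carr--Pego, so the only thing to check is whether your argument is sound. It is. Your proof is the standard one and almost certainly the one Carr and Pego have in mind: the first integral $\tfrac{1}{2}\varepsilon^2(\phi^j_x)^2 - F(\phi^j) = -a^j$ (with $a^j = \alpha^j$ by the symmetry $\phi^j_x(m_j)=0$), the common Dirichlet value $\phi^j(h_j)=\phi^{j+1}(h_j)=0$, the factorization of the difference of squares of slopes, and then the linear ODE $\varepsilon^2 g_{xx}=f'(\widetilde{\phi})g$ for $g=\phi^j-\phi^{j+1}$ propagated by Gronwall over the $\mathcal{O}(\varepsilon)$-interval. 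The nondegeneracy step you flag is indeed the crux: it works because $|\varepsilon\phi^j_x(h_j)|^2 = 2(F(0)-a^j)$ with $F(0)=\tfrac14$ and $a^j$ exponentially small, and because both slopes carry the sign $-(-1)^j$ since each $\phi^j$ is strictly monotone on the half-interval between its interior extremum at $m_j$ and its zero at $h_j$ (again by the first integral, $\phi^j_x$ cannot vanish except at $m_j$). One small bookkeeping point you might make explicit: both $\phi^j$ and $\phi^{j+1}$ are defined on the common interval $(h_j-\varepsilon, h_j+\varepsilon)$ by the overhang $\pm\varepsilon$ built into the domain of \eqref{eq: Bistabe BVP}, which is what licenses running the Gronwall estimate there. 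Otherwise the argument is complete and correct.
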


Moreover, by \eqref{eq: phij solves BS}, mean value theorem and
\eqref{eq: a lemma 8.2 CP} we have, with some $ \theta_x $
between $ \phi^j(x)$ and $ \phi^{j+1}(x), $
\begin{eqnarray}
\bm{|} \phi^j_{xx}(x) - \phi^{j+1}_{xx}(x)\bm{|} & = &
\varepsilon^{-2} \bm{|} f(\phi^j(x))  -f(\phi^{j+1}(x)) \bm{|}
\nonumber
\\
&=& \varepsilon^{-2} \bm{|} f'(\theta_x) \bm{|} \,
\bm{|}\phi^{j+1} - \phi^j\bm{|} \nonumber
\\
& \leq & C \varepsilon^{-2} \bm{\big|} a^j \, - \, a^{j+1}
\bm{\big|}, \qquad\qquad \mbox{for} \quad |x-h_j|<\varepsilon.
\label{ineq: estimate diff phij}
\end{eqnarray}

By differentiating \eqref{eq: phij solves BS}, we may proceed
recursively to get,  for $ n=1,2,3, \cdots,$
\begin{eqnarray}\label{ineq: estim gen diff phij}
\bm{|} \partial^n_x\phi^j - \partial^n_x\phi^{j+1}\bm{|} & \leq &
C \varepsilon^{-n} \bm{\big|} a^j \, - \, a^{j+1} \bm{\big|},
\qquad\qquad \mbox{for} \quad |x-h_j|<\varepsilon. \label{eq:
11.3a flow in channel}
\end{eqnarray}

Considering the smooth cut-off function $ \chi^j $ let us
notice that, for $ n=1,2,3,\cdots, $
\begin{equation}\label{eq: deriv cutoff}
\Big| \frac{d^n }{dx^n} \chi^j \Big| \leq C \varepsilon^{-n}.
\end{equation}
By \eqref{eq: derivative of uh}, \eqref{eq: phi x leq ceps} and
\eqref{eq: deriv cutoff} we have
\begin{equation}\label{eq: uhx leq eps}
|u^h_{x}| \leq C \varepsilon^{-1}, \qquad \mbox{on} \quad [0, 1],
\end{equation}
and by \eqref{eq: uxxh}, \eqref{eq: phi xx leq ceps}, \eqref{eq:
lemma 8.2 CP}, \eqref{eq: deriv cutoff} we easily get
\begin{equation}\label{eq: uhxx leq eps}
|u^h_{xx}| \leq C \varepsilon^{-2}, \qquad \mbox{on} \quad [0, 1].
\end{equation}

Differentiating \eqref{eq: uxxh} we immediately get
\begin{equation}\label{eq: u xxx form}
u_{xxx}^h =
\begin{cases}
\; \phi_{xxx}^j  &\quad  , m_j \leq x \leq h_j-\varepsilon ,
\\
\chi_{xxx}^j \, \big(\phi^{j+1}-\phi^j\big)  + 3 \chi_{xx}^j \,
\big(\phi^{j+1}_x-\phi^j_x\big)  + 3 \chi_x^j \,
\big(\phi_{xx}^{j+1}-\phi_{xx}^j\big)  &
\\
\phantom{ \chi_{xxx}^j \, \big(\phi^{j+1}-\phi^j\big) } +
(1-\chi^j)\phi^j_{xxx}
 + \chi^j\phi^{j+1}_{xxx} &\quad   , |x-h_j| < \varepsilon ,
\\
\; \phi_{xxx}^{j+1}  &\quad   , h_j +\varepsilon \leq x \leq
m_{j+1}.
\end{cases}
\end{equation}

By \eqref{eq: phi xxx leq ceps}, \eqref{ineq: estim gen diff
phij}, \eqref{eq: deriv cutoff}, \eqref{eq: u xxx form}, we
easily obtain
\begin{equation}\label{ineq: uh3x leq eps3}
|u^h_{xxx}| \leq C \varepsilon^{-3}, \qquad \mbox{on} \quad [0, 1].
\end{equation}

Also for a smooth function $ \mathcal{F} = \mathcal{F}(s), \, s
\in [0,1], $ it is straightforward to show that the remainder $
R(\chi) $ of the linear Lagrange interpolation of $ \mathcal{F} $
at $ s = 0 $ and $ s = 1, $
\begin{equation}\label{eq: Lagrange interp}
R(x) \,:=\, \mathcal{F}(x) \,-\, (1-x) \mathcal{F}(0) - x
\mathcal{F}(1), \qquad x \in [0,1],
\end{equation}
is given by
\begin{equation}\label{eq: Lagrange interp2}
R(x) \;=\; (1-x) \int_0^{\chi} s \mathcal{F}''(s)\, ds \;+\; x
\int_{x}^1 (1-s) \mathcal{F}''(s)\, ds.
\end{equation}

We now use \eqref{eq: derivative of uh} and employ \eqref{eq:
Lagrange interp}-\eqref{eq: Lagrange interp2} for the function
\begin{equation}\label{eq: appl F Lagr interp}
\mathcal{F}(s) := f((1-s)\phi^j + s \phi^{j+1}),
\end{equation}
to get
\begin{equation}\label{eq: appl F Lagr interp2}
\mathscr L^b(u^h) = \varepsilon^2 \chi_{xx}^j \,
\big(\phi^{j+1}-\phi^j\big) \; + \; 2 \varepsilon^2 \chi_x^j \,
\big(\phi_x^{j+1}-\phi_x^j\big) \;+\; R^j, \qquad \mbox{for} \quad |x-h_j| < \varepsilon,
\end{equation}
where
\begin{equation}\label{eq: rem Lagr interp}
R^j = \big(\phi^{j+1}-\phi^j\big)^2 \left[ (1-\chi^j)
\int_0^{\chi^j} s f''\big(\theta(s)\big) \, ds \; + \; \chi^j
\int_{\chi^j}^1 (1-s) f''\big(\theta(s)\big) \, ds \right],
\end{equation}
and $ \theta(s):=(1 - s) \phi^j + s \phi^{j+1}.$

We then combine \eqref{eq: appl F Lagr interp2}-\eqref{eq: rem
Lagr interp} with \eqref{eq: lemma 8.2 CP}, \eqref{eq: deriv
cutoff} to conclude that
\begin{equation}\label{ineq: Lbuh leq alpha}
|\mathscr L^b(u^h)| \leq C \alpha(r), \qquad  \mbox{for} \quad
|x-h_j| < \varepsilon ;
\end{equation}
cf. \cite[Theorem 3.5]{CarrPego}.

At this point let us recall \eqref{eq: uh sat BS}, i.e. $
\mathscr L^b(u^h)=0, $ for  $ |x-h_j| \geq \varepsilon, $ which
together with boundary values \eqref{eq: zero Neumann BV uh} and
\eqref{ineq: Lbuh leq alpha} show that $ u^h $ ``almost" satisfy
the steady-state problem \eqref{BVP: steady states}.

\begin{remark}\label{remujh}\upshape To show that
\begin{equation}\label{eq:approx der}
 u_j^h \sim -u_x^h, \qquad \mbox{as} \quad r \to 0, \qquad \mbox{uniformly on} \quad I_j :=[m_j,\, m_{j+1}]
\end{equation}
first notice that only $ \phi^j $ and $ \phi^{j+1} $ depend on $
h_j, $ so the support of $ u_j^h $ is contained in $
[h_{j-1}-\varepsilon, \, h_{j+1} + \varepsilon]. $

Applying \eqref{eq:Lemma 7.8 CP} for the translate $ \phi^j $ of
$ \phi, $
$$
\phi^j(x) := \phi\big(x - \tfrac{h_{j-1} + h_j}{2}, \, h_j -
h_{j-1}, \, (-1)^j\big) , \qquad x \in [h_{j-1} - \varepsilon, \,
h_j+\varepsilon],
$$
we have, for $ x \in [h_{j-1} - \varepsilon, \, h_j+\varepsilon],
$
\begin{eqnarray}
 \frac{\partial }{\partial h_j} \phi^j
& = & \phi^j_x \, \frac{\partial }{\partial
h_j}\left(x-\tfrac{h_{j-1} + h_j}{2}\right) \, + \, \phi^j_\ell
\,\frac{\partial }{\partial h_j} \big(h_j - h_{j-1}\big) \nonumber
\\
& = & - \, \frac{1}{2} \phi^j_x \, - \, \frac{1}{2}
\operatorname{sgn}(x-m_j)\, \phi_x^j \,+\,  \mathrm{w}^j ,
\label{eq:a derivative phi j wrt hj}
\end{eqnarray}
therefore
\begin{equation}\label{eq:b derivative phi j wrt hj}
 \frac{\partial }{\partial h_j} \phi^j
 \;=\;
- \phi^j_x \,+\,  \mathrm{w}^j , \qquad\mbox{in} \quad
I_j:=[m_j,\, h_j+\varepsilon] ,
\end{equation}
since $ \operatorname{sgn}(x-m_j) > 0. $

Similarly, we obtain
\begin{equation}\label{eq: deriv phi jplus1 wrt hj on Ij}
 \frac{\partial }{\partial h_j} \phi^{j+1}
 \,=\,
 -  \phi^{j+1}_x
\,-\,  \mathrm{w}^{j+1} , \qquad\mbox{for} \quad x \in [h_j, \,
h_{j+1}],
\end{equation}
and thus
\begin{equation}\label{eq: 7 deriv diff wrt h}
 \frac{\partial }{\partial h_j} \big(\phi^{j+1} -  \phi^j\big)
\, = \,
 \phi^j_x
\, - \, \phi^{j+1}_x \, - \,  \mathrm{w}^j \,- \,
\mathrm{w}^{j+1} , \qquad\mbox{in} \quad I_j.
\end{equation}

So recalling that $ u^h = \big(1 - \chi^j\big) \, \phi^j + \chi^j
\, \phi^{j+1}, $ and noticing that $ \chi^j_x = - \chi_{h_j}^j, $
it is straightforward to see that
\begin{eqnarray}
\frac{\partial }{\partial h_j} u^h & = & \chi^j_x \big( \phi^j
\;-\; \phi^{j+1} \big) \; + \;  \chi^j \frac{\partial }{\partial
h_j}\big(\phi^{j+1} - \phi^j\big) \;+\; \frac{\partial }{\partial
h_j} \phi^j \nonumber
\\
&\stackrel{\eqref{eq: 7 deriv diff wrt h}}{=\joinrel=}& \chi^j_x
\big( \phi^j \;-\; \phi^{j+1} \big) \; + \;  \chi^j \big[
\phi^j_x \, - \, \phi^{j+1}_x \, - \,  \mathrm{w}^j \, - \,
\mathrm{w}^{j+1}\big] \, - \, \phi^j_x \, + \,  \mathrm{w}^j
\nonumber
\\
& = & \chi^j_x \big( \phi^j \;-\; \phi^{j+1} \big) \; + \; \chi^j
\big[ \phi^j_x \, - \, \phi^{j+1}_x\big] \;-\; \chi^j
\big[\mathrm{w}^j \,+ \,  \mathrm{w}^{j+1}\big] \, - \, \phi^j_x
\, + \,  \mathrm{w}^j \nonumber
\\
&\stackrel{\eqref{eq: derivative of uh}}{=\joinrel=}& - u_x^h \;
+ \; \big(1-\chi^j\big) \mathrm{w}^j \;-\;  \chi^j
\mathrm{w}^{j+1} \qquad  \mbox{for} \quad |x-h_j|<\varepsilon.
\label{eq: uhj 1}
\end{eqnarray}

For the translate $ \phi^{j+1} $ of $ \phi, $
$$
\phi^{j+1}(x) := \phi\big(x - \tfrac{h_j + h_{j+1}}{2}, \,
h_{j+1} - h_j ; \, (-1)^{j+1}\big) \qquad x \in [h_j -
\varepsilon, \, h_{j+1} + \varepsilon]
$$
we have, by \eqref{eq:Lemma 7.8 CP},
\begin{eqnarray}
 \frac{\partial }{\partial h_j} \phi^{j+1}
& = & \phi^{j+1}_x \, \frac{\partial }{\partial
h_j}\left(x-\tfrac{h_j + h_{j+1}}{2}\right) \, + \,
\phi^{j+1}_\ell \,\frac{\partial }{\partial h_j} \big(h_{j+1} -
h_j\big) \nonumber
\\
& = & - \, \frac{1}{2} \phi^{j+1}_x \, + \, \frac{1}{2}
\operatorname{sgn}(x-m_{j+1})\, \phi_x^{j+1} \, - \,
\mathrm{w}^{j+1} \;=\;
 - \,  \mathrm{w}^{j+1} ,
 \qquad \mbox{in} \quad [m_{j+1} , \, h_{j+1} + \varepsilon],
\label{derivative phi j+1 wrt hj on Ij+1}
\end{eqnarray}
since $ \operatorname{sgn}(x-m_{j+1}) > 0. $

Recall that $$u^h = \big(1 - \chi^{j+1}\big) \, \phi^{j+1} +
\chi^{j+1} \, \phi^{j+2}, \qquad x \in [m_{j+1}, \; m_{j+2}],  $$
so using \eqref{derivative phi j+1 wrt hj on Ij+1} and noticing
that $ \chi_{h_j}^{j+1}=0=\phi_{h_j}^{j+2}, $ it is
straightforward to see that
\begin{equation}\label{derivative uh wrt hj on Ij+1}
\frac{\partial }{\partial h_j} u^h \;=\;  \big(1 -
\chi^{j+1}\big) \frac{\partial }{\partial h_j} \phi^{j+1}
\stackrel{\eqref{derivative phi j+1 wrt hj on Ij+1}}{=\joinrel=}
 - \,  \big(1 - \chi^{j+1}\big)  \mathrm{w}^{j+1}, \qquad x \in [m_{j+1}, \; h_{j+1}+\varepsilon].
\end{equation}

Analogously, taking into account that $ u^h = \big(1 -
\chi^{j-1}\big) \, \phi^{j-1} + \chi^{j-1} \, \phi^j \, $ for $ x
\in [m_{j-1}, \; m_j],  $ using \eqref{eq:a derivative phi j wrt
hj} and noticing that $ \chi_{h_j}^{j-1}=0=\phi_{h_j}^{j-1}, $
we obtain that
\begin{equation}\label{derivative uh wrt hj on Ij-1}
\frac{\partial }{\partial h_j} u^h \;=\; \chi^{j-1}
\frac{\partial }{\partial h_j} \phi^j \; = \;
  \chi^{j-1}  \mathrm{w}^j, \qquad x \in [h_{j-1}-\varepsilon, \, m_j].
\end{equation}
Gathering \eqref{eq: uhj 1}, \eqref{derivative uh wrt hj on
Ij+1}, \eqref{derivative uh wrt hj on Ij-1} we have that
\begin{equation}\label{eq: derivative uh wrt h}
 \frac{\partial }{\partial h_j} u^h
\, = \,
\begin{cases}
\chi^{j-1} \mathrm{w}^j , &  \mbox{for} \quad
h_{j-1}-\varepsilon \leq x \leq m_j,
\\
 - u^h_{x}
\,+\,  \mathrm{w}^j,  &  \mbox{for} \quad  m_j \leq x \leq
h_j-\varepsilon ,
\\
- u_x^h \; + \; \big(1-\chi^j\big) \mathrm{w}^j \;-\; \chi^j
\mathrm{w}^{j+1}, & \mbox{for} \quad  |x-h_j| < \varepsilon ,
\\
-  u^h_{x} \,-\,  \mathrm{w}^{j+1} , &  \mbox{for} \quad h_j
+\varepsilon \leq x \leq m_{j+1},
\\
- \, (1 - \chi^{j+1}) \mathrm{w}^{j+1} , &  \mbox{for} \quad
m_{j+1} \leq x \leq h_{j+1} +\varepsilon.
\end{cases}
\end{equation}
Then \eqref{eq:approx der} follows from \eqref{eq: derivative uh
wrt h} combined with \eqref{eq: 1 Lemma 7.9 CP}-\eqref{eq: 2
Lemma 7.9 CP}. See also in \cite{AntonBlomkerKarali}, for some
analogous results.
\end{remark}

\subsection{Apriori estimates for the problem (\ref{eq:deltaCH-AC})-(\ref{eq:BC1p})-(\ref{eq:BC2p})-(\ref{eq:MC})}\label{sectionWP}
For the well-posedness of the initial and boundary value problem we may argue as in \cite[\S2]{ElliottSongmu}; next we derive the estimates needed in our case where we have replaced the b.c. at $ x = 1 $ with the mass conservation condition and added the Allen-Cahn lower order term in the pde.

Local in time existence may be proved by fixed-point theory, applying a Picard-type iteration scheme. In order to prove global existence, i.e. existence on $ [0, T] $ for any $ T > 0, $ we need to derive certain apriori uniform estimates on $ u. $ 
To this aim, first notice that by \eqref{eq:MC}, \eqref{eq:deltaCH-AC2} and \eqref{eq:BC1p}-\eqref{eq:BC2p} we have
\begin{eqnarray*}
0
& = &
\frac{d}{dt}
\int_0^1 u(x, t) \, dx
\; = \;
\int_0^1 u_t(x, t) \, dx
\\
& = &
- \deps
\int_0^1
\big(\varepsilon^2 u_{xx} - W'(u)\big)_{xx}
\, dx
\; + \;
\meps
\int_0^1
\big(\varepsilon^2 u_{xx}  - W'(u)\big)
\, dx
\\
& = &
- \deps \varepsilon^2 u_{xxx}(1,t)
\; - \;
\meps  \int_0^1 W'(u) \, dx
\end{eqnarray*}
so we have
\begin{equation}\label{CompatCond2}
\meps  \int_0^1 W'(u) \, dx = - \deps \varepsilon^2 u_{xxx}(1,t).
\end{equation}
Also, as in the proof of Lemma \ref{lemma 4.1 BX}, we can see that for differentiable $ \upsilon $ and any positive $ \epsilon_1, $
\begin{equation}\label{inequ1square}
\upsilon^2(1,t)  \leq \|\upsilon\|_\infty^2
\leq
2\epsilon_1 \|\upsilon_x\|^2
\, + \,
\frac{2}{\epsilon_1} \,
\|\upsilon\|^2.
\end{equation}
For the special case of
\begin{equation}\label{eq:specialValue} W(u) = \frac{1}{4}(u^2-1)^2 , \qquad\mbox{thus}\qquad W'(u) = u^3 - u ,
\end{equation}
we see that,
\begin{equation*}
W'(u) \leq c_1 W(u) + c_2 , \qquad \forall u \in \mathbb R,
\end{equation*}
for some positive constants $ c_1, c_2 $ independent of $ u, $ and so
\begin{equation}\label{L1WprimeleqL1W}
\int_0^1 |W'(u)|\, dx \leq c_1 \int_0^1 W(u) \, dx + c_2.
\end{equation}

\textit{Growth estimate for energy:} We set
\begin{equation}
E(t) := \int_0^1 \frac{\eps^2}{2} u_x^2 + W(u)\, dx
\end{equation}
and we have
\begin{eqnarray*}
\frac{d}{dt}
E(t)
& = &
\int_0^1 \eps^2 u_x \, (u_t)_x \, + \, W'(u)\,u_t \, dx
\\
&=&
-\int_0^1 \big(\eps^2 u_{xx} \, - \, W'(u)\big) \, u_t \, dx,
\end{eqnarray*}
where we integrated by parts the first term and applied \eqref{eq:BC1p}. Then, by the pde \eqref{eq:deltaCH-AC2} and integrations by parts combined with  \eqref{eq:BC1p}-\eqref{eq:BC2p} we get
\begin{flalign}
\frac{d}{dt}
E(t)
=&
\int_0^1 \big(\eps^2 u_{xx} \, - \, W'(u)\big)\, \Big[\deps \, \big(\varepsilon^2 u_{xx} - W'(u)\big)_{xx} \; - \;   \meps \, \big(\varepsilon^2 u_{xx}  - W'(u)\big)\Big] \, dx
\nonumber
\\
=&
-  \deps\int_0^1\left[\big(\eps^2 u_{xx} \; - \;W'(u)\big)_x\right]^2\, dx\; - \meps \int_0^1\left[\eps^2 \, u_{xx}
\; - \;W'(u)\right]^2\, dx
\nonumber
\\
&
+
\deps \, \eps^2 \, u_{xxx}(1,t) \,
\Big[
\eps^2 \, u_{xx}(1,t)
\;-\;
W'\big(u(1,t)\big)\Big]
\nonumber
\\
\stackrel{\scriptscriptstyle\eqref{CompatCond2}}{=} &
- \deps
\left\|
\big(\eps^2 u_{xx}
\; - \;
W'(u)\big)_x
\right\|^2
\; -
\meps
\left\|
\eps^2 \, u_{xx}
\; - \;
W'(u)
\right\|^2
\nonumber
\\
&
- \meps \, \Big[\eps^2 \, u_{xx}(1,t) \;-\;W'\big(u(1,t)\big)\Big]
\, \int_0^1 W'(u) \, dx\nonumber
\\
\leq &- \deps\left\|\big(\eps^2 u_{xx} \; - \;W'(u)\big)_x\right\|^2\; - \; \meps \left\|\eps^2 \, u_{xx}
\; - \;W'(u)
\right\|^2\nonumber
\\
&
+\frac{\meps}{4 \epsilon}
\;\Big[\eps^2 \, u_{xx}(1,t) \;-\;W'\big(u(1,t)\big)\Big]^2\; + \; \epsilon \, \meps \left(\int_0^1 W'(u)\,dx\right)^2 .
\label{step1enest2}
\end{flalign}
By \eqref{eq:BC1p} we have
\begin{equation}\label{step1enest4}
\int_0^1 W'(u)\,dx
=
\int_0^1 \big(W'(u) - \eps^2 u_{xx}\big) \,dx
\leq
\left\| W'(u) - \eps^2 u_{xx}\right\|
\end{equation}
and by \eqref{inequ1square} for $ \upsilon=W'(u) - \eps^2 u_{xx} $ therein, we have
\begin{equation}\label{step1enest5}
\frac{\meps}{4 \epsilon}
\upsilon^2(1,t)
\leq
\frac{\meps \, \epsilon_1}{2 \epsilon}
\|\upsilon_x\|^2
\, + \,
\frac{\meps}{2 \epsilon \, \epsilon_1} \,
\|\upsilon\|^2
\end{equation}
so, choosing $ \epsilon, \epsilon_1, $ so that
\begin{equation}\label{step1enest6}
\frac{\meps \, \epsilon_1}{2 \epsilon}
\leq
\deps
\qquad\mbox{and}\qquad
0\leq 1 -\epsilon - \frac{1}{2 \epsilon \, \epsilon_1}
\end{equation}
we substitute \eqref{step1enest4}, \eqref{step1enest5} into \eqref{step1enest2} to get that
$$ \frac{d}{dt}  E(t) \leq 0.$$
Therefore, $ E(t) \leq E(0) $ that is
\begin{equation}\label{step1enest7}
\frac{\eps^2}{2}
\|u_x\|^2
\, + \,
\int_0^1 W(u) \, dx
\leq
E_0:= \int_0^1 \frac{\eps^2}{2} (u_0)_x^2 + W\big(u_0\big)\, dx
\end{equation}
with $ u_0(x):=u(x,0), $ and so
\begin{equation}\label{step1enest8}
\frac{\eps^2}{2} \|u_x\|^2 \leq E_0
\end{equation}
and
\begin{equation}\label{step1enest9}
\int_0^1 W(u) \, dx \leq E_0.
\end{equation}
Furthermore, integrating \eqref{step1enest2} we get
\begin{equation}\label{step1enest9.1}
\int_0^t\left\|\eps^2 \, u_{xx}
\; - \;W'(u)
\right\|_{H^1}^2 \, d\tau
\leq C,\qquad 0\leq t \leq T,
\end{equation}
for some constant $ C $ depending on $ u_0, T $ and $ \deps, \meps,  \eps.$

\textit{Remark:} A trivial calculus shows that the weakest condition \eqref{step1enest6} for $ \deps, \meps $ is attained by choosing $ \epsilon_1/\epsilon = 27/8 $ for $ \epsilon=2/3, $ and so $ \tfrac{27}{16} \,\meps \,\leq \,\deps. $ Let us also emphasize that the condition $ c \, \meps \,\leq \,\deps $
for some $ c > 0, $ is weaker than the assumption \eqref{eq:a assumption B} for establishing the slow evolution within the channel \eqref{eq: def ourslow
ch} (Theorem \ref{mtat}); e.g. take $ \deps, \meps $ such that $ \eps^{-3/2}\meps \,\ll \,\deps = \mathcal O(\eps^8).$

\textit{Growth estimate for} $\mathit{\|u\|^2:} $  Multiply \eqref{eq:deltaCH-AC2} by $ u, $ then integrate with respect to $ x $ and apply \eqref{eq:BC1p}-\eqref{eq:BC2p} to get
\begin{flalign}
\frac{1}{2}\frac{d}{dt}
\|u\|^2
\;
&
+ \;
\deps \, \varepsilon^2 \, \|u_{xx}\|^2
\; + \; \meps \, \varepsilon^2 \,  \|u_{x}\|^2
\nonumber
\\[0.5em]
=
&
\;
-
\deps \, \eps^2 \, u_{xxx}(1,t) \, u(1,t)
\; - \;
\deps
\int_0^1 \big(W'(u)\big)_x \, u_{x} \, dx
\;-\;
\meps
\int_0^1 W'(u) \, u\, dx
\nonumber
\\[0.5em]
=
&
\;
-
\deps \, \eps^2 \, u_{xxx}(1,t) \, u(1,t)
\; - \;
\deps
\int_0^1 W''(u) \, u^2_{x} \, dx
\;-\;
\meps
\int_0^1 W'(u) \, u\, dx
\nonumber
\\[0.5em] 
\stackrel{\scriptscriptstyle W''\geq -1}{\leq} &
\;
-
\deps \, \eps^2 \, u_{xxx}(1,t) \, u(1,t)
\; + \;
\deps
\int_0^1 u^2_x \, dx
\;-\;
\meps
\int_0^1 W'(u) \, u\, dx
\nonumber
\\
\stackrel{\scriptscriptstyle\eqref{CompatCond2}}{=} &
\;
\deps \, \|u_x\|^2
\;+\;
\meps\left(
u(1,t)
\int_0^1 W'(u) \, dx
\;-\;
\int_0^1 W'(u) \, u\, dx
\right)
\nonumber
\\[0.5em]
\leq  &
\;
\deps \, \|u_x\|^2
\;+\;
\meps
\,
\left(
\|u\|_\infty^2
\;+\;
\left\|W'(u)\right\|^2_1
\right)
\nonumber
\\[0.5em]
\stackrel{\scriptscriptstyle\eqref{inequ1square}}{\leq}  &
\; \deps \, \|u_x\|^2
\;+\;
\meps
\left(
2\, \|u_x\|^2
\, + \,
2 \,
\|u\|^2
\;+\;
\left\|W'(u)\right\|^2_1
\right) .
\label{step2enest1}
\end{flalign}
Regarding the term $ \left\|W'(u)\right\|_1 $ in \eqref{step2enest1}, we combine  \eqref{L1WprimeleqL1W} and \eqref{step1enest9} to see that
\begin{equation}\label{step2enest2}
\left\|W'(u)\right\|_1 \leq c_1 \left\|W(u)\right\|_1 + c_2 \leq C.
\end{equation}
In view of \eqref{step1enest8} and \eqref{step2enest2}, the estimate \eqref{step2enest1} yields
\begin{equation}\label{step2enest3}
\frac{1}{2}\frac{d}{dt}
\|u\|^2
\; + \;
\deps \, \eps^2 \, \|u_{xx}\|^2
\; + \; \meps \, \eps^2 \,  \|u_{x}\|^2
\leq
C_1 \|u\|^2 \,+\, C_2
\end{equation}
the constants $ C_1, C_2 $ depending only on $ u_0 $ and $ \deps, \meps.$

In particular, \eqref{step2enest3} implies
$$
\frac{1}{2} \frac{d}{dt}
\|u\|^2
\leq
C_1 \|u\|^2 \,+\, C_2
$$
and integrating this inequality we get
\begin{equation}\label{step2enest3.1}
\|u(\cdot,t)\|^2 \leq e^{2 C_1 t} \|u_0\|^2 + C_2 (e^{2 C_1 t} - 1) / C_1
\end{equation}
and so
$$ 
\|u(\cdot,t)\|^2\leq c_1 \|u_0\|^2 + c_2, \qquad 0\leq t\leq T,
$$
with $ c_1= e^{2C_1 T},\, c_2=C_2 (e^{2C_1 T} - 1)/C_1, \, $ that is
\begin{equation}\label{step2enest4}
\|u\|\leq C, \qquad 0\leq t \leq T,
\end{equation}
with a constant $ C $ depending only on $ u_0, T $ and $ \deps, \meps. $

By \eqref{step1enest8} and \eqref{step2enest4} we get that
\begin{equation}\label{step2enest5}
\|u(\cdot,t)\|_\infty \leq C, \qquad 0\leq t \leq T.
\end{equation}
Now we return to \eqref{step2enest3}, ignore the positive term $ \|u_{xx}\|, $ then integrate and employ \eqref{step2enest3.1} to get
\begin{equation*}\left\|u(\cdot,T)\right\|^2 \; + \; 2 \, \meps \, \eps^2 \, \int_0^t\|u_x\|^2 \, d\tau \leq \|u_0\|^2 \, e^{2 C_1 t} \, + \, C_3 \big(e^{2 C_1 t}-1\big)\end{equation*} therefore
\begin{equation}\label{step2enest7}
\int_0^t\|u_{x}\|^2 \, d\tau
\leq C,\qquad 0\leq t \leq T,
\end{equation}
for some positive constant $ C $ depending only on $ u_0, T $ and $ \deps, \meps, \eps.$

Returning once more to \eqref{step2enest3}, we get as above that
\begin{equation}\label{step2enest8}
\int_0^t\|u_{xx}\|^2 \, d\tau
\leq C,\qquad 0\leq t \leq T,
\end{equation}
as well.

For improving the regularity of the weak solution to be a classical one we may use a bootstrap argument; see e.g. (2-20)-(2-25) of \cite{ElliottSongmu}. Let us also remark that in view of \eqref{eq:tilde u}, the $H^k$-regularity of $ u $ implies the $H^{k+1}$-regularity for the solution $ \tilde{u} $ of the integrated problem.

\textit{Uniqueness:} Let $ u, v $ be solutions of the problem \eqref{eq:deltaCH-AC2}-\eqref{eq:BC1p}-\eqref{eq:BC2p}-\eqref{eq:MC} 
and consider the difference $ \mathrm{v} = u - v. $ In view of \eqref{eq:deltaCH-AC2}, we have
\begin{equation}\label{Uniq1}
\mathrm{v}_t
=
-
\deps \,
\varepsilon^2 \mathrm{v}_{xxxx}
\, + \,
\deps \,
\big(
W'(u)
\,-\,
W'(v)
\big)_{xx}
\; + \;
\meps \,
\varepsilon^2 \mathrm{v}_{xx}
\, - \,
\meps \,
\big(
W'(u)
\, - \,
W'(v)
\big),
\end{equation}
the \eqref{eq:BC1p}-\eqref{eq:BC2p} yield the boundary conditions
\begin{flalign}
&  &
\mathrm{v}_{x}(0, \, t) =  \mathrm{v}_{x}(1 , \, t) & \; = 0 , 
& 
\label{Uniq2}
\\
& &
\mathrm{v}_{xxx}(0, \, t) & \; = 0, 
&\label{Uniq3}
\end{flalign}
and \eqref{eq:MC} implies
\begin{equation}\label{Uniq4}
\int_0^1 \mathrm{v}(x, t) \, dx = 0, \qquad t>0.
\end{equation}
Multiply the pde \eqref{Uniq1} by $ \mathrm{v}, $ then integrate with respect to $ x $ and apply \eqref{Uniq2}-\eqref{Uniq3} to get
\begin{multline}\label{Uniq5}
\frac{1}{2} \frac{d}{dt} \|\mathrm{v}\|^2
\;+\;
\deps \, \varepsilon^2 \, \|\mathrm{v}_{xx}\|^2
\; + \; \meps \, \varepsilon^2 \,  \|\mathrm{v}_x\|^2
\; = \;
-
\deps \, \eps^2 \, \mathrm{v}_{xxx}(1,t) \, \mathrm{v}(1,t)
\\[0.5em]
+ \;
\deps
\int_0^1 \big(
W'(u)
\, - \,
W'(v)
\big) \, \mathrm{v}_{xx} \, dx
\;-\;
\meps
\int_0^1 \big(
W'(u)
\, - \,
W'(v)
\big) \, \mathrm{v}\, dx.
\end{multline}
Let us next estimate the terms in the rhs of \eqref{Uniq5}. To this aim, we set $ K_{\scriptscriptstyle T} := \sup\big\{\|u(\cdot,t)\|_\infty, \|\upsilon(\cdot,t)\|_\infty :\, 0\leq t \leq T\big\} $ and $ L = \max\big\{|W''(w)|: \, |w|\leq K_{\scriptscriptstyle T}\big\}. $ In view of \eqref{step2enest5} we have $ K_{\scriptscriptstyle T} < \infty, $ and $ L $ depends on $ u, \upsilon, W, T,  $ but it is independent of $ t. $

We then have that
\begin{equation}\label{Uniq6}
\left|
W'\big(\upsilon(x,t)\big)
\,-\,
W'\big(u(x,t)\big)
\right|
\leq
L \, |\upsilon(x,t) - u(x,t)| , \qquad 0 \leq t \leq T.
\end{equation}
Regarding the first term in the rhs of \eqref{Uniq5}, we clearly have
$$
\mathrm{v}(1,t)
=
\int_y^1 \mathrm{v}_x \,dx
\,+\,
\mathrm{v}(y,t)
\leq
\int_0^1 |\mathrm{v}_x|\,dx
\,+\,
\mathrm{v}(y,t)
$$
and integrate this inequality with respect to $ y, $ to get, by virtue of \eqref{Uniq4},
\begin{equation}\label{Uniq7}
\mathrm{v}(1,t)
 \leq
\int_0^1 |\mathrm{v}_x|\,dx
\,+\,
\int_0^1 \mathrm{v}(x,t) \, dx
\; = \;
\left\|\mathrm{v}_x(\cdot,t)\right\|_1 .
\end{equation}
Moreover, by \eqref{CompatCond2} and \eqref{Uniq6} we get
\begin{equation}
\deps \, \varepsilon^2 \, \mathrm{v}_{xxx}(1,t)
=
\meps
\int_0^1
\big(
W'(\upsilon)
\,-\,
W'(u)
\big) \, dx
\;\leq\;
L \,
\meps
\int_0^1 |\upsilon - u| \, dx
=
L \,
\meps \,
\left\|\mathrm{v}\right\|_1 .\label{Uniq8}
\end{equation}
Consequently, by \eqref{Uniq7}-\eqref{Uniq8} we obtain that
\begin{eqnarray}\label{Uniq9}
\deps \, \varepsilon^2 \, \mathrm{v}_{xxx}(1,t) \, \mathrm{v}(1,t)
& \leq &
\frac{L^2 \, \meps}{4 \epsilon}
\,
\left\|\mathrm{v}\right\|_1^2
\, + \,
\epsilon\, \meps
\,
\left\|\mathrm{v}_x\right\|_1^2
\nonumber
\\[0.5em]
& \leq &
\frac{L^2 \, \meps}{4 \epsilon}
\,
\left\|\mathrm{v}\right\|^2
\, + \,
\epsilon\, \meps
\,
\left\|\mathrm{v}_x\right\|^2
\end{eqnarray}
for an arbitrary positive $ \epsilon < 1. $

As for the second term in the rhs of \eqref{Uniq5}, again we use \eqref{Uniq6} to see that
\begin{eqnarray}\label{Uniq10}
\deps
\int_0^1 \big(
W'(u)
\, - \,
W'(v)
\big) \, \mathrm{v}_{xx} \, dx
& \leq &
\deps \, L \,
\int_0^1
\big|u-v\big|
\,
|\mathrm{v}_{xx}| \, dx
\nonumber
\\[0.5em]
& \leq &
\frac{L^2 \, \deps}{4 \epsilon}
\,
\left\|\mathrm{v}\right\|^2
\, + \,
\epsilon\, \deps
\,
\left\|\mathrm{v}_{xx}\right\|^2 ,
\end{eqnarray}
and for the last term in \eqref{Uniq5}, estimate \eqref{Uniq6} yield the bound
\begin{equation}\label{Uniq11}
\meps
\int_0^1 \big(
W'(u)
\, - \,
W'(v)
\big) \, \mathrm{v}\, dx
\; \leq \;
\meps \, L \,
\int_0^1
\big|u-v\big|
\,
|\mathrm{v}| \, dx
\;=\;
\meps \, L \,\left\|\mathrm{v}\right\|^2.
\end{equation}
We apply \eqref{Uniq9}, \eqref{Uniq10}, \eqref{Uniq11} into \eqref{Uniq5} to obtain
\begin{equation}\label{Uniq12}
\frac{1}{2} \frac{d}{dt} \|\mathrm{v}\|^2
\; + \;
\deps \, (\varepsilon^2 - \epsilon) \, \|\mathrm{v}_{xx}\|^2
\; + \;
\meps \, (\varepsilon^2 - \epsilon) \,  \|\mathrm{v}_x\|^2
\leq
\left(
\frac{L^2 \, \meps}{4 \epsilon}
+ \frac{L^2 \, \deps}{4 \epsilon}
+ \meps \, L
\right)
\,
\left\|\mathrm{v}\right\|^2.
\end{equation}
Therefore
\begin{equation}
\frac{d}{dt} \|\mathrm{v}\|^2
\leq
c \,
\left\|\mathrm{v}\right\|^2 ,
\qquad 0 \leq t \leq T,\label{Uniq13}
\end{equation}
for some constant $ c $ depending on $ \deps, \meps, T, $ but independent of $ t $ and integrating with respect to $ t, $ we obtain
\begin{equation*}
\left\|\mathrm{v}(\cdot,t)\right\|^2
\leq
e^{ct} \,
\left\|\mathrm{v}(\cdot,0)\right\|^2
\;=\;
0,
\qquad 0 \leq t \leq T,
\end{equation*}
that is $ u \equiv \upsilon, $ so the solution of \eqref{eq:deltaCH-AC2}-\eqref{eq:BC1p}-\eqref{eq:BC2p}-\eqref{eq:MC} is unique.

\section*{Acknowledgment}
The research work was supported by the Hellenic Foundation for Research and Innovation (H.F.R.I.) under the First Call for H.F.R.I. Research Projects to support Faculty members and Researchers and the procurement of high-cost research  equipment grant (Project Number: HFRI-FM17-45).


\begin{thebibliography}{999}
%
\bibitem{ABF} N. Alikakos, P. W. Bates, G. Fusco,  \textit{Slow motion for the Cahn-Hilliard equation in one space dimension},
J. Differ. equations, 90 (1), 81--135, (1990).


\bibitem{a-c}
S.M. Allen, J.W. Cahn, {\it A microscopic theory for antiphase
boundary motion and its application to antiphase domain
coarsening}, Acta Metallurgica, 27 (6), 1085--1095, (1979).


%
\bibitem{AntonBlomkerKarali} D.C. Antonopoulou, D. Bl\"omker, G.D. Karali,
{\it Front Motion in the One-Dimensional Stochastic Cahn-Hilliard
Equation}, SIAM J. Math. Anal., 44(5), 3242-3280 (2012).

\bibitem{AKM} D.C. Antonopoulou, G.D. Karali, A. Millet, {\it Existence
and regularity of solution for a stochastic
Cahn-Hilliard/Allen-Cahn equation with unbounded noise diffusion},
J. Differ. Equations, 260, 2383--2417, (2016).

%
\bibitem{BatesXunI} P. W. Bates, J.-P. Xun, {\it Metastable patterns for the Cahn-Hilliard equation: Part I},
J. Differ. Equations, 111, 421-457 (1994).
%
\bibitem{BatesXunII} P. W. Bates, J.-P. Xun, {\it Metastable patterns for the Cahn-Hilliard equation: Part II,
layer dynamics and slow invariant manifold}, J. Differ.
Equations, 116, 165-216 (1995).
%
\bibitem{BatesFife} P. W. Bates, P. C. Fife, {\it Spectral comparison principles for the Cahn-Hilliard and
phase-field equations, and time scales for coarsening}, Physica
D: Nonlinear Phenomena, 43, 335-348 (1990).
%
%

\bibitem{BS}
L. Bronsard, R. V. Kohn, {\it Motion by mean curvature as the
singular limit of Ginzburg--Landau dynamics}, J. Differ.
Equations, 90 (2) 211--237, (1991).

\bibitem{Cahn}
J.W. Cahn, {\it On spinodal decompostion}, Acta Metall., 9,
795--801, (1961).



\bibitem{CarrPego} J. Carr,  R.-L. Pego,
\textit{Metastable patterns in solutions of $ u_t=\varepsilon^2
u_{xx}-f(u)$}, Comm. Pure Appl. Math., 42, 523-576, (1989).



\bibitem{C1}
X. Chen, {\it Generation and propagation of interfaces for
reaction-diffusion equations}, J.~Differential Equations, 96 (1),
116--141, (1992).

\bibitem{C2} {\sc X.~Chen}, {\it Generation and propagation of
interfaces for reaction-diffusion systems},
Trans.~Amer.~Math.~Soc. 334, (2), 877--913, (1992).


\bibitem{chen}
X. Chen, {\it Generation, propagation, and annihilation of
metastable patterns}, J. Differential Equations 206, (2)
399--437, (2004).


\bibitem{ElliottSongmu} Charles M. Elliott, Zheng Songmu, {\it On the Cahn-Hilliard equation}, Arch. Rat. Mech. Anal. 96 339--357 (1986).


\bibitem{F}
G. Fusco, A geometric approach to the dynamics of $u_t=\ep^2
u_{xx}-f(u)$ for small $\ep$, in `Lecture Notes in Physics', 359,
Springer-Verlag, Berlin, 53--73, (1990).


\bibitem{FH}
G. Fusco, J. K. Hale, {\it Slow motion manifolds, dormant
instability and singular perturbations}, Dynamics Differential
Equations, 1, 75--94, (1989).

\bibitem{HM}
M. Hildebrand, A.S. Mikhailov, {\it Mesoscopic modeling in the
kinetic theory of adsorbates}, J. Phsy. Chem., 100, 19089,
(1996).


\bibitem{KK} G. Karali, M. Katsoulakis, The role of multiple microscopic
mechanisms in cluster interface evolution, J. Differential
Equations 235, (2), 418--438, (2007).

\bibitem{KN} G. Karali, Y. Nagase, {\it On the existence of solution for
a Cahn-Hilliard/Allen-Cahn equation}, Discrete Contin. Dyn. Syst.,
Ser. S 7, No. 1, 127--137, (2014).

\bibitem{KV} M.A. Katsoulakis, D.G. Vlachos, {\it Mesoscopic modeling of surface
processes, in: Multiscale Models for Surface Evolution and
Reacting Flows}, IMA Vol. Math. Appl., 136, 179-198, (2003).

\bibitem{MS2}
P. de Mottoni, M. Schatzman, {\it Geometrical evolution of
developed interfaces}, Trans.~Amer.~Math.~Soc.,  347 (5)
1533--1589, (1995).

\bibitem{Spohn}
H. Spohn, {\it Interface motion in models with stochastic
dynamics}, J. Stat. Phys., 5-6, 1081--1132, (1993).





\end{thebibliography}
\end{document}